\documentclass[10pt]{article}
\newcommand\tab[1][1cm]{\hspace*{#1}}
\usepackage{enumitem}
\usepackage{amsthm,amsfonts}
\usepackage{amsmath,amssymb,wasysym}
\usepackage{graphicx}
\usepackage{wrapfig}
\usepackage{comment}
\usepackage{caption}
\usepackage{subcaption}
\usepackage{tikz}
\usepackage{breqn}
\usepackage{booktabs}
\usepackage{authblk}
\usepackage{layout}
\usepackage[makeroom]{cancel}

\usepackage{pgfplots}
\pgfplotsset{compat=1.18}

\usepackage[notcite,notref]{}
\usepackage[%
  colorlinks=true,%
	linkcolor	=blue,%
	citecolor =blue,%
  urlcolor=blue%
]{hyperref}

\newcommand{\ControlC}{\textsf{\textup{\textbf{c}}}}
\newcommand{\ControlS}{\textsf{\textup{\textbf{s}}}}

\usepackage{hypcap}%importante
\usepackage{float}% Antes nofloat importante
\DeclareMathOperator*{\esssup}{ess\,sup}
\usepackage[left=2.8cm,right=2.8cm, top=3.1cm, bottom=2.8cm]{geometry}
\newtheorem{theo}{Theorem}[section]
\newtheorem{lemm}[theo]{Lemma}

\newtheorem{prop}[theo]{Proposition}

\newtheorem{defi}[theo]{Definition}
\newtheorem{remark}[theo]{Remark}
\usepackage{stackengine}%para hacer inmersión compacta
\newcommand\dhookrightarrow{\mathrel{%
		\ensurestackMath{\stackanchor[.1ex]{\hookrightarrow}{\hookrightarrow}}
}}%hasta aqu\'i inmersión compacta
\newcommand{\leu}{\lambda_\varepsilon (u_\varepsilon)}
\newcommand{\R}{\hbox{\rm I \kern -5pt R}}     % simbolo de Reales
\newcommand{\p} {\hbox{\rm I \kern -5pt P}}
\def\z        {\textit{\textbf{z}}}
\def\w        {\textit{\textbf{w}}}
\def\u        {\textit{\textbf{u}}}
\def\v        {{\textit{\textbf{v}}}}
\def\f        {\textit{\textbf{f}}}
\def\x        {\textit{\textbf{x}}}
\def\W        {\textit{\textbf{W}}}
\def\X        {\textit{\textbf{X}}}
\def\V        {\textit{\textbf{V}}}
\def\H        {{\boldsymbol H}}
\def\L       {{\boldsymbol L}}

\def\Om       {\Omega}
\def\div      {\nabla\cdot}
\def\d        {\Delta}
\def\n        {\nabla}

\def\l        {\lambda}

\def\e        {\quad}

\def\rdd      {\rho^{n+1}_{h}}
\def\rd       {\rho^{n}_{h}}
\def\dss      {\eta^{n+1}}
\def\ds       {\eta^{n}}
\def\uss      {\textit{\textbf{u}}^{n+1}}
\def\us       {\textit{\textbf{u}}^{n}}
\def\udd      {\textit{\textbf{u}}^{n+1}_{h}}
\def\ud       {\textit{\textbf{u}}^{n}_{h}}
\def\pr       {p^{n+1}_{h}}

\def\fdd      {\bar{\rho}_{h}}
\def\fdc      {\bar{\rho}}
\def\fvd      {\bar{{\textit{\textbf{u}}}}_h}
\def\fvc      {\bar{{\textit{\textbf{u}}}}}
\def\auxvv   {\textit{\textbf{u}}_{h,k}}
\def\auxv    {\widehat{\textit{\textbf{u}}}_{h,k}}
\def\auxvvl   {\textit{\textbf{u}}_{h,k,\l}}
\def\auxvl   {\widehat{\textit{\textbf{u}}}_{h,k,\l}}
\def\auxdd    {\rho_{h,k}}
\def\auxd    {\widehat{\rho}_{h,k}}
\def\auxddl    {\rho_{h,k,\l}}
\def\auxdl    {\widehat{\rho}_{h,k,\l}}
\def\auxdc    {\widetilde{\rho}_{h,k}}
\def\auxdcl    {\widetilde{\rho}_{h,k,\l}}

\numberwithin{equation}{section}
\definecolor{ao(english)}{rgb}{0.0, 0.5, 0.0}
\definecolor{aqua}{rgb}{0.0, 1.0, 1.0}
\definecolor{bazaar}{rgb}{0.6, 0.47, 0.48}
\definecolor{britishracinggreen}{rgb}{0.0, 0.26, 0.15}

\newcommand{\imgthree}{0.32\textwidth}
\newcommand{\imgfour}{0.24\textwidth}

\title{Optimal control of therapies related to an oxytaxis glioblastoma model}

\author[a]{Juan J. Forero-Herna\'ndez\thanks{E-mail: \href{mailto: juan2258036@correo.uis.edu.co}{ juan2258036@correo.uis.edu.co}}}
\author[b]{Francisco Guill\'en-Gonz\'alez\thanks{E-mail: \href{mailto:guillen@us.es}{guillen@us.es}}}
\author[a]{\'Elder J. Villamizar-Roa\thanks{E-mail (Corresponding author) \href{mailto:jvillami@uis.edu.co}{jvillami@uis.edu.co}}}

\affil[a]{Universidad Industrial de Santander, Escuela de Matem\'{a}ticas,  Carrera 29 Calle 9, A.A. 678, CP 680002, Bucaramanga, Colombia.}
\affil[b]{Universidad de Sevilla, Dpto. Ecuaciones Diferenciales y Análisis Numérico and IMUS, Facultad de Matemáticas, C. Tarfia, s/n, 41012, Sevilla, España.}

\renewcommand\Authands{, }
\renewcommand\Affilfont{\fontsize{9}{10.8}\itshape}
\date{}

\begin{document}
\maketitle

%%%%%%%%%%%%%%%%%%%%%%%%%%%%%%%%%%%%%%%%%%%%%%%%%%%%%%%%%%%%%
%%%%%%%%%%%%%%%%%%%%%%%%%%%%%%%%%%%%%%%%%%%%%%%%%%%%%%%%%%%%%
% RESUMEN
%%%%%%%%%%%%%%%%%%%%%%%%%%%%%%%%%%%%%%%%%%%%%%%%%%%%%%%%%%%%%
%%%%%%%%%%%%%%%%%%%%%%%%%%%%%%%%%%%%%%%%%%%%%%%%%%%%%%%%%%%%%

\begin{abstract}
We propose and analyze an optimal control problem associated with a Keller-Segel type parabolic system with chemoattraction, modeling the glioblastoma growth 
 in a bi-dimensional bounded domain, influenced by the presence of oxygen where the controls are   two different (chemotherapy and antiangiogenic) therapies. The model considers the random diffusion of tumor cells and oxygen, the movement of cells towards   the oxygen gradient (oxytaxis), and reaction terms describing the interaction between cells and oxygen. We establish a mathematical framework to analyze the existence and uniqueness of weak-strong solution of the model and subsequently we analyze an optimal control problem considering a cost functional that minimizes both the tumor growth and the oxygen concentration.
 We prove the existence of a global optimal solution and derive necessary first-order optimality conditions. Finally, we propose a methodology for approximating the optimal therapies. We use the gradient of the reduced cost functional through the adjoint scheme, and minimize the cost functional  implementing the Adam gradient optimization method. Some numerical experiments are provided to demonstrate the effectiveness
  of the proposed scheme.

\vspace{0.3cm}

\noindent{\bf Keywords.} Glioma cells, chemoattraction, weak-strong solutions,  optimal control, Adam method.\vspace{0.3cm}

%\noindent{\bf AMS subject classifications.} 35K55; 35Q35; 35Q92; 92C17
\noindent{\bf AMS subject classifications.}  35A01; 35Q92; 49J20; 49K20; 93C10.
\end{abstract}

%%%%%%%%%%%%%%%%%%%%%%%%%%%%%%%%%%%%%%%%%%%%%%%%%%%%%%%%%%%%%
%%%%%%%%%%%%%%%%%%%%%%%%%%%%%%%%%%%%%%%%%%%%%%%%%%%%%%%%%%%%%
% CAPÍTULO 1: INTRODUCCIÓN
%%%%%%%%%%%%%%%%%%%%%%%%%%%%%%%%%%%%%%%%%%%%%%%%%%%%%%%%%%%%%
%%%%%%%%%%%%%%%%%%%%%%%%%%%%%%%%%%%%%%%%%%%%%%%%%%%%%%%%%%%%%
\section{Introduction}
%{\color{britishracinggreen}
Cell migration driven by chemical gradients, referred to as chemotaxis, constitutes a fundamental biological mechanism underlying processes such as tissue morphogenesis, immune system function, and tumor progression. From a mathematical perspective, this phenomenon has been extensively characterized through Keller–Segel type models, which remain among the most widely employed frameworks for representing directed motion in biological systems \cite{kellersegel}. In recent decades, the classical Keller–Segel formulation has undergone numerous extensions designed to more accurately capture the complexity of biological processes \cite{ Bellomo2015, Hillen2009, winkler}. Such extensions have been applied to describe specific biomedical phenomena, including tumor invasion and cell–extracellular matrix interactions, as exemplified in haptotaxis models \cite{Chaplain2000, Chaplain2006, Celis}. Furthermore, the foundational Keller–Segel system has provided the basis for specialized models that address various pathologies, notably gliomas, a prevalent class of highly aggressive and morphologically heterogeneous brain tumors. Several authors have proposed versions of the Keller–Segel model adapted to the study of glioblastoma, in order to reproduce its infiltrative behavior and its interaction with the brain microenvironment, see, for instance, \cite{Fernandez,Leo}. At the same time, there has been a recent interest in optimal control problems in Keller-Segel type models \cite{PerusatoKS}. These control problems aim to formulate optimal interventions, such as the administration of drugs in tumor models, among others, under constraints imposed by PDE systems (see \cite{DaiLiu, De_Araujo2015}).\\

In this paper, we formulate and analyze an optimal control problem associated with a mathematical model describing the glioblastoma growth. Glioblastoma stands out the most aggressive subtype of brain tumors, characterized by its highly infiltrative nature with a pronounced resistance to conventional therapies \cite{Menze2015}. The complex dynamics of tumor cell infiltration into brain tissue present substantial challenges from both clinical and scientific perspectives, for which mathematical modeling has emerged as a critical tool to elucidate propagation mechanisms and to investigate potential therapeutic strategies \cite{anderson1999}.\\

Some studies (see \cite{Gomez2017, Leo2, Swanson}) have highlighted the role of the oxygen in the cellular proliferation, as well as in the aggressiveness and invasiveness of glioblastoma. In this sense, without considering control terms, in \cite{Leo}, a Keller-Segel type parabolic system  was proposed as a generalization of the reaction-diffusion model presented in \cite{Gomez2017}, to predict the proliferation-invasion structure of glial cells in the brain, including the mechanisms of invasion and proliferation of gliomas, and taking into account the attractive taxis mechanism. Inspired by \cite{Leo}, in this paper, we propose and analyze  an optimal control problem to deal with the treatment of the tumor invasion, by using chemotherapic and antiangiogenic controls where the state equations describe the evolution of glial cells in the presence of oxygen. Explicitly, given a bounded and sufficiently regular domain $\Omega \subset \mathbb{R}^2$, $0<T\leq \infty$, $Q:= \Omega \times (0,T)$ and $\Sigma=\partial \Omega \times [0,T),$ the state-control model is given by the following parabolic system:
\begin{equation}
	\begin{cases}
		\partial_t u= \overbrace{D_u\Delta u}^{\mbox{\tiny{Random Diffusion}}}+ \overbrace{\rho(\sigma)u(\alpha-u)}^{\mbox{\tiny{Logistic Growth}}} - \overbrace{\kappa \ControlC u \sigma}^{\mbox{\tiny{Chemotherapy}}} - \overbrace{\chi \nabla \cdot \left( u \nabla \sigma\right)}^{\mbox{\tiny{Oxytaxis}}},& \text{in } Q,\\
		\partial_t \sigma= \overbrace{D_\sigma \Delta \sigma}^{\mbox{\tiny{Random Diffusion}}}+\overbrace{P_{er}S_{v}(\beta-\sigma)}^{\mbox{\tiny{Reaction-Production}}}-\overbrace{{\frac{A_{ox}u\sigma}{k_{ox}+\sigma}}}^{\mbox{\tiny{Consumption}}}+\overbrace{(1-\ControlS) S_c u}^{\mbox{\tiny{\shortstack{Antiangiogenic\\Therapy}}}
},& \text{in }Q, \\
        \partial_{\mathbf{n}}u=\partial_{\mathbf{n}}\sigma = 0, & \text{on }\Sigma, \\
        u(0)= u_0, \sigma(0)= \sigma_0, & \text{in }\Omega,
	\end{cases}
\label{eq:SistemaControlado}
\end{equation}
where $u$ and $\sigma$ represent the cell density and the oxygen concentration, respectively. In addition, $u_0$ and $\sigma_0$ are given functions defined on $\Omega$ that represent the initial conditions of the unknowns $u$ and $\sigma$. The term $D_u$ is the diffusion coefficient of tumor cells, while \( D_\sigma \) represents the diffusion rate of oxygen in the tissue. The parameters \( A_{ox} \) and \( k_{ox} \) describe the oxygen consumption rate by tumor cells according to Michaelis-Menten kinetics given by $\frac{A_{ox}u\sigma}{k_{ox}+\sigma}$. The coefficient $P_{er}$ refers to the vascular permeability, which controls the amount of oxygen that can pass through the blood vessel walls; $S_v$ represents the vascular density, assumed constant; and $S_c$ denotes the rate of nutrient supply associated with the release of angiogenic factors by cancerous tissue. Finally, the supply of oxygen from the vasculature is modeled by a constant source term proportional to \( \beta \).\\

The oxytaxis term $\chi \nabla \cdot \left( u \nabla \sigma \right)$ models the directed movement of tumor cells towards regions with higher oxygen concentration, where \( \chi \geq 0 \) quantifies the sensitivity of cells to oxygen gradients. The tumor cell proliferation is described by a logistic term \( \rho(\sigma)u(\alpha-u) \), where $\alpha \geq 1$ is the carrying capacity; the function \( \rho(\sigma) \) modulates the growth rate depending on oxygen availability; and \(u(\alpha-u) \) accounts for saturation due to overcrowding. A particular feature of the model is that it captures the hypoxia-induced phenotypic switch from a proliferative to a migratory behavior.  In this paper, we assume that $\rho:\mathbb{R}^+ \to \mathbb{R}$ is a function of class $C^1[0,+\infty)$, with bounded derivative, satisfying the following bounds: there exist constants $\rho_{\min}, \mu_1, \mu_2 > 0$ such that $0 < \rho_{\min} \leq \rho(s) \leq \mu_1s + \mu_2$, for all $s\ge 0$.  A prototype function in the literature (see, e.g. \cite{Gomez2017, Leo2}) is given by  the following affine linear function
\[
\rho(\sigma) = \frac{\hat \rho}{\alpha} \left( \frac{\sigma}{\beta} + b \left( 1 - \frac{\sigma}{\beta} \right) \right),
\]
whose parameter \( \hat\rho \) represents the maximum proliferation rate under normoxic conditions, and \( b \in [0,1] \) modulates the extent to which low oxygen levels reduce proliferation. This formulation enables the model to reproduce the biologically observed decrease in cell division under hypoxic conditions, accompanied by increased motility.\\

System (\ref{eq:SistemaControlado}) incorporates two control functions $[\ControlC, \ControlS]$, which can be interpreted as therapeutic interventions. The function $\ControlC(x,t)$, with $0 \leq \ControlC \leq 1$ represents the administration of a cytotoxic drug (chemotherapy) which reduces the tumor cell population. This control is modeled with the decreasing (trilinear) term $-\kappa \ControlC u \sigma$ in the tumor equation where the parameter $\kappa > 0$ represents the cytotoxic therapeutic intensity and the dependence of oxygen concentration is introduced to reflect that the efficacy of the cytotoxic drug can be modulated by the availability of oxygen. The second control $\ControlS(x,t)$, with $0 \leq \ControlS \leq 1$, represents the effect of an antiangiogenic therapy by means of the (bilinear) term $(1-\ControlS)S_c u$, which modulates the supply of angiogenic factors released by the tumor. The modeling of the control system is a first contribution to be highlighted in this paper.\\

From a biomedical perspective, the objective is to regulate or prevent tumor invasion into surrounding healthy tissue and its spread to other regions of the body, through combined treatments such as chemotherapy and angiogenesis inhibitors. However, these therapeutic approaches are associated with significant side effects, including fatigue, nausea, vomiting, hair loss, and reduced blood cell production, which may lead to immunosuppression \cite{Joshi2009}. Therefore, it is reasonable to formulate an optimal control problem aimed at minimizing tumor growth while simultaneously regulating oxygen concentration toward a desired state 
%over the entire time horizon $[0,T]$ and at the final time $t=T,$
 with the additional objective of minimizing drug dosages to mitigate adverse effects of therapy.\\

Before establishing the optimal control problem, we clarify the setting for the existence of solutions. In this line of thought, we introduce the main notation and some preliminaries that will be employed in this paper. We denote by $W^{m,p}(\Omega)$ and $L^p(\Omega),$ $1\leq p\leq \infty,$ the Sobolev and Lebesgue spaces with norms $\Vert \cdot\Vert_{W^{m,p}}$ and $\Vert \cdot\Vert_{L^p},$ respectively. In particular, we denote by $H^k(\Omega):=W^{k,2}(\Omega)$ the usual Hilbert--Sobolev space. On $L^2(\Omega)$, we denote by $(\cdot,\cdot)$ its inner product. We denote by $L^p(X)=L^p(0,T;X)$ the space of Bochner integrable functions defined on the interval $[0,T]$ with values in a Banach space $X$, endowed with the usual norm $\Vert \cdot\Vert_{L^p(X)};$ in particular, we denote by $L^p(Q):=L^p(0,T;L^p(\Omega)),$ $Q\equiv \Omega\times (0,T).$ In general for a Banach space $X$ the duality product we will denote by $\langle \cdot , \cdot \rangle$. Also, we consider the Banach space $C(X)=C([0,T];X)$ of continuous functions from $[0,T]$ into $X,$ with norm $\Vert \cdot\Vert_{C(X)}$. In addition, frequently we will use the following  Banach spaces:
\[
W_2 := \{s \in C(L^2)\cap L^2( H^1), \quad \partial_t s \in L^2(H^{-1})\},
\]
and
\[
X_p := \{s \in C(W^{2-2/p, p}) \cap L^p(W^{2,p}), \quad \partial_t s \in L^p(Q) \}, \quad (p > 1).
\]
In \cite{Leo}, considering $\textbf{c}=0,$ and $\textbf{s}=0,$  the authors proved the existence and uniqueness of strong solutions in bounded domains in $\mathbb{R}^2$, and proposed a nonlinear fully discrete finite element (FE) for approximate the solutions of the continuous model. Indeed, some qualitative properties of the numerical scheme, including the well-posedness, some uniform estimates, the positivity for the discrete oxygen concentration and approximate positivity for the discrete tumor cells were demonstrated. Some numerical simulations were shown to validate the results. In this paper we consider the well-posedness of (\ref{eq:SistemaControlado}) including the control terms, focusing our analysis on a weak-strong setting, which is more interesting from the point of view of the optimal control theory.

% ===================================================
% ============ DEFINICIÓN DE SOLUCIÓN ===============
% ===================================================
\begin{defi}\label{defSolFuerte}
    Consider $[\ControlC, \ControlS] \in [L^\infty(Q)]^2$ with $0\leq \ControlS\leq 1$ a.e. in $Q$, $u_0 \in L^2 (\Omega),$ $\sigma_0 \in H^1(\Omega)$ with $u_0 \geq 0$ and $\sigma_0 \geq 0$ a.e. in $\Omega$. A pair $[u,\sigma]$ is called a weak-strong solution of problem \eqref{eq:SistemaControlado} in $[0,T]$, if $u \geq 0,$ $\sigma \geq 0$  a.e. in $Q$,
    $$u \in W_2,\quad \sigma \in X_{2},$$ the equation \eqref{eq:SistemaControlado}$_1$ and the boundary condition for $u$ are satisfied in the variational sense, equation \eqref{eq:SistemaControlado}$_2$ and the boundary condition for $\sigma$ pointwise a.e. in $Q$, and the initial conditions \eqref{eq:SistemaControlado}$_3$ and \eqref{eq:SistemaControlado}$_4$ in the sense  $L^2(\Omega)$ and $H^1(\Omega)$, respectively.
\end{defi}

Our main result of existence and uniqueness of solutions of \eqref{eq:SistemaControlado} is given by the following

 \begin{theo}\label{teoSolFuerte}
    Let $u_0 \in L^2(\Omega)$, $\sigma_0 \in H^1(\Omega)$ with $u_0 \geq 0$ and $\sigma_0 \geq 0$ a.e. in $\Omega$, and $[\ControlC,\ControlS] \in [L^{\infty}(Q)]^2$ with $0 \leq \ControlS \leq 1$ a.e. in $Q$. Then there exists a unique weak-strong solution $[u, \sigma]$ of the system \eqref{eq:SistemaControlado} in the sense of Definition \ref{defSolFuerte}. 
 \end{theo}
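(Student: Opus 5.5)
We will build the solution by a fixed-point argument on a truncated system, then derive a priori estimates that remove the truncation. Finally we prove uniqueness through an $L^2$ energy estimate for differences.

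\textbf{Truncated problem.} Fix $M>0$ large and let $u_+=\max\{u,0\}$, $T_M(s)=\min\{s_+,M\}$. In \eqref{eq:SistemaControlado} we replace $u$ by $u_+$ in the reaction terms, and $\sigma$ by $\sigma_+$ in the Michaelis--Menten and chemotherapy terms. The chemotactic flux is replaced by $u_+\nabla\sigma$.

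\textbf{Fixed point (Leray--Schauder).} Given $\bar u\in L^2(Q)$, solve the linear parabolic equation for $\sigma$ with source $P_{er}S_v\beta+(1-\ControlS)S_c\bar u_+$. The absorbing coefficient $P_{er}S_v+A_{ox}\bar u_+/(k_{ox}+\sigma_+)$ is kept in its nonlinear but monotone form. Maximal $L^2$-regularity gives $\sigma\in X_2$ with $\sigma_0\in H^1$.

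Next solve the equation for $u$ in $W_2$ with drift $\nabla\sigma$, with $\rho(\sigma)$ and $\sigma$ frozen. Because $\sigma\in X_2\subset L^4(W^{1,4})$ in 2D (Gagliardo--Nirenberg: $L^\infty(H^1)\cap L^2(H^2)\subset L^4(W^{1,4})$), the term $(u\nabla\sigma,\nabla\varphi)$ is controlled by $\|u\|_{L^4}\|\nabla\sigma\|_{L^4}\|\nabla \varphi\|_{L^2}$. Ladyzhenskaya's inequality then closes Galerkin estimates.

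Compactness of $u\mapsto \bar u$ follows from Aubin--Lions ($W_2\hookrightarrow L^2(Q)$ compactly). Continuity follows by uniqueness of the linear problems.

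\textbf{Positivity and a priori bounds.} Test the $\sigma$ equation with $\sigma_-$; the source is nonnegative because $0\le\ControlS\le1$, so $\sigma\ge0$. Test the $u$ equation with $u_-$; every term then carries the factor $u_+$, so it vanishes and $u\ge0$. The truncations can therefore be dropped.

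The uniform estimates are the main obstacle, because of the quadratic coupling $\chi\nabla\cdot(u\nabla\sigma)$ together with the growth $\rho(\sigma)u^2$, which has the good sign. We proceed in three steps.
\begin{enumerate}
\item Integrating the $u$ equation, and using $\rho\ge\rho_{\min}$, $\kappa\ControlC u\sigma$ and Jensen, gives $L^\infty(L^1)$ bounds on $u$ and an $L^2(Q)$ bound on $u$ weighted by $\rho_{\min}$. This needs $\ControlC\ge0$; if $\ControlC$ may change sign we use $\ControlC\in L^\infty$ and the growth of $\sigma$ instead.
\item The $\sigma$ equation then gives $\sigma\in L^\infty(Q)$ via a comparison or maximum-principle argument: the consumption term is nonnegative and the source lies in $L^2(Q)$. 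Maximal regularity yields $\sigma\in X_2$.
\item Test the $u$ equation with $u$ to obtain
\begin{equation*}
\frac12\frac{d}{dt}\|u\|_{L^2}^2+D_u\|\nabla u\|_{L^2}^2+\rho_{\min}\|u\|_{L^3}^3\le \chi\|u\|_{L^4}\|\nabla\sigma\|_{L^4}\|\nabla u\|_{L^2}+C\|u\|_{L^2}^2(1+\|\sigma\|_{L^\infty}).
\end{equation*}
Apply Ladyzhenskaya and Young, then Gronwall with weight $\|\nabla\sigma\|_{L^4}^4\in L^1(0,T)$.
\end{enumerate}
Since $\|\nabla\sigma\|_{L^4}$ itself depends on $\|u\|_{L^2(Q)}$, the ordering above avoids circularity: $L^1$ and $L^2(Q)$ bounds come first, then $\sigma\in X_2$, then the $u$ energy estimate. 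The term $\rho(\sigma)u\alpha\le(\mu_1\sigma+\mu_2)\alpha u$ is handled using $\sigma\in L^\infty$, or $L^4$ if the maximum principle fails because of the $S_c u$ source. Time derivative bounds follow by duality.

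\textbf{Uniqueness.} Let $(u_i,\sigma_i)$ be two solutions and set $u=u_1-u_2$, $\sigma=\sigma_1-\sigma_2$. Test the $u$-difference equation by $u$ in the $W_2$ duality, and the $\sigma$-difference equation by $-\Delta\sigma$. The critical term $\chi(u_1\nabla\sigma,\nabla u)$ is bounded by $\|u_1\|_{L^4}\|\nabla\sigma\|_{L^4}\|\nabla u\|_{L^2}$. We then use $\|\nabla\sigma\|_{L^4}\le C\|\nabla\sigma\|^{1/2}\|\sigma\|_{H^2}^{1/2}$, and absorb with the $\|\Delta\sigma\|^2$ dissipation, with $\|u_1\|_{L^4}^4\in L^1(0,T)$. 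The remaining terms ($\rho$ Lipschitz, Michaelis--Menten Lipschitz, trilinear control term) are bounded similarly. Gronwall applied to $\|u\|_{L^2}^2+\|\nabla\sigma\|_{L^2}^2$ gives $u=0$ and $\sigma=0$.
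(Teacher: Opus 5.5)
\textbf{Gap: the first a priori step does not close as written.} This is exactly where you claim your ordering avoids circularity. Integrating the $u$-equation gives $\frac{d}{dt}\int_\Omega u+\int_\Omega\rho(\sigma)u^2+\kappa\int_\Omega \ControlC u\sigma=\alpha\int_\Omega\rho(\sigma)u$. Because $\rho(\sigma)\le\mu_1\sigma+\mu_2$ is unbounded, the most you can extract (for $\ControlC\ge0$) is $\frac{d}{dt}\int_\Omega u+\frac12\int_\Omega\rho(\sigma)u^2\le C\int_\Omega\rho(\sigma)\le C\big(\mu_1\int_\Omega\sigma+\mu_2|\Omega|\big)$. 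Neither $\rho\ge\rho_{\min}$ nor Jensen removes the dependence on $\int_\Omega\sigma$. Handling $\alpha\rho(\sigma)u$ with $\sigma\in L^\infty$ or $\sigma\in L^4$, as you suggest, uses bounds that in your ordering only appear after step 1, since your step 2 needs $u\in L^2(Q)$.

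The missing ingredient is to integrate the $\sigma$-equation as well. Using $\sigma\ge0$, $0\le\ControlS\le1$ and the sign of the consumption term, you get $\frac{d}{dt}\int_\Omega\sigma\le P_{er}S_v\beta|\Omega|+S_c\int_\Omega u$. Adding the two inequalities gives $\frac{d}{dt}\int_\Omega(u+\sigma)+\frac12\int_\Omega\rho(\sigma)u^2\le C\big(1+\int_\Omega u+\int_\Omega\sigma\big)$. Gronwall then yields $u,\sigma\in L^\infty(L^1)$ and $u\in L^2(Q)$ at once. This is exactly Step 2.2 of the paper, and after it your steps 2 and 3 run in the order you state.

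\textbf{The $L^\infty$ claim for $\sigma$ is false, but not needed.} The claim $\sigma\in L^\infty(Q)$ fails in general for two reasons. First, $\sigma_0\in H^1(\Omega)$ need not be bounded in 2D. Second, an $L^2(Q)$ source is the borderline exponent $p=(N+2)/2=2$ for parabolic $L^\infty$ bounds. This is why the paper, whose fixed-point space contains $\bar\sigma\in L^\infty(Q)$, first takes $\sigma_0\in W^{1+,2+}(\Omega)$, works in $X_{2+}$, and then lets $\sigma_0^\varepsilon\to\sigma_0$. Your step 3 does not need it. The Gronwall weight $1+\|\sigma(t)\|_{L^\infty(\Omega)}\le C(1+\|\sigma(t)\|_{H^2})$ lies in $L^2(0,T)$ once $\sigma\in X_2$. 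Alternatively, absorb $\alpha\int_\Omega\rho(\sigma)u^2\le\frac12\int_\Omega\rho(\sigma)u^3+C\int_\Omega\rho(\sigma)$ as the paper does.

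\textbf{Uniqueness: test with $\sigma-\Delta\sigma$, not $-\Delta\sigma$ alone.} Terms such as $\int_\Omega u_2^2(\rho(\sigma_1)-\rho(\sigma_2))u$ and $\int_\Omega u_2\,|\sigma|\,|\Delta\sigma|$ require $\|\sigma\|_{H^2}$ and $\|\sigma\|_{L^4}$. Both need the full $H^1$ norm of the difference, not only $\|\nabla\sigma\|_{L^2}$.

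\textbf{Comparison with the paper's route.} With these repairs your existence argument is a legitimate alternative. You iterate only on $\bar u\in L^2(Q)$ and keep the unknown $u$ in the flux $u_+\nabla\sigma$, so that $u\in W_2\subset L^4(Q)$ meets $\nabla\sigma\in L^4(Q)$. This lets you work directly at the $X_2$ level with $\sigma_0\in H^1(\Omega)$, avoiding the paper's detour through $X_{2+}$ and regularized data. The price is that your two subproblems are nonlinear (monotone Michaelis--Menten term, truncated logistic and flux terms). Their solvability and uniqueness, which you invoke for continuity of the map, therefore need their own short monotonicity/Lipschitz arguments rather than linear theory.
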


Now we establish the main results of the optimal control related to the states \eqref{eq:SistemaControlado}.  First, in order to balance the clinical benefits and negative consequences of therapies, we introduce the following cost functional including positive and bounded weighting functions $k_i(x,t) \in L^\infty(Q)$ ($i = 1,2,3,4$), 
and $l_j(x) \in L^\infty(\Omega)$ ($j=1,2$): 
\begin{equation}\label{funcionalObjetivo}
    \begin{split}
        J([u(\ControlC,\ControlS), \sigma(\ControlC,\ControlS), \ControlC,\ControlS]) 
        &= \int_{Q} \Big[\frac{k_1}{2}u^2 + \frac{k_2}{2} \left(\sigma - \sigma_Q \right)^2 + k_3\ControlC + k_4\ControlS\Big](x,t)dxdt \\ 
        & + \int_\Omega \Big[\frac{l_1}{2}u^2 + \frac{l_2}{2}\left(\sigma-\sigma_\Omega\right)^2\Big](x,T)dx,
    \end{split}
\end{equation}
where $\sigma_Q \in L^2(Q)$ and $\sigma_\Omega \in L^2(\Omega)$
 represents the desired state for oxygen levels in $Q$ and in $\Omega$ at the final time $T$, respectively. Additionally, we define the set of admissible controls $[\ControlC,\ControlS]$, as
\begin{equation}\label{Uad}
\mathcal{U}_{ad} = \left\{ [\ControlC,\ControlS] \in [L^\infty(Q)]^2 : 0 \leq \ControlC, \ControlS \leq 1  \text{ a.e. in }Q ,  \int_Q \ControlC(x,t)dxdt \leq c_{max} \right\},
\end{equation}
where $c_{max}>0$ represents the feasible limit for the global therapeutic quantity that can realistically be achieved with common drugs. Since $c_{max}< T|\Omega|$ is imposed, it is not possible to use the maximum of therapy along the whole time. We also introduce the set of admissible states and controls by
\begin{equation}\label{Pad}
\begin{split}
    \mathcal{P}_{ad} = & \{ [u,\sigma,\ControlC,\ControlS] : [u,\sigma] \text{ is the unique weak-strong solution of \eqref{eq:SistemaControlado}} \text{ associated with } [\ControlC,\ControlS] \in \mathcal{U}_{ad} \}.
    \end{split}
\end{equation}
Then, the optimization problem we want to analyze is posed as follows: Minimize the objective functional $J([u,\sigma,\ControlC,\ControlS])$ subject to the state system \eqref{eq:SistemaControlado}; that is, to find a {\it global optimal} solution $[u^*,\sigma^*,\ControlC^*,\ControlS^*] \in \mathcal{P}_{ad}$ such that
\begin{equation}\label{eq:ProblemaDeControl}
    J([u^*,\sigma^*,\ControlC^*,\ControlS^*]) \le J([u, \sigma, \ControlC,\ControlS]),
    \quad \forall\, [u,\sigma,{\ControlC},\ControlS]\in \mathcal{P}_{ad}.
\end{equation}
If the minimum is just in a neighborhood, then it is a {\it local optimal control.} 

\begin{theo}\label{exis2}
Under the hypotheses of Theorem \ref{teoSolFuerte}, there is at least a global optimal solution $[u^*,\sigma^*,\ControlC^*,\ControlS^*] \in \mathcal{P}_{ad}$ for the controlled system \eqref{eq:SistemaControlado} such that
\[
    J([u^*,\sigma^*,\ControlC^*,\ControlS^*]) = \min_{[u,\sigma,{\ControlC},\ControlS]\in \mathcal{P}_{ad}} J([u, \sigma, \ControlC,\ControlS]).
\]
\end{theo}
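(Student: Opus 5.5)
We use the direct method of the calculus of variations. First, $\mathcal{P}_{ad}\neq\emptyset$: the pair $[\ControlC,\ControlS]=[0,0]$ belongs to $\mathcal{U}_{ad}$ and has a state by Theorem \ref{teoSolFuerte}. Moreover $J\geq 0$, since all weights are nonnegative and $\ControlC,\ControlS\ge 0$. Hence $j:=\inf_{\mathcal{P}_{ad}}J$ is finite, and we take a minimizing sequence $[u_m,\sigma_m,\ControlC_m,\ControlS_m]\in\mathcal{P}_{ad}$ with $J\to j$.

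The set $\mathcal{U}_{ad}$ is bounded in $[L^\infty(Q)]^2$, convex, and closed in the weak-$*$ topology. Indeed, the box constraints and $\int_Q\ControlC\le c_{max}$ are preserved under weak-$*$ limits, because $1\in L^1(Q)$. So we can extract a subsequence with $[\ControlC_m,\ControlS_m]\rightharpoonup[\ControlC^*,\ControlS^*]$ weakly-$*$ in $[L^\infty(Q)]^2$, and the limit lies in $\mathcal{U}_{ad}$.

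The key step, and the main obstacle, is a priori estimates for $[u_m,\sigma_m]$ in $W_2\times X_2$ that are uniform with respect to controls in $\mathcal{U}_{ad}$. Theorem \ref{teoSolFuerte} only gives existence for each control, so we must revisit the energy estimates behind it and check that they depend on the controls only through $\|\ControlC\|_{L^\infty},\|\ControlS\|_{L^\infty}\le 1$. We will proceed in four steps.
\begin{enumerate}
\item Nonnegativity and the bound $\rho\ge\rho_{\min}$ make the logistic term dissipative, and $-\kappa\ControlC u\sigma\le 0$. Integrating the $u$-equation then gives a uniform $L^\infty(L^1)$ bound for $u$, together with $\int_Q\rho(\sigma)u^2\le C$.
\item The $\sigma$-equation has a linear source $(1-\ControlS)S_cu$ with $0\le 1-\ControlS\le 1$, while the Michaelis--Menten consumption is bounded by $A_{ox}u$. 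Testing it by $\sigma$ and by $-\Delta\sigma$ gives bounds in $L^\infty(H^1)\cap L^2(H^2)$, in terms of $\|u\|_{L^2(Q)}$.
\item Testing the $u$-equation by $u$ produces the chemotaxis term $\chi\int u\nabla\sigma\cdot\nabla u$. We control it using the 2D Gagliardo--Nirenberg inequality $\|u\|_{L^4}^2\le C\|u\|_{L^2}\|u\|_{H^1}$ and $\|\nabla\sigma\|_{L^4}\le C\|\sigma\|_{H^1}^{1/2}\|\sigma\|_{H^2}^{1/2}$.
\item A Gronwall argument on the combined energy closes the estimates, as in \cite{Leo}. The control terms only add harmless bounded factors; in particular $\kappa\ControlC u\sigma$ has a favorable sign when tested by $u\ge0$. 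This yields uniform bounds on $\partial_t u_m$ in $L^2(H^{-1})$ and $\partial_t\sigma_m$ in $L^2(Q)$.
\end{enumerate}

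With these bounds, weak compactness gives $u_m\rightharpoonup u^*$ in $L^2(H^1)$ and $\sigma_m\rightharpoonup\sigma^*$ in $L^2(H^2)$, with weak convergence of the time derivatives. The Aubin--Lions lemma gives strong convergence of $u_m$ in $L^2(Q)$ and of $\sigma_m$ in $L^2(H^1)$, and also in $C(L^2)$, resp. $C(H^1)$ after one more compactness step. We then pass to the limit in each nonlinear term.
\begin{itemize}
\item $u_m\nabla\sigma_m\to u^*\nabla\sigma^*$ in $L^1(Q)$, since it is a product of strongly convergent factors.
\item $\rho(\sigma_m)u_m(\alpha-u_m)$ converges because $\rho$ is Lipschitz.
\item $u_m\sigma_m/(k_{ox}+\sigma_m)$ converges because the quotient is Lipschitz and bounded by $u_m$.
\item The control terms $\ControlC_m u_m\sigma_m$ and $\ControlS_m u_m$ are products of a weakly-$*$ convergent sequence with a strongly convergent one, so they converge weakly.
\end{itemize}
Nonnegativity is preserved, and the initial conditions pass to the limit. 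Hence $[u^*,\sigma^*]$ is a weak-strong solution for $[\ControlC^*,\ControlS^*]$, and by the uniqueness in Theorem \ref{teoSolFuerte} it is the associated state, so $[u^*,\sigma^*,\ControlC^*,\ControlS^*]\in\mathcal{P}_{ad}$.

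Finally, $J$ is weakly lower semicontinuous in our setting. The terms $\int_Q k_1u^2$ and $\int_Q k_2(\sigma-\sigma_Q)^2$ converge by strong convergence. The linear control terms $\int_Q(k_3\ControlC+k_4\ControlS)$ converge by weak-$*$ convergence, since $k_3,k_4\in L^1(Q)$. The terminal terms are weakly lower semicontinuous, because $u_m(T)\rightharpoonup u^*(T)$ in $L^2(\Omega)$, which follows from the $W_2$ bound and continuity of the trace, and $\sigma_m(T)\to\sigma^*(T)$ in $L^2(\Omega)$. Therefore $J([u^*,\sigma^*,\ControlC^*,\ControlS^*])\le\liminf J=j$, which proves that the limit is a global optimal solution.
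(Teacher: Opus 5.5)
Your proposal is correct and follows essentially the same direct-method argument as the paper: minimizing sequence, weak-$*$ compactness and closedness of $\mathcal{U}_{ad}$, control-uniform bounds in $W_2\times X_2$, Aubin--Lions, passage to the limit plus uniqueness, and weak lower semicontinuity of $J$. You are in fact more explicit than the paper about why the a priori bounds are uniform over $\mathcal{U}_{ad}$, how the bilinear control terms pass to the limit, and why the terminal term in $u_m(T)$ is lower semicontinuous.

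There are two harmless slips, neither of which affects the argument:
\begin{enumerate}
\item The $L^\infty(L^1)$ bound for $u$ only closes after you add the integrated $\sigma$-equation, since $\int_\Omega\rho(\sigma)$ involves $\int_\Omega\sigma$. This is how the paper does it.
\item Compactness yields $u_m\to u^*$ in $C((H^1)')$ and $\sigma_m\to\sigma^*$ in $C(L^2)$, not in $C(L^2)$, resp.\ $C(H^1)$. The weaker convergences are all your argument actually uses.
\end{enumerate}
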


\begin{theo}[First-order necessary optimality condition]\label{Teo:CNPO}
Let $[\ControlC^*, \ControlS^*] \in \mathcal{U}_{ad}$ be a local optimal control for \eqref{eq:ProblemaDeControl}, with associated state $[u^*, \sigma^*]$. Then, there exists a unique adjoint state $[p_1, p_2]$ satisfying the following adjoint system:
\[
\begin{cases} 
-\partial_t p_1 - D_u\Delta p_1 - \rho(\sigma^*)(\alpha - 2u^*)p_1+ \kappa \sigma^* \ControlC^*p_1 - \chi\nabla \sigma^* \cdot \nabla p_1 + \dfrac{A_{ox}\sigma^*p_2}{k_{ox}+\sigma^*} - (1-\ControlS^*)S_cp_2= k_1u^*,& \text{ in }Q, \\[0.3em]
-\partial_t p_2 - D_\sigma\Delta p_2 - \rho'(\sigma^*)u^*(\alpha - u^*)p_1 + \kappa u^*\ControlC^* p_1 + \chi \nabla \cdot (u^* \nabla p_1) + \dfrac{A_{ox}u^* p_2}{(k_{ox}+\sigma^*)^2} + P_{er}S_v  p_2 \\ \hspace{12cm}= k_2(\sigma^* - \sigma_Q),& \text{ in }Q,\\[0.3em]
\partial_{\mathbf{n}} p_1 = \partial_{\mathbf{n}}p_2 = 0, & \text{ on } \Sigma, \\ 
p_1(x,T) = l_1(x)u^*(x,T),\quad p_2(x,T) = l_2(x)(\sigma^*(x,T) - \sigma_\Omega), & \text{ in }\Omega.
\end{cases}
\]
Moreover, the following variational inequality holds:
\begin{equation}\label{eq:CondicionesOptimalidad}
    \int_Q \Big((k_3- \kappa u^* \sigma^* p_1)(\overline\ControlC - \ControlC^*) 
    + (k_4- S_c u^*p_2)(\overline{\ControlS}-\ControlS^*)\Big)(x,t)\, dxdt \geq 0,
    \quad \forall\, [\overline\ControlC, \overline\ControlS] \in \mathcal{U}_{ad}.
\end{equation}
This variational inequality characterizes the first-order necessary optimality condition and represents the gradient of the reduced cost in the feasible direction in the convex set of admissible controls $\mathcal{U}_{ad}$.
\end{theo}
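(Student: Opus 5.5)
We follow the standard Lagrangian/adjoint route: differentiability of the control-to-state map, well-posedness of the adjoint system, and a duality identity that turns the directional derivative of the reduced cost into \eqref{eq:CondicionesOptimalidad}.

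\textbf{Step 1: the control-to-state map.} Let $G:[\ControlC,\ControlS]\mapsto[u,\sigma]$ be the map given by Theorem \ref{teoSolFuerte}. We show that $G$ is Fréchet differentiable from $[L^\infty(Q)]^2$ (restricted to a neighbourhood of $\mathcal{U}_{ad}$) into $W_2\times X_2$.
\begin{itemize}
\item For a direction $[\overline\ControlC-\ControlC^*,\overline\ControlS-\ControlS^*]=:[h_c,h_s]$, the derivative $[U,S]=G'[h_c,h_s]$ should solve the linearized system
\begin{align*}
\partial_t U-D_u\Delta U-\rho(\sigma^*)(\alpha-2u^*)U-\rho'(\sigma^*)u^*(\alpha-u^*)S+\kappa\ControlC^*(\sigma^*U+u^*S)+\chi\nabla\cdot(U\nabla\sigma^*+u^*\nabla S)&=-\kappa h_c u^*\sigma^*,\\
\partial_t S-D_\sigma\Delta S+P_{er}S_vS+\tfrac{A_{ox}\sigma^*}{k_{ox}+\sigma^*}U+\tfrac{A_{ox}k_{ox}u^*}{(k_{ox}+\sigma^*)^2}S-(1-\ControlS^*)S_cU&=-S_c h_s u^*,
\end{align*}
with homogeneous Neumann data and zero initial data.
\item Existence and uniqueness for this linear system comes from a Galerkin scheme or a fixed point. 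We test the $U$-equation with $U$ in $W_2$ and use maximal $L^2$-regularity for $S$ in $X_2$.
\item The coefficients need control. In 2D, $u^*\in W_2\subset L^4(Q)$ and $\nabla\sigma^*\in L^4(Q)$, since $X_2\subset L^\infty(H^1)\cap L^2(H^2)$. The trilinear terms are handled by Gagliardo--Nirenberg together with Gronwall.
\item To prove differentiability, we estimate the remainder $[u_h-u^*-U,\sigma_h-\sigma^*-S]$. We first prove Lipschitz continuity of $G$ in $W_2\times X_2$ via difference estimates of the same type. The remainder then satisfies the linearized system with quadratic right-hand sides, which are bounded by $\|[h_c,h_s]\|^2_{L^\infty}$. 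The nonlinearity $\rho(\sigma)$ requires $\rho\in C^1$ with uniformly continuous derivative, or a Taylor estimate.
\item As an alternative, one could apply the implicit function theorem to the state operator.
\end{itemize}

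\textbf{Step 2: the adjoint system.} This is a backward linear parabolic system; the change of variables $t\mapsto T-t$ makes it forward.
\begin{itemize}
\item Terminal data: $l_1u^*(T)\in L^2$ and $l_2(\sigma^*(T)-\sigma_\Omega)\in L^2$.
\item Sources: $k_1u^*$ and $k_2(\sigma^*-\sigma_Q)$ are in $L^2(Q)$.
\item We seek $p_1,p_2\in W_2$. The term $\chi\nabla\cdot(u^*\nabla p_1)$ only makes sense in $L^2(H^{-1})$ (or $L^{4/3}$-type spaces), so it is treated by duality. The term $\chi\nabla\sigma^*\cdot\nabla p_1$ is bounded by $\|\nabla\sigma^*\|_{L^4}\|\nabla p_1\|_{L^2}\|p_1\|_{L^4}$.
\item Energy estimates with Ladyzhenskaya's inequality and Gronwall give existence through Galerkin. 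Uniqueness follows from linearity and the same estimates.
\end{itemize}

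\textbf{Step 3: the duality identity.} We test the linearized equations with $[p_1,p_2]$ and the adjoint equations with $[U,S]$, integrate over $Q$ by parts in time and space, and use $U(0)=S(0)=0$. All coupling terms cancel; the adjoint was constructed precisely as the formal transpose. This gives
\[
\int_Q(k_1u^*U+k_2(\sigma^*-\sigma_Q)S)+\int_\Omega(l_1u^*U+l_2(\sigma^*-\sigma_\Omega)S)(T)
=\int_Q\big(-\kappa u^*\sigma^*p_1h_c-S_cu^*p_2h_s\big).
\]

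\textbf{Step 4: the variational inequality.} The set $\mathcal{U}_{ad}$ is convex. For a local minimizer, the reduced cost $j=J\circ G$ therefore satisfies
\[
j'([\ControlC^*,\ControlS^*])[\overline\ControlC-\ControlC^*,\overline\ControlS-\ControlS^*]\ge0
\quad\text{for all admissible }[\overline\ControlC,\overline\ControlS].
\]
Expanding $j'$ with the chain rule and substituting the identity from Step 3 yields exactly \eqref{eq:CondicionesOptimalidad}.

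\textbf{Main obstacle.} We expect the hardest part to be the regularity bookkeeping. The identity in Step 3 must be justified even though $p_1$ is only weak and the term $\nabla\cdot(u^*\nabla p_1)$ acts against $S\in X_2$. The products $\langle\partial_tU,p_1\rangle$ need the integration by parts formula in $W_2$. Each cross term must be shown integrable using $L^4(Q)$ bounds in 2D. The remainder estimate in Step 1 is the other delicate point.
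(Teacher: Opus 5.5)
Your architecture (sensitivity system, backward adjoint, duality identity, variational inequality along convex directions) is the paper's. However, Step 2 as you describe it does not close, and this is the genuinely delicate point of the proof. Testing the $p_2$-equation with $p_2$ produces the cross term $\chi\int_\Omega u^*\nabla p_1\cdot\nabla p_2$. It can only be absorbed if $u^*\nabla p_1\in L^2(Q)$, i.e.\ if $\nabla\cdot(u^*\nabla p_1)\in L^2(H^{-1})$ as you assert. With $u^*\in W_2\subset L^4(Q)$ and $p_1\in W_2$ you in general only get $u^*\nabla p_1\in L^{4/3}(Q)$. Ladyzhenskaya's inequality cannot rescue this: closing the estimate would need $u^*\in L^\infty(Q)$ or $\nabla p_1\in L^4(Q)$. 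The latter would follow from $p_1\in X_2$, which requires $H^1$ terminal data, whereas $p_1(T)=l_1u^*(T)$ is only in $L^2(\Omega)$ because $u_0\in L^2(\Omega)$. So the claim that $p_1,p_2\in W_2$ follows ``through Galerkin'' is unsupported, and ``treated by duality'' is exactly where the missing idea sits.

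The paper resolves this with a case distinction.
\begin{itemize}
\item If $l_1=0$, then $p_1(T)=0$. Testing the time-reversed $p_1$-equation with $p_1-\Delta p_1$ gives $p_1\in X_2$. Then $|(u^*\nabla p_1,\nabla p_2)|\le\|u^*\|_{L^4}\|\nabla p_1\|_{L^4}\|\nabla p_2\|_{L^2}$, combined with $\|\nabla p_1\|_{L^4}^2\le C\|\nabla p_1\|_{L^2}\|p_1\|_{H^2}$, closes the estimate, so $p_2\in W_2$.
\item If $l_1\neq 0$, the $p_2$-equation is written in a very weak form, with test functions $\varphi\in L^2(H^2)\cap L^\infty(H^1)$ satisfying $\partial_{\mathbf n}\varphi=0$. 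It is tested with $q_2$ solving $-\Delta q_2+q_2=p_2$, $\partial_{\mathbf n}q_2=0$. The bad term becomes $\int_\Omega u^*\nabla p_1\cdot\nabla q_2\le\|u^*\|_{L^4}\|\nabla p_1\|_{L^2}\|\nabla q_2\|_{L^4}$, which is absorbable. One then obtains only $p_2\in L^2(Q)\cap L^\infty((H^1)')$.
\end{itemize}

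Your Step 3 must then be carried out at this lower regularity. The $p_2$-equation is tested only with $S$, which is admissible because $S\in X_2$ satisfies the Neumann condition. All resulting integrals ($\int_Q p_2\,\partial_tS$, $\int_Q p_2\Delta S$, $\int_Q u^*\nabla p_1\cdot\nabla S$, $\int_Q u^*p_2 h_s$) are finite. Alternatively, defining $[p_1,p_2]$ by transposition of the linearized problem would work and make Step 3 automatic, but then you must not claim $W_2$ regularity for $p_2$.

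A smaller point concerns Step 1. You ask for Fréchet differentiability on an $L^\infty$-neighbourhood of $\mathcal{U}_{ad}$, but Theorem \ref{teoSolFuerte} only covers $0\le\ControlS\le 1$: the sign argument for $\sigma$ uses $\ControlS\le 1$, and $u\sigma/(1+\sigma)$ is singular at $\sigma=-1$. Step 4 only uses directions $[\overline\ControlC-\ControlC^*,\overline\ControlS-\ControlS^*]$ pointing into the convex set. So what you should prove, as the paper does, is the limit in $W_2\times X_2$ of the difference quotients along these directions.
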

The content of this paper is organized as follows: In Section 2, we present the proof of the existence and uniqueness of weak-strong solution of system \eqref{eq:SistemaControlado}. In addition, we prove a stability result of solutions with respect to the controls. In Section 3,  we prove the existence of optimal solutions restricted to state system \eqref{eq:SistemaControlado}, and derive first-order necessary optimality conditions, analyzing the differentiability of the control-to-state operator and solving the adjoint problem. In Section 4, we present a numerical scheme to solve the proposed optimal control problem, including the Adam minimization algorithm. We also present several numerical results  to illustrate the well behavior of the proposed algorithm and its applicability to simulate some  situations involving the cost functional, including the role of the weight of parameters, the robustness of the minimization problem in terms of the initial controls, the stability of the cost functional, among others. Finally,  we include an appendix section addressing the analysis of the linearized system.\\

Without loss of generality, for convenience in the analysis 
%of the continuous system \eqref{eq:SistemaControlado},
given in Sections 2 and 3 we assume that the positive parameters satisfy $D_u=D_\sigma=k_{ox}=A_{ox}=\chi=S_c=\kappa=1$ and $\gamma = P_{er}S_v$.

%%%%%%%%%%%%%%%%%%%%%%%%%%%%%%%%%%%%%%%%%%%%%%%
% CAPÍTULO 3: EXISTENCIA Y UNICIDAD DE SOLUCION DEBIL-FUERTE
%%%%%%%%%%%%%%%%%%%%%%%%%%%%%%%%%%%%%%%%%%%%%%%

\section{Existence and Uniqueness. Proof of Theorems \ref{teoSolFuerte}.}\label{sec2}

 The aim of this section is to prove the existence and uniqueness of weak-strong solution of \eqref{eq:SistemaControlado}. For the existence, we will use the Leray-Schauder Fixed Point Theorem combined with a regularization strategy. We prove a stability result for the solutions with respect to the initial data (Theorem \ref{teo:estabilidad} below), which in particular implies uniqueness.

\begin{proof}
We first consider that $u_0 \in L^2(\Omega)$, $\sigma_0 \in W^{1+,2+}(\Omega)$ and we will prove that there exists a unique weak-strong solution $[u, \sigma]$ of the system \eqref{eq:SistemaControlado} in $W_2 \times X_{2+}$. In fact, improving a little the regularity of $\sigma_0 $ (from $W^{1,2}(\Omega)$ to $W^{1+,2+}(\Omega)$), we improve the regularity of the solution $\sigma$ from $X_2$ to  $X_{2+}$. For that,
we use the Leray–Schauder Fixed Point Theorem. We introduce the following auxiliary Banach spaces
$$
\mathcal{X}_u := L^{4-}(Q), \quad \quad \mathcal{X}_\sigma := L^{\infty}(Q),
$$
and let  the map $\Gamma : \mathcal{X}_u \times \mathcal{X}_\sigma \to W_2 \times X_{2+} \hookrightarrow \mathcal{X}_u \times \mathcal{X}_\sigma$ defined by $\Gamma[\overline u,\overline \sigma] = [u,\sigma]$ where $[u,\sigma]$ is the solution of the decoupled linear problem
\begin{equation}\label{eq:demSol-linealizado}
\begin{cases}
    \displaystyle\int_0^T \langle \partial_t u, \varphi \rangle + \int_Q \nabla u \cdot \nabla \varphi + \int_Q \rho(\bar \sigma_+)\overline u_+ u \varphi + \int_Q \ControlC u_+ \overline\sigma \varphi= \alpha\int_Q \rho(\bar\sigma_+)\overline u_+ \varphi + \int_Q  \overline u_+\nabla \sigma \cdot\nabla \varphi,\\\displaystyle
    \partial_t \sigma - \Delta \sigma =  \gamma(\beta - \overline \sigma_+) -\frac{\overline u_+\overline \sigma_+}{1+\overline \sigma_+} + (1-\ControlS)\overline u_+, & \text{in }Q,\\
    \partial_{\mathbf{n}} \sigma = 0, & \text{on }\Sigma,\\
    u(0) = u_0, \sigma(0) = \sigma_0. & \text{in }\Omega,
\end{cases}
\end{equation}
for all $\varphi \in L^2(H^1)$. In general, we denote $\overline m_+ := \max\{\overline m, 0\} \geq 0$. Next, we verify the hypotheses of Leray–Schauder Fixed Point Theorem.\\

\noindent
\textit{\textbf{Step 1:} The operator $\Gamma : \mathcal{X}_u \times \mathcal{X}_\sigma \to \mathcal{X}_u \times \mathcal{X}_\sigma$ is well-defined and compact.}\\

Since $\ControlS \in L^{\infty}(Q)$, $\overline u \in L^{4-}(Q)$ and $\overline{\sigma} \in L^{\infty}(Q)$, it holds that $ \gamma(\beta - \overline \sigma_+) -\frac{\overline u_+\overline \sigma_+}{1+\overline \sigma_+} + (1-\ControlS)\overline u_+ \in L^{2+}(Q)$. Therefore, by using the maximal parabolic regularity (see \cite{FeireislNovotny}, Theorem 10.22), there exists a unique solution $\sigma \in X_{2+}$ of \eqref{eq:demSol-linealizado}$_2$-\eqref{eq:demSol-linealizado}$_3$ and $\sigma(0) = \sigma_0$. On the other hand, using that $\sigma \in X_{2+}$, it holds that $\nabla \sigma \in W_{2+} \hookrightarrow L^{4+}(Q)$; moreover, taking into account that $\overline u \in L^{4-}(Q)$, it follows that $-\rho(\bar\sigma_+) \overline u_+ - \ControlC\overline \sigma + \alpha\rho(\bar \sigma_+)\overline u_+ + \overline u_+ \nabla \sigma \in L^2(Q)$, therefore there exists a unique solution $u\in W_2$ of \eqref{eq:demSol-linealizado}$_1$. Consequently, $\Gamma$ is well-defined. The compactness of $\Gamma$ follows from the compact embedding from $W_2 \times X_{2+} $ 
%\hookrightarrow 
into $\mathcal{X}_u\times \mathcal{X}_\sigma$.\\

\noindent
\textit{\textbf{Step 2:} The set
$
G_\theta = \{[u,\sigma] \in W_2 \times X_{2+} : [u,\sigma] = \theta \,\Gamma[u,\sigma] \text{ for some } \theta \in [0,1]\},
$
is bounded in $\mathcal{X}_u \times \mathcal{X}_\sigma$ (independently of $\theta \in [0,1]$).}\\

Let $[u,\sigma] \in G_\theta$ for $\theta \in (0,1]$ (the case $\theta = 0$ is trivial). Then, the pair $[u,\sigma] \in W_2 \times X_{2+}$ satisfies the following system
\begin{equation}\label{eq:DemSolDebilFuerte-10}
\begin{cases}
        \displaystyle\int_0^T \langle \partial_t u, \varphi \rangle + \int_Q \nabla u \cdot \nabla \varphi + \int_Q \rho(\sigma)u_+u\varphi + \int_Q \ControlC u_+ \sigma\varphi  = \theta \alpha\int_Q \rho(\sigma) u_+\varphi + \int_Q u_+ \nabla \sigma \cdot \nabla \varphi,\\
        \displaystyle\partial_t \sigma - \Delta \sigma = \theta\gamma(\beta-\sigma_+) - \theta\frac{u_+\sigma_+}{1+\sigma_+} + \theta(1-\ControlS) u_+, & \text{in }Q,\\
        \partial_{\mathbf{n}} \sigma = 0, & \text{on }\Sigma,\\
    u(0) = u_0, \sigma(0) = \sigma_0, & \text{in }\Omega.
\end{cases}
\end{equation}
for all $\varphi \in L^2(H^1)$. The boundedness of $[u,\sigma]$ is obtained in the following steps.\\

\noindent
\textit{\textbf{Step 2.1:} Non-negativity: $u,\sigma \geq 0$}.\\

Taking $\varphi = u_{-} := \min\{u, 0\} \leq 0$ in \eqref{eq:DemSolDebilFuerte-10}$_1$ (that is possible because $u \in L^2(H^1)$), and considering that $u_- = 0$ if $u \geq 0$, $\nabla u_- = \nabla u$ if $u \leq 0$, and $\nabla u_- = 0$ if $u > 0$, we have
$$
\frac{1}{2}\frac{d}{dt}\|u_-\|^2_{L^2} + \|\nabla u_-\|^2_{L^2} + \int_\Omega \rho(\sigma)u_+ u u_- + \int_\Omega \ControlC u_+ \sigma u_- = \theta\alpha\int_\Omega \rho(\sigma)u_+u_-+ \int_\Omega u_+ \nabla \sigma \cdot \nabla u_-  = 0.
$$
Thus $u_- \equiv 0$ and, consequently, $u \geq 0$ a.e. in $Q$. Similarly, since $0 \leq \ControlS \leq 1$, then multiplying \eqref{eq:DemSolDebilFuerte-10}$_2$ by $\sigma_- = \min \{\sigma,0\} \leq 0$ and integrating on $\Omega$, it holds
$$
\frac{1}{2}\frac{d}{dt}\|\sigma_-\|^2_{L^2} + \|\nabla \sigma_-\|^2_{L^2} =  \theta\gamma \int_\Omega (\beta-\sigma_+)\sigma_- - \theta \int_\Omega \frac{u_+\sigma_+}{1+\sigma_+}\sigma_- + \theta \int_\Omega (1-\ControlS) u_+ \sigma_-\leq 0,
$$
which implies $\sigma_- \equiv 0$ and then $\sigma \geq 0$ a.e. in $Q$. \\

\noindent
\textit{\textbf{Step 2.2:} Boundedness of $u$ in $L^2(Q).$}\\

Taking $\varphi = 1$ in \eqref{eq:DemSolDebilFuerte-10}$_1$, we obtain
\begin{equation}\label{eq:Teo1_nuevaEsti}
    \frac{d}{dt}\int_\Omega u + \int_\Omega \rho(\sigma)u^2 + \int_\Omega \ControlC u \sigma = \alpha \int_\Omega \rho(\sigma) u \leq \frac{1}{2}\int_\Omega \rho(\sigma)u^2 + C\int_\Omega \rho(\sigma).
\end{equation}
Integrating \eqref{eq:DemSolDebilFuerte-10}$_2$  on $\Omega$ and using that $0\leq \ControlS \leq 1$ we get
\begin{equation}\label{eq:Teo1_nuevaEsti2}
\frac{d}{dt}\int_\Omega \sigma 
%+ \gamma \int_\Omega \sigma 
+ \int_\Omega \frac{u\sigma}{1+\sigma} \leq \gamma \beta |\Omega| + \int_\Omega u.
\end{equation}
Adding \eqref{eq:Teo1_nuevaEsti} and \eqref{eq:Teo1_nuevaEsti2}, we obtain
\[
\frac{d}{dt}\left(\int_\Omega u +\int_\Omega \sigma\right) 
+ \frac12\int_\Omega \rho(\sigma) u^2 
%+ \int_\Omega \ControlC u \sigma 
%+ \gamma \int_\Omega \sigma 
%+ \int_\Omega \frac{u\sigma}{1+\sigma} 
\leq \gamma \beta |\Omega| + \int_\Omega u 
%+ \frac{1}{2}\int_\Omega \rho(\sigma)u^2 
+ C\int_\Omega \rho(\sigma).
\]
Applying the Gronwall inequality and using that $\rho(\sigma) \leq \mu_1 \sigma + \mu_2$, $\rho(\sigma)\geq \rho_{min}>0$ and $u,\sigma\geq 0,$ then
\[
u, \sigma \in L^\infty(L^1), \quad u \in L^2(Q).
\]

\noindent
\textit{\textbf{Step 2.3:} Boundedness of $\sigma$ in $ L^\infty(H^1) \cap L^2(H^2)$.} \\

Multiplying \eqref{eq:DemSolDebilFuerte-10}$_2$ by $\sigma-\Delta \sigma,$ integrating by parts on $\Omega$ and using that $\|z\|_{H^2} \leq C\, \|z-\Delta z\|_{L^2}$ and that $\theta \in (0,1]$, we obtain
\[
\begin{split}
\frac{1}{2}\frac{d}{dt}\|\sigma\|^2_{H^1} + C\|\sigma\|_{H^2} &= \theta \gamma \beta \! \int_\Omega (\sigma-\Delta \sigma) - \theta \gamma \int_\Omega \sigma (\sigma-\Delta\sigma) - \theta \int_\Omega \frac{u\sigma}{1+\sigma}(\sigma-\Delta \sigma) + \!\theta\! \int_\Omega (1-\ControlS)u(\sigma-\Delta \sigma) \\
& \leq 4\delta \|\sigma\|^2_{H^2} + C_\delta \Big(\gamma ^2 \beta ^2 |\Omega| + \gamma^2\|\sigma\|^2_{L^2} + \|u\|^2_{L^2} + \|1-\ControlS\|^2_{L^\infty}\|u\|^2_{L^2}\Big),
\end{split}
\]
taking $\delta$ small enough, we obtain that $\sigma$ is bounded in $L^\infty(H^1)\cap L^2(H^2).$ \\

\noindent
\textit{\textbf{Step 2.4:} Boundedness of $u$ in $L^\infty(L^2) \cap L^2(H^1)$.} \\

Testing \eqref{eq:DemSolDebilFuerte-10}$_1$ by $u$, using the H\"older and Young inequalities, we get
\[
\begin{split}
    \frac{1}{2}\frac{d}{dt}\|u\|^2_{L^2}+\|\nabla u \|^2_{L^2} & + \int_\Omega \rho(\sigma)u^3 + \int_\Omega \ControlC u^2 \sigma = \alpha \int_\Omega \rho(\sigma)u^2 + \int_\Omega u \nabla \sigma \cdot \nabla u \\
    &\leq \frac{1}{2}\int_\Omega \rho(\sigma)u^3 + C\int_\Omega \rho(\sigma) + \int u\nabla \sigma \cdot \nabla u \\ 
    &\leq \frac{1}{2}\int_\Omega \rho(\sigma)u^3 + C\int_\Omega \rho(\sigma) + \|u\|_{L^4}\|\nabla \sigma\|_{L^4}\|\nabla u\|_{L^2} \\
    &\leq \frac{1}{2}\int_\Omega \rho(\sigma)u^3 + C\int_\Omega \rho(\sigma) + C\Big(\| u\|^{1/2}_{H^1}\|u\|^{1/2}_{L^2}+\|u\|_{L^2}\Big) \|\nabla u\|_{L^2}\|\nabla \sigma\|_{L^2} \\ 
    &\leq \frac{1}{2}\int_\Omega \rho(\sigma)u^3 + C\int_\Omega \rho(\sigma) + 2\delta \|\nabla u\|^2_{L^2}+C_\delta C^4 \|u\|^2_{L^2}\|\nabla \sigma\|^4_{L^4}+C_\delta C^2 \|u\|^2_{L^2}\|\nabla \sigma\|^2_{L^4}.
\end{split}
\]
Taking $\delta$ small enough, applying the Gronwall Lemma and using that $\rho(\sigma) \leq \mu_1 \sigma + \mu_2$ and \textit{Step 2.3}, we obtain that $u$ is bounded in $L^\infty(L^2)\cap L^2(H^1)$. \\

\noindent
\textit{\textbf{Step 2.5:} Boundedness of $\sigma$ in $L^\infty(W^{1+,2+})\cap L^{2+}(W^{2,2+})$.} \\

By interpolation, using \textit{Steps 2.4} and \textit{2.5}, we also obtain that $\sigma \in L^{\infty-}(Q)$ and $u \in L^{4}(Q)$. Consequently, $\gamma\!\left(\beta-\sigma\right) -\frac{u\sigma}{1+\sigma} +(1-\ControlS)u \in L^{2+}(Q).$ Then, by the regularity result for the heat equation with Neumann boundary conditions, it follows that $\sigma \in X_{2+}$, with the corresponding estimate in $X_{2+}$ depending on $\|\sigma_0\|_{W^{1+,2+}(\Omega)}$.\\

\noindent
\textit{\textbf{Step 3:} The operator $\Gamma:\mathcal{X}_u \times \mathcal{X}_\sigma \to \mathcal{X}_u \times \mathcal{X}_\sigma$ defined in \eqref{eq:demSol-linealizado}, is continuous.}\\

Consider a sequence $\{[\overline u^l, \overline \sigma^l]\}_{l \in \mathbb{N}} \subset \mathcal{X}_u \times \mathcal{X}_{\sigma}$ such that
\begin{equation}\label{eq:conti-1}
    [\overline u^l, \overline \sigma^l]\to [\overline u, \overline \sigma] \quad \text{in } \mathcal{X}_u \times \mathcal{X}_{\sigma}, \quad \text{as }l \to +\infty.
\end{equation}
Since the sequence $\{[\overline u^l, \overline \sigma^l]\}_{l\in \mathbb{N}}$ is bounded in $\mathcal{X}_u \times \mathcal{X}_{\sigma}$, it follows from \textit{Step 2.4 - Step 2.5} that the corresponding sequence $[u^l, \sigma^l] = \Gamma [\overline u^l, \overline \sigma^l]$ is bounded in $W_2 \times X_{2+}$. By the compact embedding $W_{2} \times X_{2+} \hookrightarrow \mathcal{X}_u \times \mathcal{X}_{\sigma}$, there exists a subsequence (still denoted by $[u^l, \sigma^l]$) and a limit pair $[\hat u, \hat \sigma] \in W_2\times X_{2+}$ such that
\begin{equation}\label{eq:conti-2}
\Gamma[\overline u^l, \overline \sigma^l] \to [\hat u,\hat \sigma] \quad \text{weakly in }W_2 \times X_{2+}\quad  \text{and} \quad \text{strongly in } \mathcal{X}_u \times \mathcal{X}_{\sigma}.
\end{equation}
Now, combining \eqref{eq:conti-1} and \eqref{eq:conti-2}, we may pass to the limit as $l\to \infty$ in equation \eqref{eq:demSol-linealizado} (with $[\overline u^l,\overline \sigma^l]$ and $[u^l, \sigma^l]$ in place of $[\overline u, \overline \sigma]$ and $[u,\sigma]$, respectively) yields $[\hat u, \hat \sigma] = \Gamma [\overline u, \overline \sigma]$. Consequently, $\{\Gamma [\overline u^l, \overline \sigma^l]\}_{l\in \mathbb{N}}$ converges strongly in $\mathcal{X}_u \times \mathcal{X}_{\sigma}$ to $\Gamma[\overline u, \overline \sigma]$. Hence, the operator $\Gamma$ is continuous on $L^{4-}(Q) \times L^\infty(Q)$. Finally, the Leray-Schauder Fixed Point theorem guarantees the existence of a weak-strong solution $u \in W_2$ $\sigma \in X_{2+}$ of system (\ref{eq:SistemaControlado}).\\

\noindent
Now, we consider the case where we only have $\sigma_0\in H^1(\Omega).$ For any $\varepsilon>0,$ let $\sigma_0^\varepsilon\in W^{1+,2+}(\Omega),$ 
a regularization of $\sigma_0$ such that $\sigma_0^\varepsilon\rightarrow \sigma_0$ in $H^1(\Omega),$ and then, we regularize system (\ref{eq:SistemaControlado}) replacing $[u,\sigma]$ by $[u^\varepsilon, \sigma^\varepsilon],$ with initial data $u^\epsilon(0)=u_0$ and $\sigma^\varepsilon(0)=\sigma^\varepsilon_0.$ Then, for each $\varepsilon>0,$ from the first part of the proof, we obtain the existence of a solution $[u^\varepsilon,\sigma^\varepsilon]\in W_2\times X_{2+},$ of the regularized system, which satisfies the following $\varepsilon $-uniform estimates:
% with respect to the parameter $:$
 \begin{equation}\label{uniform_estimates}
\begin{cases}
        \{u^\varepsilon\}\ \mbox{bounded in}\ L^2(Q),\\
        \{ \sigma^\varepsilon \}\ \mbox{bounded in}\  L^\infty(H^1) \cap L^2(H^2)\},\\
        \{ u^\varepsilon \}\ \mbox{bounded in}\  L^\infty(L^2) \cap L^2(H^1)\}.
\end{cases}
\end{equation}
From (\ref{uniform_estimates}) we deduce that there exists limit functions $(u,\sigma)\in W_2\times X_2$ such that, for some subsequence of $\{[u^\varepsilon,\sigma^\varepsilon]\}_{\varepsilon>0},$ still denoted by $\{[u^\varepsilon,\sigma^\varepsilon]\}_{\varepsilon>0},$ the following convergences holds as $\varepsilon\rightarrow 0,$
\begin{eqnarray}\label{e1}
[u^\varepsilon,\sigma^\varepsilon]\rightarrow\ [u,\sigma]\ \mbox{weakly in}\ W_2\times X_2.
\end{eqnarray}
It holds that $[u,\sigma]$ is a weak-strong solution of \eqref{eq:SistemaControlado}. From (\ref{uniform_estimates}) it holds that $\{ u^\varepsilon \nabla\sigma^\varepsilon\}$ is bounded in $L^2(Q),$ and there exists $z$ such that $u^\varepsilon \nabla\sigma^\varepsilon\rightarrow z$ weakly in $L^{2-}(Q).$ Also, from (\ref{e1}), $\{ u^\varepsilon\}_{\varepsilon>0}$ is relatively compact in $L^q(Q),$ for all $q<2,$ and $\nabla\sigma^\varepsilon\rightarrow \nabla\sigma$ in $L^4(Q).$ Thus, by uniqueness of the limit, $z=u\nabla\sigma,$ and $u^\varepsilon \nabla\sigma^\varepsilon\rightarrow u\nabla\sigma$ in $L^2(Q).$ A standard limit argument allows us to pass to the limit as $\varepsilon\rightarrow 0,$ in the remaining terms, which finishes the proof of the existence. The uniqueness is consequence of Theorem \ref{teo:estabilidad} below.
\end{proof}
\begin{theo}{\label{teo:estabilidad}}
    Let $[u_i, \sigma_i]$ be two solutions of \eqref{eq:SistemaControlado} corresponding to controls $[\ControlC_i, \ControlS_i] \in L^\infty(Q) \times L^\infty (Q)$, with $u_{0,i} \in L^2(\Omega)$, $\sigma_{0,i} \in H^1(\Omega)$, $u_{0,i} \geq 0$, $\sigma_{0,i} \geq 0$ a.e. in $\Omega$, $i=1,2$. Then the following estimate holds
    \[
    \|u_1-u_2\|_{W_2} + \|\sigma_1-\sigma_2\|_{X_{2}} \leq C(\|u_{0,1}-u_{0,2}\|_{L^2(\Omega)}+\|\sigma_{0,1}-\sigma_{0,2}\|_{H^1(\Omega)} + \|\ControlC_1 - \ControlC_2\|_{L^4(L^{4/3+})} + \|\ControlS_1 - \ControlS_2\|_{L^{4}(Q)}),
    \]
    where $C>0$ is a constant dependent on the norms of $u_1, u_2$ in $W_2$, $\sigma_2$ in $X_{2}$, and $\ControlC_i, \ControlS_i$ in $L^\infty(Q)$, $i=1,2$. 
\end{theo}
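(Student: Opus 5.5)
We set $u=u_1-u_2$, $\sigma=\sigma_1-\sigma_2$, $\ControlC=\ControlC_1-\ControlC_2$, $\ControlS=\ControlS_1-\ControlS_2$, and subtract the two systems. In the normalized parameters this gives
\[
\partial_t u-\Delta u=\rho(\sigma_1)u_1(\alpha-u_1)-\rho(\sigma_2)u_2(\alpha-u_2)-(\ControlC_1u_1\sigma_1-\ControlC_2u_2\sigma_2)-\nabla\cdot(u\nabla\sigma_1+u_2\nabla\sigma)
\]
in the variational sense. The equation for $\sigma$ is
\[
\partial_t\sigma-\Delta\sigma+\gamma\sigma=-\Big(\tfrac{u_1\sigma_1}{1+\sigma_1}-\tfrac{u_2\sigma_2}{1+\sigma_2}\Big)+(1-\ControlS_1)u-\ControlS u_2 ,
\]
which holds a.e.\ in $Q$. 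We split each nonlinear difference telescopically. For the logistic term,
\[
\rho(\sigma_1)u_1(\alpha-u_1)-\rho(\sigma_2)u_2(\alpha-u_2)=(\rho(\sigma_1)-\rho(\sigma_2))u_1(\alpha-u_1)+\rho(\sigma_2)(\alpha-u_1-u_2)u .
\]
Since $\rho'$ is bounded, we have $|\rho(\sigma_1)-\rho(\sigma_2)|\le C|\sigma|$. The chemotherapy term splits as $\ControlC u_1\sigma_1+\ControlC_2 u\sigma_1+\ControlC_2u_2\sigma$. The Michaelis--Menten term splits as $\frac{\sigma_1}{1+\sigma_1}u+u_2\frac{\sigma}{(1+\sigma_1)(1+\sigma_2)}$, and both coefficients are bounded because $\sigma_i\ge0$.

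The first step is an energy estimate in which we test the $u$-equation by $u$ and the $\sigma$-equation by $\sigma-\Delta\sigma$, and then add the two. In the $\sigma$-estimate, the right-hand side is bounded in $L^2$ by $C\|u\|_{L^2}+C\|u_2\|_{L^4}\|\sigma\|_{L^4}+\|\ControlS\|_{L^4}\|u_2\|_{L^4}$. Here $u_2\in W_2\hookrightarrow L^4(Q)$ and $\|\sigma\|_{L^4}\le C\|\sigma\|_{H^1}$. The $u$-estimate contains the following pieces.

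\begin{itemize}
\item The term $\int u_1u\,\nabla\sigma_1\cdot\nabla u$ is handled with Gagliardo--Nirenberg in 2D, $\|u\|_{L^4}\le C\|u\|_{L^2}^{1/2}\|u\|_{H^1}^{1/2}$, together with $\nabla\sigma_1\in L^4(Q)$. This produces a Gronwall weight $\|\nabla\sigma_1\|_{L^4}^4\in L^1(0,T)$.
\item The term $\int u_2\nabla\sigma\cdot\nabla u$ is bounded by $\|u_2\|_{L^4}\|\nabla\sigma\|_{L^4}\|\nabla u\|_{L^2}$ with $\|\nabla\sigma\|_{L^4}\le C\|\sigma\|_{H^1}^{1/2}\|\sigma\|_{H^2}^{1/2}$. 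The $\|\sigma\|_{H^2}^2$ dissipation from the $\sigma$-estimate absorbs it, leaving a weight $\|u_2\|_{L^4}^4\in L^1(0,T)$.
\item The logistic term contains $\int(\alpha-u_1-u_2)\rho(\sigma_2)u^2$, which is cubic-type. We plan to use $\rho(\sigma_2)\le\mu_1\sigma_2+\mu_2$ with $\sigma_2\in L^\infty(H^1)$ and $u_i\in L^4(Q)$, so that $\|\rho(\sigma_2)(u_1+u_2)\|_{L^{2}}\|u\|_{L^4}^2$ is controlled via Gagliardo--Nirenberg. We can also note that the $-u_1u$ part has a good sign.
\item The term $(\rho(\sigma_1)-\rho(\sigma_2))u_1(\alpha-u_1)u$ is bounded by $\|\sigma\|_{L^\infty-}\|u_1\|_{L^4}^2\|u\|_{L^4}$ in the same way.
\item The control term $\int\ControlC u_1\sigma_1 u$ is bounded by $\|\ControlC\|_{L^{4/3+}}\|u_1\sigma_1\|_{L^{4-}}\|u\|_{L^\infty-}$-type Hölder. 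More simply, we use $\sigma_1\in L^\infty(H^1)\subset L^\infty(L^{q})$ for all $q$, $u_1\in L^4$ and $u\in L^4$ in space. This is exactly where the norm $L^4(L^{4/3+})$ of $\ControlC$ appears, since the three spatial exponents must sum to $1$ with $u,u_1\in L^4(\Omega)$ and $\sigma_1\in L^{\infty-}(\Omega)$.
\end{itemize}

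Gronwall then gives the bound for $u$ in $L^\infty(L^2)\cap L^2(H^1)$ and for $\sigma$ in $L^\infty(H^1)\cap L^2(H^2)$ in terms of the initial data and the control differences.

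The second step recovers the remaining parts of the norms. We get $\partial_t u\in L^2(H^{-1})$ by duality from the equation, since every right-hand-side term is already bounded in $L^2(H^{-1})$ (for example, $u\nabla\sigma_1$ and $u_2\nabla\sigma$ lie in $L^2(Q)$ by the preceding bounds). We get $\partial_t\sigma\in L^2(Q)$ by comparison in the equation, and continuity in time follows from the standard embeddings. Uniqueness is then the case of equal data and controls.

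We expect the main obstacle to be closing the $u$-estimate. The cross-diffusion term and the cubic logistic difference require the Gronwall weights to be integrable in time using only the regularity available for $u_i\in W_2$ and $\sigma_i\in X_2$. The Hölder exponents for the $\ControlC$ term must also match the stated $L^4(L^{4/3+})$ norm. If the logistic term resists, the first remedy we would try is to exploit the sign of $-\rho(\sigma_2)(u_1+u_2)u^2$ and keep only $\alpha\rho(\sigma_2)u^2$, which is bounded by $\|\rho(\sigma_2)\|_{L^2}\|u\|_{L^4}^2$.
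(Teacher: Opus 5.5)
Your proposal is correct and follows essentially the paper's argument: you subtract the two systems, test the $u$-equation by $u$ and the $\sigma$-equation by $\sigma-\Delta\sigma$, and estimate term by term with H\"older, Young and the 2D Gagliardo--Nirenberg inequality (absorbing $\|\nabla\sigma\|_{L^4}$ into the $H^2$ dissipation), close with Gronwall, and recover $\partial_t u$, $\partial_t\sigma$ by comparison; splitting the taxis difference as $u\nabla\sigma_1+u_2\nabla\sigma$ instead of the paper's $u_1\nabla\sigma+u\nabla\sigma_2$, and using the good sign of $-\rho(\sigma_2)(u_1+u_2)u^2$, are harmless variations. One slip needs fixing: the ``more simply'' version of the $\ControlC u_1\sigma_1 u$ bound, with $u,u_1\in L^4(\Omega)$ and $\sigma_1\in L^{\infty-}(\Omega)$, cannot produce $L^{4/3+}$ since $\frac34+\frac14+\frac14>1$ (those exponents would force $\ControlC\in L^{2+}$); the $L^4(L^{4/3+})$ norm comes from your first version, where $u\in L^{\infty-}(\Omega)$ is controlled by $\|u\|_{H^1}$ and absorbed, followed by H\"older in time against $u_1\in L^4(Q)$, exactly as in the paper.
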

\begin{proof}
    Let $[u_1, \sigma_1], [u_2,\sigma_2] \in W_2 \times X_{2}$ be two solutions of system \eqref{eq:SistemaControlado}. Defining their difference $[u,\sigma] = [u_1-u_2, \sigma_1-\sigma_2]$ and $[\ControlC, \ControlS] = [\ControlC_1-\ControlC_2, \ControlS_1- \ControlS_2]$ 
    %and  subtracting the equations satisfied by each pair yields 
    the following system holds
    \begin{equation}\label{eq:unicidad_1}
        \begin{cases}
            \displaystyle\int_0^T \langle \partial_t u, \varphi \rangle - \int_Q \nabla u \cdot \nabla \varphi = \alpha\int_Q u\rho(\sigma_1)\varphi + \alpha \int_Q u_2(\rho(\sigma_1)-\rho(\sigma_2))\varphi \\ \qquad\qquad\qquad \displaystyle- \int_Q u(u_1+u_2)\rho(\sigma_1)\varphi+\int_Q u_2^2(\rho(\sigma_1)-\rho(\sigma_2))\varphi - \int_Q \ControlC u_1\sigma_1\varphi \\ \qquad\qquad\qquad \displaystyle- \int_Q \ControlC_2 u \sigma_1\varphi  - \int_Q \ControlC_2 u_2 \sigma \varphi + \int_Q u_1\nabla \sigma \cdot \nabla \varphi+ \int_Q u\nabla\sigma_2 \cdot \nabla \varphi, & \forall \varphi \in L^2(H^1),\\
            \displaystyle\partial_t \sigma - \Delta \sigma = -\frac{u\sigma_1}{1+\sigma_1} - u_2\left(\frac{\sigma_1}{1+\sigma_1} - \frac{\sigma_2}{1+\sigma_2}\right) - \gamma \sigma - \ControlS u_1 + (1-\ControlS_2)u, & \text{in }Q,\\
            \partial_{\mathbf{n}}u = \partial_{\mathbf{n}}\sigma = 0, & \text{on }\Sigma, \\
            u(0) = u_0, \sigma(0) = \sigma_0, & \text{in }\Omega.
        \end{cases}
    \end{equation}
     Testing \eqref{eq:unicidad_1}$_1$ by $u$ and \eqref{eq:unicidad_1}$_2$ by $\sigma-\Delta \sigma$, using that $\|z\|_{H^2} \leq C\, \|z-\Delta z\|_{L^2}$, we obtain
    \[
    \begin{split}
    \frac{1}{2}&\frac{d}{dt}\Big(\|u\|^2_{L^2} + \|\sigma\|^2_{H^1}\Big)  + \|\nabla u\|_{L^2}^2 + C\|\sigma\|_{H^2}^2 = \alpha(u\rho(\sigma_1),u)+\alpha(u_2(\rho(\sigma_1)-\rho(\sigma_2)),u) \! - \! (u(u_1+u_2)\rho(\sigma_1),u) \\ & +(u^2_2 (\rho(\sigma_1)-\rho(\sigma_2)), u) -(\ControlC u_1 \sigma_1, u) -(\ControlC_2 u \sigma_1, u) - (\ControlC_2 u_2 \sigma, u) + (u_1\nabla \sigma, \nabla u) \\ & + (u\nabla \sigma_2, \nabla u) -\Big(\frac{u\sigma_1}{1+\sigma_1},\sigma-\Delta \sigma\Big)- \Big(u_2\left(\frac{\sigma_1}{1+\sigma_1} - \frac{\sigma_2}{1+\sigma_2}\right), \sigma-\Delta \sigma\Big) - \gamma \Big(\sigma,\sigma-\Delta \sigma\Big)\\& - (\ControlS u_2, \sigma-\Delta \sigma) + ((1-\ControlS_2)u, \sigma-\Delta\sigma).
    \end{split}
    \]
    Applying the H\"older and Young inequalities, using the 2D interpolation estimates, and that $\rho$ is bounded by a linear function, we obtain
\begin{equation}\label{eq:unicidad_2}
\begin{split}
\alpha(u\rho(\sigma_1),u) &\leq \delta \|u\|^2_{H^1} + C_\delta \alpha^2 \|\rho(\sigma_1)\|^2_{L^{2+}}\|u\|^2_{L^2}, \\
(u(u_1+u_2)\rho(\sigma_1), u) & \leq \delta \|u\|^2_{H^1} + C_\delta \|u_1+u_2\|^2_{L^{2+}}\|\rho(\sigma_1)\|^2_{L^{\infty-}}\|u\|^2_{L^2}.
\end{split}
\end{equation}
Likewise, using the fact that $\rho$ is Lipschitz, we get
\begin{equation}\label{eq:unicidad_2_1}
\begin{split}
(\alpha u_2(\rho(\sigma_1)-\rho(\sigma_2)),u) &\leq \delta \|\sigma\|^2_{H^2} + C_\delta \alpha \|u_2\|^2_{L^2}\|u\|^2_{L^2}, \\
(u_2^2(\rho(\sigma_1)-\rho(\sigma_2)),u) & \leq \delta \|\sigma\|^2_{H^2} + C_\delta C\|u_2\|^4_{L^4}\|u\|^2_{L^2}.
\end{split}
\end{equation}
Similarly, it is easy to see that
\begin{equation}\label{eq:unicidad_2_3}
\begin{split}
(\ControlC u_1 \sigma_1, u) &\leq \delta \|u\|^2_{H^1} + C_\delta \|u_1\|^2_{L^4}\|\sigma_1\|^2_{L^{\infty-}}\|\ControlC\|^2_{L^{4/3+}},\\
(\ControlC_2 u \sigma_1,u) &\leq  \delta \|u\|^2_{H^1} + C_\delta \|\ControlC_2\|^2_{L^\infty}\|\sigma_1\|^2_{L^{2+}}\|u\|^2_{L^2},\\
(\ControlC_2 u_2 \sigma, u) & \leq \delta \|\sigma\|^2_{H^2} + C_\delta \|\ControlC_2\|^2_{L^\infty}\|u_2\|^2_{L^2}\|u\|^2_{L^2}.
\end{split}
\end{equation}
Additionally, we obtain
\begin{equation}\label{eq:unicidad_6}
\begin{split}
&(u_1\nabla \sigma, \nabla u)  \leq C\|u_1\|_{L^4}\|\nabla \sigma\|^{1/2}_{L^2}\|\nabla \sigma\|^{1/2}_{H^1}\|\nabla u\|_{L^2} \leq \delta (\|\nabla \sigma\|^2_{H^1} + \|\nabla u\|^2_{L^2})+ C_\delta \|u_1\|^4_{L^4}\|\nabla \sigma\|^2_{L^2}, \\
&(u\nabla \sigma_2, \nabla u) \leq C\|u\|^{1/2}_{L^2}\|u\|^{1/2}_{H^1}\|\nabla \sigma_2\|_{L^4}\|\nabla u\|_{L^2} \leq \delta \|u\|^2_{H^1} + C_\delta \|\nabla \sigma_2\|^4_{L^4}\|u\|^2_{L^2.}
\end{split}
\end{equation}
Now, taking account the global Lipschitz inequality $\left|\frac{\sigma_1}{1+\sigma_1}-\frac{\sigma_2}{1+\sigma_2}\right|\leq |\sigma_1-\sigma_2| = |\sigma|$, then
\begin{equation}\label{eq:unicidad_7}
\begin{split}
    \Big(u_2\Big(\frac{\sigma_1}{1+\sigma_1} - \frac{\sigma_2}{1+\sigma_2}\Big), \sigma-\Delta\sigma\Big) & \leq \|\sigma\|_{H^2}\|\sigma\|_{L^4}\|u_2\|_{L^4} \leq \delta \|\sigma\|^2_{H^2} + C_\delta \|\sigma\|^2_{H^1}\|u_2\|^2_{L^4},
     \end{split}
\end{equation}
and using that $\frac{|x|}{1+|x|} \leq 1$, we have
\begin{equation}\label{eq:unicidad_9}
    \Big(\frac{u\sigma_1}{1+\sigma_1},\sigma-\Delta \sigma\Big) \leq \delta \|\sigma\|_{H^2}^2+C_\delta \|u\|^2_{L^2}.
\end{equation}
Moreover, 
\begin{equation}\label{eq:unicidad_10}
    (\ControlS u_1, \sigma- \Delta \sigma) \leq \delta \|\sigma\|_{H^2}^2 + C_\delta \|u_1\|^2_{L^4}\|\ControlS\|^2_{L^4},
\end{equation}
\begin{equation}\label{eq:unicidad_11}
    ((1-\ControlS_2) u, \sigma - \Delta \sigma) \leq \delta \|\sigma\|^2_{H^2} + C_\delta \|1-\ControlS_1\|^2_{L^\infty}\|u\|^2_{L^2}.
\end{equation}
Therefore, using the estimates \eqref{eq:unicidad_2}-\eqref{eq:unicidad_11} in \eqref{eq:unicidad_1}, and since $u_1, u_2 \in W_2 \hookrightarrow L^4(Q)$, $\nabla \sigma_2 \in W_{2}\hookrightarrow L^{4}(Q)$, $\ControlC_i, \ControlS_i \in L^\infty(Q)$, $i=1,2$, taking $\delta > 0$ small enough, and using the Gronwall inequality  and a comparison argument to obtain the time-regularity, we can obtain
\[
\|u\|^2_{W_2} + \|\sigma\|^2_{X_2} \leq C(\|u_{0,1}-u_{0,2}\|_{L^2(\Omega)}^2+\|\sigma_{0,1}-\sigma_{0,2}\|^2_{W^{1,2}(\Omega)}+\|\ControlC\|^2_{L^4(L^{4/3+})}+\|\ControlS\|^2_{L^4(Q)}),
\]
completing the proof. 
\end{proof}

%%%%%%%%%%%%%%%%%%%%%%%%%%%%%%%%%%%%%%%%%%%%%%%%%%%%%%%%%%%%%
%%%%%%%%%%%%%%%%%%%%%%%%%%%%%%%%%%%%%%%%%%%%%%%%%%%%%%%%%%%%%
% CAPÍTULO 3: EXISTENCIA DE SOLUCIÓN OPTIMA GLOBAL Y CONDICIONES DE OPTIMALIDAD
%%%%%%%%%%%%%%%%%%%%%%%%%%%%%%%%%%%%%%%%%%%%%%%%%%%%%%%%%%%%%
%%%%%%%%%%%%%%%%%%%%%%%%%%%%%%%%%%%%%%%%%%%%%%%%%%%%%%%%%%%%%

\section{Existence of Global Optimal Solution and Optimality Conditions}

\subsection{Existence of Global Optimal Solution. Proof of Theorem \ref{exis2}.}
In this subsection, we address the proof of the Theorem \ref{exis2} related to the existence of solution for the optimal control problem \eqref{eq:ProblemaDeControl}.
\begin{proof}

    From Theorem \ref{teoSolFuerte}, it follows that the set $\mathcal{P}_{ad}$ is nonempty. Moreover, due to the nonnegativity of the variables, the functional $J$ is bounded from below. Thus, there exists a minimizing sequence  $ \{[u_n, \sigma_n,\ControlC_n, \ControlS_n]\}_{n\in \mathbb{N}} \subset \mathcal{P}_{ad}$ such that
    $$
    \lim_{n\to\infty} J([u_n, \sigma_n,\ControlC_n, \ControlS_n]) = \inf_{[u,\sigma,\ControlC,\ControlS] \in \mathcal{P}_{ad}} J([u,\sigma, \ControlC, \ControlS]).
    $$
    For each $n\in\mathbb{N},$ the pair $[u_n, \sigma_n]$ is the weak-strong solution of system \eqref{eq:SistemaControlado} with data  $[\ControlC_n, \ControlS_n] \in \mathcal{U}_{ad}$. Consequently, since $[\ControlC_n, \ControlS_n]$ is bounded in $L^\infty(Q)$ then $[u_n, \sigma_n]$ is bounded in $W_2\times X_2$, hence we can extract a subsequence of $\{[u_n, \sigma_n, \ControlC_n, \ControlS_n]\}_{n\geq 1}$, still denoted by itself, such that
    \begin{equation}\label{eq:DemExistencia - convergencia1}
        [\ControlC_n, \ControlS_n] \overset{*}{\rightharpoonup} [\ControlC^*, \ControlS^*] \quad \text{weak-* in }  (L^\infty(Q))^2,
    \end{equation}
    \begin{equation}\label{eq:DemExistencia - convergencia2}
        [u_n,\sigma_n] \rightharpoonup [u^*,\sigma^*] \quad \text{weakly in } L^2(H^1)\times L^{2}(H^2),
    \end{equation}
    \begin{equation}\label{eq:DemExistencia - convergencia3}
        [u_n,\sigma_n] \overset{*}{\rightharpoonup} [u^*,\sigma^*] \quad \text{weak-* in } L^\infty(L^2 \times H^1),
    \end{equation}
    \begin{equation}\label{eq:DemExistencia - convergencia4}
        [\partial_t u_n, \partial_t\sigma_n] \rightharpoonup [\partial_t u^*, \partial_t \sigma^*] \quad \text{weakly in } L^2((H^1)')\times L^{2}(Q).
    \end{equation}
    By the Aubin-Lions Lemma (see \cite{16}, Theorem 5.1, p.58) and using the Corollary 4 of \cite{23}, we have that
    \begin{equation}\label{eq:DemExistencia - convergencia5}
        [u_n,\sigma_n] \to [u^*, \sigma^*] \quad \text{strongly in } C^0([0,T]; ((H^1)'\times L^2)),
    \end{equation}
    \begin{equation}\label{eq:DemExistencia - convergencia6}
        \sigma_n \to \sigma^* \quad \text{strongly in }L^{\infty-}(Q),
    \end{equation}
    \begin{equation}\label{eq:DemExistencia - convergencia7}
        u_n \to u^* \quad \text{strongly in } L^{4-}(Q),
    \end{equation}
    \begin{equation}\label{eq:DemExistencia - convergencia8}
        \nabla \sigma_n \to \nabla \sigma^* \quad \text{strongly in }L^{4-}(Q).
    \end{equation}
    Since for every $n$ one has $u_n(0) = u_0$ and $\sigma_n(0) = \sigma_0$, it follows directly that the limit functions satisfy $u^*(0) = u_0$ and $\sigma^*(0)=\sigma_0$. Indeed, by convergence \eqref{eq:DemExistencia - convergencia5}, the sequence $(u_n(0), \sigma_n(0))$ approaches $(u^*(0), \sigma^*(0))$ in the space $(H^1(\Omega))' \times L^2(\Omega)$. Consequently, $[u^*,\sigma^*]$ satisfies the initial condition in \eqref{eq:SistemaControlado}.
    Since $[\ControlC_n, \ControlS_n] \in \mathcal{U}_{ad}$, they are bounded in $[L^2(Q)]^2$, and hence
    \begin{equation}\label{eq:DemExistencia - convergencia9}
    [\ControlC_n, \ControlS_n] \rightharpoonup [\ControlC^*, \ControlS^*] \quad \text{weakly in } L^2(Q).
    \end{equation}
    Note that $\mathcal{U}_{ad}$ is a convex and closed subset of $[L^2(Q)]^2$, and thus weakly closed, implying $[\ControlC^*, \ControlS^*] \in \mathcal{U}_{ad}$. 
    \\ \\
    Taking into account that $\left|\frac{x}{1+x}-\frac{y}{1+x}\right|\leq |x-y|$ and using that $\sigma_n \rightarrow \sigma^*$ in $L^{\infty-}(Q)$, $\nabla \sigma_n \rightarrow \nabla\sigma^*$ weakly in $L^{4}(Q)$, $u_n \rightarrow u^*$ strongly in $L^{4-}(Q)$ and $u_n \nabla \sigma_n \in L^2(Q)$, then
     \begin{equation}\label{eq:DemExistencia - convergencia10}
     u_n  \nabla \sigma_n \to u^* \nabla \sigma^*, \quad \text{weakly in }L^{2}(Q),
     \end{equation}
     \begin{equation}\label{eq:DemExistencia - convergencia11}
     \frac{\sigma_nu_n}{1+\sigma_n} \to \frac{\sigma^* u^*}{1+\sigma^*} \quad \text{weakly in }L^{4-}(Q).
     \end{equation}
    Passing to the limit  as $n \to \infty$ and using \eqref{eq:DemExistencia - convergencia1}-\eqref{eq:DemExistencia - convergencia11}, it follows that $[u^*, \sigma^*]$ solves \eqref{eq:SistemaControlado} associated to $[\ControlC^*, \ControlS^*]$. Consequently,
    $$
    \lim_{n\to \infty} J([u_n, \sigma_n, \ControlC_n, \ControlS_n]) = \inf_{[u,\sigma, \ControlC, \ControlS] \in \mathcal{P}_{ad}} J([u,\sigma, \ControlC, \ControlS]) \leq J([u^*, \sigma^*, \ControlC^*, \ControlS^*]).
    $$
    Since $J$ is weakly lower-semicontinuous on $\mathcal{P}_{ad}$, we obtain
    $$
    J([u^*, \sigma^*, \ControlC^*, \ControlS^*]) \leq \underset{n\to \infty}{\text{ lim inf }} J([u_n,\sigma_n, \ControlC_n, \ControlS_n]),
    $$
    and hence, we conclude that
    $$
    \inf_{[u,\sigma, \ControlC, \ControlS]\in \mathcal{P}_{ad}} J([u,\sigma, \ControlC, \ControlS]) \leq J([u^*,\sigma^*, \ControlC^*, \ControlS^*]) \leq \underset{n\to\infty}{\text{ lim inf }} J([u_n, \sigma_n, \ControlC_n, \ControlS_n]) = \inf_{[u,\sigma, \ControlC, \ControlS] \in \mathcal{P}_{ad}} J([u,\sigma,\ControlC,\ControlS]),
    $$
    implying that $J$ reaches a global minimum at $[u^*, \sigma^*,\ControlC^*, \ControlS^*]$.
    
\end{proof}

\subsection{Linearized and Adjoint Systems}

In this subsection, we derive necessary first-order optimality conditions for a local optimal solution $[u^*,\sigma^*,\ControlC^*,\ControlS^*] \in \mathcal{P}_{ad}$ of Problem \eqref{eq:ProblemaDeControl}. We introduce the control-to-state mapping
\[
G : \mathcal{U}_{ad} \subset [L^\infty (Q)]^2 \to W_2 \times X_{2},
\]
defined by $G([\ControlC, \ControlS]) = [u(\ControlC,\ControlS), \sigma(\ControlC, \ControlS)]$, where $[u(\ControlC,\ControlS), \sigma(\ControlC, \ControlS)]$ is the (unique) solution of the controlled system \eqref{eq:SistemaControlado} corresponding to the controls $[\ControlC, \ControlS]\in \mathcal{U}_{ad}$. From Theorem \ref{teoSolFuerte} it follows that the mapping $G$ is well-defined. Moreover, from Theorem \ref{teo:estabilidad}, we have the stability of solutions 
$G([\ControlC, \ControlS]) = [u(\ControlC,\ControlS), \sigma(\ControlC, \ControlS)]$
 with respect to the control $[\ControlC,\ControlS]$. Next proposition provides the directional derivative in any point $[\ControlC^*, \ControlS^*]\in \mathcal{U}_{ad}$ in any direction towards  the convex  $[\bar \ControlC-\ControlC^*, \bar \ControlS -\ControlS^*]$ of the control-to-state mapping $G$, for any $[\overline\ControlC, \overline\ControlS]\in \mathcal{U}_{ad}$.
\begin{prop}
    Under the hypotheses of Theorem  \ref{teoSolFuerte}, the directional derivative $G'(\ControlC^*, \ControlS^*)[\bar \ControlC-\ControlC^*, \bar \ControlS-\ControlS^*]= [z_1,z_2]$ exists in $W_2 \times X_{2}$, with $[z_1,z_2]$ the unique solution of the following (sensitivity) problem
    \begin{equation}\label{eq:SistemaLinealizado}
    \begin{cases}
    \displaystyle\int_0^T \langle \partial_t z_1, \varphi \rangle 
    +\int_Q \nabla z_1 \cdot \nabla \varphi = \int_Q \rho(\sigma^*)(\alpha - 2u^*)z_1\varphi + \int_Q \rho'(\sigma^*)u^*(\alpha - u^*)z_2\varphi
     \\ \quad
      \displaystyle -\int_Q\sigma^* \ControlC^*z_1\varphi
       -\int_Qu^*\ControlC^* z_2 \varphi 
       + \int_Q u^* \nabla z_2 \cdot \nabla \varphi + \int_Q z_1 \nabla \sigma^* \cdot \nabla \varphi
       - \int_Q u^*\sigma^*(\bar\ControlC-\ControlC^*)\varphi , 
       & \forall \varphi \in L^2(H^1), \\
    \displaystyle\partial_t z_2 -\Delta z_2 = -\frac{z_1\sigma^*}{1+\sigma^*}- \frac{u^*z_2}{(1+\sigma^*)^2} - \gamma z_2  + (1-\ControlS^*)z_1
    - (\bar \ControlS - \ControlS^*)u^*,
     & \text{ in }Q, \\
   % \partial_{\mathbf{n}}z_1 =
     \partial_{\mathbf{n}}z_2 = 0, & \text{ on }\Sigma, \\
    z_1(0) = 0, z_2(0) = 0, & \text{ in }\Omega.
    \end{cases}
    \end{equation}
\end{prop}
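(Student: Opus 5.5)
The proof has three parts: well-posedness of the linear sensitivity problem \eqref{eq:SistemaLinealizado}, an estimate for the remainder of the difference quotient, and passage to the limit in that remainder.

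\textbf{Well-posedness of \eqref{eq:SistemaLinealizado}.} I would rely on the appendix on the linearized system. Otherwise I would use a Galerkin scheme or a Leray--Schauder argument in the spirit of Section 2.

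The a priori estimate mirrors the proof of Theorem \ref{teo:estabilidad}: test the $z_1$-equation with $z_1$ and the $z_2$-equation with $z_2-\Delta z_2$. The coefficient regularity is $u^*\in W_2\hookrightarrow L^4(Q)$, $\sigma^*\in X_2$, $\nabla\sigma^*\in L^4(Q)$ and $\sigma^*\in L^{\infty-}(Q)$. Since $\rho'$ is bounded and $\rho(\sigma^*)\le \mu_1\sigma^*+\mu_2$, every coefficient is controlled as in \eqref{eq:unicidad_2}--\eqref{eq:unicidad_11}.

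The cross term $(u^*\nabla z_2,\nabla z_1)$ is absorbed exactly as in \eqref{eq:unicidad_6}:
\[
(u^*\nabla z_2,\nabla z_1)\le \delta\big(\|z_2\|_{H^2}^2+\|\nabla z_1\|_{L^2}^2\big)+C_\delta\|u^*\|_{L^4}^4\|\nabla z_2\|_{L^2}^2 .
\]
The sources are handled by $\|u^*\sigma^*(\bar\ControlC-\ControlC^*)\|_{L^2(H^{-1})}\le C$ and $(\bar\ControlS-\ControlS^*)u^*\in L^2(Q)$. Gronwall and a comparison argument for the time derivatives then give a bound in $W_2\times X_2$. Linearity together with the same estimate yields uniqueness.

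\textbf{Remainder estimate.} For $\varepsilon\in(0,1]$ set $[\ControlC_\varepsilon,\ControlS_\varepsilon]=[\ControlC^*,\ControlS^*]+\varepsilon[\bar\ControlC-\ControlC^*,\bar\ControlS-\ControlS^*]$. By convexity this lies in $\mathcal{U}_{ad}$, and we denote the associated states by $[u_\varepsilon,\sigma_\varepsilon]$. Theorem \ref{teo:estabilidad} gives
\[
\|u_\varepsilon-u^*\|_{W_2}+\|\sigma_\varepsilon-\sigma^*\|_{X_2}\le C\varepsilon ,
\]
with $C$ uniform because the $L^\infty$ control bounds, and hence the state bounds, are uniform.

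Define the remainders
\[
r_1=\frac{u_\varepsilon-u^*}{\varepsilon}-z_1,\qquad r_2=\frac{\sigma_\varepsilon-\sigma^*}{\varepsilon}-z_2 .
\]
These solve the linearized system with zero data and extra source terms. The sources are of two kinds: products of differences, such as $\frac1\varepsilon(u_\varepsilon-u^*)^2\rho(\sigma_\varepsilon)$, $\frac1\varepsilon(u_\varepsilon-u^*)\nabla(\sigma_\varepsilon-\sigma^*)$ and $(\bar\ControlC-\ControlC^*)(u_\varepsilon\sigma_\varepsilon-u^*\sigma^*)$, and Taylor remainders of $\rho$ and of $s\mapsto s/(1+s)$. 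The products are $O(\varepsilon)$ in the dual norms used above, by the stability estimate together with $W_2\hookrightarrow L^4(Q)$ and $X_2\hookrightarrow L^\infty(H^1)\cap L^2(H^2)$.

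The Taylor remainders require the most care, because $\rho$ is only $C^1$ with bounded derivative. I would write
\[
\rho(\sigma_\varepsilon)-\rho(\sigma^*)-\rho'(\sigma^*)(\sigma_\varepsilon-\sigma^*)=\int_0^1\big[\rho'(\sigma^*+\theta(\sigma_\varepsilon-\sigma^*))-\rho'(\sigma^*)\big]\,d\theta\,(\sigma_\varepsilon-\sigma^*).
\]
After dividing by $\varepsilon$, this is a bounded factor that tends to $0$ a.e., since $\sigma_\varepsilon\to\sigma^*$ in $L^{\infty-}(Q)$ along subsequences, multiplied by a term bounded in $X_2$.

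\textbf{Passage to the limit.} Applying the energy estimate of the first step to $[r_1,r_2]$ bounds $\|r_1\|_{W_2}+\|r_2\|_{X_2}$ by the norms of these sources. Those norms tend to $0$ by dominated convergence, because the integrand $(\ldots)u^*(\alpha-u^*)$ is integrable: $u^*\in L^4(Q)$ and the $\sigma$-quotient is bounded in $L^\infty(H^1)\cap L^2(H^2)$. A subsequence-uniqueness argument then upgrades convergence along subsequences to the full family, which gives $G'(\ControlC^*,\ControlS^*)[\bar\ControlC-\ControlC^*,\bar\ControlS-\ControlS^*]=[z_1,z_2]$.

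\textbf{Main obstacle.} I expect the hardest step to be this last convergence for the non-Lipschitz-derivative remainder of $\rho$ and for the quadratic terms, since $u$ is only in $W_2$. The delicate point is obtaining dual-norm bounds that fit the weak formulation of the $u$-equation.
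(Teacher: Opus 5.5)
Your argument is correct, and its skeleton is the paper's. The paper simply invokes its generic result, Theorem \ref{teo:Append_derivada}, with $f_1=\rho(\sigma)u(\alpha-u)$, $f_2=-u\sigma$, $g_1=\gamma(\beta-\sigma)-\frac{u\sigma}{1+\sigma}+u$ and $g_2=-u$. The proof of that theorem is exactly your scheme: derive the equation for the error between the difference quotient and the linearized solution, test by $e_w$ and by $e_z-\Delta e_z$, apply Gronwall, and show the source coefficients vanish via the stability estimate of Theorem \ref{teo:estabilidad}.

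The genuine difference is how you handle the nonlinear remainder. The paper requires $f_i,g_i\in C^2$ and uses a second-order mean value expansion. Coefficient differences such as $A_3$ and $A_4$ are then bounded by $\|\tilde u^\varepsilon-u\|$ and $\|\tilde v^\varepsilon-v\|$, hence are $O(\varepsilon)$. This gives a rate and a reusable abstract theorem. However, the standing assumption is only $\rho\in C^1$ with bounded derivative, so the paper's claim $f_1\in C^2$ holds only for smoother $\rho$, such as the affine prototype.

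Your route uses the integral Taylor remainder with the bounded factor $\theta_\varepsilon\to0$ a.e. along subsequences, then dominated convergence and the subsequence principle. It works under exactly the stated $C^1$ hypothesis and avoids choosing measurable intermediate points. The cost is the convergence rate, which a directional derivative does not need.

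Make one step explicit. Dominated convergence applies to the fixed-profile factor: $\theta_\varepsilon u^*(\alpha-u^*)\to0$ in $L^2(Q)$. The quotient $(\sigma_\varepsilon-\sigma^*)/\varepsilon$ enters only through its uniform bound in $L^\infty(L^q)$, $q<\infty$. By H\"older the remainder then tends to $0$ in $L^2(L^r)$ for some $r<2$. In two dimensions this space embeds in $L^2(H^{-1})$, which is the norm the weak $z_1$-equation requires.
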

\begin{proof}
    We apply Theorem \ref{teo:Append_derivada} (to the controls  $[\ControlC^*,\ControlS^*]\in\mathcal{U}_{ad}$ and its associated state 
    $[u^*,\sigma^*]\in W_2\times X_2$), considering the nonlinearities
    \begin{align*}
    &f_1(u^*,\sigma^*)=\rho(\sigma^*)u^*(\alpha-u^*), && f_2(u^*,\sigma^*)=-u^*\sigma^*,\\
    & g_1(u^*,\sigma^*)=\gamma(\beta-\sigma^*)-\frac{u^*\sigma^*}{1+\sigma^*}+u^*, && 
    g_2(u^*,\sigma^*)=-u^*.
    \end{align*}
    Recalling that $\rho(\cdot)$ is bounded by a linear function, then it is straightforward  to check that $f_i,g_i\in C^2([0,\infty)^2)$ for $i=1,2$.  
    Using the regularity $u^*\in W_2$, $\sigma^*\in X_2$, it follows that the hypotheses of the Theorem \ref{teo:Append_derivada} are satisfied. Consequently, $G'(\ControlC^*, \ControlS^*)[\bar \ControlC-\ControlC^*, \bar\ControlS - \ControlS^*] = [w,z]$
    where $[w,z]$ solves the linearized system \eqref{eq:SistemaLinealizado}.
\end{proof}
Now we introduce the adjoint system associated to the linearized one. Let $[u^*, \sigma^*]$ solution of system \eqref{eq:SistemaControlado} associated with the optimal control $[\ControlC^*, \ControlS^*]$. Then the adjoint variables $[p_1, p_2]$ satisfy the following system
\begin{equation}\label{eq:sistemaAdjunto} 
\begin{cases} 
-\partial_t p_1 - \Delta p_1 - \rho(\sigma^*)(\alpha - 2u^*)p_1+ \sigma^* \ControlC^*p_1 - \nabla \sigma^* \cdot \nabla p_1 + \frac{\sigma^*p_2}{1+\sigma^*} - (1-\ControlS^*)p_2= k_1u^*,& \text{ in }Q, \\ 
-\partial_t p_2 - \Delta p_2 - \rho'(\sigma^*)u^*(\alpha - u^*)p_1 + u^*\ControlC^* p_1 + \nabla \cdot (u^* \nabla p_1) + \frac{u^* p_2}{(1+\sigma^*)^2} + \gamma  p_2 = k_2(\sigma^* - \sigma_Q),& \text{ in }Q,\\

\partial_{\mathbf{n}} p_1 = \partial_{\mathbf{n}}p_2 = 0, & \text{ on } \Sigma, \\ 
p_1(x,T) = l_1(x)u^*(x,T),\quad p_2(x,T) = l_2(x)(\sigma^*(x,T) - \sigma_\Omega), & \text{ in }\Omega,
\end{cases}
\end{equation}
where $[u^*, \sigma^*,\ControlC^*,\ControlS^*]$ is the optimal control-state and $k_j(x,t) \in L^\infty(Q)$, $(j=1,2,3,4)$ and $l_i(x) \in L^\infty(\Omega)$ $(i = 1,2)$ are the positive bounded weighting functions from the objetive functional \eqref{funcionalObjetivo}.
% \\ \\
%{\color{red}Assumption: $k_1, k_2 \in L^2(Q)$, $l_1 \in W^{1+,2+}(\Omega)$, $l_2 \in L^2(\Omega)$}
\begin{theo} Let $[u^*, \sigma^*, \ControlC^*, \ControlS^*] \in \mathcal{P}_{ad}$ be a local optimal solution of problem \eqref{eq:ProblemaDeControl}. Then 
\begin{enumerate}
    \item If $l_1(x) = 0$, there exists a unique solution $[p_1, p_2]$ in $X_2 \times W_2$ of the adjoint system \eqref{eq:sistemaAdjunto}.
    \item If $l_1(x) \neq 0$, there exists a unique solution $[p_1, p_2]$ in $W_2 \times (L^2(Q)\cap L^\infty((H^1)'))$ of \eqref{eq:sistemaAdjunto}.
\end{enumerate}
\end{theo}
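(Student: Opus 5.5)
The adjoint system \eqref{eq:sistemaAdjunto} is linear and backward in time. I would first reverse time, $s=T-t$, $q_i(s)=p_i(T-s)$, to get a forward linear parabolic system with Neumann conditions and initial data $q_1(0)=l_1u^*(T)$, $q_2(0)=l_2(\sigma^*(T)-\sigma_\Omega)$. Its coefficients are built from $[u^*,\sigma^*]\in W_2\times X_2$ and $[\ControlC^*,\ControlS^*]\in L^\infty(Q)^2$. In 2D this regularity gives $u^*\in L^4(Q)\cap L^\infty(L^2)$, $\nabla\sigma^*\in L^4(Q)$ and $\sigma^*\in L^{\infty-}(Q)$, and I will use repeatedly that $\rho(\sigma^*)\le\mu_1\sigma^*+\mu_2$, $\rho'$ is bounded and $\frac{\sigma^*}{1+\sigma^*}\le1$. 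Existence would come from a Faedo--Galerkin approximation (or a Leray--Schauder/Banach argument on the decoupled linear problems, as in the proof of Theorem \ref{teoSolFuerte}) together with a priori estimates uniform in the approximation. Passing to the limit is then routine because everything is linear. Uniqueness follows by applying the same a priori estimate to the difference of two solutions, which has zero data.

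\emph{Case $l_1=0$.} Here $q_1(0)=0\in H^1(\Omega)$ and $q_2(0)\in L^2(\Omega)$. I would test the $q_1$ equation by $q_1-\Delta q_1$ and the $q_2$ equation by $q_2$ (the latter in variational form, with $\nabla\cdot(u^*\nabla q_1)$ integrated by parts), and use $\|z\|_{H^2}\le C\|z-\Delta z\|_{L^2}$. The critical terms are estimated as follows.
\begin{itemize}
\item For the drift term, $(\nabla\sigma^*\cdot\nabla q_1,\Delta q_1)\le C\|\nabla\sigma^*\|_{L^4}\|\nabla q_1\|_{L^2}^{1/2}\|q_1\|_{H^2}^{3/2}\le\delta\|q_1\|_{H^2}^2+C_\delta\|\nabla\sigma^*\|_{L^4}^4\|q_1\|_{H^1}^2$.
\item For the cross-diffusion term, $(u^*\nabla q_1,\nabla q_2)\le\|u^*\|_{L^4}\|\nabla q_1\|_{L^4}\|\nabla q_2\|_{L^2}\le\delta(\|\nabla q_2\|^2+\|q_1\|_{H^2}^2)+C_\delta\|u^*\|_{L^4}^4\|q_1\|_{H^1}^2$.
\item The zero-order terms $\rho(\sigma^*)(\alpha-2u^*)q_1$, $\rho'(\sigma^*)u^*(\alpha-u^*)q_1$, $u^*\ControlC^*q_1$ and $\frac{u^*q_2}{(1+\sigma^*)^2}$ are handled by H\"older and 2D Gagliardo--Nirenberg, exactly as in \eqref{eq:unicidad_2}--\eqref{eq:unicidad_11}.
\item The source terms $k_1u^*$ and $k_2(\sigma^*-\sigma_Q)$ lie in $L^2(Q)$.
\end{itemize}
Since $\|\nabla\sigma^*\|_{L^4}^4,\|u^*\|_{L^4}^4\in L^1(0,T)$, Gronwall gives $q_1\in L^\infty(H^1)\cap L^2(H^2)$ and $q_2\in L^\infty(L^2)\cap L^2(H^1)$. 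The time derivatives then follow by comparison, giving $[p_1,p_2]\in X_2\times W_2$.

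\emph{Case $l_1\neq0$.} Now $q_1(0)\in L^2$ only, so I can at best expect $q_1\in W_2$. Then $u^*\nabla q_1$ lies only in $L^2(L^{1+})$ in space, so $\nabla\cdot(u^*\nabla q_1)$ is not in $L^2(H^{-1})$, and $q_2$ must be sought in the weaker dual class $L^2(Q)\cap L^\infty((H^1)')$. I would test the $q_1$ equation by $q_1$; the drift term is $(\nabla\sigma^*\cdot\nabla q_1,q_1)\le\delta\|\nabla q_1\|^2+C\|\nabla\sigma^*\|_{L^4}^4\|q_1\|^2$. I would test the $q_2$ equation by $(I-\Delta)^{-1}q_2$, which lies in $H^2\hookrightarrow W^{1,r}$ for every $r<\infty$, with norm controlled by $\|q_2\|_{L^2}$. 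This yields $\frac{d}{ds}\|q_2\|^2_{(H^1)'}+\|q_2\|_{L^2}^2$ on the left. The coupling term is bounded by $\|u^*\|_{L^{2+}}\|\nabla q_1\|_{L^2}\|\nabla(I-\Delta)^{-1}q_2\|_{L^{\infty-}}\le\delta\|q_2\|^2_{L^2}+C_\delta\|u^*\|^2_{L^{2+}}\|\nabla q_1\|^2_{L^2}$. The terms $q_2$ entering the $q_1$ equation are bounded by $\delta\|q_2\|^2_{L^2}+C\|q_1\|^2$.

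The \textbf{main obstacle} is closing this last estimate. The factor $\|u^*\|^2_{L^{2+}}$ multiplies the dissipative quantity $\|\nabla q_1\|^2$ rather than a quantity controlled in $L^\infty$, so plain Gronwall does not close. I would first exploit $u^*\in L^\infty(L^2)\cap L^2(H^1)$ through interpolation, $\|u^*\|_{L^{2+\epsilon}}\le C\|u^*\|^{1-\theta}_{L^2}\|u^*\|^\theta_{H^1}$ with $\theta$ small, so that $\|u^*\|^2_{L^{2+}}$ is essentially bounded in time. I would then weight the $q_2$ estimate by a small $\epsilon$ before adding it to the $q_1$ estimate. If the constant still does not close, I would split $[0,T]$ into finitely many subintervals on which $\int\|u^*\|^{2/\theta'}_{L^{2+}}$ is small and iterate. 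Alternatively, I would define $q_2$ by transposition, as the dual of the forward problem with data in $L^2(H^1)$. Once $q_2\in L^2(Q)$, the estimate $q_1\in W_2$ follows from the first step, and linearity again gives uniqueness.
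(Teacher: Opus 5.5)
Your case $l_1=0$ is correct and is essentially the paper's argument: time reversal, the multipliers $q_1-\Delta q_1$ and $q_2$, the same bounds for the drift and cross-diffusion terms, and Gronwall with $\|\nabla\sigma^*\|^4_{L^4},\|u^*\|^4_{L^4}\in L^1(0,T)$.

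For $l_1\neq0$ you also use the paper's multipliers: $q_1$, and $\psi:=(I-\Delta)^{-1}q_2$, which is the auxiliary function of \eqref{very_weak_2}. However, you leave the key estimate unclosed, and none of your remedies works.
\begin{itemize}
\item Interpolation only gives $\|u^*\|^2_{L^{2+\epsilon}}\le C\|u^*\|^{2-2\theta}_{L^\infty(L^2)}\|u^*\|^{2\theta}_{H^1}\in L^{1/\theta}(0,T)$. This need not be in $L^\infty(0,T)$, so it is not ``essentially bounded''.
\item Weighting the $q_2$-estimate by $\epsilon$ still requires $\epsilon\,\esssup_s\|u^*(s)\|^2_{L^{2+}}$ to be small, which is the same missing bound.
\item Smallness of $\int_I\|u^*\|^p_{L^{2+}}$ on subintervals cannot be paired by H\"older with $\|\nabla q_1\|^2_{L^2}$. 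That factor is only $L^1$ in time, and in general no better, since $q_1(0)\in L^2$ only.
\item The transposition alternative is only named. As stated (forward data in $L^2(H^1)$), it would place $q_2$ in $L^2((H^1)')$, not in $L^2(Q)$.
\end{itemize}

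The obstruction comes from bounding $\nabla\psi$ entirely by $\|\psi\|_{H^2}\le C\|q_2\|_{L^2}$. That puts both $\nabla q_1$ and $\nabla\psi$ fully at the dissipation level and leaves nothing for Gronwall. Instead, put $u^*$ in $L^4$ and interpolate $\nabla\psi$ (Ladyzhenskaya) between the energy level $\|\psi\|_{H^1}=\|q_2\|_{(H^1)'}$ and the dissipation level $\|\psi\|_{H^2}$:
\begin{align*}
(u^*\nabla q_1,\nabla\psi)&\le\|u^*\|_{L^4}\|\nabla q_1\|_{L^2}\|\nabla\psi\|_{L^4}
\le C\|u^*\|_{L^4}\|\nabla q_1\|_{L^2}\|\psi\|_{H^1}^{1/2}\|q_2\|_{L^2}^{1/2}\\
&\le\delta\|\nabla q_1\|^2_{L^2}+\delta\|q_2\|^2_{L^2}+C_\delta\|u^*\|^4_{L^4}\|\psi\|^2_{H^1},
\end{align*}
using Young with exponents $2,4,4$. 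Since $u^*\in W_2\hookrightarrow L^4(Q)$, the coefficient $\|u^*\|^4_{L^4}$ lies in $L^1(0,T)$, and Gronwall applies to $\|q_1\|^2_{L^2}+\|\psi\|^2_{H^1}$. Any finite $r$ works the same way, via $\|\nabla\psi\|_{L^r}\le C\|\psi\|_{H^1}^{2/r}\|\psi\|_{H^2}^{1-2/r}$ and $u^*\in L^r(L^{2r/(r-2)})$. Only putting all of $\nabla\psi$ at the dissipation level fails.

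The remaining terms tested by $\psi$ are handled similarly:
\begin{itemize}
\item $(\frac{u^*q_2}{(1+\sigma^*)^2},\psi)$ loses its sign, but is bounded by $\delta\|q_2\|^2_{L^2}+C_\delta\|u^*\|^2_{L^4}\|\psi\|^2_{H^1}$.
\item The term with coefficient $\rho'(\sigma^*)u^*(\alpha-u^*)-u^*\ControlC^*$ is bounded by $C(\|u^*\|_{L^4}+\|u^*\|^2_{L^4})\|q_1\|_{L^4}\|\psi\|_{L^4}$.
\end{itemize}
With this estimate supplied, your Case 2 becomes the paper's proof, with the step the paper only asserts made explicit.
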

\begin{proof}
    First assume $l_1(x)=0.$ By substituting $t$ by $T - t$, and setting $\eta_i(x,t)= p_i (x, T-t),$ $i= 1,2$ in $Q$, system \eqref{eq:sistemaAdjunto} can be rewritten as the following forward parabolic system:
    \begin{equation}\label{eq:sistemaAdjunto hacia adelante}
        \begin{cases}
            \partial_t \eta_1 - \Delta \eta_1 =   \rho(\sigma^*)(\alpha - 2u^*)\eta_1  - \sigma^*\ControlC^* \eta_1 + \nabla \sigma^*\cdot \nabla \eta_1 - \frac{\sigma^*}{1+\sigma^*}\eta_2 + (1 - \ControlS^*)\eta_2+k_1u^*,& \text{ in }Q, \\
            \partial_t\eta_2 - \Delta \eta_2 =  \rho'(\sigma^*)u^*(\alpha - u^*)\eta_1 - u^*\ControlC^* \eta_1 - \nabla \cdot (u^* \nabla \eta_1) - \frac{u^*}{(1+\sigma^*)^2}\eta_2 - \gamma \eta_2 + k_2(\sigma^* - \sigma_Q),&\text{ in }Q,\\
            \partial_{\mathbf{n}} \eta_1 = \partial_{\mathbf{n}}\eta_2 = 0, & \text{ on } \Sigma, \\
            \eta_1(x,0) = 0,\quad \eta_2(x,0) = l_2(x)(\sigma^*(x,T) - \sigma_\Omega), & \text{ in }\Omega.
        \end{cases}
    \end{equation}
    The existence of a solution to \eqref{eq:sistemaAdjunto hacia adelante} follows from the Leray–Schauder Fixed Point Theorem, applied to the operator
    \[
    \Gamma : L^{\infty-}(Q) \times L^{4-}(Q) \to X_{2} \times W_2 \hookrightarrow L^{\infty-}(Q) \times L^{4-}(Q),
    \]
    such that $\Gamma[\overline \eta_1, \overline \eta_2] = [\eta_1, \eta_2]$ is the solution of the following decoupled linear problem:
    \begin{equation}\label{eq_append: LemmaEq2}
         \begin{cases}
            \partial_t \eta_1 - \Delta \eta_1 =   \rho(\sigma^*)(\alpha - 2u^*)\eta_1  - \sigma^*\ControlC^* \eta_1 
            + \nabla \sigma^*\cdot \nabla \eta_1 - \frac{\sigma^*}{1+\sigma^*}\bar \eta_2 
            + (1 - \ControlS^*)\bar\eta_2+k_1u^*,& \text{ in }Q, \\
            \displaystyle\int_0^T \langle \partial_t\eta_2,\varphi\rangle 
            + \int_Q\nabla \eta_2 \cdot\nabla \varphi =  \int_Q\rho'(\sigma^*)u^*(\alpha - u^*)\eta_1\varphi 
            - \int_Q u^*\ControlC^* \eta_1\varphi 
            + \int_Qu^* \nabla \eta_1\cdot \nabla \varphi \\ \qquad\qquad\displaystyle- \int_Q\frac{u^*}{(1+\sigma^*)^2}\eta_2\varphi - \int_Q\gamma \eta_2\varphi + \int_Q k_2(\sigma^* - \sigma_Q)\varphi,&\forall \varphi \in L^2(H^1),\\
            \partial_{\mathbf{n}} \eta_1 
            %= \partial_{\mathbf{n}}\eta_2 
            = 0, & \text{ on } \Sigma, \\
            \eta_1(x,0) = 0,\quad \eta_2(x,0) = l_2(x)(\sigma^*(x,T) - \sigma_\Omega), & \text{ in }\Omega.
        \end{cases}
    \end{equation}
    Since $u^* \in W_{2} \hookrightarrow L^4(Q)$, $\eta_2 \in L^{4-}(Q)$, $\sigma^* \in X_{2},$ which gives that consequently $\nabla \sigma^* \in W_{2} \hookrightarrow L^{4}(Q)$, we apply Lemma \ref{lema_adjunto} with $f_1 = \rho(\sigma^*)(\alpha - 2u^*) - \sigma^* \ControlC^* \in L^4(L^{4-})$ , $f_2 = \frac{\sigma^*}{1+\sigma^*}\bar \eta_2 + (1-\ControlS^*)\bar \eta_2 + k_1u^* \in L^{4-}(Q)$ and $\mathbf{v} = \nabla \sigma^* \in L^4(Q)$, in order to obtain the existence and  uniqueness of solution $\eta_1 \in X_2$ of \eqref{eq_append: LemmaEq2}$_1$. Moreover, since $X_2 \hookrightarrow L^{\infty}(L^{\infty-})$, then $\rho'(\sigma^*)u^*(\alpha - u^*)\eta_1 - u^*\ControlC^*\eta_1 - u^*\nabla \eta_1 - \frac{u^*}{(1+\sigma^*)^2} - \gamma + k_2(\sigma^*-\sigma_Q) \in L^{2}(L^{2-})$; therefore, exists a unique solution $\eta_2 \in W_2$ of \eqref{eq_append: LemmaEq2}. Hence, the operator $\Gamma$ is well defined. In addition, since $X_2 \times W_2$ is compactly embedded into $L^{\infty-}(Q) \times L^{4-}(Q)$, it follows that $\Gamma$ is a compact operator.
    \\ \\
    Now we prove that the set 
    \[
    G_\theta = \{[\eta_1,\eta_2] \in  X_{2} \times W_2: [\eta_1,\eta_2] = \theta \,\Gamma[\eta_1,\eta_2] \text{ for some } \theta \in [0,1]\},
    \]
    of possible fixed points of $\theta\Gamma,$ is bounded in $L^{\infty-}(Q) \times L^{4-}(Q),$ uniformly with respect to $\theta$. Indeed, if $[\eta_1, \eta_2] \in G_\theta$, then $[\eta_1, \eta_2] \in X_{2} \times W_{2}$ satisfying the following system:
    \begin{equation}\label{eq:linealizadoAdjuntoPrueba}
        \begin{cases}
            \partial_t \eta_1 - \Delta \eta_1 =   \rho(\sigma^*)(\alpha - 2u^*)\eta_1  - \sigma^*\ControlC^* \eta_1 + \nabla \sigma^*\cdot \nabla \eta_1 - \theta\frac{\sigma^*}{1+\sigma^*} \eta_2 + \theta(1 - \ControlS^*)\eta_2+\theta k_1u^*,& \text{ in }Q, \\
            \displaystyle\int_0^T \langle \partial_t\eta_2,\varphi\rangle
             + \int_Q\nabla \eta_2 \cdot\nabla \varphi 
             =  \int_Q\rho'(\sigma^*)u^*(\alpha - u^*)\eta_1\varphi - \int_Q u^*\ControlC^* \eta_1\varphi 
             + \int_Qu^* \nabla \eta_1 \cdot\nabla \varphi 
             \\ \qquad\qquad\displaystyle
             - \int_Q\frac{u^*}{(1+\sigma^*)^2}\eta_2\varphi - \int_Q\gamma \eta_2\varphi 
             + \theta\int_Q k_2(\sigma^* - \sigma_Q)\varphi,&\forall \varphi \in L^2(H^1),\\
            \partial_{\mathbf{n}} \eta_1 
            %= \partial_{\mathbf{n}}\eta_2 
            = 0, & \text{ on } \Sigma, \\
            \eta_1(x,0) = 0,\quad \eta_2(x,0) = l_2(x)(\sigma^*(x,T) - \sigma_\Omega), & \text{ in }\Omega.
        \end{cases}
    \end{equation}
    Now, multiplying \eqref{eq:linealizadoAdjuntoPrueba}$_1$ by $\eta_1 - \Delta \eta_1 \in L^2(Q),$ testing \eqref{eq:linealizadoAdjuntoPrueba}$_2$ by $\eta_2 \in L^2(H^1),$ using the estimates
    \[
        \begin{split}
        (u^*\nabla \eta_1, \nabla \eta_2) & \leq \delta \|\nabla \eta_2\|^2_{L^2} + C_\delta \|u^*\|^2_{L^4}\|\nabla \eta_1\|_{L^4}^2 \\ & \leq \delta \|\nabla \eta_2\|^2_{L^2} + C_\delta \|u^*\|^2_{L^4}\|\Delta\eta_1\|_{L^2}\|\nabla \eta_1\|_{L^2} \\
        & \leq \delta \|\nabla \eta_2\|^2_{L^2} + \delta \|\Delta \eta_1\|^2_{L^2} + C_\delta \|u^*\|^4_{L^2}\|\nabla \eta_1\|^2_{L^2},\\
        (\nabla \sigma^* \nabla \eta_1, \eta_1 - \Delta \eta_1) &\leq \delta \|\eta_1 - \Delta \eta_1\|^2_{L^2} + C_\delta \|\nabla \sigma^*\|^2_{L^4}\|\nabla \eta_1\|^2_{L^4} \\
        & \leq \delta \|\eta_1-\Delta\eta_1\|^2_{L^2}+\delta \|\Delta \eta_1\|^2_{L^2} + C_\delta \|\nabla \sigma^*\|^4_{L^4}\|\nabla \eta_1\|^2_{L^2},
        \end{split}
    \]
    and then, taking $\delta$ small enough, from the Gronwall Lemma we obtain that
    \[
    \begin{split}
    \|\eta_1\|_{X_2} + \|\eta_2\|_{W_2} \leq C\Big(\|l_2\|_{L^{\infty}(\Omega)}\|\sigma^*(T) - \sigma_\Omega\|_{L^2(\Omega)},\|u^*\|_{W_2}, \|\sigma^*\|_{X_2}, \|\ControlC^*\|_{L^\infty(Q)}, \|\ControlS^*\|_{L^\infty(Q)}, \|\sigma_Q\|_{L^{2+}(Q)}\Big).
    \end{split}
    \]
    Therefore, the Leray–Schauder Fixed Point Theorem implies the existence of solution $[\eta_1,\eta_2]\in X_{2}\times W_{2}$ of \eqref{eq:sistemaAdjunto} with $l_1(x) = 0$. The uniqueness follows from the linearity of \eqref{eq:sistemaAdjunto}.\\
    
    In the case $l_1(x)\neq 0,$ we only have $u^*(\cdot,T)\in L^2(\Omega),$ and thus, $p_1(\cdot,T)=\eta_1(\cdot,0)\in L^2(\Omega).$ Then, we can only obtain regularity $W_2$ for $\eta_1,$ which is not sufficient to control the 
    %right hand side of the 
    variational equation for $\eta_2.$ 
    Then, we consider the following "weak-very weak''  variational formulation of (\ref{eq:sistemaAdjunto hacia adelante}):
    \begin{equation}\label{very_weak}
         \begin{cases}
            \displaystyle\int_0^T\langle \partial_t\eta_1,\phi\rangle +\int_Q \nabla\eta_1\cdot \nabla\phi =   \int_Q \rho(\sigma^*)(\alpha - 2u^*)\eta_1\phi  - \int_Q\sigma^*\ControlC^* \eta_1\phi
            + \int_Q \phi\nabla \sigma^*\cdot \nabla \eta_1 - \int_Q\frac{\sigma^*}{1+\sigma^*} \eta_2\phi \\ \qquad \qquad \displaystyle
            +\int_Q (1 - \ControlS^*)\eta_2\phi+\int_Q k_1u^*\phi,\\
            \displaystyle\int_0^T \langle \partial_t\eta_2,\varphi\rangle 
            + \int_Q \eta_2(-\Delta \varphi) =  \int_Q\rho'(\sigma^*)u^*(\alpha - u^*)\eta_1\varphi 
            - \int_Q u^*\ControlC^* \eta_1\varphi 
            + \int_Qu^* \nabla \eta_1\cdot \nabla \varphi - \int_Q\gamma \eta_2\varphi \\ \qquad\qquad\displaystyle- \int_Q\frac{u^*}{(1+\sigma^*)^2}\eta_2\varphi  + \int_Q k_2(\sigma^* - \sigma_Q)\varphi,\\
            \eta_1(x,0) = l_1(x)u^*(x,T),\quad \eta_2(x,0) = l_2(x)(\sigma^*(x,T) - \sigma_\Omega),
        \end{cases}
    \end{equation}
for all $\phi \in L^2(H^1)$ and for all $\varphi \in L^2(H^2)\cap L^\infty(H^1)$ with $\partial_{\mathbf{n}}\varphi = 0$ on $\partial\Omega$. Now, we introduce the following elliptic boundary value problem
     \begin{equation}\label{very_weak_2}
         \begin{cases}
-\Delta q_2+q_2=\eta_2\ & \mbox{in}\ \Omega,\\
\partial_{\mathbf{n}} q_2 = 0\ &\mbox{on}\ \partial\Omega.
     \end{cases}
    \end{equation}
    Then, testing (\ref{very_weak})$_1$ by $\eta_1$ and (\ref{very_weak})$_2$ by $q_2,$ using the regularity of $u^*,\sigma^*,$ the linear boundedness of $\rho(\cdot),$ and the uniform bound of $\rho^\prime(\cdot),$ we can derive that $\eta_1$ is bounded in $W_2$ and $q_2$ is bounded in $L^2(H^2)\cap L^\infty(H^1),$ which implies that $\eta_2$ is bounded in $L^2(Q)\cap L^\infty((H ^1)').$ Consequently, $[p_1,p_2]\in W_2 \times (L^2(Q)\cap L^\infty((H ^1)'))$.
    
\end{proof}

\subsection{First-order Necessary Optimality Conditions. Proof of Theorem \ref{Teo:CNPO}.}
In this subsection, we derive the first-order necessary conditions, namely, the proof of Theorem \ref{Teo:CNPO}.
\begin{proof}

    Assume that $[u^*,\sigma^*,\ControlC^*,\ControlS^*] \in \mathcal{P}_{ad}$ is an optimal solution with $[\ControlC^*, \ControlS^*] \in \mathcal{U}_{ad}$. For all $[\overline{\ControlC}, \overline{\ControlS}] \in \mathcal{U}_{ad}$, we consider the admissible point $[u^\varepsilon, \sigma^\varepsilon, \ControlC^\varepsilon, \ControlS^\varepsilon] \in \mathcal{P}_{ad}$ with $[\ControlC^\varepsilon, \ControlS^\varepsilon] = [\ControlC^* + \varepsilon (\overline{\ControlC}-\ControlC^*), \ControlS^* + \varepsilon (\overline{\ControlS}-\ControlS^*)]\in \mathcal{U}_{ad}$ for any $\varepsilon \in [0,1]$. Then
    \[
    J([u^\varepsilon, \sigma^\varepsilon, \ControlC^\varepsilon, \ControlS^\varepsilon]) \geq J([u^*, \sigma^*, \ControlC^*, \ControlS^*]).
    \]
    This implies
    \[
    \begin{split}
    \int_Q \left(\frac{1}{2}k_1[(u^\varepsilon)^2-(u^*)^2] + \frac{1}{2}k_2[(\sigma^\varepsilon - \sigma_Q)^2 - (\sigma^*-\sigma_Q)^2] + k_3(\ControlC^\varepsilon - \ControlC^*) + k_4(\ControlS^\varepsilon - \ControlS^*)\right) (x,t)dxdt \\
    + \int_\Omega \left(\frac{1}{2}l_1[(u^\varepsilon)^2 - (u^*)^2] + \frac{1}{2}l_2[(\sigma^\varepsilon-\sigma_\Omega)^2-(\sigma^*-\sigma_\Omega)^2]\right)(x,T) dx \geq 0.
    \end{split}
    \]
    Multiplying by $1/\varepsilon > 0$ and assigning $z_1^\varepsilon = \frac{u^\varepsilon - u^*}{\varepsilon}$ and $z_2^\varepsilon=\frac{\sigma^\varepsilon - \sigma^*}{\varepsilon}$, it follows that
    \begin{eqnarray}
        \int_Q \left(\frac{1}{2}k_1 z_1^\varepsilon(u^\varepsilon+u^*) + \frac{1}{2}k_2z_2^\varepsilon(\sigma^\varepsilon +\sigma^*-2\sigma_Q) + k_3(\overline{\ControlC}-\ControlC^*) +k_4(\overline{\ControlS}-\ControlS^*)\right)(x,t)dxdt\nonumber\\
        + \int_\Omega \left(\frac{1}{2}l_1z_1^\varepsilon(u^\varepsilon+u^*) + \frac{1}{2}l_2z_2^\varepsilon(\sigma^\varepsilon + \sigma^* - 2\sigma_\Omega)\right)(x,T)dx \geq 0.\label{eq32-2}
    \end{eqnarray}
    Taking $\varepsilon \to 0$, yields
    \begin{equation}\label{eq32}
    \begin{split}
        \int_Q \Big(k_1 z_1u^* + k_2z_2(\sigma^*-\sigma_Q) + k_3(\overline{\ControlC}-\ControlC^*) +k_4(\overline{\ControlS}-\ControlS^*)\Big)(x,t)dxdt \\ + \int_\Omega \Big(l_1z_1u^* + l_2z_2(\sigma^*-\sigma_\Omega)\Big)(x,T)dx \geq 0.
    \end{split}
    \end{equation}
    Testing the first and second equations of the linearized system \eqref{eq:SistemaLinealizado} by $p_1$ and $p_2$ respectively, we obtain
    \begin{equation}\label{sisMult}
        \begin{cases}
        p_1\partial_t z_1 - p_1\Delta z_1 = p_1\rho(\sigma^*)(\alpha - 2u^*)z_1 + p_1\rho'(\sigma^*)u^*(\alpha - u^*)z_2 - p_1\sigma^* \ControlC^*z_1 -p_1u^*\ControlC^* z_2 \\\qquad \qquad\qquad- p_1u^*\sigma^*(\bar\ControlC-\ControlC^*) -p_1\nabla \cdot ( u^* \nabla z_2) - p_1\nabla \cdot(z_1 \nabla \sigma^*), & \text{in }Q, \\
    p_2\partial_t z_2 -p_2\Delta z_2 = -p_2\frac{\sigma^*}{1+\sigma^*}z_1- p_2\frac{u^*}{(1+\sigma^*)^2}z_2 - p_2\gamma z_2 - p_2(\bar \ControlS - \ControlS^*)u^* + p_2(1-\ControlS^*)z_1, & \text{in }Q.
        \end{cases}
    \end{equation}
    In addition, multiplying the two equations in the adjoint system \eqref{eq:sistemaAdjunto} by $z_1$ and $z_2$ respectively, we have
    \begin{equation}\label{adjMult}
        \begin{cases} 
            z_1\partial_t p_1 + z_1\Delta p_1 = - z_1\rho(\sigma^*)(\alpha - 2u^*)p_1+ z_1\sigma^* \ControlC^*p_1 - z_1\nabla \sigma^* \cdot \nabla p_1 + z_1\frac{\sigma^*}{1+\sigma^*}p_2 \\ \qquad \qquad \qquad- z_1(1-\ControlS^*)p_2- z_1k_1u^*,& \text{ in }Q, \\ 
            z_2\partial_t p_2 + z_2\Delta p_2= - z_2\rho'(\sigma^*)u^*(\alpha - u^*)p_1 + z_2u^*\ControlC^* p_1 + z_2\nabla \cdot (u^* \nabla p_1) + z_2\frac{u^*}{(1+\sigma^*)^2}p_2 \\ \qquad \qquad \qquad+ z_2\gamma  p_2 - z_2k_2(\sigma^* - \sigma_Q),& \text{ in }Q.
        \end{cases}
    \end{equation}
    Adding \eqref{sisMult} and \eqref{adjMult} gives
    \[
    \begin{split}
    \sum_{i=1}^2 \left(p_i \frac{\partial z_i}{\partial t} + z_i \frac{\partial p_i}{\partial t}\right) = & (p_1 \Delta z_1 - z_1\Delta p_1) + (p_2\Delta z_2 - z_2 \Delta p_2) \\ 
    & -p_1\nabla \cdot (u^* \nabla z_2) + z_2\nabla \cdot(u^*\nabla p_1) - p_1\nabla \cdot (z_1\nabla \sigma^*) - z_1 \nabla \sigma^* \cdot \nabla p_1 \\
    & -p_1u^*\sigma^*(\overline \ControlC - \ControlC^*) - p_2u^*(\overline \ControlS - \ControlS^*) - z_1k_1u^* - z_2k_2(\sigma^* - \sigma_Q).
    \end{split}
    \]
    Integrating by parts over $Q$ and applying the initial and boundary conditions of the linearized and adjoint systems, we obtain
    \begin{equation}\label{eq134}
        \begin{split}
        \int_\Omega \Big(l_1& z_1u^* +l_2z_2(\sigma^*-\sigma_\Omega)\Big)(x,T) dx \\ & =   \int_Q \Big(-p_1u^*\sigma^*(\overline \ControlC - \ControlC^*) - p_2u^*(\overline \ControlS - \ControlS^*) - k_1z_1u^* - k_2z_2(\sigma^*-\sigma_Q)\Big)(x,t)dxdt.
        \end{split}
    \end{equation}
    Combining \eqref{eq32} and \eqref{eq134} it follows that
    \begin{equation}\label{eq:DesigualdadVariacional}
        \int_Q \Big((k_3-u^*\sigma^*p_1)(\overline\ControlC - \ControlC^*) + (k_4-u^*p_2)(\overline{\ControlS}-\ControlS^*)\Big)(x,t) dxdt \geq 0.
    \end{equation}
\end{proof}

\begin{remark}
 %   From  Proposition \ref{prop:proyeccion} below, 
    Since the set of admissible controls $\mathcal{U}_{ad}$ is convex, from \eqref{eq:DesigualdadVariacional}, we have
    \[
    \ControlC^*(x,t) = \text{Proj}_{\mathcal{U}_{ad}}\Big(-u^*\sigma^*p_1\Big) \qquad \text{and} \qquad \ControlS^*(x,t) = \text{Proj}_{\mathcal{U}_{ad}}\Big(-u^*p_2\Big).
    \]
\end{remark}
In order to compute this projection, we state the following result.
\begin{prop}\label{prop:proyeccion}(See \cite{Fernandez-RestriccionGlobalControl})
   Assume that $u_{max}, y_{max} \in (0,+\infty)$ and $\rho \in L^2(Q)$ with $\rho(x,t) > 0$ a.e. $Q$ satisfying
   \[
   \int_Q \rho(x,t)dxdt > \frac{y_{max}}{u_{max}}.
   \]
   Let us consider the set
   \[
   \mathbb{K}=\Big\{u\in L^2(Q) : 0\leq u \leq u_{max} \text{ a.e. }(x,t)\in Q, \int_Q u(x,t)\rho(x,t) dxdt \leq y_{max}\Big\},
   \]
   and denote by Proj$_{\mathbb{K}}$ the projection operator onto $\mathbb{K}$. 
   For each $h \in L^2(Q)$, we have that
   \begin{itemize}
       \item[i)] If $\displaystyle\int_Q\text{Proj}_{[0,u_{max}]}(h(x,t))\rho(x,t)dxdt \leq y_{max}$, then Proj$_{\mathbb{K}} = $ Proj$_{[0,u_{max}]}(h)$.
       \item[ii)] If $\displaystyle\int_Q \text{Proj}_{[0,u_{max}]}(h(x,t))\rho(x,t)dxdt > y_{max}$, then there exists $\gamma > 0$ such that
       \[
       \text{Proj}_{\mathbb{K}}(h) = \text{Proj}_{[0,u_{max}]}(h-\gamma \rho) \quad \text{and} \quad \int_Q \text{Proj}_{\mathbb{K}}(h)(x,t)\rho(x,t) dxdt = y_{max}.
       \]
   \end{itemize}
\end{prop}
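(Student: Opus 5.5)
This is the projection onto a box with one weighted integral constraint in the Hilbert space $L^2(Q)$. I would use the variational characterization: $P=\text{Proj}_{\mathbb{K}}(h)$ is the unique element of the closed convex set $\mathbb{K}$ with $(h-P,v-P)_{L^2(Q)}\le 0$ for all $v\in\mathbb{K}$. The set $\mathbb{K}$ is nonempty (it contains $0$), convex, and closed in $L^2(Q)$, so the projection exists and is unique. Pointwise, $\text{Proj}_{[0,u_{max}]}(h)$ is the projection of $h$ onto the larger box set $\mathbb{B}=\{0\le u\le u_{max}\}$.

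\emph{Case i).} Put $w=\text{Proj}_{[0,u_{max}]}(h)$. It minimizes $\|h-v\|_{L^2(Q)}$ over $\mathbb{B}\supset\mathbb{K}$. By hypothesis $w\in\mathbb{K}$, so it also minimizes over $\mathbb{K}$, and uniqueness gives $P=w$.

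\emph{Case ii).} I would introduce the continuous function
\[
\phi(\gamma)=\int_Q \text{Proj}_{[0,u_{max}]}(h-\gamma\rho)\,\rho\,dxdt,\qquad \gamma\ge 0.
\]
The map $\gamma\mapsto \text{Proj}_{[0,u_{max}]}(h-\gamma\rho)$ is $1$-Lipschitz pointwise in $\gamma\rho$ and bounded by $u_{max}$. Since $\rho\in L^2(Q)\subset L^1(Q)$ ($Q$ is bounded), dominated convergence shows $\phi$ is continuous. It is also nonincreasing, because $\rho>0$.

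The limiting values are as follows. By assumption, $\phi(0)>y_{max}$. As $\gamma\to\infty$, $h-\gamma\rho\to-\infty$ a.e. because $\rho>0$ a.e., so the integrand tends to $0$ a.e. Dominated convergence (with bound $u_{max}\rho$) gives $\phi(\gamma)\to0<y_{max}$. The intermediate value theorem then yields $\gamma>0$ with $\phi(\gamma)=y_{max}$.

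Set $P=\text{Proj}_{[0,u_{max}]}(h-\gamma\rho)$; it lies in $\mathbb{K}$, with the integral constraint active. For $v\in\mathbb{K}$ I would write
\[
(h-P,v-P)=(h-\gamma\rho-P,v-P)+\gamma\Big(\int_Q v\rho-\int_Q P\rho\Big).
\]
The first term is $\le0$ by the pointwise projection inequality onto $[0,u_{max}]$, since $v(x,t)\in[0,u_{max}]$. The second term is $\le \gamma(y_{max}-y_{max})=0$. Hence $P=\text{Proj}_{\mathbb{K}}(h)$.

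The main obstacle is the existence of $\gamma$, specifically continuity and the limit of $\phi$, which rests on dominated convergence together with $\rho>0$ a.e. The hypothesis $\int_Q\rho>y_{max}/u_{max}$ only guarantees that the constraint is not automatically slack for $u\equiv u_{max}$. It is not needed beyond what case ii) already assumes.
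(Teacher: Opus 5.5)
Your proof is correct and complete. The paper does not prove this proposition; it only quotes it from the reference it cites, so there is no in-paper argument to compare against.

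Your route has three steps. First, you characterize $\text{Proj}_{\mathbb{K}}(h)$ by the variational inequality. Second, you produce the multiplier $\gamma$ constructively, via the intermediate value theorem, as a root of the continuous nonincreasing function $\phi(\gamma)=\int_Q \text{Proj}_{[0,u_{max}]}(h-\gamma\rho)\rho$. Third, you verify the inequality by splitting $(h-P,v-P)$ into the pointwise box-projection term and $\gamma\big(\int_Q v\rho-\int_Q P\rho\big)$.

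This is the elementary alternative to the usual Lagrange-multiplier (KKT) derivation of such results. That derivation goes as follows. A Slater point ($u=0$, since $\int_Q 0\cdot\rho=0<y_{max}$) yields a multiplier $\gamma\ge 0$ for the integral constraint. Pointwise minimization of the Lagrangian then gives $P=\text{Proj}_{[0,u_{max}]}(h-\gamma\rho)$. Complementary slackness gives $\int_Q P\rho=y_{max}$ when $\gamma>0$, and $\gamma=0$ recovers case i).

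The KKT route handles both cases in one stroke but relies on an abstract multiplier theorem. Yours avoids it and verifies optimality directly. As a bonus, it exhibits exactly the monotonicity and continuity of $\phi$ that justify the bisection search for $\lambda$ in Step 6 of the paper's algorithm.

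Two small refinements:
\begin{itemize}
\item Continuity of $\phi$ follows without dominated convergence, since $|\phi(\gamma_1)-\phi(\gamma_2)|\le|\gamma_1-\gamma_2|\,\|\rho\|_{L^2(Q)}^2$.
\item If you want to avoid using $|Q|<\infty$ in the limit $\gamma\to\infty$, dominate the integrand by $h_+\rho\in L^1(Q)$ instead of $u_{max}\rho$. This works because $0\le\text{Proj}_{[0,u_{max}]}(h-\gamma\rho)\le h_+$ for $\gamma\ge0$.
\end{itemize}

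Your remark that the hypothesis $\int_Q\rho>y_{max}/u_{max}$ is implied by the assumption of case ii) is correct. Also, the conclusion of i) in the statement should read $\text{Proj}_{\mathbb{K}}(h)$, which is what you proved.
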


\section{Numerical Simulations}

This section describes the numerical strategy used to approximate the control problem related to the system (\ref{eq:SistemaControlado}). We show some numerical simulations, 
% that validate the model and the proposed control problem, 
demonstrating the effectiveness and
computational efficiency of the proposed scheme.

\subsection{Approximation of the States and Adjoint System}

To approximate the state and adjoint systems (\ref{eq:SistemaControlado}) and (\ref{eq:sistemaAdjunto}), we propose a  first-order, linear, decoupled, fully discrete numerical scheme, based on the finite element method for spatial discretization and finite differences for temporal discretization. 

We consider a partition of $[0,T]$ with time step $\Delta t = T/N : (t_n = n \Delta t)_{n=0}^{n=N}$
%.\\. \\ \noindent
%For the spatial discretization, we consider 
and a family of triangulations $\{\mathcal{T}_h\}_{h>0}$ of $\overline \Omega$, and the
$\mathbb{P}_1$ continuous  finite element space
% is defined as
\[
X_h = \{v_h \in C(\overline\Omega) : v_h |_K \in \mathbb{P}_1, \forall K \in \mathcal{T}_h\} \subset H^1(\Omega).
\]
%Thus, the state variables are spatially approximated in $X_h$. 
%\begin{comment}
%In addition, the nodal interpolation operator $I_h : C(\bar \Omega) \to X_h$ and the mass-lumping inner product
%\[
%(m_1, m_2)^h := \int_\Omega I_h(m_1m_2)dx,
%\]
%with its associated norm $|m|_h = \sqrt{(m,m)^h}$ are introduced.
%\end{comment}
Likewise, we consider the projection operator $\mathcal{Q}^h : L^2(Q) \to X_h$, defined by
\[
(\mathcal{Q} ^hz, \bar m) = (z, \bar m), \quad \forall \bar m \in X_h.
\]
%Thus, we consider the following first-order in time, linear, and decoupled numerical scheme
\\
{\bf Approximation for the state  system   (\ref{eq:SistemaControlado}):}
\begin{itemize}
\item \textbf{Initialization:} Let $[u^0_h, \sigma_h^0] = [\mathcal{Q}^h u_0, \mathcal{Q}^h\sigma_0] \in X_h \times X_h$.
\item \textbf{Time step $n$:} Given the vector $[u_h^{n-1}, \sigma_h^{n-1}] \in X_h \times X_h$, calculate $[u_h^n, \sigma_h^n] \in X_h \times X_h$ solving the following linear decoupled problem
\end{itemize}
\[
\begin{cases}
    (\delta_t u_h^n, \bar u) + D_u(\nabla u_h^n, \nabla \bar u) 
    = (\rho(\sigma^{n-1}_h)u^n_h(\alpha - u_h^{n-1}), \bar u)
    -\kappa(\ControlC u^n_h\sigma^{n-1}_h, \bar u)
    +\chi(u^n_h \nabla \sigma^{n-1}_h, \nabla \bar u), \\
    (\delta_t \sigma^n_h, \bar \sigma) + D_\sigma(\nabla \sigma^n_h, \nabla \bar \sigma) = (P_{er}S_v(\beta - \sigma_h^n), \bar \sigma)-\displaystyle(\frac{A_{ox}u^n_h\sigma^n_h}{k_{ox}+\sigma_h^{n-1}}, \bar \sigma) + ((1-\ControlS)S_cu^n_h, \bar \sigma),
\end{cases}
\]
for all $[\bar u, \bar \sigma] \in [X_h]^2$. Here $\delta_t z_h^n := (z_h^n-z_h^{n-1})/\Delta t$.\\ 

\noindent
{\bf Approximation for the adjoint  system  (\ref{eq:sistemaAdjunto}):}
\begin{itemize}
\item \textbf{Initialization:} Let $[p^{N+1}_{1,h}, p_{2,h}^{N+1}] = [\mathcal{Q}^h p_1(T), \mathcal{Q}^hp_2(T)] \in X_h \times X_h$.
\item \textbf{Time step $n$:} Given the vector $[p_{1,h}^{n+1}, p_{2,h}^{n+1}] \in X_h \times X_h$, calculate $[p_{1,h}^n, p_{2,h}^n] \in X_h \times X_h$ such that
\end{itemize}
\[
\begin{cases}
    -(\tilde\delta_t p_{1,h}^n, \bar p) + D_u(\nabla p_{1,h}^n, \nabla \bar p) = (\rho(\sigma^*)(\alpha - 2u^*)p_{1,h}^n, \bar p) -\kappa(\sigma^*\ControlC^*p_{1,h}^n, \bar p) + (\nabla \sigma^* \cdot \nabla p_{1,h}^n,\bar p) \\ \qquad \qquad - \displaystyle(\frac{A_{ox}\sigma^*}{k_{ox}+\sigma^*}p_{2,h}^{n+1}, \bar p) + ((1-\ControlS^*)p_{2,h}^{n+1}, \bar p) + (k_1 u^*, \bar p), \\
    -(\tilde\delta_t p^n_{2,h}, \bar q) + D_\sigma(\nabla p^n_{2,h}, \nabla \bar q) = (\rho'(\sigma^*)u^*(\alpha -u^*)p_{1,h}^n, \bar q) - \kappa(u^*\ControlC^* p_{1,h}^n, \bar q) + (u^*\nabla p_{1,h}^n, \nabla  \bar q) \\ \qquad \qquad- \displaystyle(\frac{A_{ox}k_{ox}u^*}{(k_{ox}+\sigma^*)^2}p_{2,h}^n, \bar q)- (P_{er}S_v p_{2,h}^n, \bar q) + (k_2(\sigma^*-\sigma_Q), \bar q),
\end{cases}
\]
for all $[\bar p, \bar q] \in [X_h]^2.$ Here $\tilde\delta_t z_h^n :=(z_h^{n+1}-z_h^{n})/\Delta t$.

\begin{comment}
\begin{remark}
    In order to preserve the biological interpretation of the variables, after each resolution, a point truncation is applied to ensure non-negativity. In particular, the following is imposed:
    \[
    u_h^n \leftarrow \text{max}\{u_h^n, 0\}, \qquad \sigma_h^n \leftarrow \text{max}\{\sigma_h^n, 0\}.
    \]
\end{remark}
\end{comment}

\subsection{Optimal Control Algorithm}
To approximate the solutions of the optimal control problem  we propose a gradient descent type method, namely, the Adam method \cite{Adam-cite}. By simplicity, in all simulations, the controls $\ControlC$ and $\ControlS$ depend only on time. The optimal controls are obtained by building sequences $\{\ControlC_k\}_{k \geq 1}$ and $\{\ControlS_k\}_{k\geq 1}$, starting from given initial controls $\ControlC_0$ and $\ControlS_0$. These sequence are computed using the following iterative scheme:\\

\noindent
\textbf{Step 1.} Compute $[u_k, \sigma_k]$ by solving state system \eqref{eq:SistemaControlado} with 
$[\ControlC^*, \ControlS^*] = [\ControlC_k, \ControlS_k]$. \\ \\
\noindent
\textbf{Step 2.} Compute $[p_1^k, p_2^k]$ by solving adjoint system \eqref{eq:sistemaAdjunto}  with
 $[\ControlC^*, \ControlS^*] = [\ControlC_k, \ControlS_k]$ and $[u^*, \sigma^*] = [u_k ,\sigma_k]$. \\ \\
\noindent
\textbf{Step 3.} Compute the gradient of the cost functional $J$ in $[\ControlC_k, \ControlS_k]$ as
\[
d^k_{\ControlC} = \nabla_{\ControlC}J([u_k, \sigma_k, \ControlC_k, \ControlS_k])
 = k_3 - \int_\Omega\kappa \, \sigma_k u_k p_1^k, 
 \qquad d^k_{\ControlS}=\nabla_{\ControlS} J([u_k,\sigma_k,\ControlC_k,\ControlS_k]) = k_4 - \int_\Omega S_c u_k p_2^k.
\]
\noindent
\textbf{Step 4.} Compute the value of the cost functional at the current iterate: $J_k = J([u_k, \sigma_k, \ControlC_k, \ControlS_k])$.\\ \\
\noindent
\textbf{Step 5.} %Update the controls by using an Adam type descent strategy. For each time level, 
Compute the first and second moments of the discrete gradients as
\[
m^{k+1}=\beta_1 m^k + (1-\beta_1)d^k,
\qquad
v^{k+1}=\beta_2 v^k + (1-\beta_2)(d^k)^2,
\]
where $d^k$ denotes the corresponding discrete gradient component. After bias correction, obtain a descent direction as
\[
p^{k+1}=-\frac{\widehat m^{k+1}}{\sqrt{\widehat v^{k+1}}+\varepsilon}.
\]
Then, define the tentative controls by
\[
\ControlC_{k+1}
=
\ControlC_k+\alpha_k p_{\ControlC}^{k+1},
\qquad
\ControlS_{k+1}
=
\ControlS_k+\alpha_k p_{\ControlS}^{k+1},
\]
where $\alpha_k>0$ is the step size. \\ \\

\noindent
\textbf{Step 6.} Compute a projection of the tentative controls onto the admissible set. For the pointwise bounds, truncate the controls as 
\[
0\leq \ControlC_{k+1}(t)\leq 1,
\qquad
0\leq \ControlS_{k+1}(t)\leq 1.
\]
In the case of $\ControlC_{k+1}$, we must to apply the additional integral constraint
\[
\int_0^T \ControlC_{k+1}(t)\,dt \leq c_{max}.
\]
If the truncated control already satisfies this condition, no further correction is required. Otherwise, following Proposition \ref{prop:proyeccion}, there exists a constant $\lambda>0$ such that 
\[
    \ControlC_{k+1} = \min\{\max\{\ControlC-\lambda,0\},1\} \qquad \text{and} \qquad \int_0^T \min\{\max\{\ControlC-\lambda,0\},1\}dt = c_{max}.
\]
For this case, we compute the scalar $\lambda$ using the bisection method.
\\ \\

\noindent
\textbf{Step 7. (Stopping criterion).} 
Let $J_k$ denote the value of the objective functional at iteration $k$ and define
\[
\delta_k =
|J_k - J_{k-1}|.
\]
Given a tolerance ${\bf tol}>0$ and an integer
$\mathcal{N}\in\mathbb{N}$, the algorithm stops if
\[
\delta_{k-j} < {\mbox{\bf tol}},
\qquad
j=0,\dots,\mathcal{N}-1,
\]
that is, if the value of the functional remains stable during $\mathcal{N}$ consecutive iterations.
Otherwise, set $k\leftarrow k+1$ and repeat Steps 1--6.

\subsection{Computational Setup}

The numerical simulations are performed on a two-dimensional normalized domain discretized using finite elements with $|\Omega| = 1$, representing the axial section of a brain (see Figure \ref{fig:dominio}). For the temporal discretization, we consider a final time $T = n_{iter}\Delta t$ with $n_{iter} = 250$ and time step $\Delta t = 0.008$; therefore, the total simulated time was $T=2$. \\

The computational domain (see Figure \ref{fig:dominio}) was constructed  from a radiographic image by identifying the relevant geometric structures. Initially, the external boundary of the domain and its interior were highlighted in black, producing a binary image that defines the principal region. Subsequently, an analogous procedure was applied to detect and delineate the internal boundaries of the domain, yielding additional binary masks corresponding to voids (regions where the mesh is not defined). These images, stored in PGM format with binary encoding (black representing the domain and white the exterior), were imported into FreeFEM++, where each one was interpreted as a discrete scalar field. From these fields, level curves (isolines) were extracted to reconstruct the associated boundaries. Finite element meshes were then generated from these curves. Finally, the global domain was obtained by subtracting the meshes of the internal regions from the main mesh, resulting in an irregular computational geometry that approximates the structural characteristics of the original radiographic image.

\begin{figure}[H]
     \centering
     \begin{subfigure}[b]{0.3\textwidth}
         \centering
         \includegraphics[width=\textwidth]{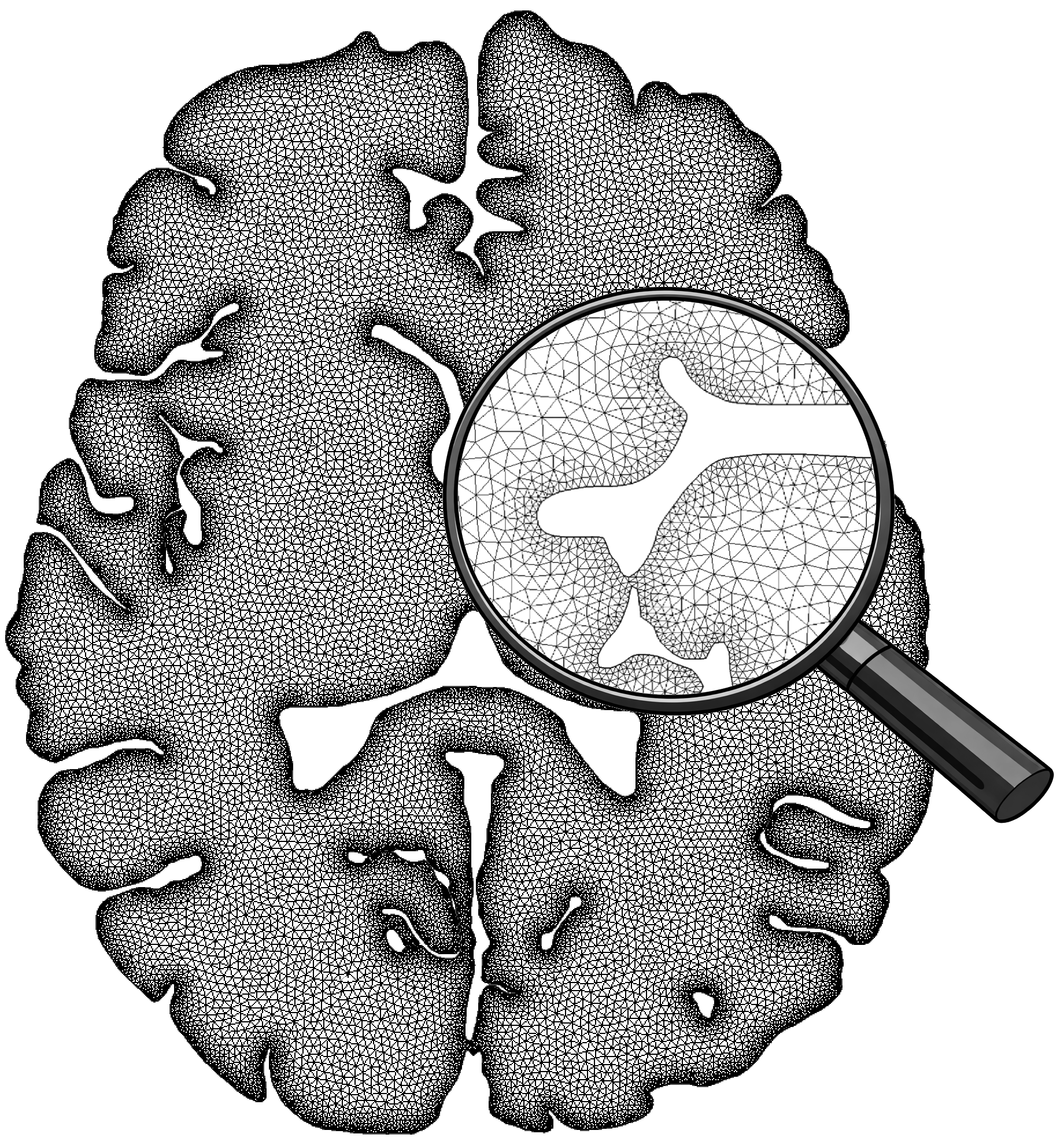}
         \caption{Domain mesh.}
         \label{fig:dominio}
     \end{subfigure}
     \hfill
     \begin{subfigure}[b]{0.3\textwidth}
         \centering
         \includegraphics[width=\textwidth]{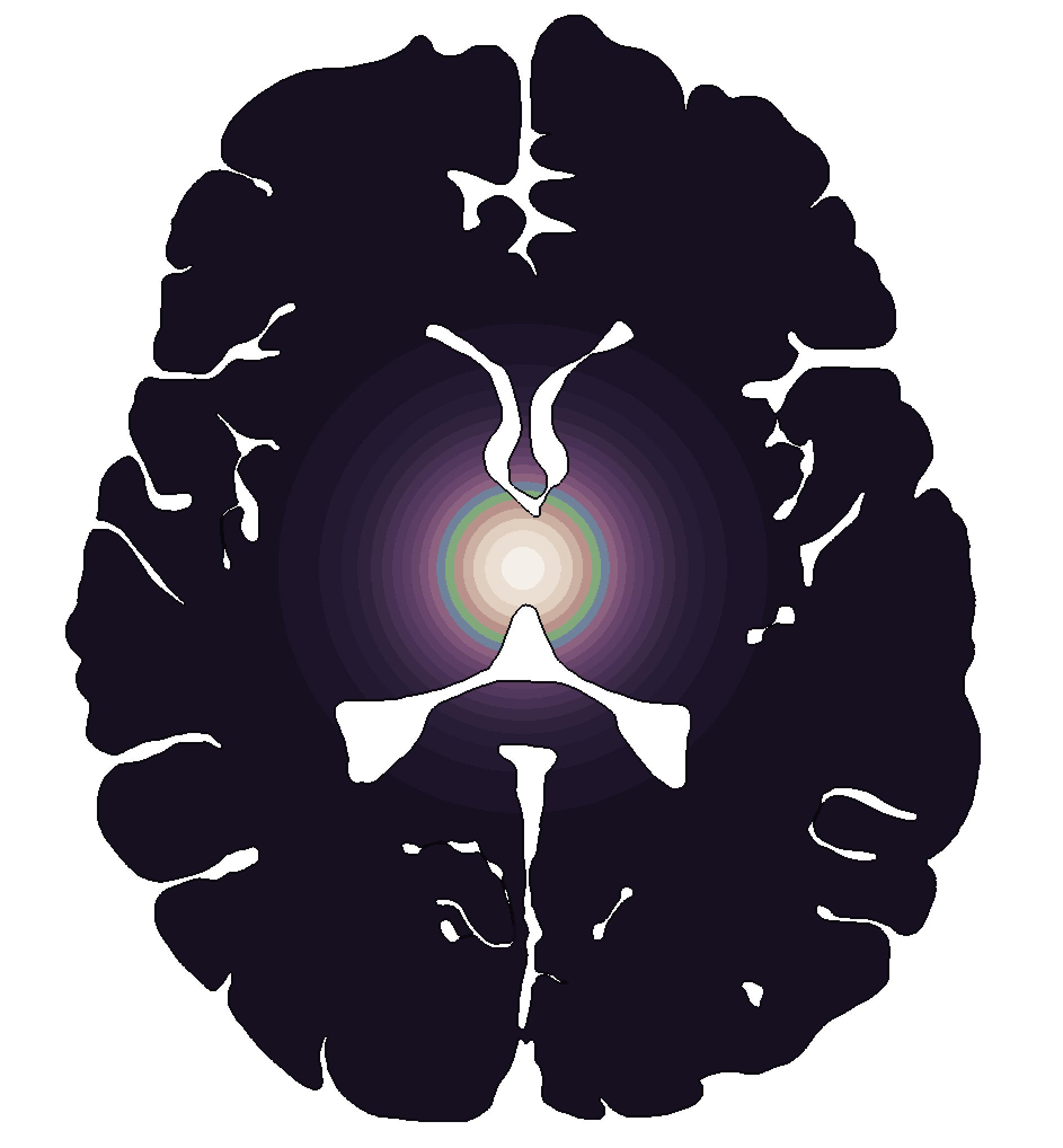}
         \caption{Initial tumor density.}
         \label{fig:u0}
     \end{subfigure}
     \hfill
     \begin{subfigure}[b]{0.3\textwidth}
         \centering
         \includegraphics[width=\textwidth]{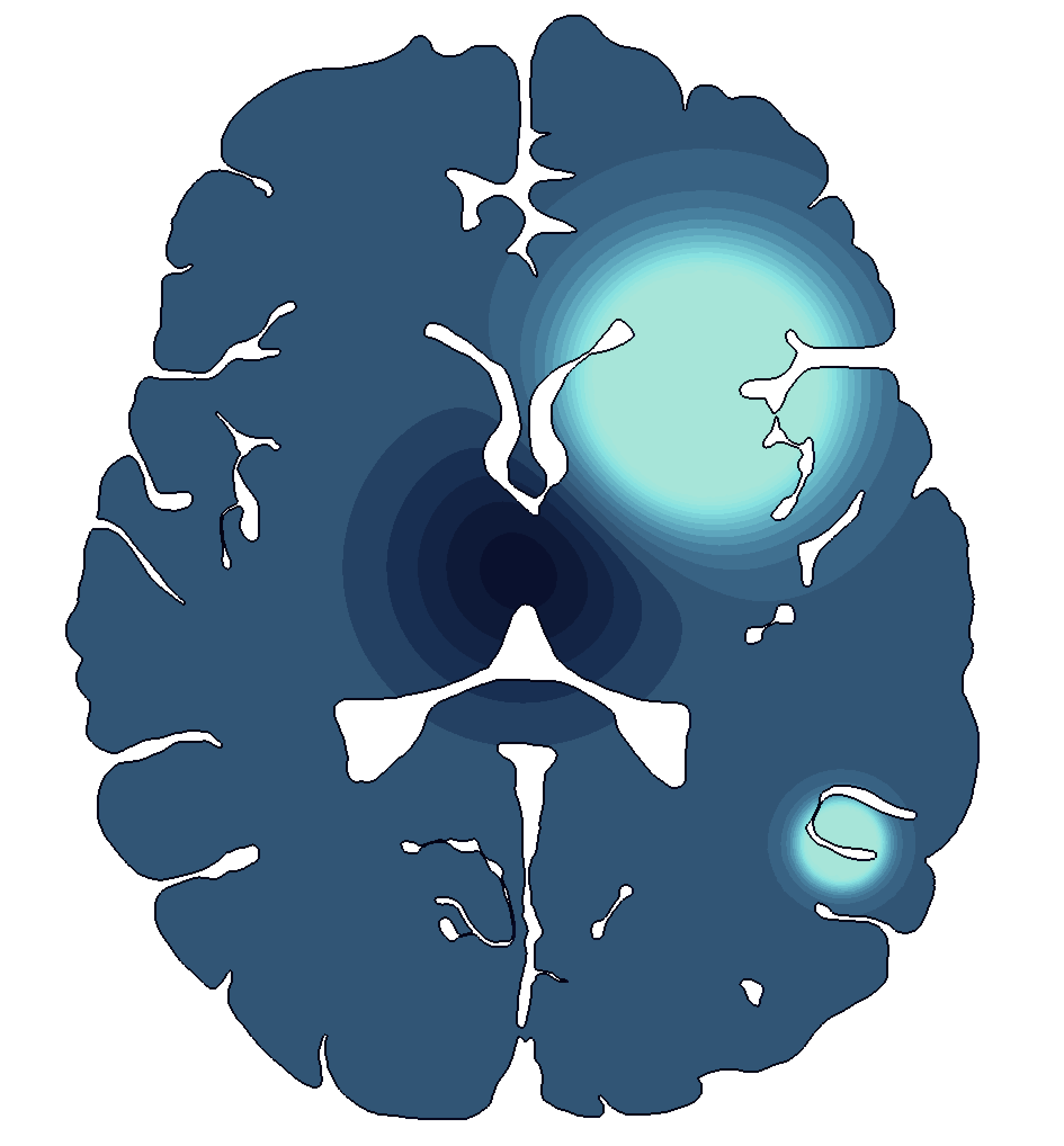}
         \caption{Initial oxygen concentration.}
         \label{fig:s0}
     \end{subfigure}
        \caption{Initial settings for the simulations.}
        \label{fig:confInitial}
\end{figure}

\noindent
We define the initial condition for tumor density $u$ as a Gaussian distribution localized at the center of the domain (see Figure \ref{fig:u0}). Denoting by $L_x$ and $L_y$ the horizontal and vertical lengths of the domain, respectively, and by $(x_c, y_c)$ the center point of the domain, with a width based on the size of the domain, we consider
\[
u_0(x,y) = 0.6 \exp \Big(-\frac{(x-x_c)^2+(y-y_c)^2}{2(0.1 \min\{L_x, L_y\})^2}\Big).
\]
The initial condition for the oxygen $\sigma$ was constructed using $u_0$ and two nearby Gaussians in order to generate nearby gradients (see Figure \ref{fig:s0}). More precisely, we consider
\[
\sigma_0(x,y)= 0.7 \beta (1-0.3 u_0(x,y)) + \beta \exp\left(-\frac{(x-x_1)^2 + (y-y_1)^2}{2(0.08\min\{L_x, L_y\})^2}\right) + \beta \exp\left(-\frac{(x-x_2)^2 + (y-y_2)^2}{2(0.025\min\{L_x, L_y\})^2}\right).
\]
The parameters used for the state equations are shown in the Table \ref{tab:Parametros}, and the parameters used for the Adam algorithm are presented in the Table \ref{tab:ParametrosAdam}.
\begin{table}[H]
\centering

\begin{minipage}{0.6\textwidth}
\centering
\begin{tabular}{cc|cc}
\hline
\multicolumn{2}{c|}{\textbf{Tumor equation}} & \multicolumn{2}{c}{\textbf{Oxygen equation}} \\ \hline
\textbf{Parameter}      & \textbf{Value}     & \textbf{Parameter}      & \textbf{Value}     \\ \hline
$D_u$                   & $2$                & $D_\sigma$              & $0.02$             \\
$\alpha$                & $4$                & $A_{ox}$                & $0.6$              \\
$\hat \rho$            & $3.5$              & $k_{ox}$                & $0.1$              \\
$b$                     & $1$                & $\beta$                & $1$                \\
$\chi$                  & $10$               & $P_{er}S_v$                   & $1$                \\
$\kappa$                & $1$                & $S_c$                   & $0.4$              \\ \hline
\end{tabular}
\caption{Parameters for the state equations.}
\label{tab:Parametros}
\end{minipage}
\hspace{0.05\textwidth}
\begin{minipage}{0.3\textwidth}
\centering
\begin{tabular}{cc}
\hline
\textbf{Parameter} & \textbf{Value} \\ \hline
$\beta_1$          & $0.9$          \\
$\beta_2$          & $0.999$        \\
$\varepsilon$      & $10^{-8}$      \\
$\alpha_0$         & $0.1$          \\ \hline
\end{tabular}
\caption{Parameters for the Adam algorithm.}
\label{tab:ParametrosAdam}
\end{minipage}

\end{table}
\noindent
Finally, for the cost functional, we consider the desired oxygen as the constant capacity   $\beta=\sigma_Q = \sigma_\Omega $.

\subsection{Experiment 1 - Uncontrolled Evolution}
This section analyzes the behavior of model (\ref{eq:SistemaControlado}) in the absence of control, that is, by setting $\ControlC = 0$ and $\ControlS = 0$, in order to observe the natural growth of the tumor. Figure \ref{fig:taxis} shows the spatio-temporal evolution of the cell density $u$ and the oxygen concentration $\sigma$ during the early times of the simulation. The first row (violet-type color) corresponds to the evolution of the tumor, while the second row (blue color) corresponds to the evolution of oxygen. During this period, the dynamic of the system is primarily governed by the oxytaxis, characterized by a rapid migration toward regions with higher oxygen concentrations. Figure \ref{fig:Evol_1anio} illustrates the evolution of the system over a longer time horizon up to $T=1$, where random diffusion becomes significant in conjunction with oxytaxis effects.
\begin{figure}[H]
     \centering
     \begin{subfigure}[b]{0.24\textwidth}
         \centering
         \includegraphics[width=\textwidth]{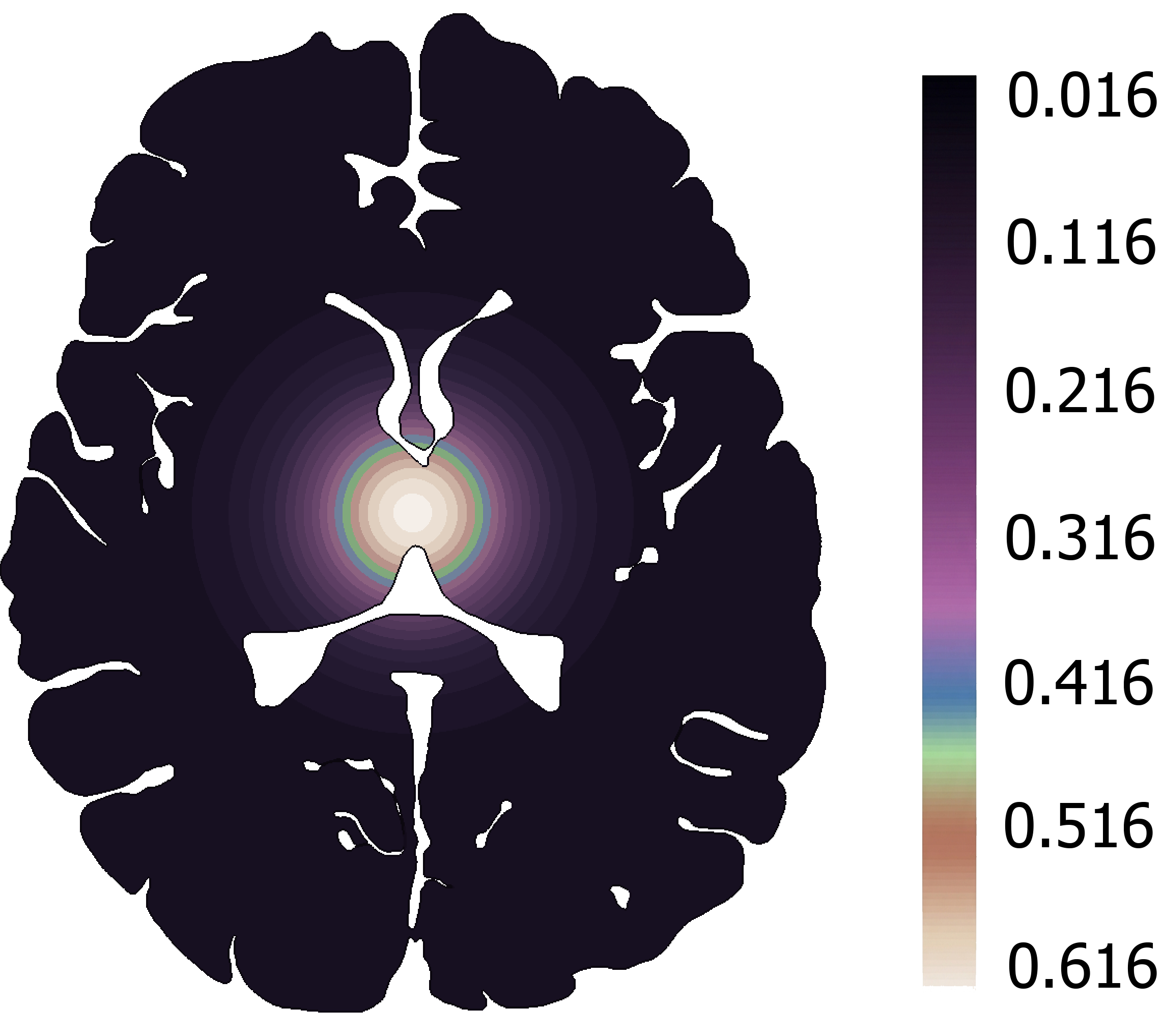}
         \caption{Initial}
         \label{fig:u0_inicial_quimioatraccion}
     \end{subfigure}
     \hfill
     \begin{subfigure}[b]{0.24\textwidth}
         \centering
         \includegraphics[width=\textwidth]{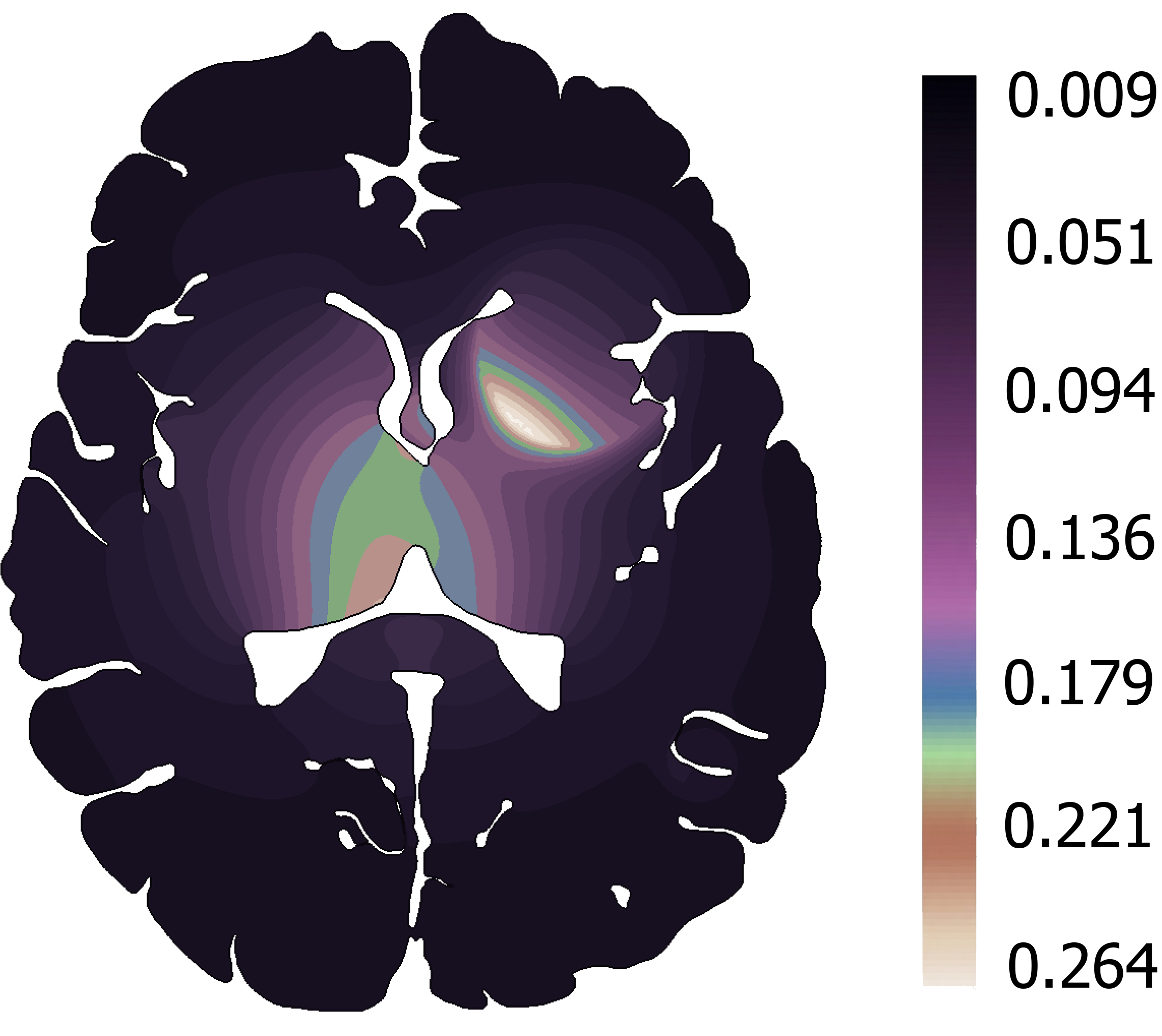}
         \caption{$t = 0.008$}
         
         \label{fig:u0_0_08}
     \end{subfigure}
     \hfill
     \begin{subfigure}[b]{0.24\textwidth}
         \centering
         \includegraphics[width=\textwidth]{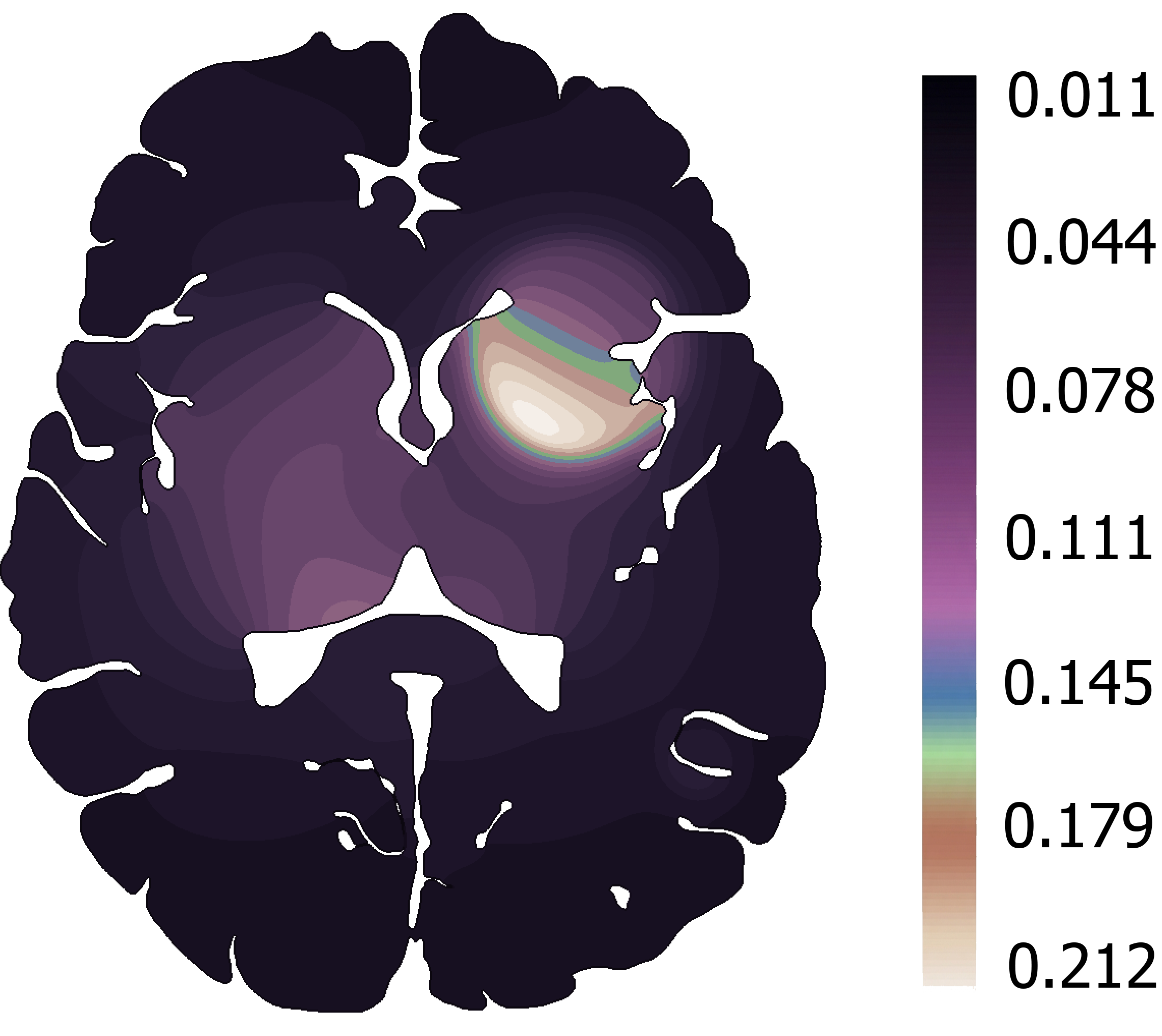}
         \caption{$t = 0.016$}
         \label{fig:u0_0_016}
     \end{subfigure}
     \hfill
     \begin{subfigure}[b]{0.24\textwidth}
         \centering
         \includegraphics[width=\textwidth]{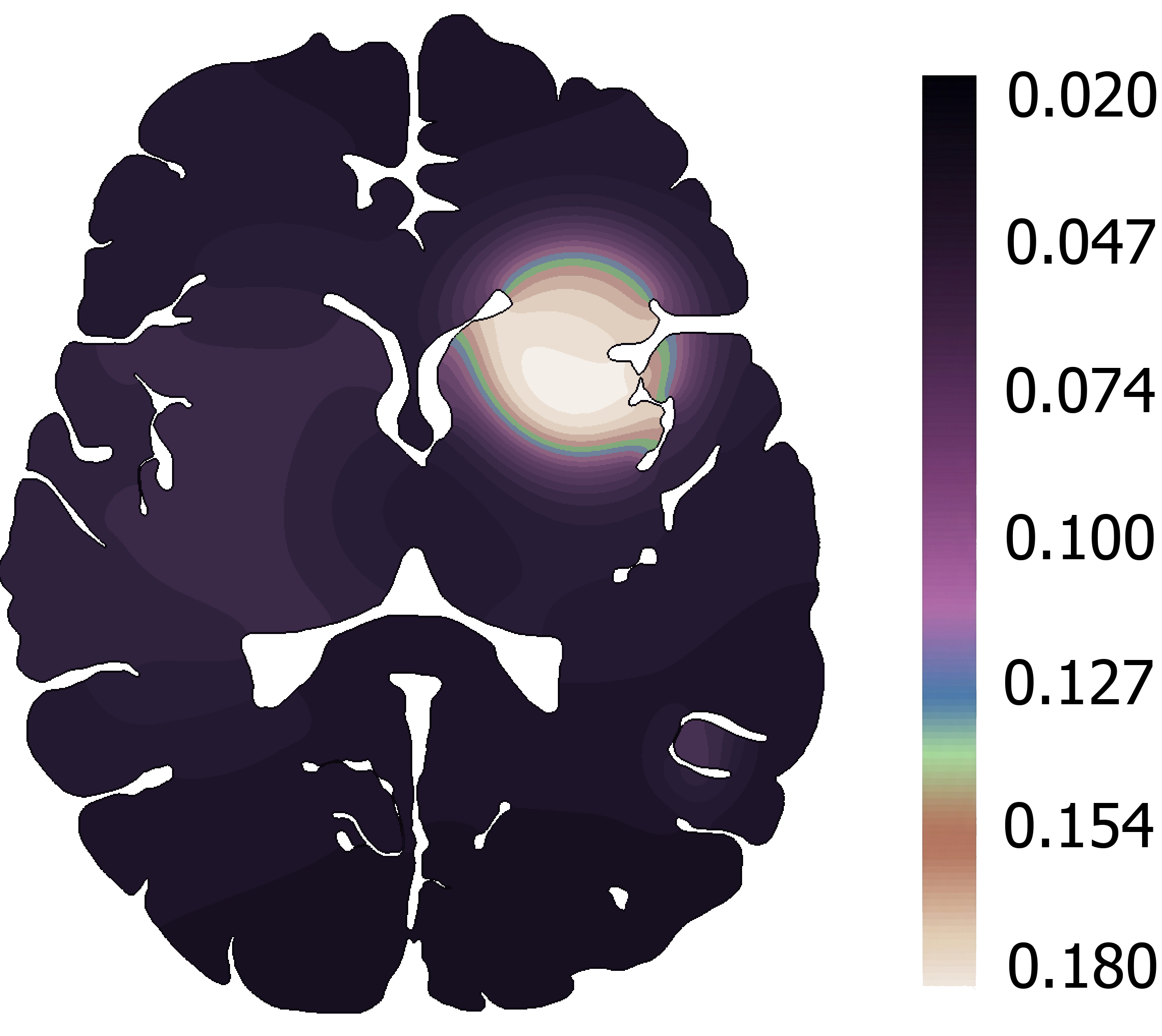}
         \caption{$t = 0.032$}
         \label{fig:u0_0_04}
     \end{subfigure}
     \begin{subfigure}[b]{0.24\textwidth}
         \centering
         \includegraphics[width=\textwidth]{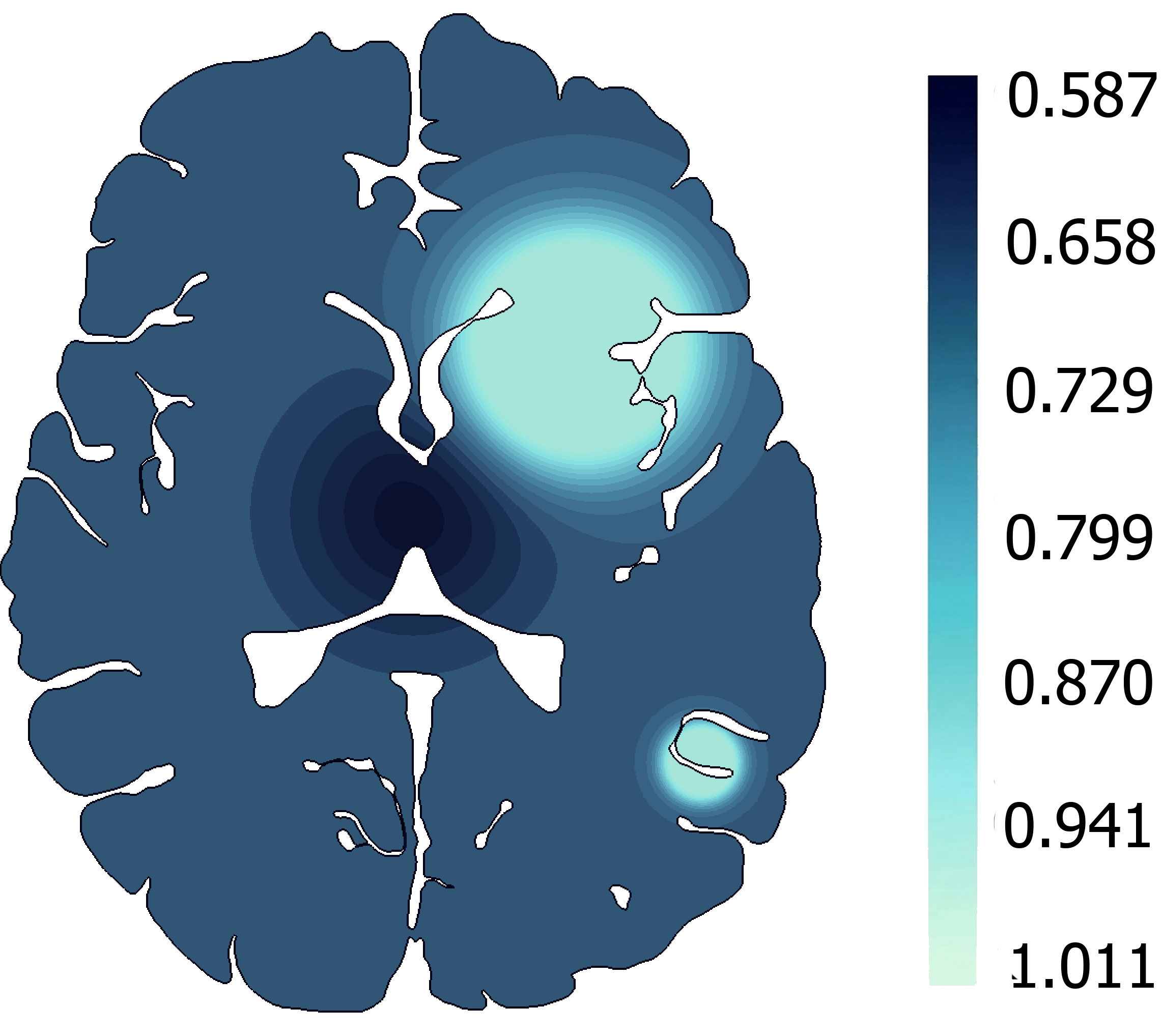}
         \caption{Initial}
         \label{fig:s0_inicial_quimioatraccion}
     \end{subfigure}
     \hfill
     \begin{subfigure}[b]{0.24\textwidth}
         \centering
         \includegraphics[width=\textwidth]{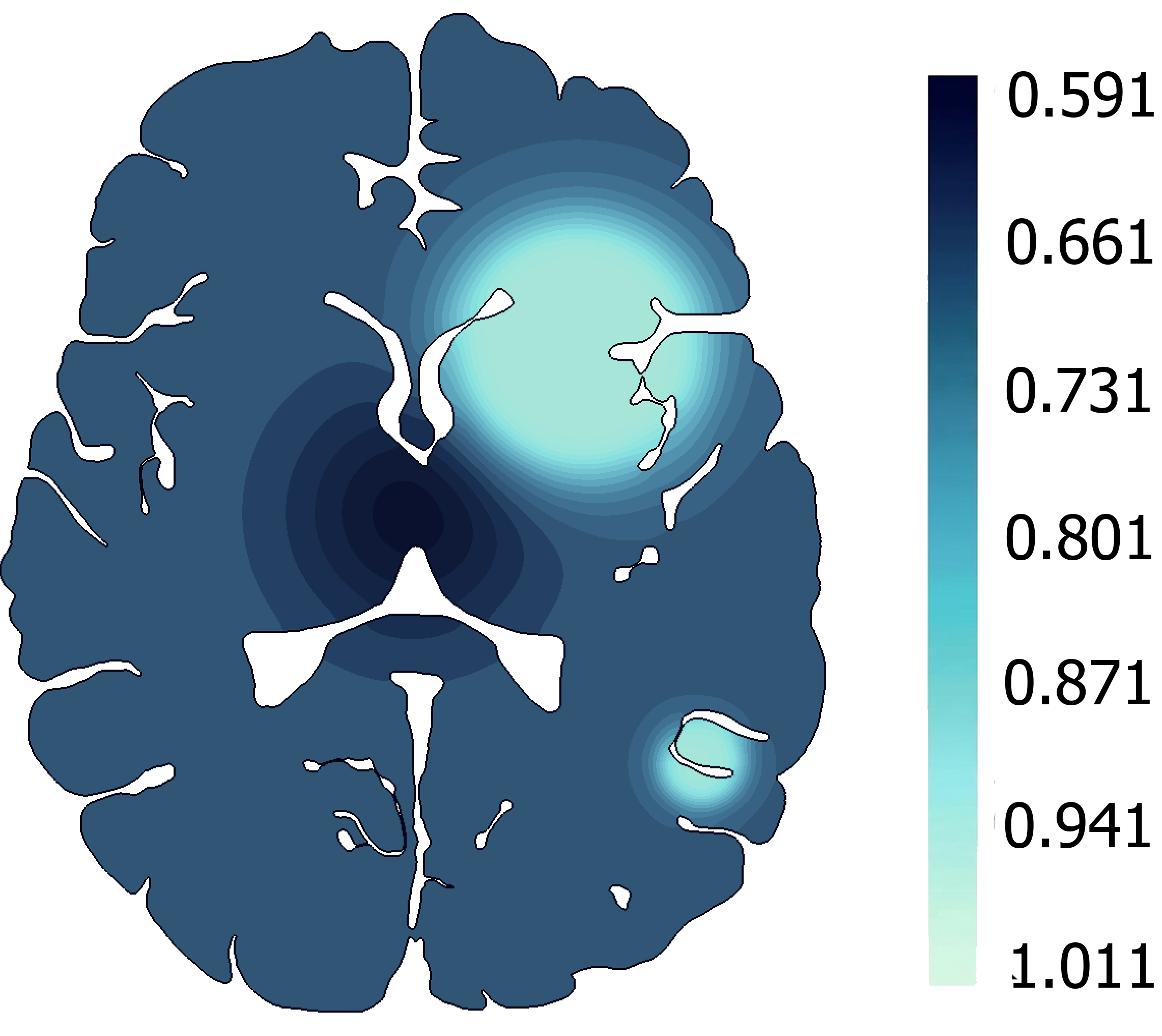}
         \caption{$t = 0.08$}
         \label{fig:s0_0_08}
     \end{subfigure}
     \hfill
     \begin{subfigure}[b]{0.24\textwidth}
         \centering
         \includegraphics[width=\textwidth]{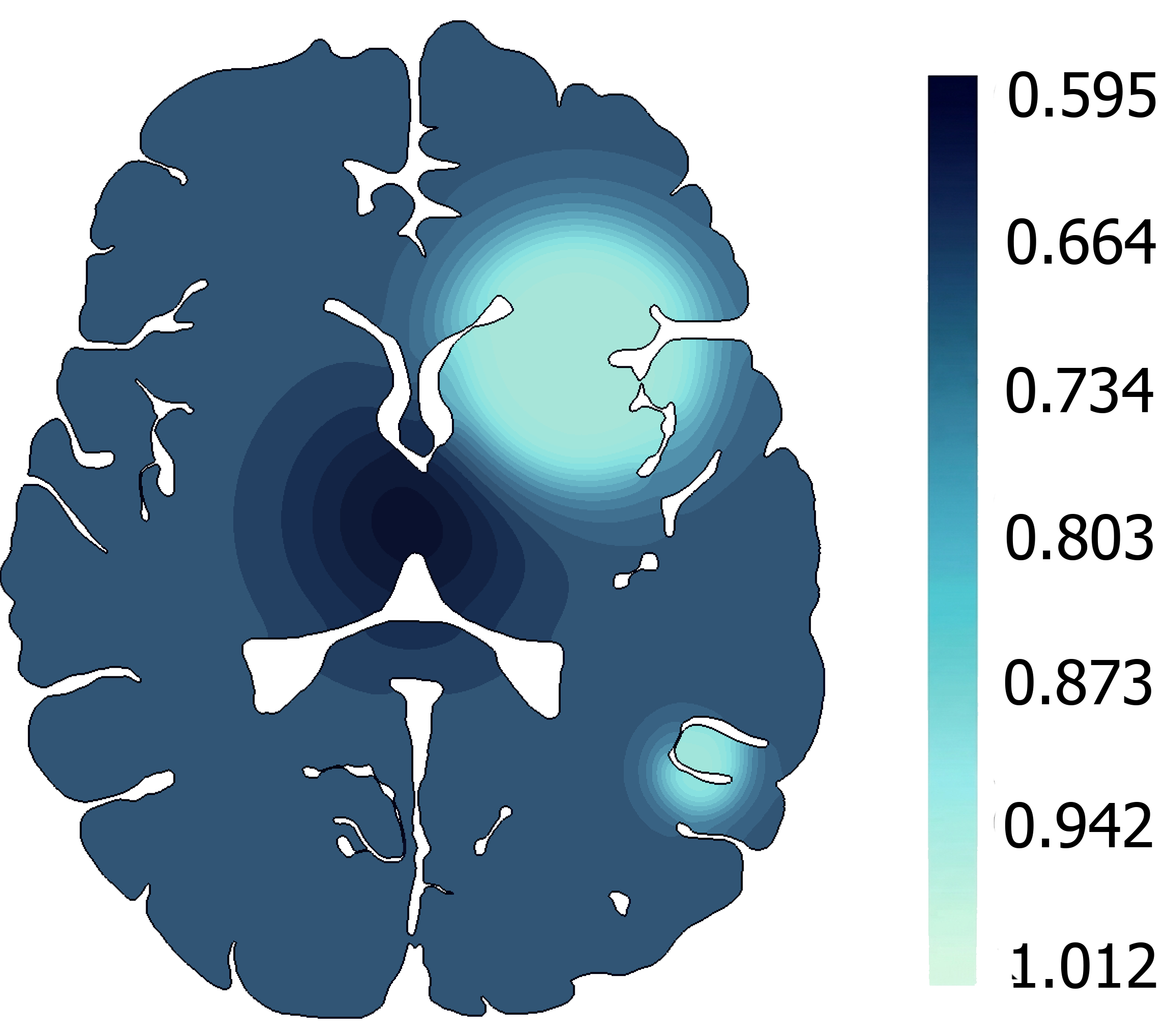}
         \caption{$t = 0.016$}
         \label{fig:s_0_016}
     \end{subfigure}
     \hfill
     \begin{subfigure}[b]{0.24\textwidth}
         \centering
         \includegraphics[width=\textwidth]{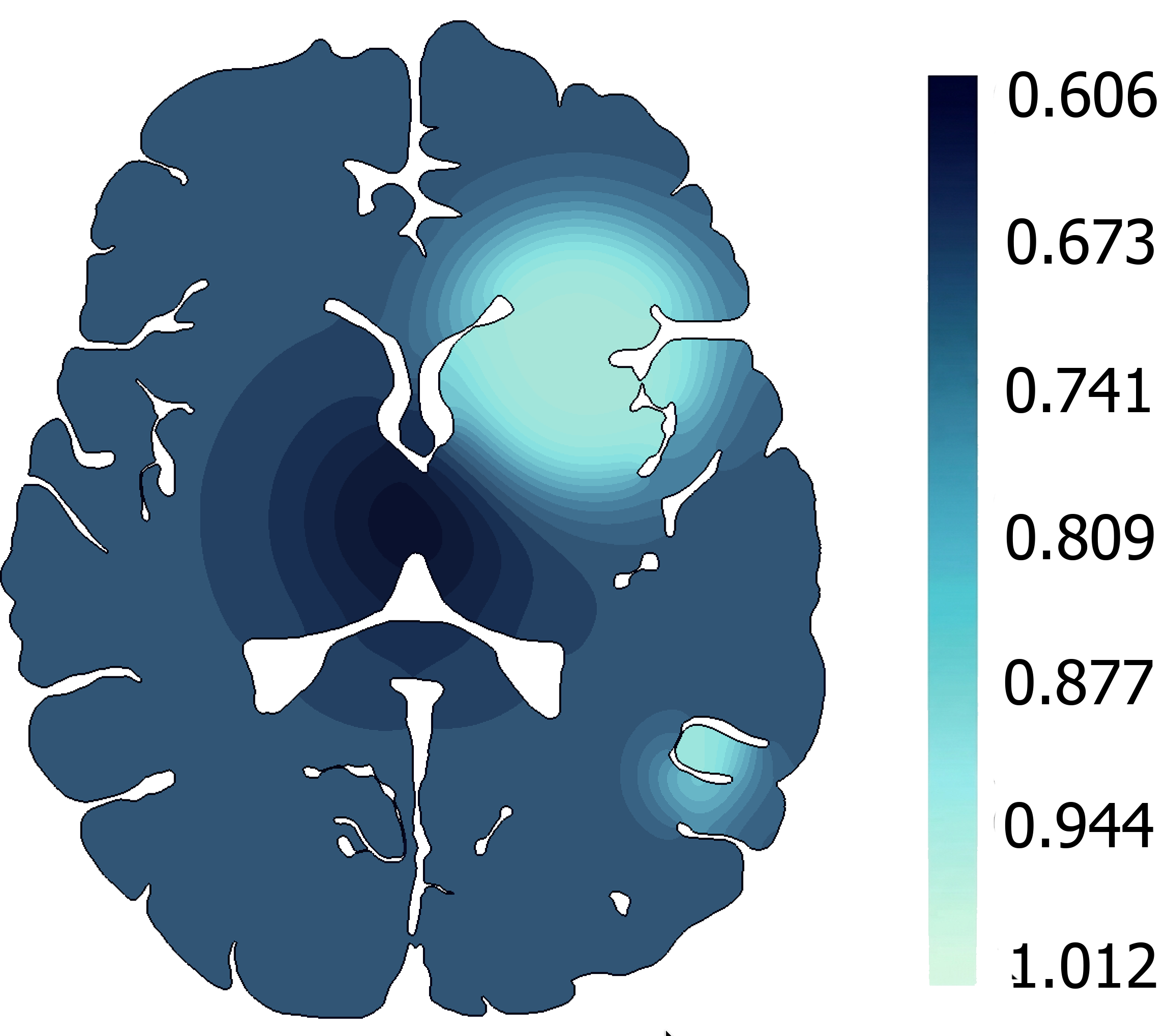}
         \caption{$t = 0.032$}
         \label{fig:s0_0_04}
     \end{subfigure}
        \caption{First times of evolution.}
        \label{fig:taxis}
\end{figure}
\begin{figure}[H]
     \centering
     \begin{subfigure}[b]{0.30\textwidth}
         \centering
         \includegraphics[width=\textwidth]{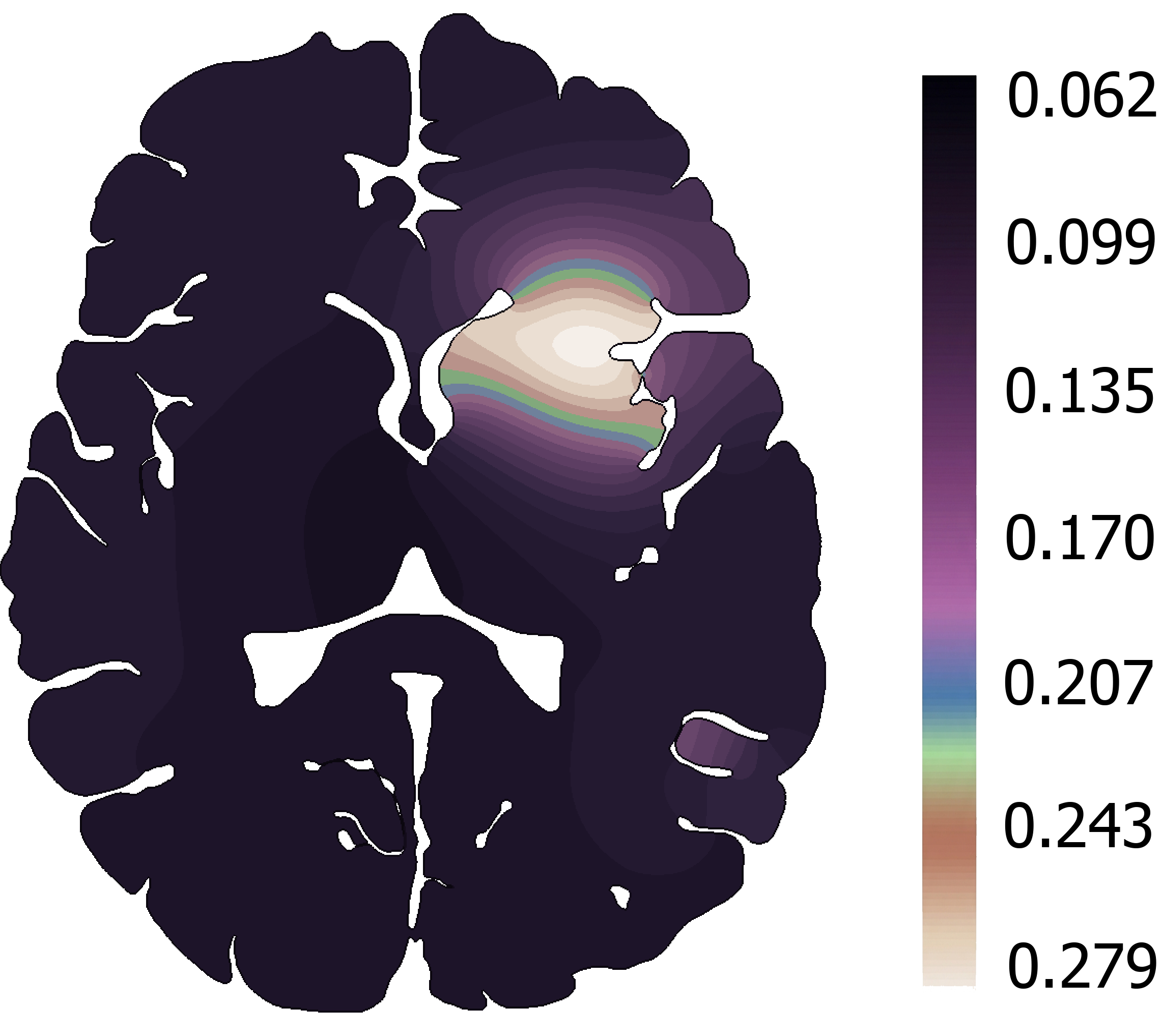}
         \caption{$t = 0.25$}
         \label{fig:u_3meses}
     \end{subfigure}
     \hfill
     \begin{subfigure}[b]{0.30\textwidth}
         \centering
         \includegraphics[width=\textwidth]{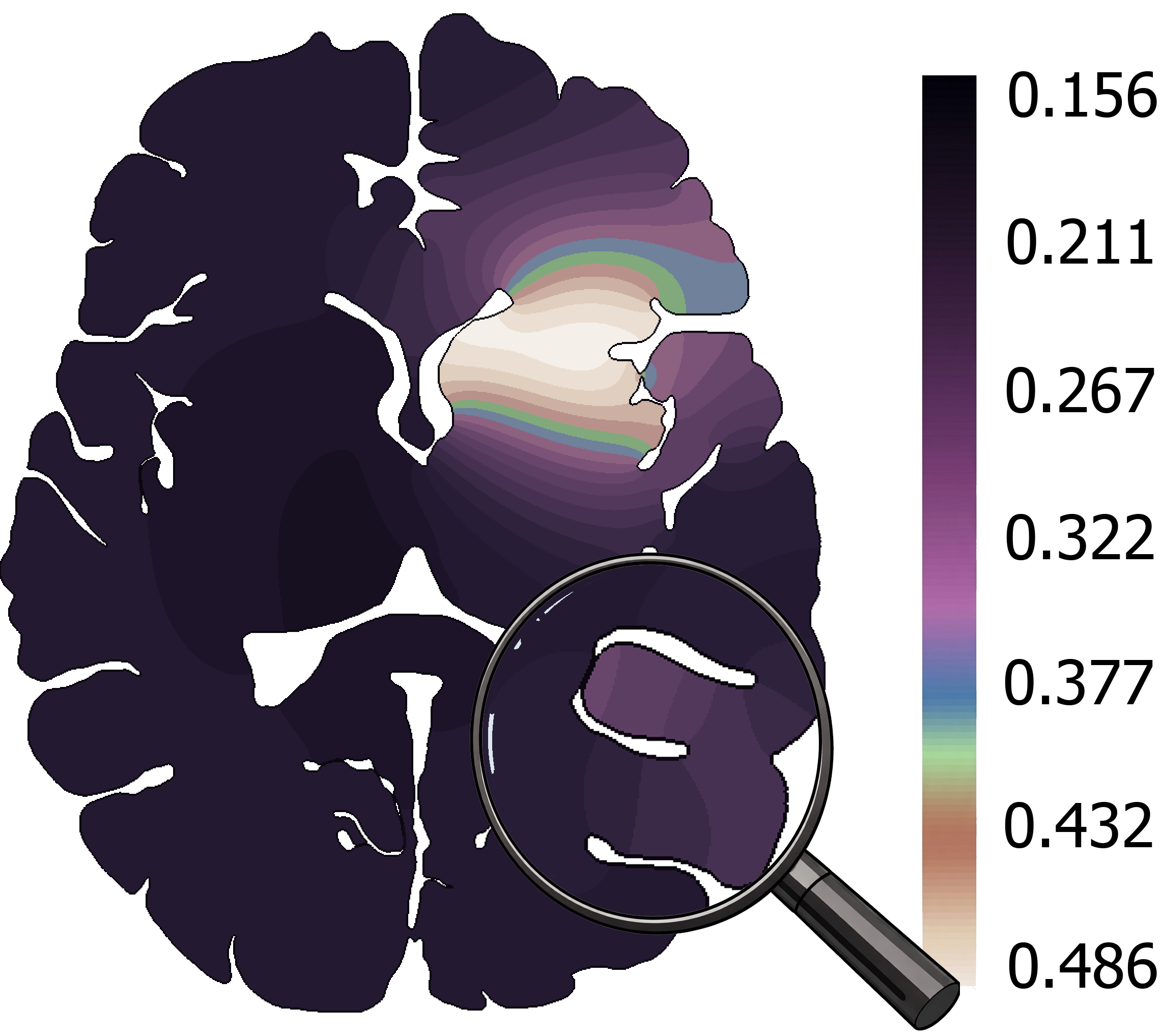}
         \caption{$t = 0.5$}
         \label{fig:u_6meses}
     \end{subfigure}
     \hfill
     \begin{subfigure}[b]{0.30\textwidth}
         \centering
         \includegraphics[width=\textwidth]{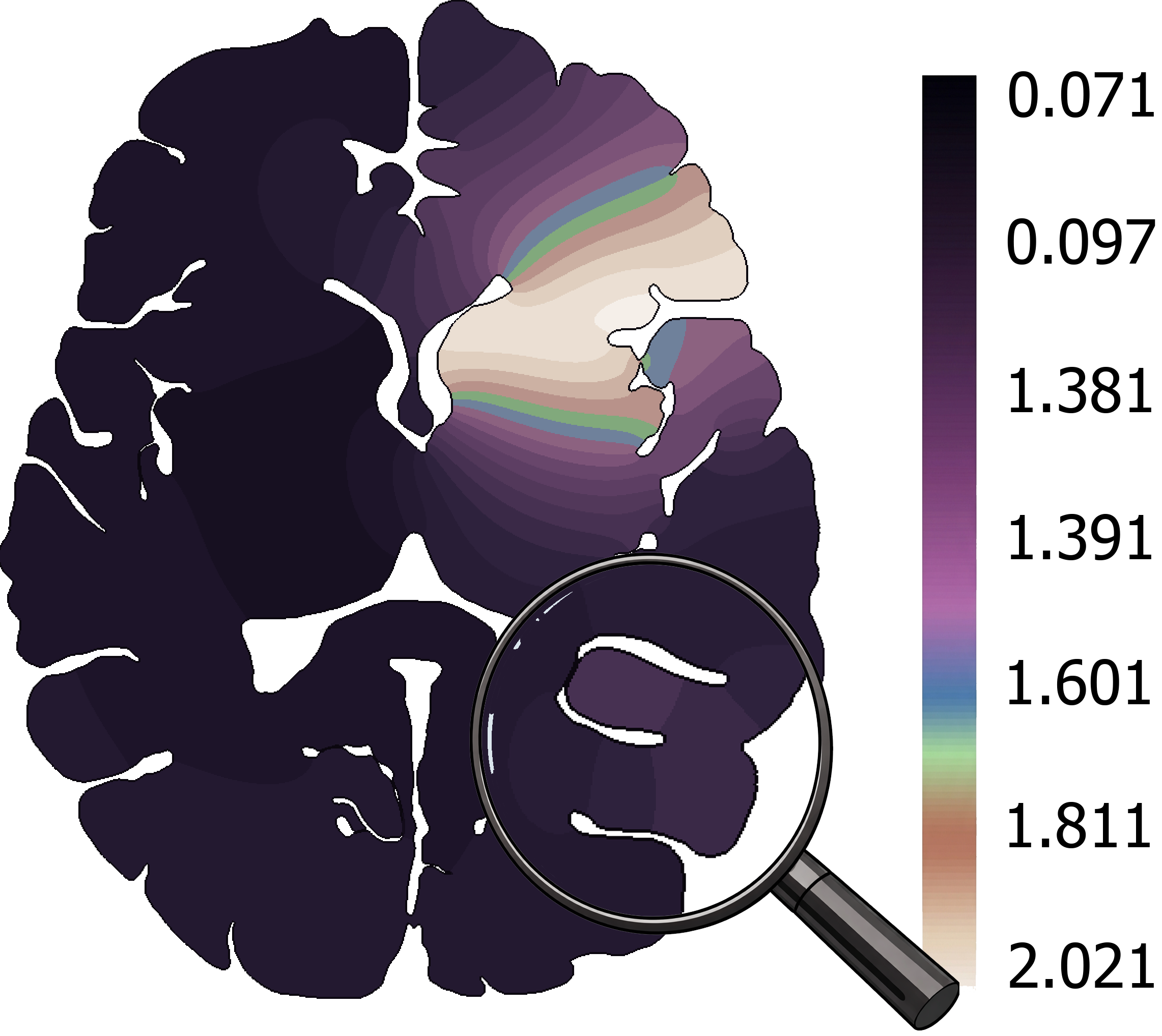}
         \caption{$t = 1$}
         \label{fig:u_1anio}
     \end{subfigure}

     \centering
     \begin{subfigure}[b]{0.30\textwidth}
         \centering
         \includegraphics[width=\textwidth]{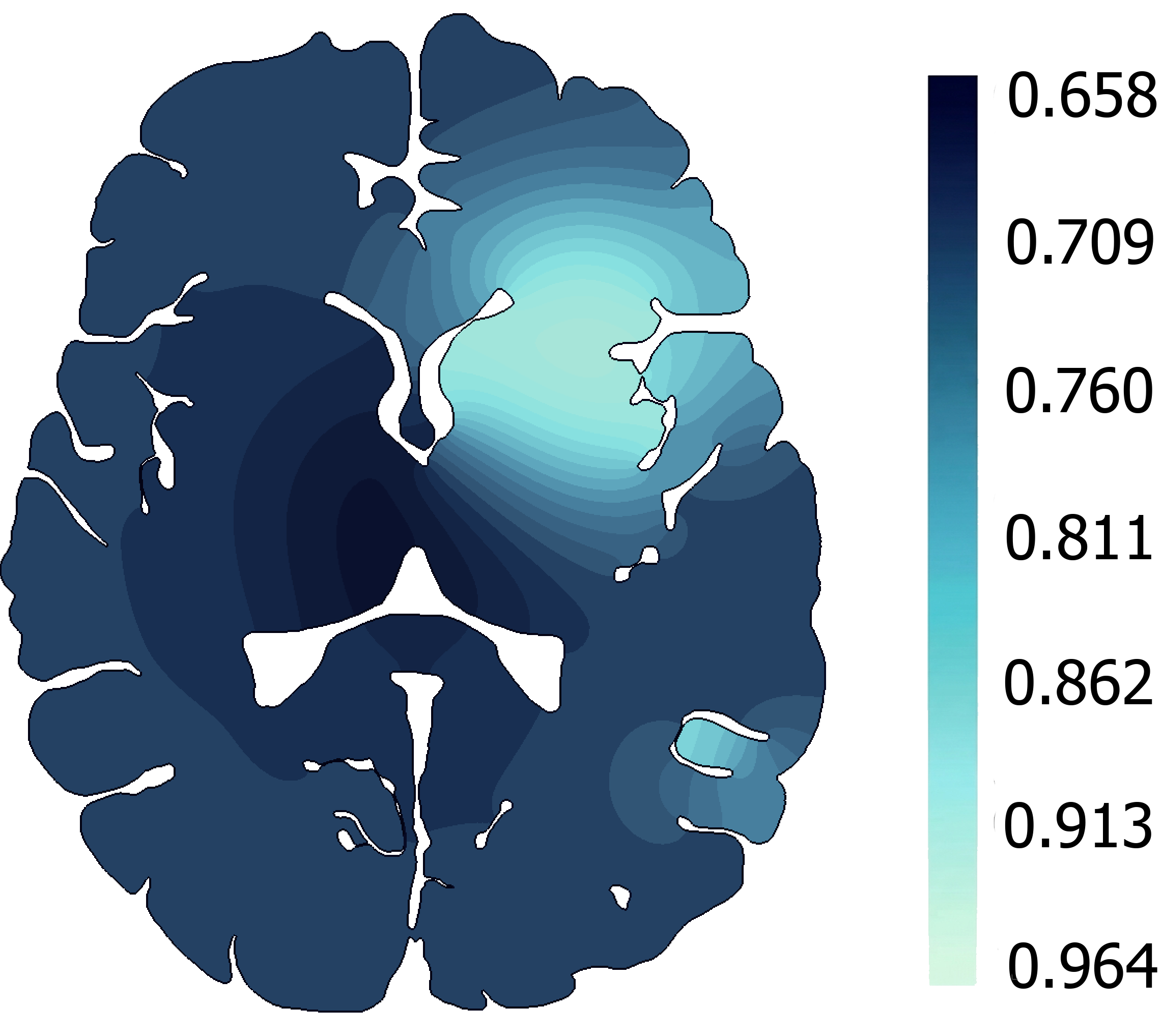}
         \caption{$t = 0.25$}
         \label{fig:s_3meses}
     \end{subfigure}
     \hfill
     \begin{subfigure}[b]{0.30\textwidth}
         \centering
         \includegraphics[width=\textwidth]{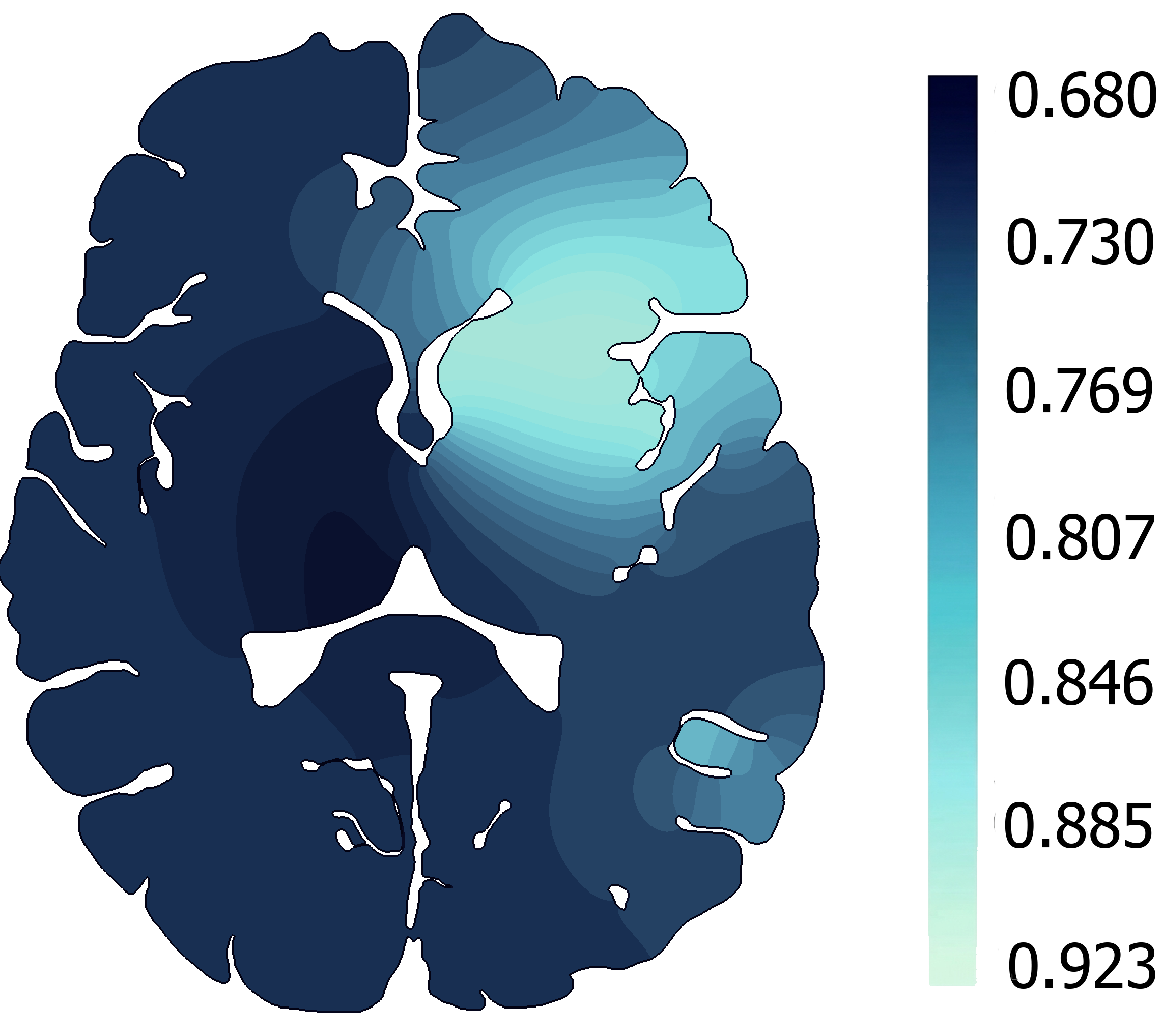}
         \caption{$t = 0.5$}
         \label{fig:s_6meses}
     \end{subfigure}
     \hfill
     \begin{subfigure}[b]{0.30\textwidth}
         \centering
         \includegraphics[width=\textwidth]{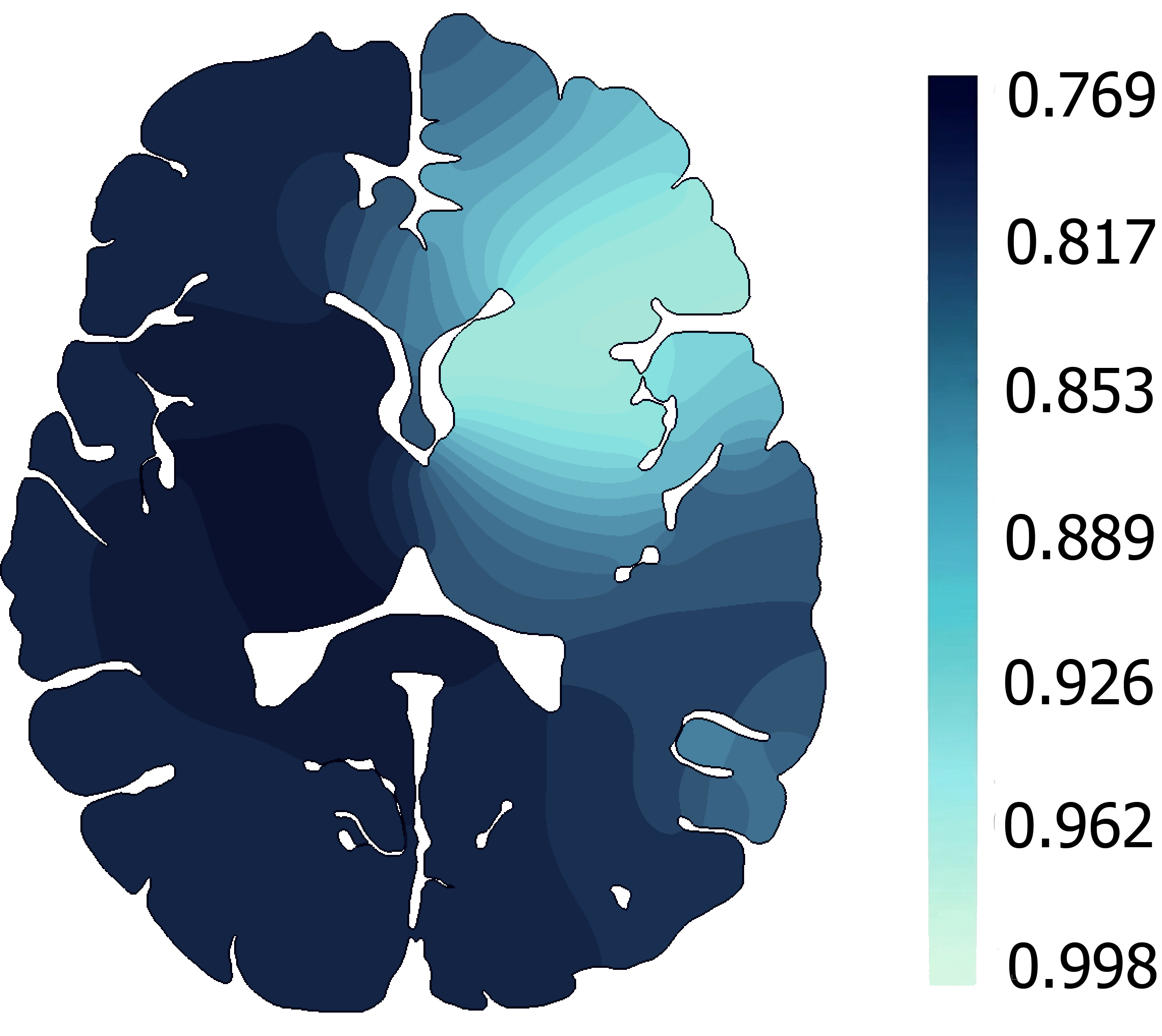}
         \caption{$t = 1$}
         \label{fig:s_1anio}
     \end{subfigure}
        \caption{Evolution up to $T=1.$}
        \label{fig:Evol_1anio}
\end{figure}
Figure \ref{fig:Evol_2anio} shows the formation of a cell aggregation point; that is, cells tend to concentrate in regions where the gradient of $\sigma$ is favorable. This aggregation phenomenon is directly related to the oxytaxis term and its sensitivity parameter $\chi$. In particular, the tumor reaches its maximum value of aggregation at $t = 1.664$ (see Figure \ref{fig:u_18meses}).
\begin{figure}[H]
     \centering
     \begin{subfigure}[b]{0.24\textwidth}
         \centering
         \includegraphics[width=\textwidth]{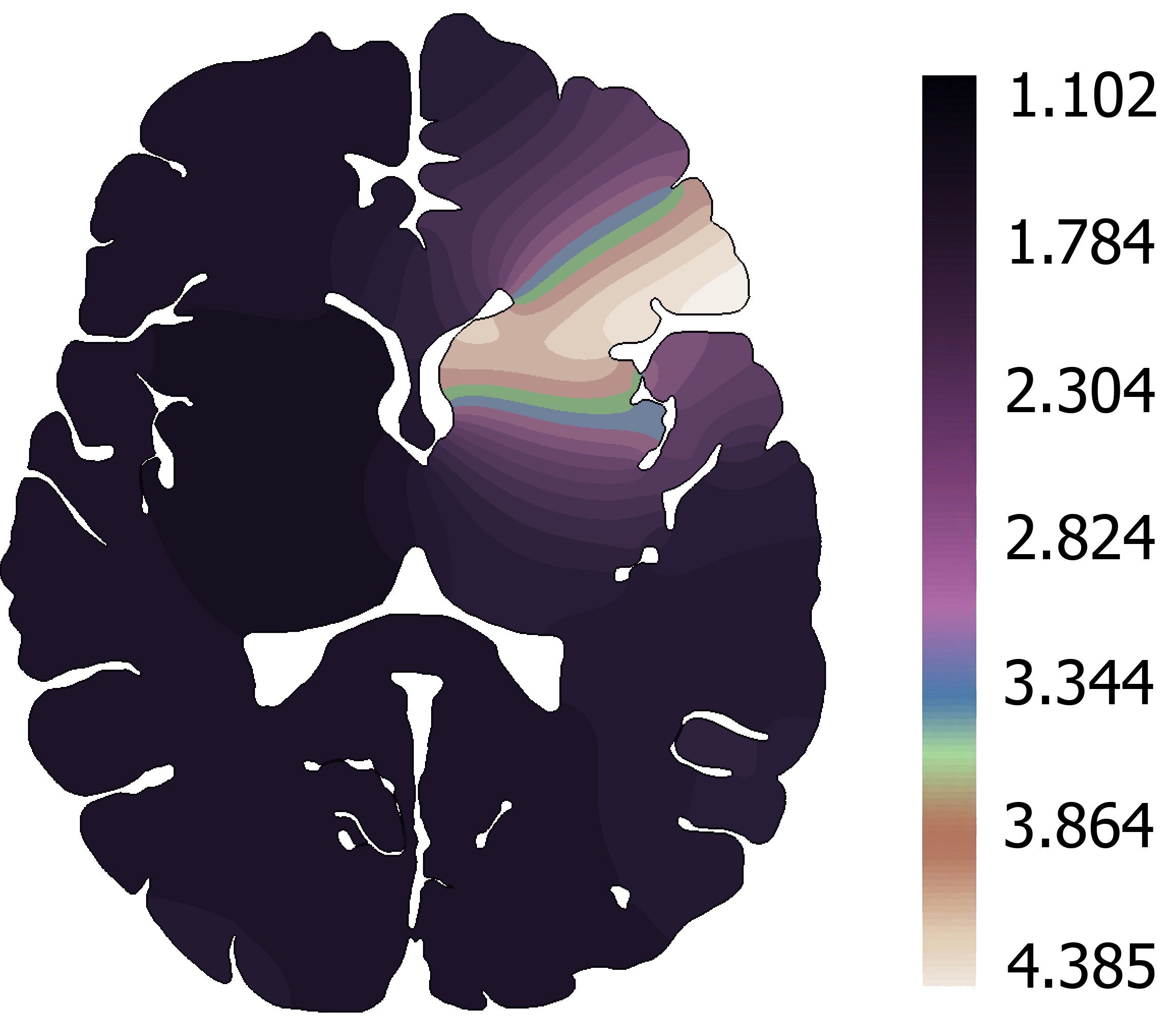}
         \caption{$t = 1.25$}
         \label{fig:u_15meses}
     \end{subfigure}
     \hfill
     \begin{subfigure}[b]{0.24\textwidth}
         \centering
         \includegraphics[width=\textwidth]{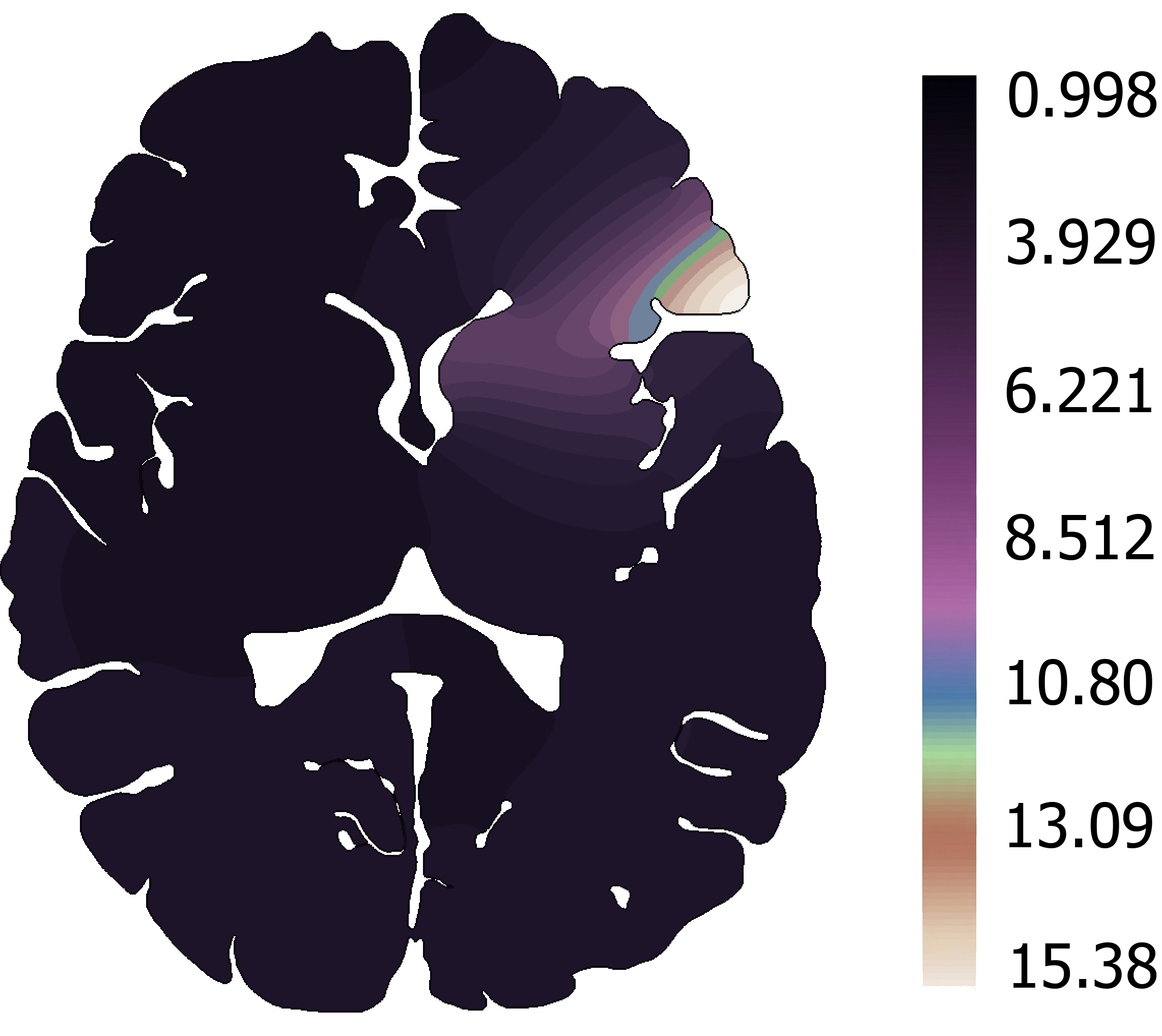}
         \caption{$t = 1.5$}
         \label{fig:u_182meses}
     \end{subfigure}
     \hfill
     \begin{subfigure}[b]{0.24\textwidth}
         \centering
         \includegraphics[width=\textwidth]{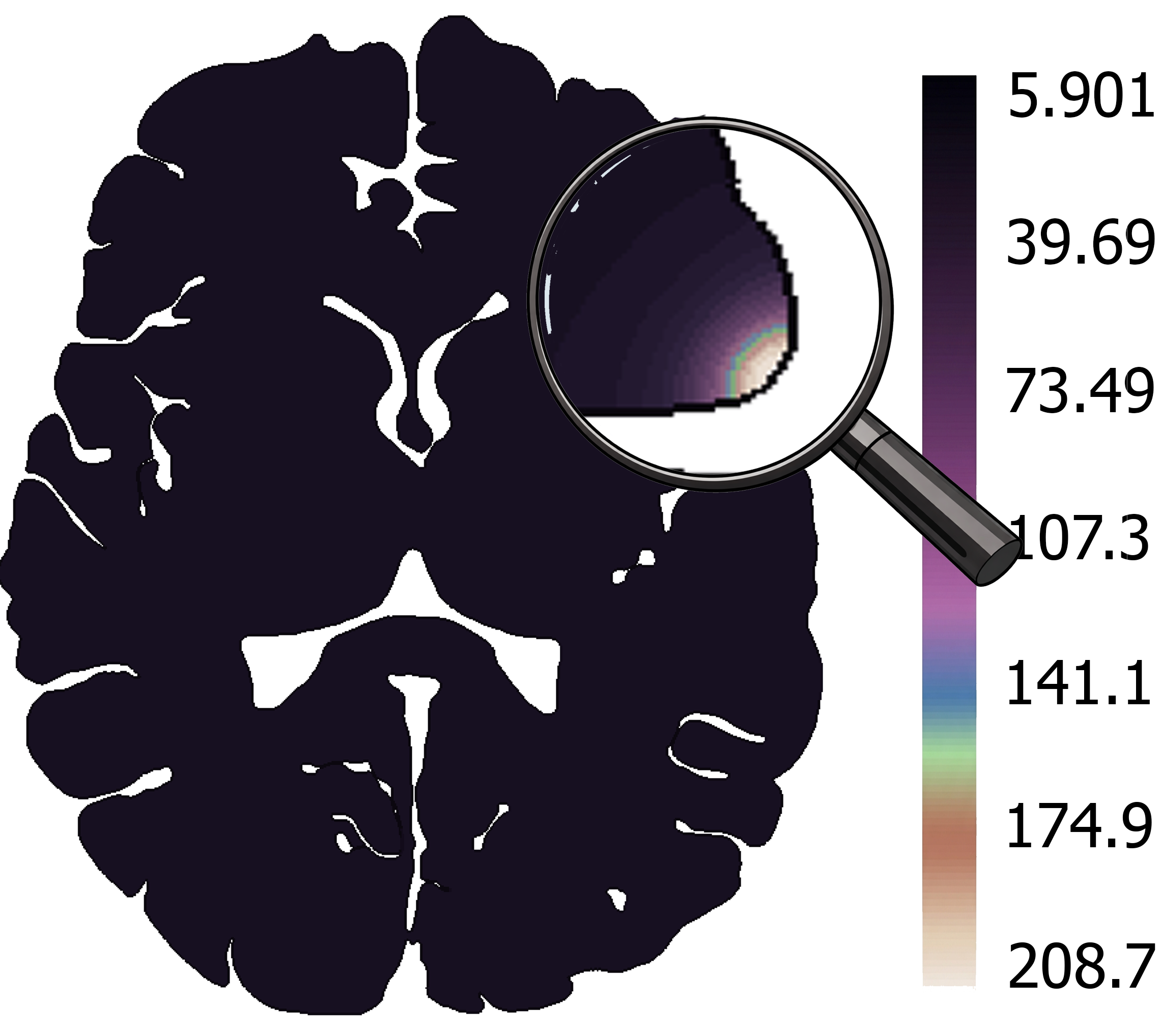}
         \caption{t = 1.664}
         \label{fig:u_18meses}
     \end{subfigure}
     \hfill
     \begin{subfigure}[b]{0.24\textwidth}
         \centering
         \includegraphics[width=\textwidth]{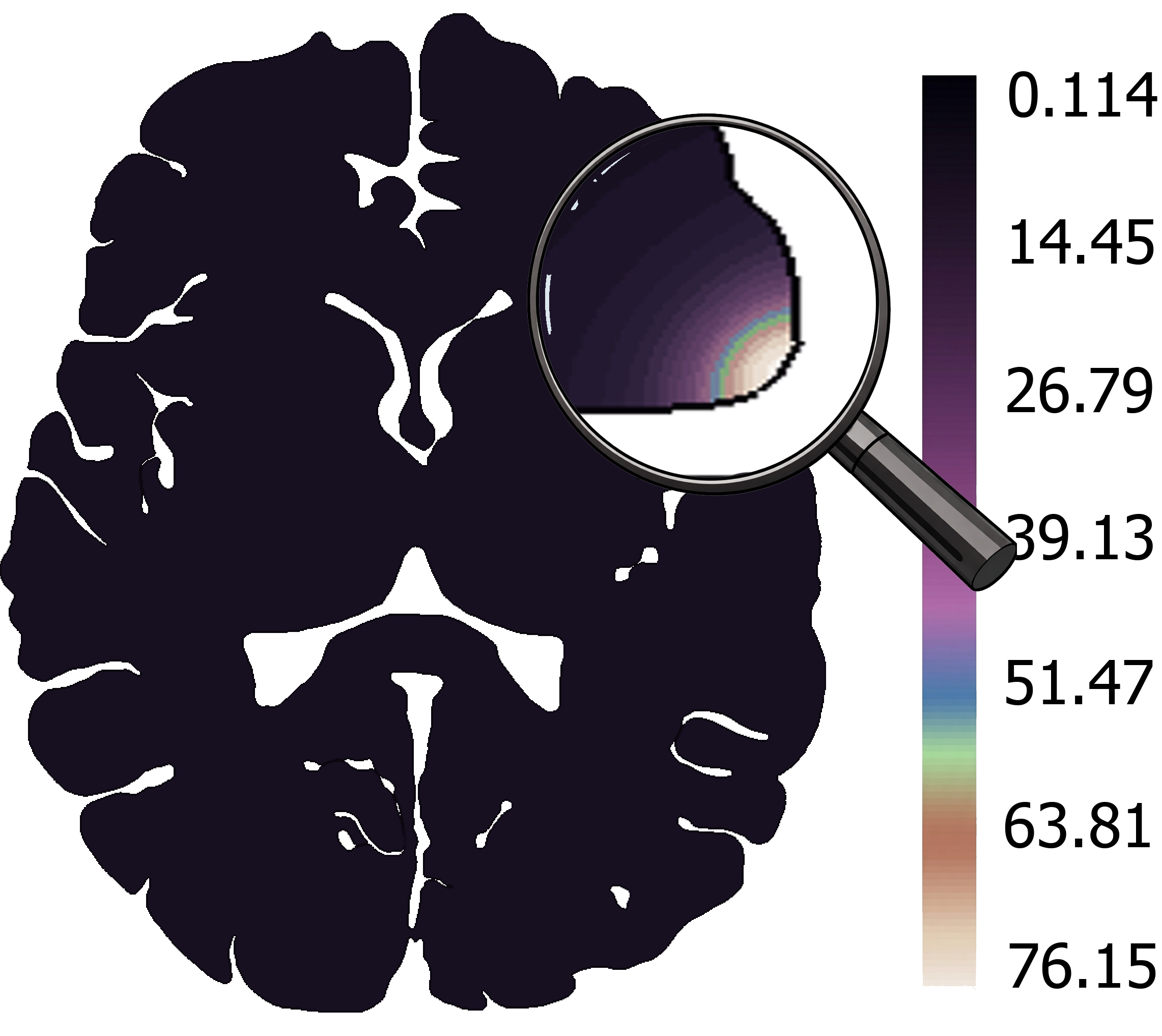}
         \caption{$t = 2$}
         \label{fig:u_2anio}
     \end{subfigure}

     \centering
     \begin{subfigure}[b]{0.24\textwidth}
         \centering
         \includegraphics[width=\textwidth]{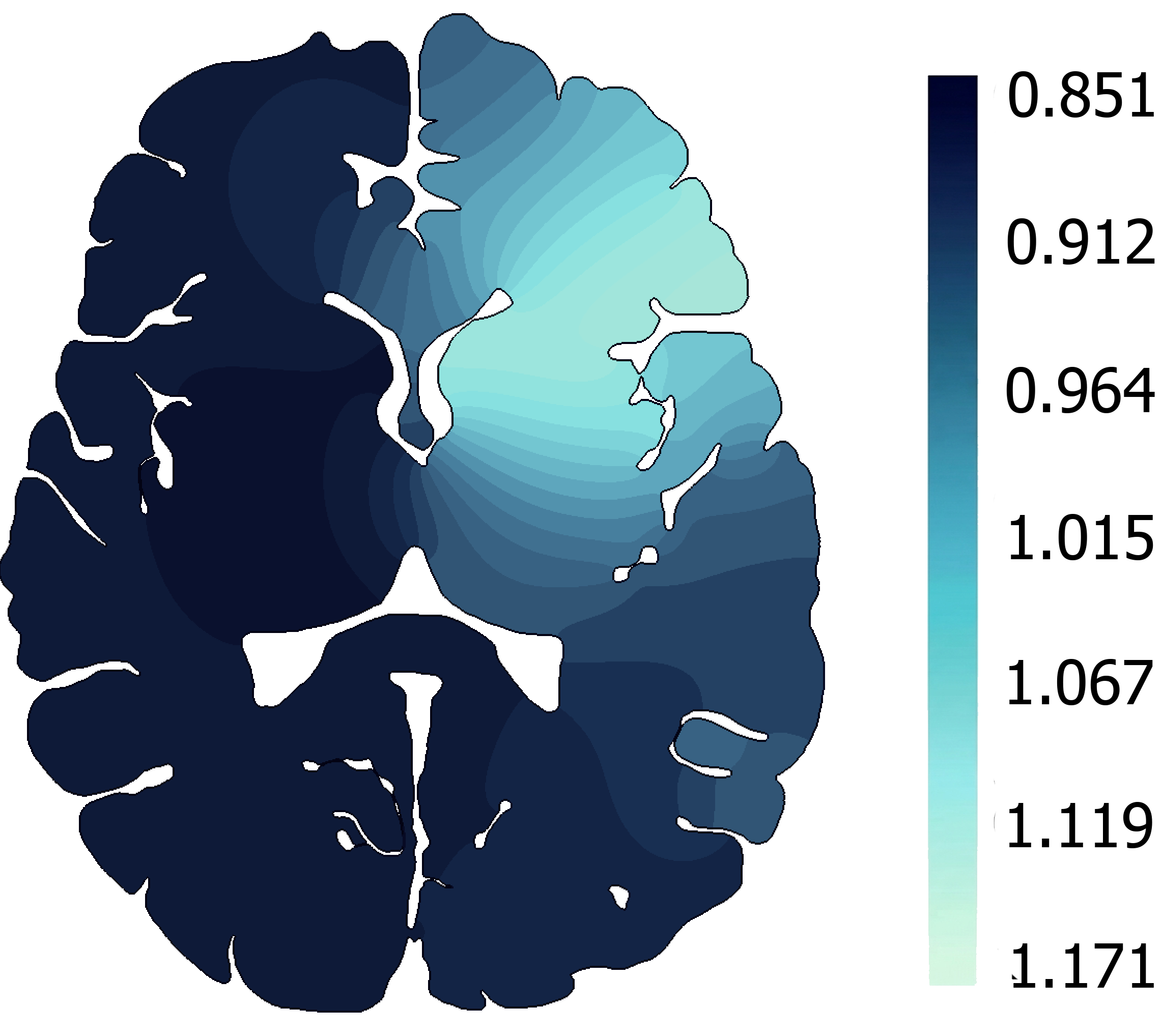}
         \caption{$t = 1.25$}
         \label{fig:s_15meses}
     \end{subfigure}
     \hfill
     \begin{subfigure}[b]{0.24\textwidth}
         \centering
         \includegraphics[width=\textwidth]{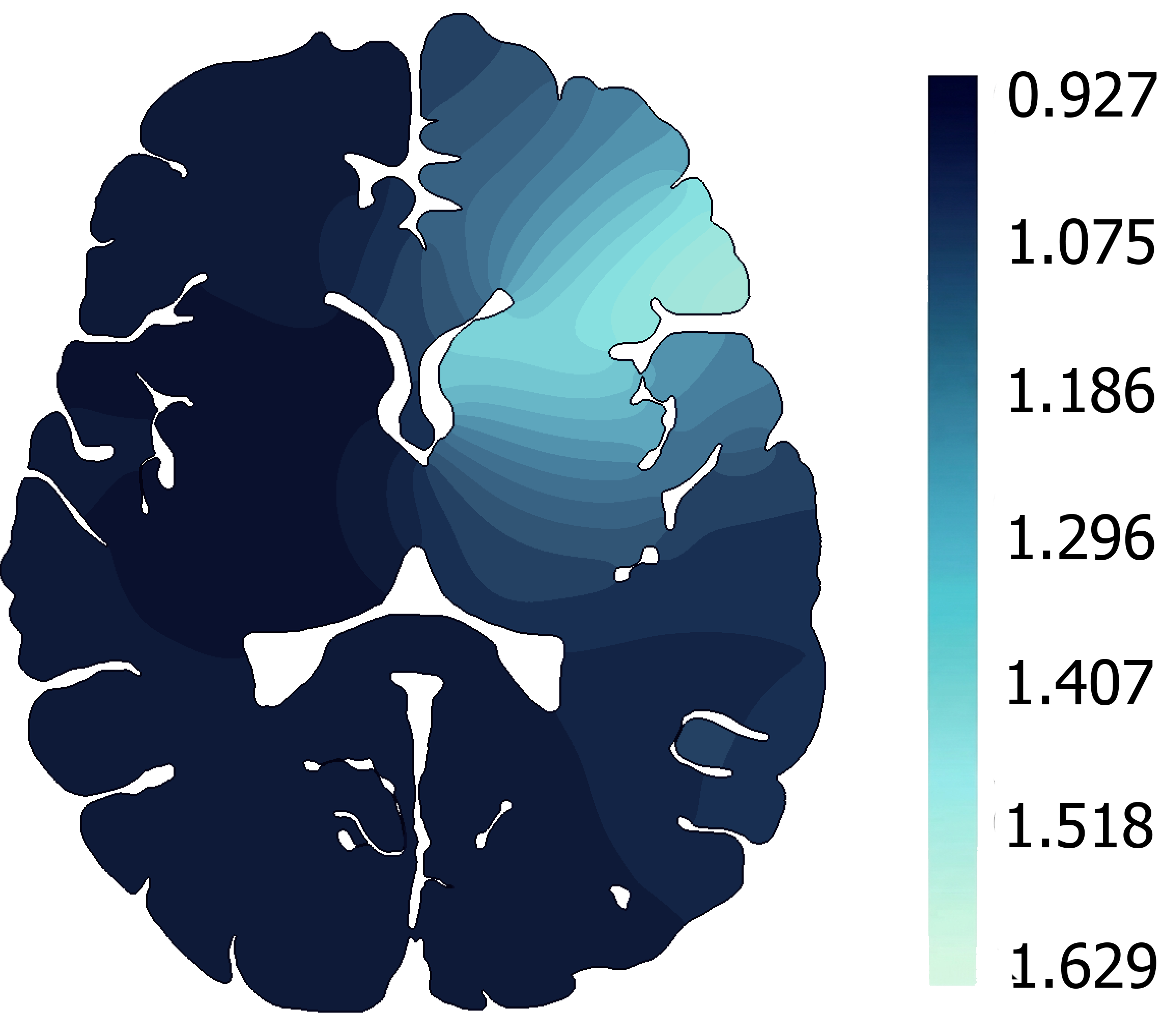}
         \caption{$t = 1.5$}
         \label{fig:s_18meses}
     \end{subfigure}
     \hfill
     \begin{subfigure}[b]{0.24\textwidth}
         \centering
         \includegraphics[width=\textwidth]{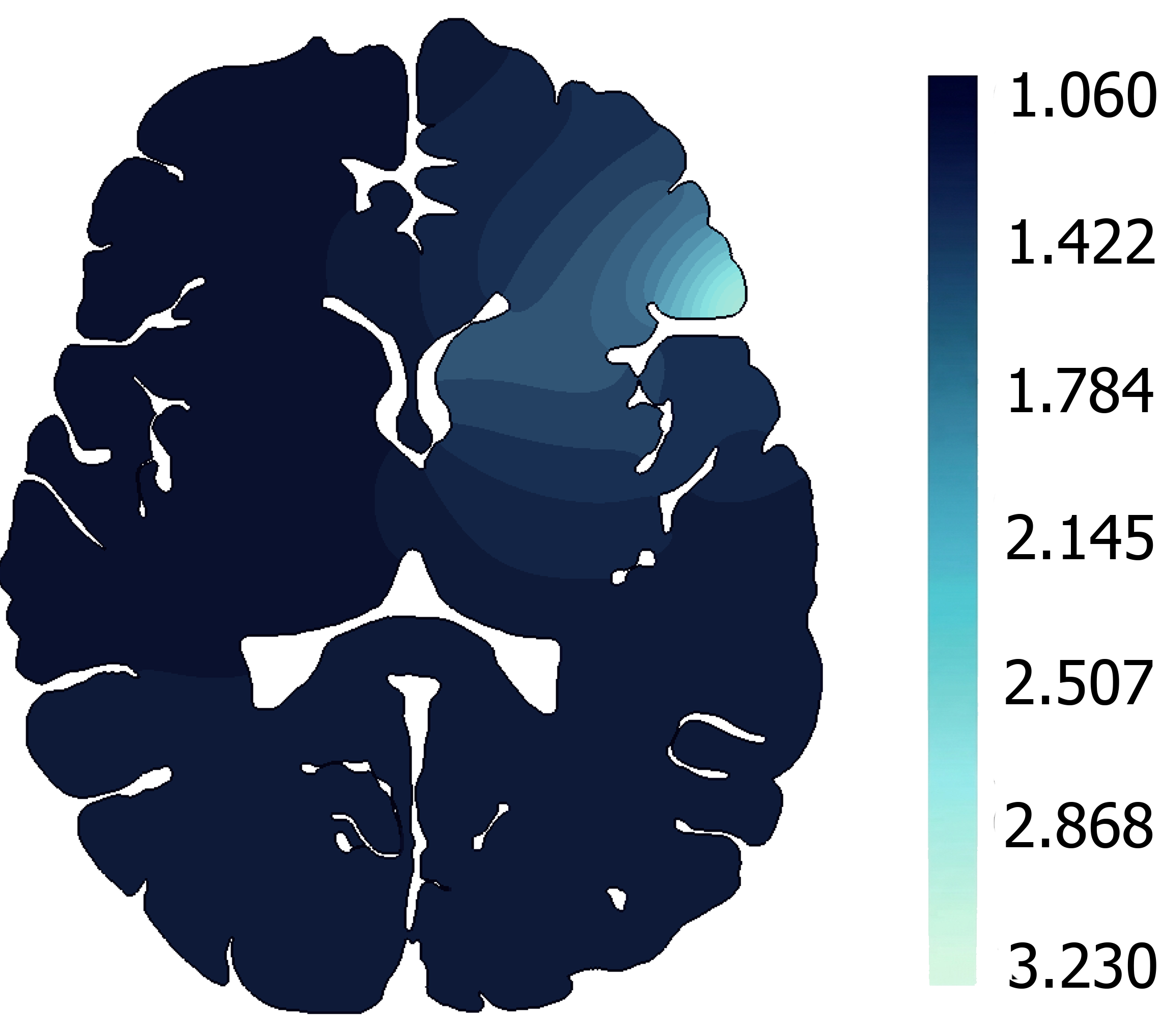}
         \caption{$t = 1.664$}
         \label{fig:s_199meses}
     \end{subfigure}
     \hfill
     \begin{subfigure}[b]{0.24\textwidth}
         \centering
         \includegraphics[width=\textwidth]{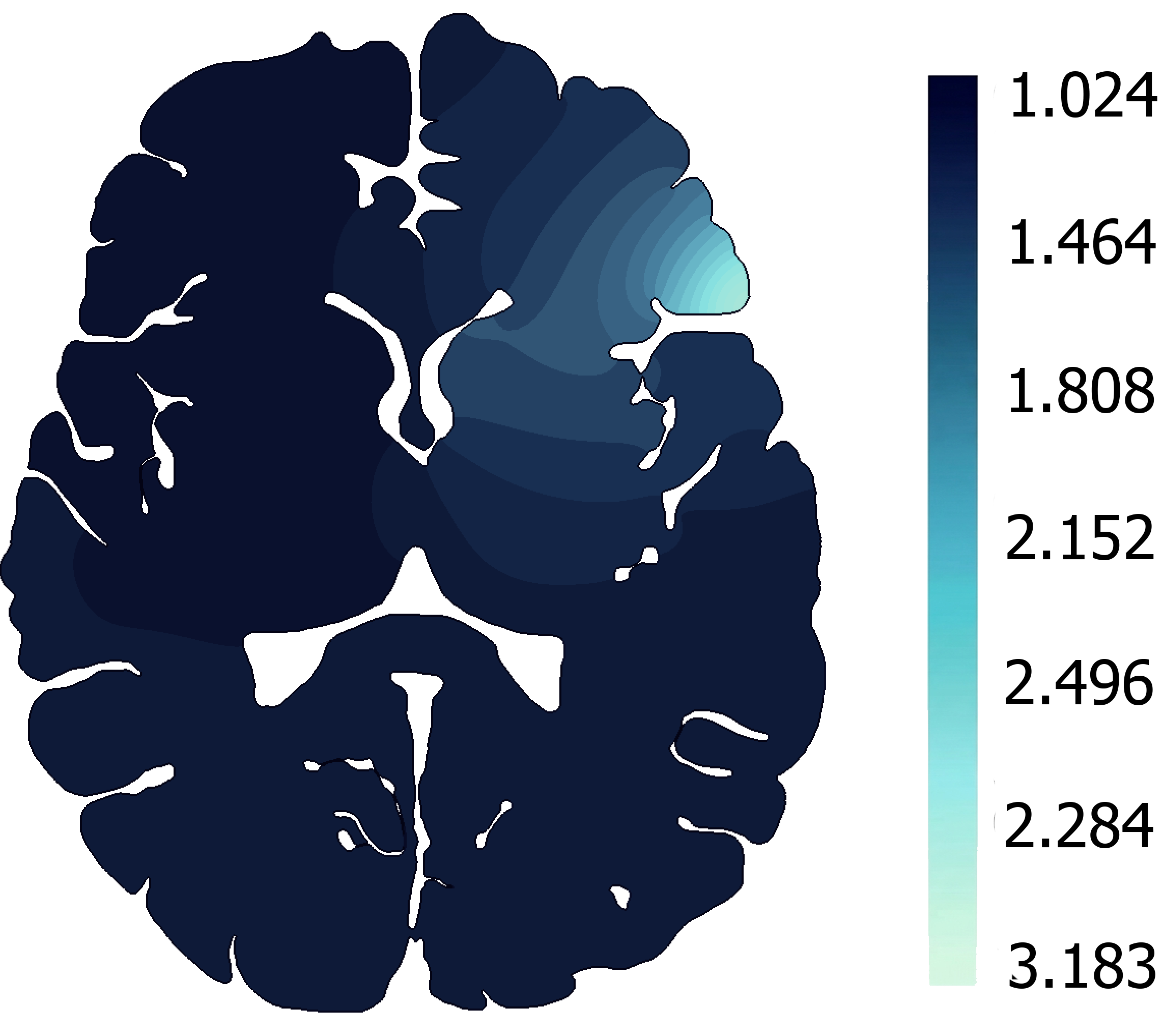}
         \caption{$t = 2$}
         \label{fig:s_2anio}
     \end{subfigure}
        \caption{Evolution up to $T=2$.}
        \label{fig:Evol_2anio}
\end{figure}
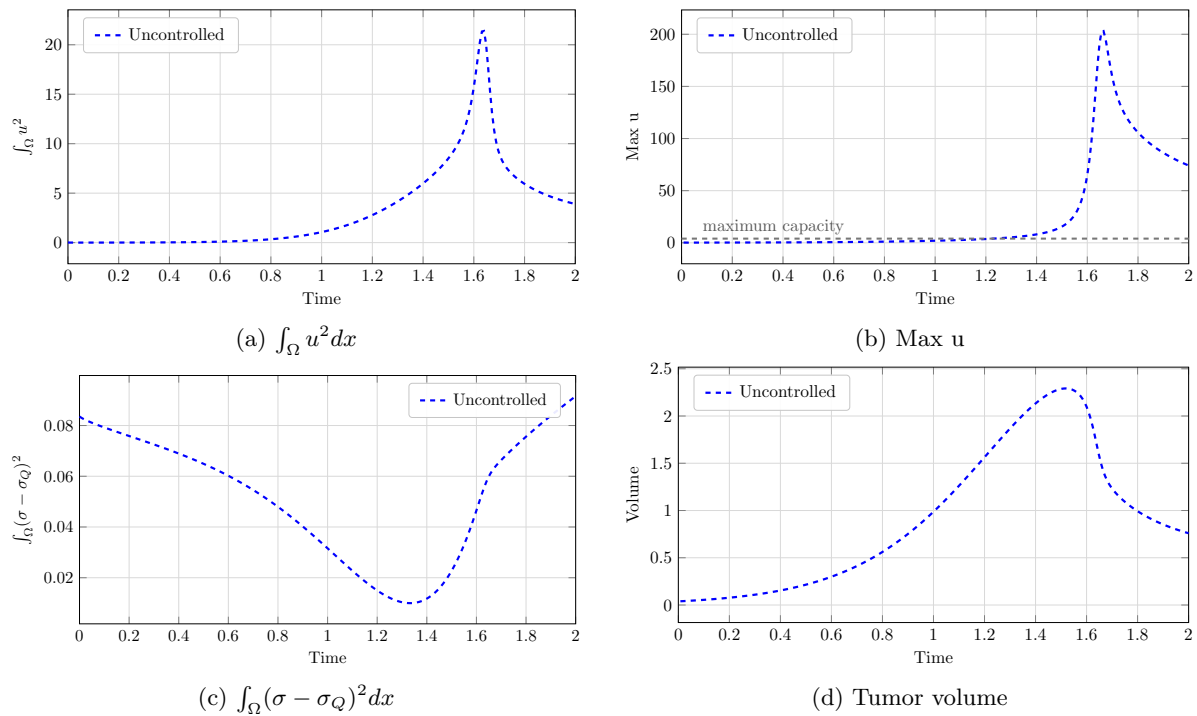
\begin{figure}[H]
     \centering
     \begin{subfigure}[b]{0.49\textwidth}
         \centering
         \resizebox{\linewidth}{!}{
            \begin{tikzpicture}
\begin{axis}[width=12cm, height=6.8cm, xmin=0, xmax=2, xlabel={Time}, ylabel={$\int_\Omega u^2$}, grid=both, ymajorgrids, grid style={gray!30}, legend cell align=left, legend style={font=\normalsize, at={(0.03,0.97)}, anchor=north west, fill=white, fill opacity=0.92, text opacity=1, draw=gray!45, rounded corners=2pt, inner xsep=6pt, inner ysep=4pt}, /pgf/number format/use comma=false, /pgf/number format/set decimal separator={.}, /pgf/number format/1000 sep={}, scaled x ticks=false, scaled y ticks=false, tick label style={font=\normalsize}, xticklabel style={font=\normalsize, /pgf/number format/fixed, /pgf/number format/precision=3, /pgf/number format/use comma=false, /pgf/number format/set decimal separator={.}, /pgf/number format/1000 sep={}}, yticklabel style={font=\normalsize, /pgf/number format/fixed, /pgf/number format/precision=6, /pgf/number format/use comma=false, /pgf/number format/set decimal separator={.}, /pgf/number format/1000 sep={}}, label style={font=\normalsize}, title style={font=\normalsize}, ]
\addplot+[color={rgb,255:red,0;green,0;blue,255}, dashed, line width=1.3pt, mark=none] coordinates {(0,0.0116615) (0.008,0.00424311) (0.016,0.00317803) (0.024,0.00292957) (0.032,0.00289467) (0.04,0.00294018) (0.048,0.00302428) (0.056,0.00313147) (0.064,0.00325527) (0.072,0.00339272) (0.08,0.00354233) (0.088,0.0037034) (0.096,0.00387562) (0.104,0.00405899) (0.112,0.00425369) (0.12,0.00446005) (0.128,0.00467847) (0.136,0.00490949) (0.144,0.00515368) (0.152,0.00541169) (0.16,0.00568423) (0.168,0.00597204) (0.176,0.00627594) (0.184,0.00659678) (0.192,0.00693547) (0.2,0.00729297) (0.208,0.00767031) (0.216,0.00806855) (0.224,0.00848883) (0.232,0.00893234) (0.24,0.00940035) (0.248,0.00989417) (0.256,0.0104152) (0.264,0.010965) (0.272,0.0115449) (0.28,0.0121568) (0.288,0.0128023) (0.296,0.0134832) (0.304,0.0142014) (0.312,0.014959) (0.32,0.015758) (0.328,0.0166007) (0.336,0.0174894) (0.344,0.0184265) (0.352,0.0194148) (0.36,0.0204568) (0.368,0.0215555) (0.376,0.0227138) (0.384,0.023935) (0.392,0.0252222) (0.4,0.0265791) (0.408,0.0280093) (0.416,0.0295165) (0.424,0.031105) (0.432,0.0327788) (0.44,0.0345425) (0.448,0.0364006) (0.456,0.0383581) (0.464,0.0404202) (0.472,0.0425921) (0.48,0.0448796) (0.488,0.0472885) (0.496,0.049825) (0.504,0.0524955) (0.512,0.0553069) (0.52,0.0582663) (0.528,0.061381) (0.536,0.0646588) (0.544,0.0681079) (0.552,0.0717367) (0.56,0.075554) (0.568,0.0795692) (0.576,0.0837919) (0.584,0.0882322) (0.592,0.0929006) (0.6,0.0978081) (0.608,0.102966) (0.616,0.108387) (0.624,0.114082) (0.632,0.120065) (0.64,0.12635) (0.648,0.13295) (0.656,0.139879) (0.664,0.147154) (0.672,0.154789) (0.68,0.162802) (0.688,0.171208) (0.696,0.180026) (0.704,0.189274) (0.712,0.19897) (0.72,0.209135) (0.728,0.219788) (0.736,0.23095) (0.744,0.242643) (0.752,0.25489) (0.76,0.267712) (0.768,0.281135) (0.776,0.295182) (0.784,0.309879) (0.792,0.325252) (0.8,0.341326) (0.808,0.358131) (0.816,0.375693) (0.824,0.394043) (0.832,0.413208) (0.84,0.43322) (0.848,0.45411) (0.856,0.475909) (0.864,0.49865) (0.872,0.522366) (0.88,0.54709) (0.888,0.572856) (0.896,0.5997) (0.904,0.627657) (0.912,0.656762) (0.92,0.687052) (0.928,0.718564) (0.936,0.751335) (0.944,0.785403) (0.952,0.820804) (0.96,0.857579) (0.968,0.895764) (0.976,0.935398) (0.984,0.97652) (0.992,1.01917) (1,1.06338) (1.008,1.1092) (1.016,1.15666) (1.024,1.20579) (1.032,1.25665) (1.04,1.30926) (1.048,1.36366) (1.056,1.41989) (1.064,1.47799) (1.072,1.53798) (1.08,1.59991) (1.088,1.6638) (1.096,1.7297) (1.104,1.79763) (1.112,1.86762) (1.12,1.93971) (1.128,2.01392) (1.136,2.09028) (1.144,2.16882) (1.152,2.24957) (1.16,2.33255) (1.168,2.41778) (1.176,2.5053) (1.184,2.59512) (1.192,2.68725) (1.2,2.78174) (1.208,2.87859) (1.216,2.97782) (1.224,3.07946) (1.232,3.18352) (1.24,3.29003) (1.248,3.399) (1.256,3.51046) (1.264,3.62442) (1.272,3.74091) (1.28,3.85996) (1.288,3.9816) (1.296,4.10585) (1.304,4.23275) (1.312,4.36233) (1.32,4.49465) (1.328,4.62975) (1.336,4.7677) (1.344,4.90855) (1.352,5.05238) (1.36,5.1993) (1.368,5.34939) (1.376,5.50278) (1.384,5.65961) (1.392,5.82005) (1.4,5.98428) (1.408,6.15252) (1.416,6.32504) (1.424,6.50214) (1.432,6.68419) (1.44,6.87163) (1.448,7.06496) (1.456,7.26481) (1.464,7.47192) (1.472,7.68722) (1.48,7.91183) (1.488,8.14715) (1.496,8.39492) (1.504,8.65736) (1.512,8.93729) (1.52,9.23838) (1.528,9.56537) (1.536,9.92455) (1.544,10.3242) (1.552,10.7754) (1.56,11.2928) (1.568,11.8959) (1.576,12.6095) (1.584,13.4647) (1.592,14.4956) (1.6,15.7317) (1.608,17.1765) (1.616,18.7634) (1.624,20.2864) (1.632,21.3396) (1.64,21.3924) (1.648,20.1318) (1.656,17.845) (1.664,15.2774) (1.672,13.0505) (1.68,11.3731) (1.688,10.1812) (1.696,9.33827) (1.704,8.72622) (1.712,8.26311) (1.72,7.89639) (1.728,7.59333) (1.736,7.33361) (1.744,7.10456) (1.752,6.89815) (1.76,6.70921) (1.768,6.53429) (1.776,6.37106) (1.784,6.21784) (1.792,6.07339) (1.8,5.93678) (1.808,5.80723) (1.816,5.68411) (1.824,5.5669) (1.832,5.45513) (1.84,5.3484) (1.848,5.24635) (1.856,5.14865) (1.864,5.05502) (1.872,4.9652) (1.88,4.87895) (1.888,4.79604) (1.896,4.7163) (1.904,4.63953) (1.912,4.56557) (1.92,4.49427) (1.928,4.4255) (1.936,4.35911) (1.944,4.295) (1.952,4.23305) (1.96,4.17316) (1.968,4.11524) (1.976,4.05919) (1.984,4.00494) (1.992,3.95241) (2,3.90151)};
\addlegendentry{Uncontrolled}
\end{axis}
\end{tikzpicture}
         }
         \caption{$\int_\Omega u^2dx$}
         \label{fig:int_u}
     \end{subfigure}
     \hfill
     \begin{subfigure}[b]{0.49\textwidth}
         \centering
         \resizebox{\linewidth}{!}{
            \begin{tikzpicture}
\begin{axis}[width=12cm, height=6.8cm, xmin=0, xmax=2, xlabel={Time}, ylabel={Max u}, grid=both, ymajorgrids, grid style={gray!30}, legend cell align=left, legend style={font=\normalsize, at={(0.03,0.97)}, anchor=north west, fill=white, fill opacity=0.92, text opacity=1, draw=gray!45, rounded corners=2pt, inner xsep=6pt, inner ysep=4pt}, tick label style={font=\normalsize}, label style={font=\normalsize}, title style={font=\small}, ]
\addplot+[color={rgb,255:red,0;green,0;blue,255}, opacity=1, dashed, line width=1.3pt, mark=none] coordinates {(0.008,0.256873) (0.016,0.206981) (0.024,0.187258) (0.032,0.178822) (0.04,0.176312) (0.048,0.177225) (0.056,0.180147) (0.064,0.184085) (0.072,0.188386) (0.08,0.192753) (0.088,0.197023) (0.096,0.20124) (0.104,0.205328) (0.112,0.209294) (0.12,0.213152) (0.128,0.216918) (0.136,0.22061) (0.144,0.224245) (0.152,0.227841) (0.16,0.231415) (0.168,0.234982) (0.176,0.238558) (0.184,0.242156) (0.192,0.245789) (0.2,0.249467) (0.208,0.253201) (0.216,0.257001) (0.224,0.260876) (0.232,0.264833) (0.24,0.268881) (0.248,0.273025) (0.256,0.277273) (0.264,0.28163) (0.272,0.286102) (0.28,0.290695) (0.288,0.295414) (0.296,0.300263) (0.304,0.305248) (0.312,0.310373) (0.32,0.315643) (0.328,0.321063) (0.336,0.326636) (0.344,0.332368) (0.352,0.338262) (0.36,0.344323) (0.368,0.350556) (0.376,0.356964) (0.384,0.363553) (0.392,0.370332) (0.4,0.377301) (0.408,0.384463) (0.416,0.391823) (0.424,0.399385) (0.432,0.407164) (0.44,0.41516) (0.448,0.423373) (0.456,0.431807) (0.464,0.440469) (0.472,0.449363) (0.48,0.458495) (0.488,0.46787) (0.496,0.477494) (0.504,0.487373) (0.512,0.497512) (0.52,0.507918) (0.528,0.518596) (0.536,0.529552) (0.544,0.540794) (0.552,0.552553) (0.56,0.56479) (0.568,0.577323) (0.576,0.590159) (0.584,0.603307) (0.592,0.616772) (0.6,0.630563) (0.608,0.644691) (0.616,0.659162) (0.624,0.673981) (0.632,0.689156) (0.64,0.704697) (0.648,0.720611) (0.656,0.736907) (0.664,0.753593) (0.672,0.770678) (0.68,0.788171) (0.688,0.806082) (0.696,0.824419) (0.704,0.843192) (0.712,0.862411) (0.72,0.882085) (0.728,0.902225) (0.736,0.92284) (0.744,0.943941) (0.752,0.965538) (0.76,0.987642) (0.768,1.01026) (0.776,1.03341) (0.784,1.05711) (0.792,1.08135) (0.8,1.10615) (0.808,1.13153) (0.816,1.1575) (0.824,1.18473) (0.832,1.21276) (0.84,1.24148) (0.848,1.2709) (0.856,1.30102) (0.864,1.33189) (0.872,1.3635) (0.88,1.39587) (0.888,1.42902) (0.896,1.46298) (0.904,1.49776) (0.912,1.53337) (0.92,1.56984) (0.928,1.60718) (0.936,1.64544) (0.944,1.68461) (0.952,1.72472) (0.96,1.76578) (0.968,1.80784) (0.976,1.8509) (0.984,1.895) (0.992,1.94019) (1,1.98646) (1.008,2.03386) (1.016,2.0824) (1.024,2.13212) (1.032,2.18307) (1.04,2.23528) (1.048,2.28878) (1.056,2.3436) (1.064,2.39978) (1.072,2.45736) (1.08,2.51641) (1.088,2.57697) (1.096,2.63906) (1.104,2.70275) (1.112,2.76808) (1.12,2.83511) (1.128,2.90391) (1.136,2.97457) (1.144,3.04712) (1.152,3.12164) (1.16,3.19822) (1.168,3.27694) (1.176,3.36014) (1.184,3.4507) (1.192,3.54427) (1.2,3.64098) (1.208,3.74103) (1.216,3.84459) (1.224,3.95187) (1.232,4.06309) (1.24,4.1785) (1.248,4.29835) (1.256,4.42294) (1.264,4.55258) (1.272,4.68762) (1.28,4.82843) (1.288,4.97545) (1.296,5.12911) (1.304,5.28995) (1.312,5.45852) (1.32,5.63545) (1.328,5.82144) (1.336,6.01727) (1.344,6.22381) (1.352,6.44204) (1.36,6.67306) (1.368,6.91813) (1.376,7.17865) (1.384,7.45626) (1.392,7.75277) (1.4,8.07028) (1.408,8.41121) (1.416,8.77836) (1.424,9.17497) (1.432,9.60484) (1.44,10.0724) (1.448,10.583) (1.456,11.1429) (1.464,11.7595) (1.472,12.4418) (1.48,13.2009) (1.488,14.0502) (1.496,15.0061) (1.504,16.0893) (1.512,17.3258) (1.52,18.7485) (1.528,20.3998) (1.536,22.3346) (1.544,24.6258) (1.552,27.3693) (1.56,30.6944) (1.568,34.777) (1.576,39.8574) (1.584,46.2656) (1.592,54.45) (1.6,65.0044) (1.608,78.6611) (1.616,96.1789) (1.624,117.973) (1.632,143.313) (1.64,169.278) (1.648,190.658) (1.656,202.373) (1.664,203.049) (1.672,195.597) (1.68,184.357) (1.688,172.722) (1.696,162.154) (1.704,153.087) (1.712,145.478) (1.72,139.101) (1.728,133.708) (1.736,129.08) (1.744,125.047) (1.752,121.477) (1.76,118.274) (1.768,115.365) (1.776,112.696) (1.784,110.228) (1.792,107.928) (1.8,105.774) (1.808,103.747) (1.816,101.831) (1.824,100.014) (1.832,98.2875) (1.84,96.6416) (1.848,95.0698) (1.856,93.5657) (1.864,92.124) (1.872,90.7402) (1.88,89.41) (1.888,88.1299) (1.896,86.8966) (1.904,85.7071) (1.912,84.5589) (1.92,83.4495) (1.928,82.3768) (1.936,81.3388) (1.944,80.3336) (1.952,79.3597) (1.96,78.4156) (1.968,77.4997) (1.976,76.6108) (1.984,75.7478) (1.992,74.9094) (2,74.0946)};
\addlegendentry{Uncontrolled}
\addplot+[color={rgb,255:red,119;green,119;blue,119},mark= none, dashed, line width=1.2pt, forget plot] coordinates {(0,4) (2,4)};
\node[anchor=west, mark= none, text={rgb,255:red,85;green,85;blue,85}] at (axis cs:0.06,15.02) {maximum capacity};
\end{axis}
\end{tikzpicture}
        }
         \caption{Max u}
         \label{fig:maxU_sinControl}
     \end{subfigure}
     \hfill
     \begin{subfigure}[b]{0.49\textwidth}
         \centering
         \resizebox{\linewidth}{!}{
         \begin{tikzpicture}
\begin{axis}[width=12cm, height=6.8cm, xmin=0, xmax=2, xlabel={Time}, ylabel={$\int_\Omega (\sigma - \sigma_Q)^2$}, grid=both, ymajorgrids, grid style={gray!30}, legend cell align=left, legend style={font=\normalsize, at={(0.97,0.97)}, anchor=north east, fill=white, fill opacity=0.92, text opacity=1, draw=gray!45, rounded corners=2pt, inner xsep=6pt, inner ysep=4pt}, /pgf/number format/use comma=false, /pgf/number format/set decimal separator={.}, /pgf/number format/1000 sep={}, scaled x ticks=false, scaled y ticks=false, tick label style={font=\normalsize}, xticklabel style={font=\normalsize, /pgf/number format/fixed, /pgf/number format/precision=3, /pgf/number format/use comma=false, /pgf/number format/set decimal separator={.}, /pgf/number format/1000 sep={}}, yticklabel style={font=\normalsize, /pgf/number format/fixed, /pgf/number format/precision=6, /pgf/number format/use comma=false, /pgf/number format/set decimal separator={.}, /pgf/number format/1000 sep={}}, label style={font=\normalsize}, title style={font=\normalsize}, ]
\addplot+[color={rgb,255:red,0;green,0;blue,255}, dashed, line width=1.3pt, mark=none] coordinates {(0,0.0835513) (0.008,0.0830233) (0.016,0.0825902) (0.024,0.0822019) (0.032,0.0818413) (0.04,0.0814996) (0.048,0.0811721) (0.056,0.0808555) (0.064,0.0805477) (0.072,0.0802471) (0.08,0.0799526) (0.088,0.0796631) (0.096,0.079378) (0.104,0.0790966) (0.112,0.0788183) (0.12,0.0785428) (0.128,0.0782697) (0.136,0.0779985) (0.144,0.0777291) (0.152,0.077461) (0.16,0.0771942) (0.168,0.0769283) (0.176,0.0766632) (0.184,0.0763987) (0.192,0.0761345) (0.2,0.0758706) (0.208,0.0756067) (0.216,0.0753427) (0.224,0.0750785) (0.232,0.0748139) (0.24,0.0745488) (0.248,0.074283) (0.256,0.0740165) (0.264,0.0737491) (0.272,0.0734806) (0.28,0.073211) (0.288,0.0729401) (0.296,0.0726678) (0.304,0.072394) (0.312,0.0721186) (0.32,0.0718415) (0.328,0.0715626) (0.336,0.0712817) (0.344,0.0709987) (0.352,0.0707136) (0.36,0.0704262) (0.368,0.0701364) (0.376,0.0698441) (0.384,0.0695491) (0.392,0.0692515) (0.4,0.068951) (0.408,0.0686476) (0.416,0.0683412) (0.424,0.0680316) (0.432,0.0677187) (0.44,0.0674024) (0.448,0.0670827) (0.456,0.0667593) (0.464,0.0664322) (0.472,0.0661014) (0.48,0.0657665) (0.488,0.0654277) (0.496,0.0650847) (0.504,0.0647374) (0.512,0.0643858) (0.52,0.0640296) (0.528,0.0636689) (0.536,0.0633035) (0.544,0.0629332) (0.552,0.062558) (0.56,0.0621778) (0.568,0.0617924) (0.576,0.0614018) (0.584,0.0610058) (0.592,0.0606044) (0.6,0.0601974) (0.608,0.0597847) (0.616,0.0593662) (0.624,0.0589419) (0.632,0.0585115) (0.64,0.0580751) (0.648,0.0576326) (0.656,0.0571837) (0.664,0.0567285) (0.672,0.0562669) (0.68,0.0557988) (0.688,0.0553241) (0.696,0.0548427) (0.704,0.0543545) (0.712,0.0538596) (0.72,0.0533577) (0.728,0.052849) (0.736,0.0523332) (0.744,0.0518104) (0.752,0.0512806) (0.76,0.0507437) (0.768,0.0501996) (0.776,0.0496484) (0.784,0.0490901) (0.792,0.0485246) (0.8,0.0479519) (0.808,0.0473722) (0.816,0.0467853) (0.824,0.0461914) (0.832,0.0455905) (0.84,0.0449826) (0.848,0.0443679) (0.856,0.0437464) (0.864,0.0431182) (0.872,0.0424834) (0.88,0.0418422) (0.888,0.0411947) (0.896,0.0405411) (0.904,0.0398815) (0.912,0.0392161) (0.92,0.0385453) (0.928,0.0378691) (0.936,0.0371878) (0.944,0.0365018) (0.952,0.0358112) (0.96,0.0351165) (0.968,0.034418) (0.976,0.033716) (0.984,0.0330109) (0.992,0.0323031) (1,0.031593) (1.008,0.0308812) (1.016,0.0301681) (1.024,0.0294542) (1.032,0.02874) (1.04,0.0280262) (1.048,0.0273133) (1.056,0.0266019) (1.064,0.0258928) (1.072,0.0251866) (1.08,0.024484) (1.088,0.0237857) (1.096,0.0230927) (1.104,0.0224056) (1.112,0.0217254) (1.12,0.0210529) (1.128,0.020389) (1.136,0.0197347) (1.144,0.019091) (1.152,0.0184589) (1.16,0.0178395) (1.168,0.0172338) (1.176,0.0166429) (1.184,0.0160681) (1.192,0.0155106) (1.2,0.0149715) (1.208,0.0144521) (1.216,0.0139537) (1.224,0.0134777) (1.232,0.0130254) (1.24,0.0125983) (1.248,0.0121977) (1.256,0.0118251) (1.264,0.0114821) (1.272,0.0111701) (1.28,0.0108907) (1.288,0.0106455) (1.296,0.0104362) (1.304,0.0102643) (1.312,0.0101316) (1.32,0.0100398) (1.328,0.0099906) (1.336,0.00998575) (1.344,0.010027) (1.352,0.0101163) (1.36,0.0102553) (1.368,0.010446) (1.376,0.0106901) (1.384,0.0109897) (1.392,0.0113465) (1.4,0.0117625) (1.408,0.0122397) (1.416,0.0127798) (1.424,0.013385) (1.432,0.0140571) (1.44,0.0147981) (1.448,0.0156099) (1.456,0.0164944) (1.464,0.0174536) (1.472,0.0184892) (1.48,0.0196033) (1.488,0.0207976) (1.496,0.0220738) (1.504,0.0234337) (1.512,0.0248788) (1.52,0.0264106) (1.528,0.0280305) (1.536,0.0297394) (1.544,0.0315383) (1.552,0.0334273) (1.56,0.0354061) (1.568,0.0374734) (1.576,0.0396262) (1.584,0.0418592) (1.592,0.0441632) (1.6,0.0465231) (1.608,0.0489144) (1.616,0.0512993) (1.624,0.0536235) (1.632,0.0558166) (1.64,0.0578051) (1.648,0.0595364) (1.656,0.0610025) (1.664,0.0622406) (1.672,0.0633093) (1.68,0.0642641) (1.688,0.0651464) (1.696,0.0659829) (1.704,0.0667898) (1.712,0.0675769) (1.72,0.0683494) (1.728,0.0691106) (1.736,0.0698623) (1.744,0.0706056) (1.752,0.0713411) (1.76,0.0720694) (1.768,0.0727908) (1.776,0.0735055) (1.784,0.0742137) (1.792,0.0749158) (1.8,0.075612) (1.808,0.0763023) (1.816,0.0769871) (1.824,0.0776664) (1.832,0.0783406) (1.84,0.0790098) (1.848,0.0796741) (1.856,0.0803336) (1.864,0.0809887) (1.872,0.0816394) (1.88,0.0822858) (1.888,0.0829281) (1.896,0.0835664) (1.904,0.0842009) (1.912,0.0848316) (1.92,0.0854588) (1.928,0.0860825) (1.936,0.0867028) (1.944,0.0873198) (1.952,0.0879337) (1.96,0.0885445) (1.968,0.0891524) (1.976,0.0897575) (1.984,0.0903598) (1.992,0.0909595) (2,0.0915566)};
\addlegendentry{Uncontrolled}
\end{axis}
\end{tikzpicture}
         }
         \caption{$\int_\Omega (\sigma - \sigma_Q)^2dx$}
         \label{fig:int_sigma}
     \end{subfigure}
     \hfill
     \begin{subfigure}[b]{0.49\textwidth}
         \centering
         \resizebox{\linewidth}{!}{
            \begin{tikzpicture}
                \begin{axis}[width=12cm, height=6.8cm, xmin=0, xmax=2, xlabel={Time}, ylabel={Volume}, grid=both, ymajorgrids, grid style={gray!30}, legend cell align=left, legend style={font=\normalsize, at={(0.03,0.97)}, anchor=north west, fill=white, fill opacity=0.92, text opacity=1, draw=gray!45, rounded corners=2pt, inner xsep=6pt, inner ysep=4pt}, tick label style={font=\normalsize}, label style={font=\normalsize}, title style={font=\normalsize}, ]
                \addplot+[color={rgb,255:red,0;green,0;blue,255}, dashed, line width=1.3pt, mark=none] coordinates {(0.008,0.0396039) (0.016,0.04072) (0.024,0.0418715) (0.032,0.0430569) (0.04,0.0442763) (0.048,0.0455303) (0.056,0.0468197) (0.064,0.0481454) (0.072,0.0495084) (0.08,0.0509096) (0.088,0.0523501) (0.096,0.0538308) (0.104,0.0553529) (0.112,0.0569175) (0.12,0.0585258) (0.128,0.0601788) (0.136,0.0618778) (0.144,0.0636241) (0.152,0.0654189) (0.16,0.0672634) (0.168,0.0691591) (0.176,0.0711073) (0.184,0.0731093) (0.192,0.0751666) (0.2,0.0772807) (0.208,0.079453) (0.216,0.0816851) (0.224,0.0839786) (0.232,0.086335) (0.24,0.088756) (0.248,0.0912433) (0.256,0.0937986) (0.264,0.0964237) (0.272,0.0991203) (0.28,0.10189) (0.288,0.104736) (0.296,0.107658) (0.304,0.11066) (0.312,0.113742) (0.32,0.116908) (0.328,0.12016) (0.336,0.123498) (0.344,0.126927) (0.352,0.130447) (0.36,0.134061) (0.368,0.137772) (0.376,0.141581) (0.384,0.145491) (0.392,0.149506) (0.4,0.153626) (0.408,0.157855) (0.416,0.162195) (0.424,0.166649) (0.432,0.17122) (0.44,0.17591) (0.448,0.180722) (0.456,0.185659) (0.464,0.190723) (0.472,0.195918) (0.48,0.201247) (0.488,0.206713) (0.496,0.212318) (0.504,0.218066) (0.512,0.223959) (0.52,0.230002) (0.528,0.236197) (0.536,0.242547) (0.544,0.249056) (0.552,0.255727) (0.56,0.262564) (0.568,0.269569) (0.576,0.276746) (0.584,0.284099) (0.592,0.291631) (0.6,0.299346) (0.608,0.307246) (0.616,0.315336) (0.624,0.323619) (0.632,0.332098) (0.64,0.340778) (0.648,0.349661) (0.656,0.358752) (0.664,0.368053) (0.672,0.377568) (0.68,0.387302) (0.688,0.397256) (0.696,0.407435) (0.704,0.417843) (0.712,0.428482) (0.72,0.439356) (0.728,0.450468) (0.736,0.461822) (0.744,0.473421) (0.752,0.485268) (0.76,0.497365) (0.768,0.509717) (0.776,0.522326) (0.784,0.535194) (0.792,0.548325) (0.8,0.561721) (0.808,0.575384) (0.816,0.589317) (0.824,0.603523) (0.832,0.618002) (0.84,0.632758) (0.848,0.647791) (0.856,0.663104) (0.864,0.678697) (0.872,0.694573) (0.88,0.710731) (0.888,0.727173) (0.896,0.743899) (0.904,0.76091) (0.912,0.778206) (0.92,0.795785) (0.928,0.813649) (0.936,0.831796) (0.944,0.850225) (0.952,0.868935) (0.96,0.887924) (0.968,0.90719) (0.976,0.926731) (0.984,0.946544) (0.992,0.966626) (1,0.986974) (1.008,1.00758) (1.016,1.02845) (1.024,1.04957) (1.032,1.07094) (1.04,1.09256) (1.048,1.11441) (1.056,1.13649) (1.064,1.15879) (1.072,1.18132) (1.08,1.20405) (1.088,1.22698) (1.096,1.25011) (1.104,1.27342) (1.112,1.29691) (1.12,1.32056) (1.128,1.34437) (1.136,1.36832) (1.144,1.39241) (1.152,1.41661) (1.16,1.44092) (1.168,1.46533) (1.176,1.48981) (1.184,1.51437) (1.192,1.53897) (1.2,1.56362) (1.208,1.58828) (1.216,1.61295) (1.224,1.6376) (1.232,1.66223) (1.24,1.6868) (1.248,1.71131) (1.256,1.73573) (1.264,1.76005) (1.272,1.78423) (1.28,1.80826) (1.288,1.83212) (1.296,1.85578) (1.304,1.87922) (1.312,1.90241) (1.32,1.92532) (1.328,1.94793) (1.336,1.97021) (1.344,1.99213) (1.352,2.01365) (1.36,2.03475) (1.368,2.05538) (1.376,2.07552) (1.384,2.09512) (1.392,2.11415) (1.4,2.13255) (1.408,2.15029) (1.416,2.16731) (1.424,2.18356) (1.432,2.19899) (1.44,2.21353) (1.448,2.22712) (1.456,2.23969) (1.464,2.25116) (1.472,2.26143) (1.48,2.27042) (1.488,2.27802) (1.496,2.2841) (1.504,2.28852) (1.512,2.29112) (1.52,2.29171) (1.528,2.29006) (1.536,2.2859) (1.544,2.2789) (1.552,2.26866) (1.56,2.25466) (1.568,2.23626) (1.576,2.21263) (1.584,2.18275) (1.592,2.14531) (1.6,2.09876) (1.608,2.0414) (1.616,1.97168) (1.624,1.88897) (1.632,1.79476) (1.64,1.69388) (1.648,1.59431) (1.656,1.50443) (1.664,1.42917) (1.672,1.36874) (1.68,1.32046) (1.688,1.28101) (1.696,1.2477) (1.704,1.21865) (1.712,1.19261) (1.72,1.1688) (1.728,1.14671) (1.736,1.12601) (1.744,1.10647) (1.752,1.08793) (1.76,1.07028) (1.768,1.05343) (1.776,1.03731) (1.784,1.02187) (1.792,1.00705) (1.8,0.992817) (1.808,0.979132) (1.816,0.965963) (1.824,0.953279) (1.832,0.941054) (1.84,0.929264) (1.848,0.917886) (1.856,0.906898) (1.864,0.896284) (1.872,0.886023) (1.88,0.876102) (1.888,0.866503) (1.896,0.857213) (1.904,0.848219) (1.912,0.839509) (1.92,0.831071) (1.928,0.822894) (1.936,0.814969) (1.944,0.807284) (1.952,0.799832) (1.96,0.792605) (1.968,0.785593) (1.976,0.778789) (1.984,0.772187) (1.992,0.765779) (2,0.759558)};
                \addlegendentry{Uncontrolled}
                \end{axis}
                \end{tikzpicture}
         }
         \caption{Tumor volume}
         \label{fig:volumen_sinControl}
     \end{subfigure}
    \caption{Evolution over time.}
        \label{fig:u_sinControl_datos}
        \end{figure}
Figure  \ref{fig:int_u} and Figure \ref{fig:int_sigma}  show the evolution of the amount of the square of cells and oxygen with respect to a desired stated, namely, absence of tumor and $\sigma_Q$  threshold of oxygen, respectively.   Figure~\ref{fig:maxU_sinControl} shows the maximum value of  $u$ as a function of time over the time horizon, where the influence of the previously identified aggregation point becomes more evident. In addition, it shows the influence of the consumption effect of the logistic term $-\rho(\sigma)u^2$ to control that aggregation point, but at the same time it causes a reduction in tumor volume (see Figure \ref{fig:volumen_sinControl}).     
    
\subsection{Experiment 2 (assuming \texorpdfstring{$c_{max} = 0.25$}{cmax = 0.25})}
In this experiment, we analyze the behavior of the system with control assuming a small amount of maximum  therapy $\ControlC$; specifically, for this experiment, we consider the limit for the global therapeutic quantity $c_{max} = 0.25$.  Figure \ref{fig:robustes_controles_1} shows a comparison of the initial control versus the optimal control obtained for two cases: the first one includes the cost functional with the terms involving the final time $T$ (i.e., $l_j = 1$ for $j=1,2$), and the second one excluding those terms (i.e., $l_j = 0$ for $j = 1,2$). Essentially, when the cost functional is not considered in the final time $T$, the therapy acts earlier.   
\noindent
\begin{figure}[H]
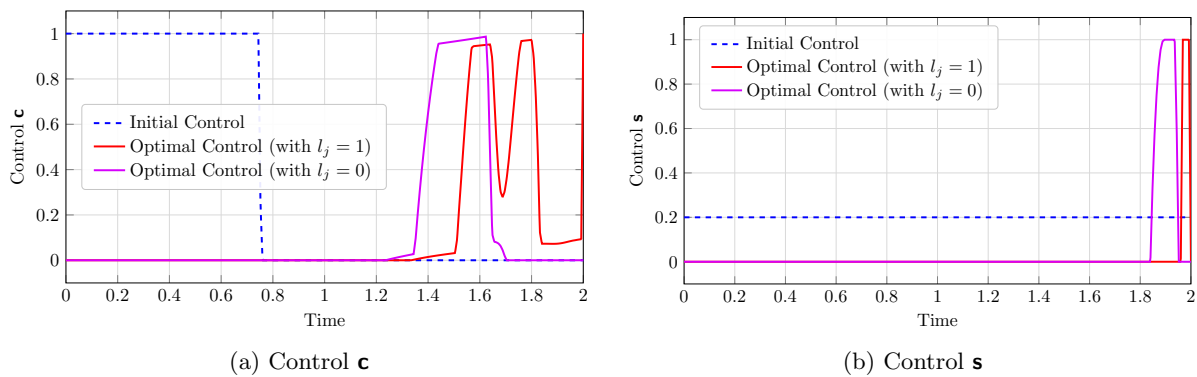

     \centering
     \begin{subfigure}[b]{0.49\textwidth}
         \centering
         \resizebox{\linewidth}{!}{% [inline block 0: 2 envs, 83455 chars in 2 pieces, piece 1 here, a bare % at each other -> data_tex | \begin{tikzpicture} \begin{axis}[width=11.5cm, height=6.8cm, xmin=0, xmax=2, xlabel={Time}, ylabel={Control $\ControlC$}...]
}
         \caption{Control \ControlC}
         \label{fig:robustes_controlC_1}
     \end{subfigure}
     \hfill
     \begin{subfigure}[b]{0.49\textwidth}
         \centering
         \resizebox{\linewidth}{!}{
         %
         }
         \caption{Control \ControlS}
         \label{fig:robustes_controlS_1}
     \end{subfigure}
    
    \caption{Optimal control compared with initial control.}
    \label{fig:robustes_controles_1}
\end{figure}
Figures \ref{fig:robustes_u_1} shows a comparison of the two cases of the optimal control versus the uncontrolled case, where low effectiveness is observed due to the limited therapy available for control $\ControlC$. In particular, we see that the case where $l_j = 0$ (i.e., excluding the final time terms $T$ in the cost functional) is more effective, giving higher emphasis on controlling the tumor and oxygen levels throughout the entire time interval.\\

Figures \ref{fig:func_exp1_lj1}-\ref{fig:func_exp1_lj0} show the evolution of the cost functional during the simulation. In particular, we observe a stabilization is in the final iterations, indicating a good convergence of the Adam algorithm. In the Figures \ref{fig:func_exp1_lj1}-\ref{fig:func_exp1_lj0}, green color indicates a decrease in the functional, red indicates an increase in the functional, and yellow indicates that the functional has stabilized. Additionally, Figures \ref{fig:controlC_exp1_pert}-\ref{fig:controlS_exp1_pert} illustrate the nature of a local minimum of the cost functional, perturbing the optimal values $\ControlC$ and $\ControlS$ in a specific direction.
\begin{figure}[H]
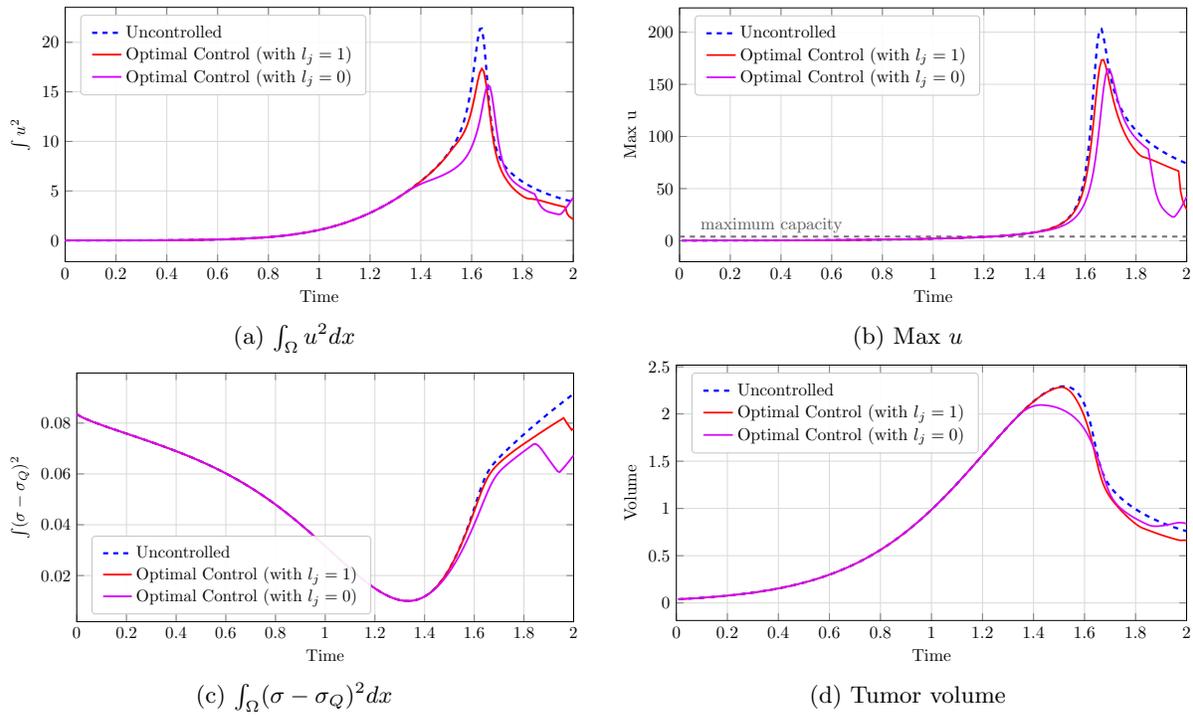

     \centering
     \begin{subfigure}[b]{0.49\textwidth}
         \centering
         \resizebox{\linewidth}{!}{
         % [inline block 1: 8 envs, 181982 chars in 8 pieces, piece 1 here, a bare % at each other -> data_tex | \begin{tikzpicture} \begin{axis}[width=12cm, height=6.8cm, xmin=0, xmax=2, xlabel={Time}, ylabel={$\int u^2$}, grid=both...]

         }
         \caption{$\int_\Omega u^2 dx$}
         \label{fig:robustes_min_u_1}
     \end{subfigure}
     \hfill
     \begin{subfigure}[b]{0.49\textwidth}
         \centering
         \resizebox{\linewidth}{!}{
         %
         }
         \caption{Max $u$}
         \label{fig:robustes_max_u_1}
     \end{subfigure}
     \hfill
     \begin{subfigure}[b]{0.49\textwidth}
         \centering
         \resizebox{\linewidth}{!}{
         %
         }
         \caption{$\int_\Omega (\sigma-\sigma_Q)^2dx$}
         \label{fig:robustes_vol_u_2}
     \end{subfigure}
     \hfill
     \begin{subfigure}[b]{0.49\textwidth}
         \centering
         \resizebox{\linewidth}{!}{
         %
         }
         \caption{Tumor volume}
         \label{fig:robustes_vol_u_1}
     \end{subfigure}
    \caption{Evolution over time.}
    \label{fig:robustes_u_1}
\end{figure}
\begin{figure}[H]
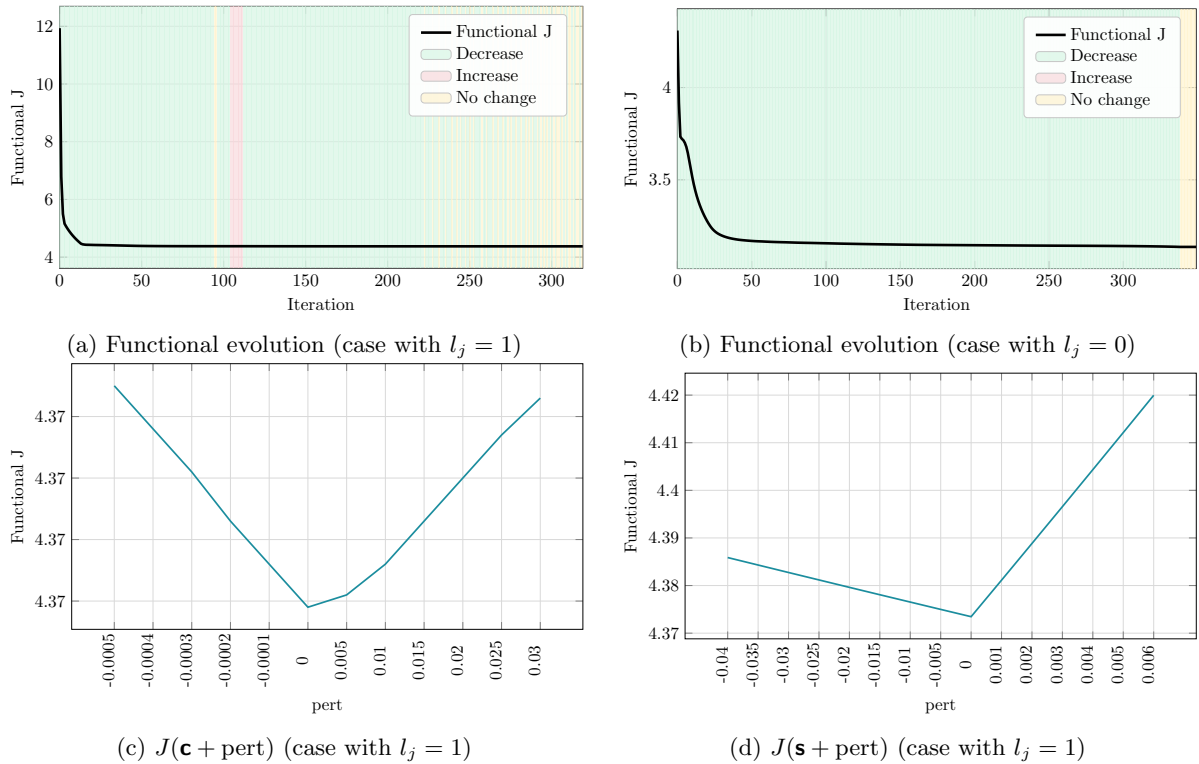

\begin{subfigure}[b]{0.49\textwidth}
    \centering
    \resizebox{\linewidth}{!}{
%
    }
    \caption{Functional evolution (case with $l_j=1$)}
    \label{fig:func_exp1_lj1}
    \end{subfigure}
    \begin{subfigure}[b]{0.49\linewidth}
        \resizebox{\linewidth}{!}{
        %
        }
        \caption{Functional evolution (case with $l_j=0$)}
        \label{fig:func_exp1_lj0}
    \end{subfigure}
            \hfill
     \begin{subfigure}[b]{0.49\textwidth}
         \centering
         \resizebox{\linewidth}{!}{
         %
         }
         \caption{$J(\ControlC+\text{pert})$ (case with $l_j=1$)}
         \label{fig:controlC_exp1_pert}
     \end{subfigure}
     \hfill
     \begin{subfigure}[b]{0.49\textwidth}
         \centering
         \resizebox{\linewidth}{!}{
%
         }
         \caption{$J(\ControlS+\text{pert})$ (case with $l_j = 1$)}
         \label{fig:controlS_exp1_pert}
     \end{subfigure}
     \caption{Stability of functional and minimum local.}
\end{figure}

\subsection{Experiment 3 (assuming $c_{max} = 0.75$)}
In this experiment, we analyze the evolution of the optimal control problem under the assumption of a greater maximum control capacity $\ControlC$, specifically, we consider the limit for the global therapeutic quantity $c_{max} = 0.75$. Figure \ref{fig:robustes_controles_2} displays the initial control together with the corresponding optimal control obtained. In this case, the optimal control $\ControlC$ uses all the quantity allowed at the final times. 
\begin{figure}[H]
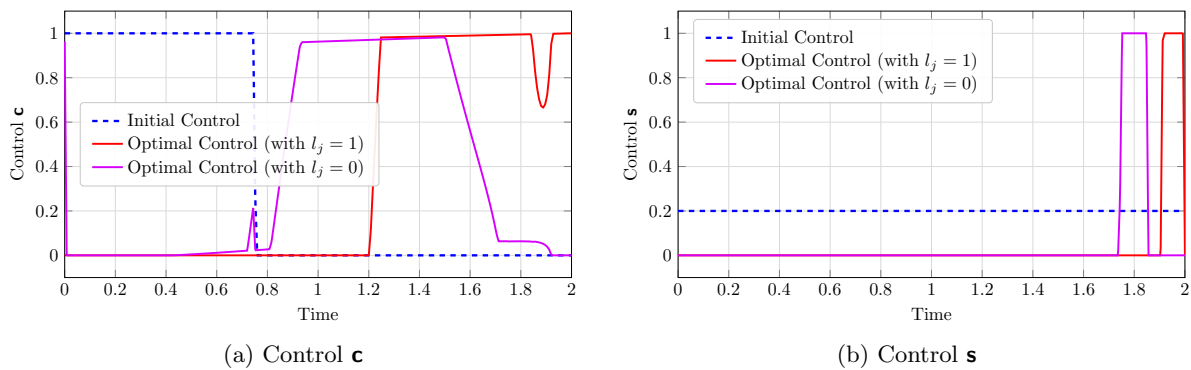

     \centering
     \begin{subfigure}[b]{0.49\textwidth}
         \centering
         \resizebox{\linewidth}{!}{
         % [inline block 2: 2 envs, 87442 chars in 2 pieces, piece 1 here, a bare % at each other -> data_tex | \begin{tikzpicture} \begin{axis}[width=11.5cm, height=6.8cm, xmin=0, xmax=2, xlabel={Time}, ylabel={Control $\ControlC$}...]

         }
         \caption{Control \ControlC}
         \label{fig:robustes_controlC_2}
     \end{subfigure}
     \hfill
     \begin{subfigure}[b]{0.49\textwidth}
         \centering
         \resizebox{\linewidth}{!}{
         %
         }
         \caption{Control \ControlS}
         \label{fig:robustes_controlS_2}
     \end{subfigure}
    \caption{Optimal control compared with initial control.}
    \label{fig:robustes_controles_2}
\end{figure}
Figure \ref{fig:robustes_u_2} shows that  the increase in the amount of control  $\ControlC$, allows to obtain a greater regulation of both the volume (see Figure \ref{fig:vol_u_exp2}) and the $L^2$-norm of $u$ (see Figure \ref{fig:int_u_exp2}); this fact demonstrates a greater efficiency of the optimal control strategy. Furthermore, it facilitates enhanced control of the aggregation point (see Figure 10b).
\begin{figure}[H]
     \centering
     \begin{subfigure}[b]{0.49\textwidth}
         \centering
         \resizebox{\linewidth}{!}{
         \begin{tikzpicture}
\begin{axis}[width=12cm, height=6.8cm, xmin=0, xmax=2, xlabel={Time}, ylabel={$\int u^2$}, grid=both, ymajorgrids, grid style={gray!30}, legend cell align=left, legend style={font=\normalsize, at={(0.03,0.97)}, anchor=north west, fill=white, fill opacity=0.92, text opacity=1, draw=gray!45, rounded corners=2pt, inner xsep=6pt, inner ysep=4pt}, /pgf/number format/use comma=false, /pgf/number format/set decimal separator={.}, /pgf/number format/1000 sep={}, scaled x ticks=false, scaled y ticks=false, tick label style={font=\normalsize}, xticklabel style={font=\normalsize, /pgf/number format/fixed, /pgf/number format/precision=3, /pgf/number format/use comma=false, /pgf/number format/set decimal separator={.}, /pgf/number format/1000 sep={}}, yticklabel style={font=\normalsize, /pgf/number format/fixed, /pgf/number format/precision=6, /pgf/number format/use comma=false, /pgf/number format/set decimal separator={.}, /pgf/number format/1000 sep={}}, label style={font=\normalsize}, title style={font=\normalsize}, ]
\addplot+[color={rgb,255:red,0;green,0;blue,255}, dashed, line width=1.3pt, mark=none] coordinates {(0,0.0116615) (0.008,0.00424311) (0.016,0.00317803) (0.024,0.00292957) (0.032,0.00289467) (0.04,0.00294018) (0.048,0.00302428) (0.056,0.00313147) (0.064,0.00325527) (0.072,0.00339272) (0.08,0.00354233) (0.088,0.0037034) (0.096,0.00387562) (0.104,0.00405899) (0.112,0.00425369) (0.12,0.00446005) (0.128,0.00467847) (0.136,0.00490949) (0.144,0.00515368) (0.152,0.00541169) (0.16,0.00568423) (0.168,0.00597204) (0.176,0.00627594) (0.184,0.00659678) (0.192,0.00693547) (0.2,0.00729297) (0.208,0.00767031) (0.216,0.00806855) (0.224,0.00848883) (0.232,0.00893234) (0.24,0.00940035) (0.248,0.00989417) (0.256,0.0104152) (0.264,0.010965) (0.272,0.0115449) (0.28,0.0121568) (0.288,0.0128023) (0.296,0.0134832) (0.304,0.0142014) (0.312,0.014959) (0.32,0.015758) (0.328,0.0166007) (0.336,0.0174894) (0.344,0.0184265) (0.352,0.0194148) (0.36,0.0204568) (0.368,0.0215555) (0.376,0.0227138) (0.384,0.023935) (0.392,0.0252222) (0.4,0.0265791) (0.408,0.0280093) (0.416,0.0295165) (0.424,0.031105) (0.432,0.0327788) (0.44,0.0345425) (0.448,0.0364006) (0.456,0.0383581) (0.464,0.0404202) (0.472,0.0425921) (0.48,0.0448796) (0.488,0.0472885) (0.496,0.049825) (0.504,0.0524955) (0.512,0.0553069) (0.52,0.0582663) (0.528,0.061381) (0.536,0.0646588) (0.544,0.0681079) (0.552,0.0717367) (0.56,0.075554) (0.568,0.0795692) (0.576,0.0837919) (0.584,0.0882322) (0.592,0.0929006) (0.6,0.0978081) (0.608,0.102966) (0.616,0.108387) (0.624,0.114082) (0.632,0.120065) (0.64,0.12635) (0.648,0.13295) (0.656,0.139879) (0.664,0.147154) (0.672,0.154789) (0.68,0.162802) (0.688,0.171208) (0.696,0.180026) (0.704,0.189274) (0.712,0.19897) (0.72,0.209135) (0.728,0.219788) (0.736,0.23095) (0.744,0.242643) (0.752,0.25489) (0.76,0.267712) (0.768,0.281135) (0.776,0.295182) (0.784,0.309879) (0.792,0.325252) (0.8,0.341326) (0.808,0.358131) (0.816,0.375693) (0.824,0.394043) (0.832,0.413208) (0.84,0.43322) (0.848,0.45411) (0.856,0.475909) (0.864,0.49865) (0.872,0.522366) (0.88,0.54709) (0.888,0.572856) (0.896,0.5997) (0.904,0.627657) (0.912,0.656762) (0.92,0.687052) (0.928,0.718564) (0.936,0.751335) (0.944,0.785403) (0.952,0.820804) (0.96,0.857579) (0.968,0.895764) (0.976,0.935398) (0.984,0.97652) (0.992,1.01917) (1,1.06338) (1.008,1.1092) (1.016,1.15666) (1.024,1.20579) (1.032,1.25665) (1.04,1.30926) (1.048,1.36366) (1.056,1.41989) (1.064,1.47799) (1.072,1.53798) (1.08,1.59991) (1.088,1.6638) (1.096,1.7297) (1.104,1.79763) (1.112,1.86762) (1.12,1.93971) (1.128,2.01392) (1.136,2.09028) (1.144,2.16882) (1.152,2.24957) (1.16,2.33255) (1.168,2.41778) (1.176,2.5053) (1.184,2.59512) (1.192,2.68725) (1.2,2.78174) (1.208,2.87859) (1.216,2.97782) (1.224,3.07946) (1.232,3.18352) (1.24,3.29003) (1.248,3.399) (1.256,3.51046) (1.264,3.62442) (1.272,3.74091) (1.28,3.85996) (1.288,3.9816) (1.296,4.10585) (1.304,4.23275) (1.312,4.36233) (1.32,4.49465) (1.328,4.62975) (1.336,4.7677) (1.344,4.90855) (1.352,5.05238) (1.36,5.1993) (1.368,5.34939) (1.376,5.50278) (1.384,5.65961) (1.392,5.82005) (1.4,5.98428) (1.408,6.15252) (1.416,6.32504) (1.424,6.50214) (1.432,6.68419) (1.44,6.87163) (1.448,7.06496) (1.456,7.26481) (1.464,7.47192) (1.472,7.68722) (1.48,7.91183) (1.488,8.14715) (1.496,8.39492) (1.504,8.65736) (1.512,8.93729) (1.52,9.23838) (1.528,9.56537) (1.536,9.92455) (1.544,10.3242) (1.552,10.7754) (1.56,11.2928) (1.568,11.8959) (1.576,12.6095) (1.584,13.4647) (1.592,14.4956) (1.6,15.7317) (1.608,17.1765) (1.616,18.7634) (1.624,20.2864) (1.632,21.3396) (1.64,21.3924) (1.648,20.1318) (1.656,17.845) (1.664,15.2774) (1.672,13.0505) (1.68,11.3731) (1.688,10.1812) (1.696,9.33827) (1.704,8.72622) (1.712,8.26311) (1.72,7.89639) (1.728,7.59333) (1.736,7.33361) (1.744,7.10456) (1.752,6.89815) (1.76,6.70921) (1.768,6.53429) (1.776,6.37106) (1.784,6.21784) (1.792,6.07339) (1.8,5.93678) (1.808,5.80723) (1.816,5.68411) (1.824,5.5669) (1.832,5.45513) (1.84,5.3484) (1.848,5.24635) (1.856,5.14865) (1.864,5.05502) (1.872,4.9652) (1.88,4.87895) (1.888,4.79604) (1.896,4.7163) (1.904,4.63953) (1.912,4.56557) (1.92,4.49427) (1.928,4.4255) (1.936,4.35911) (1.944,4.295) (1.952,4.23305) (1.96,4.17316) (1.968,4.11524) (1.976,4.05919) (1.984,4.00494) (1.992,3.95241) (2,3.90151)};
\addlegendentry{Uncontrolled}
\addplot+[color={rgb,255:red,255;green,0;blue,0}, solid, line width=1.08pt, mark=none] coordinates {(0,0.0116615) (0.008,0.00424311) (0.016,0.00317803) (0.024,0.00292957) (0.032,0.00289467) (0.04,0.00294018) (0.048,0.00302428) (0.056,0.00313147) (0.064,0.00325527) (0.072,0.00339272) (0.08,0.00354233) (0.088,0.0037034) (0.096,0.00387562) (0.104,0.00405899) (0.112,0.00425369) (0.12,0.00446005) (0.128,0.00467847) (0.136,0.00490949) (0.144,0.00515368) (0.152,0.00541169) (0.16,0.00568423) (0.168,0.00597204) (0.176,0.00627594) (0.184,0.00659678) (0.192,0.00693547) (0.2,0.00729297) (0.208,0.00767031) (0.216,0.00806855) (0.224,0.00848883) (0.232,0.00893234) (0.24,0.00940035) (0.248,0.00989417) (0.256,0.0104152) (0.264,0.010965) (0.272,0.0115449) (0.28,0.0121568) (0.288,0.0128023) (0.296,0.0134832) (0.304,0.0142014) (0.312,0.014959) (0.32,0.015758) (0.328,0.0166007) (0.336,0.0174894) (0.344,0.0184265) (0.352,0.0194148) (0.36,0.0204568) (0.368,0.0215555) (0.376,0.0227138) (0.384,0.023935) (0.392,0.0252222) (0.4,0.0265791) (0.408,0.0280093) (0.416,0.0295165) (0.424,0.031105) (0.432,0.0327788) (0.44,0.0345425) (0.448,0.0364006) (0.456,0.0383581) (0.464,0.0404202) (0.472,0.0425921) (0.48,0.0448796) (0.488,0.0472885) (0.496,0.049825) (0.504,0.0524955) (0.512,0.0553069) (0.52,0.0582663) (0.528,0.061381) (0.536,0.0646588) (0.544,0.0681079) (0.552,0.0717367) (0.56,0.075554) (0.568,0.0795692) (0.576,0.0837919) (0.584,0.0882322) (0.592,0.0929006) (0.6,0.0978081) (0.608,0.102966) (0.616,0.108387) (0.624,0.114082) (0.632,0.120065) (0.64,0.12635) (0.648,0.13295) (0.656,0.139879) (0.664,0.147154) (0.672,0.154789) (0.68,0.162802) (0.688,0.171208) (0.696,0.180026) (0.704,0.189274) (0.712,0.19897) (0.72,0.209135) (0.728,0.219788) (0.736,0.23095) (0.744,0.242643) (0.752,0.25489) (0.76,0.267712) (0.768,0.281135) (0.776,0.295182) (0.784,0.309879) (0.792,0.325252) (0.8,0.341326) (0.808,0.358131) (0.816,0.375693) (0.824,0.394043) (0.832,0.413208) (0.84,0.43322) (0.848,0.45411) (0.856,0.475909) (0.864,0.49865) (0.872,0.522366) (0.88,0.54709) (0.888,0.572856) (0.896,0.5997) (0.904,0.627657) (0.912,0.656762) (0.92,0.687052) (0.928,0.718564) (0.936,0.751335) (0.944,0.785403) (0.952,0.820804) (0.96,0.857579) (0.968,0.895764) (0.976,0.935398) (0.984,0.97652) (0.992,1.01917) (1,1.06338) (1.008,1.1092) (1.016,1.15666) (1.024,1.20579) (1.032,1.25665) (1.04,1.30926) (1.048,1.36366) (1.056,1.41989) (1.064,1.47799) (1.072,1.53798) (1.08,1.59991) (1.088,1.6638) (1.096,1.7297) (1.104,1.79763) (1.112,1.86762) (1.12,1.93971) (1.128,2.01392) (1.136,2.09028) (1.144,2.16882) (1.152,2.24957) (1.16,2.33255) (1.168,2.41778) (1.176,2.5053) (1.184,2.59512) (1.192,2.68725) (1.2,2.78174) (1.208,2.87247) (1.216,2.95695) (1.224,3.03487) (1.232,3.10595) (1.24,3.17001) (1.248,3.22689) (1.256,3.28377) (1.264,3.34116) (1.272,3.39901) (1.28,3.4573) (1.288,3.51604) (1.296,3.57521) (1.304,3.6348) (1.312,3.69481) (1.32,3.75525) (1.328,3.81613) (1.336,3.87744) (1.344,3.93921) (1.352,4.00145) (1.36,4.06418) (1.368,4.12743) (1.376,4.19122) (1.384,4.25561) (1.392,4.32062) (1.4,4.38632) (1.408,4.45276) (1.416,4.52) (1.424,4.58814) (1.432,4.65726) (1.44,4.72747) (1.448,4.79889) (1.456,4.87168) (1.464,4.946) (1.472,5.02205) (1.48,5.10006) (1.488,5.18031) (1.496,5.26312) (1.504,5.34888) (1.512,5.43806) (1.52,5.53122) (1.528,5.62903) (1.536,5.73231) (1.544,5.84207) (1.552,5.95955) (1.56,6.08628) (1.568,6.22416) (1.576,6.37555) (1.584,6.54342) (1.592,6.73146) (1.6,6.94425) (1.608,7.18748) (1.616,7.4681) (1.624,7.79435) (1.632,8.17566) (1.64,8.62199) (1.648,9.14224) (1.656,9.74103) (1.664,10.4129) (1.672,11.1328) (1.68,11.8441) (1.688,12.4468) (1.696,12.8014) (1.704,12.7634) (1.712,12.2549) (1.72,11.33) (1.728,10.1661) (1.736,8.97362) (1.744,7.90323) (1.752,7.0175) (1.76,6.31539) (1.768,5.76733) (1.776,5.3382) (1.784,4.99734) (1.792,4.72094) (1.8,4.49157) (1.808,4.29675) (1.816,4.12765) (1.824,3.97802) (1.832,3.84341) (1.84,3.72064) (1.848,3.61765) (1.856,3.53225) (1.864,3.45841) (1.872,3.39142) (1.88,3.32778) (1.888,3.26481) (1.896,3.2003) (1.904,3.13206) (1.912,3.05546) (1.92,2.40989) (1.928,2.11611) (1.936,1.95296) (1.944,1.84645) (1.952,1.76817) (1.96,1.70531) (1.968,1.65145) (1.976,1.60321) (1.984,1.55874) (1.992,1.51704) (2,1.4776)};
\addlegendentry{Optimal Control (with $l_j = 1$)}
\addplot+[color={rgb,255:red,214;green,0;blue,252}, solid, line width=1.08pt, mark=none] coordinates {(0,0.0116615) (0.008,0.00424311) (0.016,0.00317803) (0.024,0.00292957) (0.032,0.00289467) (0.04,0.00294018) (0.048,0.00302428) (0.056,0.00313147) (0.064,0.00325527) (0.072,0.00339272) (0.08,0.00354233) (0.088,0.0037034) (0.096,0.00387562) (0.104,0.00405899) (0.112,0.00425369) (0.12,0.00446005) (0.128,0.00467847) (0.136,0.00490949) (0.144,0.00515368) (0.152,0.00541169) (0.16,0.00568423) (0.168,0.00597204) (0.176,0.00627594) (0.184,0.00659678) (0.192,0.00693547) (0.2,0.00729297) (0.208,0.00767031) (0.216,0.00806855) (0.224,0.00848883) (0.232,0.00893234) (0.24,0.00940035) (0.248,0.00989417) (0.256,0.0104152) (0.264,0.010965) (0.272,0.0115449) (0.28,0.0121568) (0.288,0.0128023) (0.296,0.0134832) (0.304,0.0142014) (0.312,0.014959) (0.32,0.015758) (0.328,0.0166007) (0.336,0.0174894) (0.344,0.0184265) (0.352,0.0194148) (0.36,0.0204568) (0.368,0.0215555) (0.376,0.0227138) (0.384,0.023935) (0.392,0.0252222) (0.4,0.0265791) (0.408,0.0280093) (0.416,0.0295165) (0.424,0.031105) (0.432,0.0327787) (0.44,0.034542) (0.448,0.0363996) (0.456,0.0383563) (0.464,0.0404171) (0.472,0.0425875) (0.48,0.0448729) (0.488,0.0472792) (0.496,0.0498125) (0.504,0.0524792) (0.512,0.0552861) (0.52,0.0582401) (0.528,0.0613484) (0.536,0.0646189) (0.544,0.0680595) (0.552,0.0716785) (0.56,0.0754846) (0.568,0.079487) (0.576,0.0836951) (0.584,0.0881189) (0.592,0.0927686) (0.6,0.097655) (0.608,0.102789) (0.616,0.108183) (0.624,0.113849) (0.632,0.119799) (0.64,0.126047) (0.648,0.132606) (0.656,0.13949) (0.664,0.146715) (0.672,0.154294) (0.68,0.162245) (0.688,0.170584) (0.696,0.179327) (0.704,0.188493) (0.712,0.198099) (0.72,0.208165) (0.728,0.218532) (0.736,0.229184) (0.744,0.240113) (0.752,0.252169) (0.76,0.264787) (0.768,0.277989) (0.776,0.291798) (0.784,0.306239) (0.792,0.321337) (0.8,0.337117) (0.808,0.353605) (0.816,0.370674) (0.824,0.388154) (0.832,0.406025) (0.84,0.424265) (0.848,0.442853) (0.856,0.461764) (0.864,0.480974) (0.872,0.500457) (0.88,0.520186) (0.888,0.540134) (0.896,0.560273) (0.904,0.580576) (0.912,0.601014) (0.92,0.621558) (0.928,0.642177) (0.936,0.663187) (0.944,0.684754) (0.952,0.706886) (0.96,0.729589) (0.968,0.752871) (0.976,0.776737) (0.984,0.801196) (0.992,0.826252) (1,0.851912) (1.008,0.878181) (1.016,0.905065) (1.024,0.932569) (1.032,0.960697) (1.04,0.989455) (1.048,1.01885) (1.056,1.04887) (1.064,1.07954) (1.072,1.11085) (1.08,1.1428) (1.088,1.17541) (1.096,1.20866) (1.104,1.24256) (1.112,1.27711) (1.12,1.31231) (1.128,1.34816) (1.136,1.38467) (1.144,1.42182) (1.152,1.45962) (1.16,1.49806) (1.168,1.53715) (1.176,1.57688) (1.184,1.61725) (1.192,1.65825) (1.2,1.69988) (1.208,1.74214) (1.216,1.78501) (1.224,1.82851) (1.232,1.87261) (1.24,1.91732) (1.248,1.96263) (1.256,2.00853) (1.264,2.05502) (1.272,2.10209) (1.28,2.14974) (1.288,2.19796) (1.296,2.24674) (1.304,2.29607) (1.312,2.34596) (1.32,2.39639) (1.328,2.44737) (1.336,2.49888) (1.344,2.55092) (1.352,2.60349) (1.36,2.65658) (1.368,2.7102) (1.376,2.76434) (1.384,2.819) (1.392,2.87418) (1.4,2.92989) (1.408,2.98613) (1.416,3.04291) (1.424,3.10023) (1.432,3.15811) (1.44,3.21656) (1.448,3.2756) (1.456,3.33526) (1.464,3.39554) (1.472,3.4565) (1.48,3.51815) (1.488,3.58055) (1.496,3.64373) (1.504,3.70817) (1.512,3.77607) (1.52,3.8476) (1.528,3.92295) (1.536,4.00233) (1.544,4.086) (1.552,4.17424) (1.56,4.26737) (1.568,4.36577) (1.576,4.46987) (1.584,4.58019) (1.592,4.69732) (1.6,4.82198) (1.608,4.955) (1.616,5.09743) (1.624,5.2505) (1.632,5.41573) (1.64,5.59503) (1.648,5.79071) (1.656,6.00573) (1.664,6.24376) (1.672,6.50952) (1.68,6.80901) (1.688,7.14993) (1.696,7.54212) (1.704,7.99815) (1.712,8.53064) (1.72,9.14881) (1.728,9.87027) (1.736,10.7108) (1.744,11.6778) (1.752,11.9829) (1.76,10.1429) (1.768,8.8111) (1.776,7.87074) (1.784,7.18335) (1.792,6.666) (1.8,6.26807) (1.808,5.95695) (1.816,5.7106) (1.824,5.51366) (1.832,5.35511) (1.84,5.2269) (1.848,5.12309) (1.856,5.03891) (1.864,5.13239) (1.872,5.30366) (1.88,5.52398) (1.888,5.78434) (1.896,6.0837) (1.904,6.42543) (1.912,6.81622) (1.92,7.26634) (1.928,7.78205) (1.936,8.37425) (1.944,9.0541) (1.952,9.82979) (1.96,10.7011) (1.968,11.6496) (1.976,12.6237) (1.984,13.5198) (1.992,14.1723) (2,14.379)};
\addlegendentry{Optimal Control (with $l_j = 0$)}
\end{axis}
\end{tikzpicture}
         }
         \caption{$\int_\Omega u^2dx$}
         \label{fig:int_u_exp2}
     \end{subfigure}
     \hfill
     \begin{subfigure}[b]{0.49\textwidth}
         \centering
         \resizebox{\linewidth}{!}{
         \begin{tikzpicture}
\begin{axis}[width=12cm, height=6.8cm, xmin=0, xmax=2, xlabel={Time}, ylabel={Max u}, grid=both, ymajorgrids, grid style={gray!30}, legend cell align=left, legend style={font=\normalsize, at={(0.03,0.97)}, anchor=north west, fill=white, fill opacity=0.92, text opacity=1, draw=gray!45, rounded corners=2pt, inner xsep=6pt, inner ysep=4pt}, tick label style={font=\normalsize}, label style={font=\normalsize}, title style={font=\normalsize}, ]
\addplot+[color={rgb,255:red,119;green,119;blue,119}, dashed, opacity=0.65, line width=1.2pt, mark=none, forget plot] coordinates {(0,4) (2,4)};
\node[anchor=west, text={rgb,255:red,85;green,85;blue,85}, text opacity=0.85] at (axis cs:0.06,15.02) {maximum capacity};
\addplot+[color={rgb,255:red,0;green,0;blue,255}, dashed, line width=1.3pt, mark=none] coordinates {(0.008,0.256873) (0.016,0.206981) (0.024,0.187258) (0.032,0.178822) (0.04,0.176312) (0.048,0.177225) (0.056,0.180147) (0.064,0.184085) (0.072,0.188386) (0.08,0.192753) (0.088,0.197023) (0.096,0.20124) (0.104,0.205328) (0.112,0.209294) (0.12,0.213152) (0.128,0.216918) (0.136,0.22061) (0.144,0.224245) (0.152,0.227841) (0.16,0.231415) (0.168,0.234982) (0.176,0.238558) (0.184,0.242156) (0.192,0.245789) (0.2,0.249467) (0.208,0.253201) (0.216,0.257001) (0.224,0.260876) (0.232,0.264833) (0.24,0.268881) (0.248,0.273025) (0.256,0.277273) (0.264,0.28163) (0.272,0.286102) (0.28,0.290695) (0.288,0.295414) (0.296,0.300263) (0.304,0.305248) (0.312,0.310373) (0.32,0.315643) (0.328,0.321063) (0.336,0.326636) (0.344,0.332368) (0.352,0.338262) (0.36,0.344323) (0.368,0.350556) (0.376,0.356964) (0.384,0.363553) (0.392,0.370332) (0.4,0.377301) (0.408,0.384463) (0.416,0.391823) (0.424,0.399385) (0.432,0.407164) (0.44,0.41516) (0.448,0.423373) (0.456,0.431807) (0.464,0.440469) (0.472,0.449363) (0.48,0.458495) (0.488,0.46787) (0.496,0.477494) (0.504,0.487373) (0.512,0.497512) (0.52,0.507918) (0.528,0.518596) (0.536,0.529552) (0.544,0.540794) (0.552,0.552553) (0.56,0.56479) (0.568,0.577323) (0.576,0.590159) (0.584,0.603307) (0.592,0.616772) (0.6,0.630563) (0.608,0.644691) (0.616,0.659162) (0.624,0.673981) (0.632,0.689156) (0.64,0.704697) (0.648,0.720611) (0.656,0.736907) (0.664,0.753593) (0.672,0.770678) (0.68,0.788171) (0.688,0.806082) (0.696,0.824419) (0.704,0.843192) (0.712,0.862411) (0.72,0.882085) (0.728,0.902225) (0.736,0.92284) (0.744,0.943941) (0.752,0.965538) (0.76,0.987642) (0.768,1.01026) (0.776,1.03341) (0.784,1.05711) (0.792,1.08135) (0.8,1.10615) (0.808,1.13153) (0.816,1.1575) (0.824,1.18473) (0.832,1.21276) (0.84,1.24148) (0.848,1.2709) (0.856,1.30102) (0.864,1.33189) (0.872,1.3635) (0.88,1.39587) (0.888,1.42902) (0.896,1.46298) (0.904,1.49776) (0.912,1.53337) (0.92,1.56984) (0.928,1.60718) (0.936,1.64544) (0.944,1.68461) (0.952,1.72472) (0.96,1.76578) (0.968,1.80784) (0.976,1.8509) (0.984,1.895) (0.992,1.94019) (1,1.98646) (1.008,2.03386) (1.016,2.0824) (1.024,2.13212) (1.032,2.18307) (1.04,2.23528) (1.048,2.28878) (1.056,2.3436) (1.064,2.39978) (1.072,2.45736) (1.08,2.51641) (1.088,2.57697) (1.096,2.63906) (1.104,2.70275) (1.112,2.76808) (1.12,2.83511) (1.128,2.90391) (1.136,2.97457) (1.144,3.04712) (1.152,3.12164) (1.16,3.19822) (1.168,3.27694) (1.176,3.36014) (1.184,3.4507) (1.192,3.54427) (1.2,3.64098) (1.208,3.74103) (1.216,3.84459) (1.224,3.95187) (1.232,4.06309) (1.24,4.1785) (1.248,4.29835) (1.256,4.42294) (1.264,4.55258) (1.272,4.68762) (1.28,4.82843) (1.288,4.97545) (1.296,5.12911) (1.304,5.28995) (1.312,5.45852) (1.32,5.63545) (1.328,5.82144) (1.336,6.01727) (1.344,6.22381) (1.352,6.44204) (1.36,6.67306) (1.368,6.91813) (1.376,7.17865) (1.384,7.45626) (1.392,7.75277) (1.4,8.07028) (1.408,8.41121) (1.416,8.77836) (1.424,9.17497) (1.432,9.60484) (1.44,10.0724) (1.448,10.583) (1.456,11.1429) (1.464,11.7595) (1.472,12.4418) (1.48,13.2009) (1.488,14.0502) (1.496,15.0061) (1.504,16.0893) (1.512,17.3258) (1.52,18.7485) (1.528,20.3998) (1.536,22.3346) (1.544,24.6258) (1.552,27.3693) (1.56,30.6944) (1.568,34.777) (1.576,39.8574) (1.584,46.2656) (1.592,54.45) (1.6,65.0044) (1.608,78.6611) (1.616,96.1789) (1.624,117.973) (1.632,143.313) (1.64,169.278) (1.648,190.658) (1.656,202.373) (1.664,203.049) (1.672,195.597) (1.68,184.357) (1.688,172.722) (1.696,162.154) (1.704,153.087) (1.712,145.478) (1.72,139.101) (1.728,133.708) (1.736,129.08) (1.744,125.047) (1.752,121.477) (1.76,118.274) (1.768,115.365) (1.776,112.696) (1.784,110.228) (1.792,107.928) (1.8,105.774) (1.808,103.747) (1.816,101.831) (1.824,100.014) (1.832,98.2875) (1.84,96.6416) (1.848,95.0698) (1.856,93.5657) (1.864,92.124) (1.872,90.7402) (1.88,89.41) (1.888,88.1299) (1.896,86.8966) (1.904,85.7071) (1.912,84.5589) (1.92,83.4495) (1.928,82.3768) (1.936,81.3388) (1.944,80.3336) (1.952,79.3597) (1.96,78.4156) (1.968,77.4997) (1.976,76.6108) (1.984,75.7478) (1.992,74.9094) (2,74.0946)};
\addlegendentry{Uncontrolled}
\addplot+[color={rgb,255:red,255;green,0;blue,0}, solid, line width=0.99pt, mark=none] coordinates {(0.008,0.256873) (0.016,0.206981) (0.024,0.187258) (0.032,0.178822) (0.04,0.176312) (0.048,0.177225) (0.056,0.180147) (0.064,0.184085) (0.072,0.188386) (0.08,0.192753) (0.088,0.197023) (0.096,0.20124) (0.104,0.205328) (0.112,0.209294) (0.12,0.213152) (0.128,0.216918) (0.136,0.22061) (0.144,0.224245) (0.152,0.227841) (0.16,0.231415) (0.168,0.234982) (0.176,0.238558) (0.184,0.242156) (0.192,0.245789) (0.2,0.249467) (0.208,0.253201) (0.216,0.257001) (0.224,0.260876) (0.232,0.264833) (0.24,0.268881) (0.248,0.273025) (0.256,0.277273) (0.264,0.28163) (0.272,0.286102) (0.28,0.290695) (0.288,0.295414) (0.296,0.300263) (0.304,0.305248) (0.312,0.310373) (0.32,0.315643) (0.328,0.321063) (0.336,0.326636) (0.344,0.332368) (0.352,0.338262) (0.36,0.344323) (0.368,0.350556) (0.376,0.356964) (0.384,0.363553) (0.392,0.370332) (0.4,0.377301) (0.408,0.384463) (0.416,0.391823) (0.424,0.399385) (0.432,0.407164) (0.44,0.41516) (0.448,0.423373) (0.456,0.431807) (0.464,0.440469) (0.472,0.449363) (0.48,0.458495) (0.488,0.46787) (0.496,0.477494) (0.504,0.487373) (0.512,0.497512) (0.52,0.507918) (0.528,0.518596) (0.536,0.529552) (0.544,0.540794) (0.552,0.552553) (0.56,0.56479) (0.568,0.577323) (0.576,0.590159) (0.584,0.603307) (0.592,0.616772) (0.6,0.630563) (0.608,0.644691) (0.616,0.659162) (0.624,0.673981) (0.632,0.689156) (0.64,0.704697) (0.648,0.720611) (0.656,0.736907) (0.664,0.753593) (0.672,0.770678) (0.68,0.788171) (0.688,0.806082) (0.696,0.824419) (0.704,0.843192) (0.712,0.862411) (0.72,0.882085) (0.728,0.902225) (0.736,0.92284) (0.744,0.943941) (0.752,0.965538) (0.76,0.987642) (0.768,1.01026) (0.776,1.03341) (0.784,1.05711) (0.792,1.08135) (0.8,1.10615) (0.808,1.13153) (0.816,1.1575) (0.824,1.18473) (0.832,1.21276) (0.84,1.24148) (0.848,1.2709) (0.856,1.30102) (0.864,1.33189) (0.872,1.3635) (0.88,1.39587) (0.888,1.42902) (0.896,1.46298) (0.904,1.49776) (0.912,1.53337) (0.92,1.56984) (0.928,1.60718) (0.936,1.64544) (0.944,1.68461) (0.952,1.72472) (0.96,1.76578) (0.968,1.80784) (0.976,1.8509) (0.984,1.895) (0.992,1.94019) (1,1.98646) (1.008,2.03386) (1.016,2.0824) (1.024,2.13212) (1.032,2.18307) (1.04,2.23528) (1.048,2.28878) (1.056,2.3436) (1.064,2.39978) (1.072,2.45736) (1.08,2.51641) (1.088,2.57697) (1.096,2.63906) (1.104,2.70275) (1.112,2.76808) (1.12,2.83511) (1.128,2.90391) (1.136,2.97457) (1.144,3.04712) (1.152,3.12164) (1.16,3.19822) (1.168,3.27694) (1.176,3.36014) (1.184,3.4507) (1.192,3.54427) (1.2,3.64098) (1.208,3.73361) (1.216,3.81673) (1.224,3.89051) (1.232,3.95508) (1.24,4.01058) (1.248,4.05838) (1.256,4.10907) (1.264,4.16239) (1.272,4.21821) (1.28,4.27648) (1.288,4.33719) (1.296,4.40037) (1.304,4.46609) (1.312,4.5344) (1.32,4.60541) (1.328,4.6792) (1.336,4.7559) (1.344,4.83565) (1.352,4.91858) (1.36,5.00486) (1.368,5.09467) (1.376,5.18824) (1.384,5.28575) (1.392,5.38744) (1.4,5.49357) (1.408,5.60442) (1.416,5.7203) (1.424,5.84154) (1.432,5.96851) (1.44,6.10162) (1.448,6.24132) (1.456,6.38807) (1.464,6.54243) (1.472,6.70498) (1.48,6.87637) (1.488,7.05733) (1.496,7.24863) (1.504,7.45118) (1.512,7.66599) (1.52,7.89417) (1.528,8.13689) (1.536,8.39555) (1.544,8.67166) (1.552,8.96697) (1.56,9.28343) (1.568,9.62325) (1.576,9.98892) (1.584,10.3833) (1.592,10.8096) (1.6,11.2716) (1.608,11.7734) (1.616,12.32) (1.624,12.917) (1.632,13.5708) (1.64,14.289) (1.648,15.0802) (1.656,15.9545) (1.664,16.9236) (1.672,18.0011) (1.68,19.2028) (1.688,20.547) (1.696,22.0548) (1.704,23.7507) (1.712,25.6614) (1.72,27.817) (1.728,30.2499) (1.736,32.993) (1.744,36.0776) (1.752,39.5282) (1.76,43.3555) (1.768,47.5462) (1.776,52.0495) (1.784,56.7622) (1.792,61.5168) (1.8,66.0782) (1.808,70.1596) (1.816,73.4605) (1.824,75.725) (1.832,76.8008) (1.84,76.6764) (1.848,75.6551) (1.856,74.0292) (1.864,71.9778) (1.872,69.6628) (1.88,67.2121) (1.888,64.7125) (1.896,62.2112) (1.904,59.7199) (1.912,57.1828) (1.92,40.6122) (1.928,32.1174) (1.936,27.1093) (1.944,23.7321) (1.952,21.2521) (1.96,19.3241) (1.968,17.7638) (1.976,16.4631) (1.984,15.3543) (1.992,14.3924) (2,13.5464)};
\addlegendentry{Optimal Control (with $l_j = 1$)}
\addplot+[color={rgb,255:red,214;green,0;blue,252}, solid, line width=1.08pt, mark=none] coordinates {(0.008,0.256873) (0.016,0.206981) (0.024,0.187258) (0.032,0.178822) (0.04,0.176312) (0.048,0.177225) (0.056,0.180147) (0.064,0.184085) (0.072,0.188386) (0.08,0.192753) (0.088,0.197023) (0.096,0.20124) (0.104,0.205328) (0.112,0.209294) (0.12,0.213152) (0.128,0.216918) (0.136,0.22061) (0.144,0.224245) (0.152,0.227841) (0.16,0.231415) (0.168,0.234982) (0.176,0.238558) (0.184,0.242156) (0.192,0.245789) (0.2,0.249467) (0.208,0.253201) (0.216,0.257001) (0.224,0.260876) (0.232,0.264833) (0.24,0.268881) (0.248,0.273025) (0.256,0.277273) (0.264,0.28163) (0.272,0.286102) (0.28,0.290695) (0.288,0.295414) (0.296,0.300263) (0.304,0.305248) (0.312,0.310373) (0.32,0.315643) (0.328,0.321063) (0.336,0.326636) (0.344,0.332368) (0.352,0.338262) (0.36,0.344323) (0.368,0.350556) (0.376,0.356964) (0.384,0.363553) (0.392,0.370332) (0.4,0.377301) (0.408,0.384463) (0.416,0.391823) (0.424,0.399385) (0.432,0.407163) (0.44,0.415154) (0.448,0.42336) (0.456,0.431785) (0.464,0.440433) (0.472,0.44931) (0.48,0.458422) (0.488,0.467772) (0.496,0.477367) (0.504,0.487213) (0.512,0.497313) (0.52,0.507676) (0.528,0.518305) (0.536,0.529207) (0.544,0.540388) (0.552,0.552082) (0.56,0.564244) (0.568,0.576696) (0.576,0.589443) (0.584,0.602493) (0.592,0.615852) (0.6,0.629528) (0.608,0.643531) (0.616,0.657866) (0.624,0.672539) (0.632,0.687557) (0.64,0.702927) (0.648,0.718659) (0.656,0.734758) (0.664,0.751234) (0.672,0.768093) (0.68,0.785345) (0.688,0.802998) (0.696,0.82106) (0.704,0.839539) (0.712,0.858445) (0.72,0.877787) (0.728,0.896776) (0.736,0.91535) (0.744,0.933474) (0.752,0.95466) (0.76,0.976287) (0.768,0.998385) (0.776,1.02097) (0.784,1.04406) (0.792,1.06767) (0.8,1.0918) (0.808,1.11647) (0.816,1.14117) (0.824,1.16593) (0.832,1.19031) (0.84,1.21409) (0.848,1.23723) (0.856,1.25969) (0.864,1.28143) (0.872,1.30242) (0.88,1.3226) (0.888,1.34196) (0.896,1.36047) (0.904,1.37812) (0.912,1.39486) (0.92,1.41069) (0.928,1.42558) (0.936,1.44036) (0.944,1.45538) (0.952,1.47063) (0.96,1.48609) (0.968,1.50175) (0.976,1.51762) (0.984,1.53368) (0.992,1.54995) (1,1.56642) (1.008,1.58307) (1.016,1.5999) (1.024,1.61692) (1.032,1.63412) (1.04,1.65153) (1.048,1.66911) (1.056,1.68687) (1.064,1.70481) (1.072,1.72293) (1.08,1.74126) (1.088,1.75976) (1.096,1.77844) (1.104,1.79731) (1.112,1.81635) (1.12,1.83558) (1.128,1.85502) (1.136,1.87464) (1.144,1.89445) (1.152,1.91447) (1.16,1.93493) (1.168,1.9592) (1.176,1.98378) (1.184,2.00869) (1.192,2.03392) (1.2,2.05949) (1.208,2.08541) (1.216,2.11168) (1.224,2.13831) (1.232,2.16532) (1.24,2.19271) (1.248,2.22049) (1.256,2.24867) (1.264,2.27727) (1.272,2.30629) (1.28,2.33576) (1.288,2.36568) (1.296,2.39606) (1.304,2.42693) (1.312,2.4583) (1.32,2.49017) (1.328,2.52258) (1.336,2.55554) (1.344,2.58907) (1.352,2.62318) (1.36,2.6579) (1.368,2.69325) (1.376,2.72925) (1.384,2.76592) (1.392,2.80329) (1.4,2.84138) (1.408,2.88022) (1.416,2.91985) (1.424,2.96028) (1.432,3.00156) (1.44,3.04371) (1.448,3.08678) (1.456,3.13079) (1.464,3.17579) (1.472,3.22184) (1.48,3.26895) (1.488,3.31719) (1.496,3.36659) (1.504,3.4176) (1.512,3.47234) (1.52,3.53088) (1.528,3.59335) (1.536,3.65992) (1.544,3.73076) (1.552,3.80609) (1.56,3.88613) (1.568,3.97116) (1.576,4.06146) (1.584,4.15735) (1.592,4.2592) (1.6,4.3674) (1.608,4.48242) (1.616,4.60474) (1.624,4.73494) (1.632,4.87365) (1.64,5.02161) (1.648,5.17963) (1.656,5.34867) (1.664,5.52981) (1.672,5.72429) (1.68,5.93358) (1.688,6.15941) (1.696,6.40387) (1.704,6.66945) (1.712,6.95693) (1.72,7.26326) (1.728,7.59092) (1.736,7.94255) (1.744,8.32115) (1.752,8.6409) (1.76,8.67554) (1.768,8.60317) (1.776,8.48367) (1.784,8.3431) (1.792,8.19422) (1.8,8.04363) (1.808,7.89486) (1.816,7.7498) (1.824,7.60943) (1.832,7.47423) (1.84,7.34441) (1.848,7.22006) (1.856,7.10081) (1.864,7.17029) (1.872,7.32764) (1.88,7.53799) (1.888,7.78761) (1.896,8.07142) (1.904,8.38858) (1.912,8.74102) (1.92,9.13343) (1.928,9.56497) (1.936,10.0397) (1.944,10.5633) (1.952,11.1425) (1.96,11.7854) (1.968,12.502) (1.976,13.3046) (1.984,14.2083) (1.992,15.2315) (2,16.3975)};
\addlegendentry{Optimal Control (with $l_j = 0$)}

\end{axis}
\end{tikzpicture}
         }
         \caption{Max $u$}
         \label{fig:max_u_exp2}
     \end{subfigure}
     \hfill
     \begin{subfigure}[b]{0.49\textwidth}
         \centering
         \resizebox{\linewidth}{!}{
         \begin{tikzpicture}
\begin{axis}[width=12cm, height=6.8cm, xmin=0, xmax=2, xlabel={Time}, ylabel={$\int (\sigma - \sigma_Q)^2$}, grid=both, ymajorgrids, grid style={gray!30}, legend cell align=left, legend style={font=\normalsize, at={(0.03,0.03)}, anchor=south west, fill=white, fill opacity=0.92, text opacity=1, draw=gray!45, rounded corners=2pt, inner xsep=6pt, inner ysep=4pt}, /pgf/number format/use comma=false, /pgf/number format/set decimal separator={.}, /pgf/number format/1000 sep={}, scaled x ticks=false, scaled y ticks=false, tick label style={font=\normalsize}, xticklabel style={font=\normalsize, /pgf/number format/fixed, /pgf/number format/precision=3, /pgf/number format/use comma=false, /pgf/number format/set decimal separator={.}, /pgf/number format/1000 sep={}}, yticklabel style={font=\normalsize, /pgf/number format/fixed, /pgf/number format/precision=6, /pgf/number format/use comma=false, /pgf/number format/set decimal separator={.}, /pgf/number format/1000 sep={}}, label style={font=\normalsize}, title style={font=\normalsize}, ]
\addplot+[color={rgb,255:red,0;green,0;blue,255}, dashed, line width=1.3pt, mark=none] coordinates {(0,0.0835513) (0.008,0.0830233) (0.016,0.0825902) (0.024,0.0822019) (0.032,0.0818413) (0.04,0.0814996) (0.048,0.0811721) (0.056,0.0808555) (0.064,0.0805477) (0.072,0.0802471) (0.08,0.0799526) (0.088,0.0796631) (0.096,0.079378) (0.104,0.0790966) (0.112,0.0788183) (0.12,0.0785428) (0.128,0.0782697) (0.136,0.0779985) (0.144,0.0777291) (0.152,0.077461) (0.16,0.0771942) (0.168,0.0769283) (0.176,0.0766632) (0.184,0.0763987) (0.192,0.0761345) (0.2,0.0758706) (0.208,0.0756067) (0.216,0.0753427) (0.224,0.0750785) (0.232,0.0748139) (0.24,0.0745488) (0.248,0.074283) (0.256,0.0740165) (0.264,0.0737491) (0.272,0.0734806) (0.28,0.073211) (0.288,0.0729401) (0.296,0.0726678) (0.304,0.072394) (0.312,0.0721186) (0.32,0.0718415) (0.328,0.0715626) (0.336,0.0712817) (0.344,0.0709987) (0.352,0.0707136) (0.36,0.0704262) (0.368,0.0701364) (0.376,0.0698441) (0.384,0.0695491) (0.392,0.0692515) (0.4,0.068951) (0.408,0.0686476) (0.416,0.0683412) (0.424,0.0680316) (0.432,0.0677187) (0.44,0.0674024) (0.448,0.0670827) (0.456,0.0667593) (0.464,0.0664322) (0.472,0.0661014) (0.48,0.0657665) (0.488,0.0654277) (0.496,0.0650847) (0.504,0.0647374) (0.512,0.0643858) (0.52,0.0640296) (0.528,0.0636689) (0.536,0.0633035) (0.544,0.0629332) (0.552,0.062558) (0.56,0.0621778) (0.568,0.0617924) (0.576,0.0614018) (0.584,0.0610058) (0.592,0.0606044) (0.6,0.0601974) (0.608,0.0597847) (0.616,0.0593662) (0.624,0.0589419) (0.632,0.0585115) (0.64,0.0580751) (0.648,0.0576326) (0.656,0.0571837) (0.664,0.0567285) (0.672,0.0562669) (0.68,0.0557988) (0.688,0.0553241) (0.696,0.0548427) (0.704,0.0543545) (0.712,0.0538596) (0.72,0.0533577) (0.728,0.052849) (0.736,0.0523332) (0.744,0.0518104) (0.752,0.0512806) (0.76,0.0507437) (0.768,0.0501996) (0.776,0.0496484) (0.784,0.0490901) (0.792,0.0485246) (0.8,0.0479519) (0.808,0.0473722) (0.816,0.0467853) (0.824,0.0461914) (0.832,0.0455905) (0.84,0.0449826) (0.848,0.0443679) (0.856,0.0437464) (0.864,0.0431182) (0.872,0.0424834) (0.88,0.0418422) (0.888,0.0411947) (0.896,0.0405411) (0.904,0.0398815) (0.912,0.0392161) (0.92,0.0385453) (0.928,0.0378691) (0.936,0.0371878) (0.944,0.0365018) (0.952,0.0358112) (0.96,0.0351165) (0.968,0.034418) (0.976,0.033716) (0.984,0.0330109) (0.992,0.0323031) (1,0.031593) (1.008,0.0308812) (1.016,0.0301681) (1.024,0.0294542) (1.032,0.02874) (1.04,0.0280262) (1.048,0.0273133) (1.056,0.0266019) (1.064,0.0258928) (1.072,0.0251866) (1.08,0.024484) (1.088,0.0237857) (1.096,0.0230927) (1.104,0.0224056) (1.112,0.0217254) (1.12,0.0210529) (1.128,0.020389) (1.136,0.0197347) (1.144,0.019091) (1.152,0.0184589) (1.16,0.0178395) (1.168,0.0172338) (1.176,0.0166429) (1.184,0.0160681) (1.192,0.0155106) (1.2,0.0149715) (1.208,0.0144521) (1.216,0.0139537) (1.224,0.0134777) (1.232,0.0130254) (1.24,0.0125983) (1.248,0.0121977) (1.256,0.0118251) (1.264,0.0114821) (1.272,0.0111701) (1.28,0.0108907) (1.288,0.0106455) (1.296,0.0104362) (1.304,0.0102643) (1.312,0.0101316) (1.32,0.0100398) (1.328,0.0099906) (1.336,0.00998575) (1.344,0.010027) (1.352,0.0101163) (1.36,0.0102553) (1.368,0.010446) (1.376,0.0106901) (1.384,0.0109897) (1.392,0.0113465) (1.4,0.0117625) (1.408,0.0122397) (1.416,0.0127798) (1.424,0.013385) (1.432,0.0140571) (1.44,0.0147981) (1.448,0.0156099) (1.456,0.0164944) (1.464,0.0174536) (1.472,0.0184892) (1.48,0.0196033) (1.488,0.0207976) (1.496,0.0220738) (1.504,0.0234337) (1.512,0.0248788) (1.52,0.0264106) (1.528,0.0280305) (1.536,0.0297394) (1.544,0.0315383) (1.552,0.0334273) (1.56,0.0354061) (1.568,0.0374734) (1.576,0.0396262) (1.584,0.0418592) (1.592,0.0441632) (1.6,0.0465231) (1.608,0.0489144) (1.616,0.0512993) (1.624,0.0536235) (1.632,0.0558166) (1.64,0.0578051) (1.648,0.0595364) (1.656,0.0610025) (1.664,0.0622406) (1.672,0.0633093) (1.68,0.0642641) (1.688,0.0651464) (1.696,0.0659829) (1.704,0.0667898) (1.712,0.0675769) (1.72,0.0683494) (1.728,0.0691106) (1.736,0.0698623) (1.744,0.0706056) (1.752,0.0713411) (1.76,0.0720694) (1.768,0.0727908) (1.776,0.0735055) (1.784,0.0742137) (1.792,0.0749158) (1.8,0.075612) (1.808,0.0763023) (1.816,0.0769871) (1.824,0.0776664) (1.832,0.0783406) (1.84,0.0790098) (1.848,0.0796741) (1.856,0.0803336) (1.864,0.0809887) (1.872,0.0816394) (1.88,0.0822858) (1.888,0.0829281) (1.896,0.0835664) (1.904,0.0842009) (1.912,0.0848316) (1.92,0.0854588) (1.928,0.0860825) (1.936,0.0867028) (1.944,0.0873198) (1.952,0.0879337) (1.96,0.0885445) (1.968,0.0891524) (1.976,0.0897575) (1.984,0.0903598) (1.992,0.0909595) (2,0.0915566)};
\addlegendentry{Uncontrolled}
\addplot+[color={rgb,255:red,255;green,0;blue,0}, solid, line width=1.08pt, mark=none] coordinates {(0,0.0835513) (0.008,0.0830233) (0.016,0.0825902) (0.024,0.0822019) (0.032,0.0818413) (0.04,0.0814996) (0.048,0.0811721) (0.056,0.0808555) (0.064,0.0805477) (0.072,0.0802471) (0.08,0.0799526) (0.088,0.0796631) (0.096,0.079378) (0.104,0.0790966) (0.112,0.0788183) (0.12,0.0785428) (0.128,0.0782697) (0.136,0.0779985) (0.144,0.0777291) (0.152,0.077461) (0.16,0.0771942) (0.168,0.0769283) (0.176,0.0766632) (0.184,0.0763987) (0.192,0.0761345) (0.2,0.0758706) (0.208,0.0756067) (0.216,0.0753427) (0.224,0.0750785) (0.232,0.0748139) (0.24,0.0745488) (0.248,0.074283) (0.256,0.0740165) (0.264,0.0737491) (0.272,0.0734806) (0.28,0.073211) (0.288,0.0729401) (0.296,0.0726678) (0.304,0.072394) (0.312,0.0721186) (0.32,0.0718415) (0.328,0.0715626) (0.336,0.0712817) (0.344,0.0709987) (0.352,0.0707136) (0.36,0.0704262) (0.368,0.0701364) (0.376,0.0698441) (0.384,0.0695491) (0.392,0.0692515) (0.4,0.068951) (0.408,0.0686476) (0.416,0.0683412) (0.424,0.0680316) (0.432,0.0677187) (0.44,0.0674024) (0.448,0.0670827) (0.456,0.0667593) (0.464,0.0664322) (0.472,0.0661014) (0.48,0.0657665) (0.488,0.0654277) (0.496,0.0650847) (0.504,0.0647374) (0.512,0.0643858) (0.52,0.0640296) (0.528,0.0636689) (0.536,0.0633035) (0.544,0.0629332) (0.552,0.062558) (0.56,0.0621778) (0.568,0.0617924) (0.576,0.0614018) (0.584,0.0610058) (0.592,0.0606044) (0.6,0.0601974) (0.608,0.0597847) (0.616,0.0593662) (0.624,0.0589419) (0.632,0.0585115) (0.64,0.0580751) (0.648,0.0576326) (0.656,0.0571837) (0.664,0.0567285) (0.672,0.0562669) (0.68,0.0557988) (0.688,0.0553241) (0.696,0.0548427) (0.704,0.0543545) (0.712,0.0538596) (0.72,0.0533577) (0.728,0.052849) (0.736,0.0523332) (0.744,0.0518104) (0.752,0.0512806) (0.76,0.0507437) (0.768,0.0501996) (0.776,0.0496484) (0.784,0.0490901) (0.792,0.0485246) (0.8,0.0479519) (0.808,0.0473722) (0.816,0.0467853) (0.824,0.0461914) (0.832,0.0455905) (0.84,0.0449826) (0.848,0.0443679) (0.856,0.0437464) (0.864,0.0431182) (0.872,0.0424834) (0.88,0.0418422) (0.888,0.0411947) (0.896,0.0405411) (0.904,0.0398815) (0.912,0.0392161) (0.92,0.0385453) (0.928,0.0378691) (0.936,0.0371878) (0.944,0.0365018) (0.952,0.0358112) (0.96,0.0351165) (0.968,0.034418) (0.976,0.033716) (0.984,0.0330109) (0.992,0.0323031) (1,0.031593) (1.008,0.0308812) (1.016,0.0301681) (1.024,0.0294542) (1.032,0.02874) (1.04,0.0280262) (1.048,0.0273133) (1.056,0.0266019) (1.064,0.0258928) (1.072,0.0251866) (1.08,0.024484) (1.088,0.0237857) (1.096,0.0230927) (1.104,0.0224056) (1.112,0.0217254) (1.12,0.0210529) (1.128,0.020389) (1.136,0.0197347) (1.144,0.019091) (1.152,0.0184589) (1.16,0.0178395) (1.168,0.0172338) (1.176,0.0166429) (1.184,0.0160681) (1.192,0.0155106) (1.2,0.0149715) (1.208,0.0144525) (1.216,0.0139554) (1.224,0.0134819) (1.232,0.0130333) (1.24,0.0126111) (1.248,0.0122165) (1.256,0.0118503) (1.264,0.0115132) (1.272,0.0112061) (1.28,0.0109298) (1.288,0.0106849) (1.296,0.0104723) (1.304,0.0102928) (1.312,0.010147) (1.32,0.0100357) (1.328,0.00995978) (1.336,0.00991985) (1.344,0.00991668) (1.352,0.009951) (1.36,0.0100235) (1.368,0.010135) (1.376,0.010286) (1.384,0.0104774) (1.392,0.0107098) (1.4,0.0109839) (1.408,0.0113003) (1.416,0.0116597) (1.424,0.0120628) (1.432,0.0125101) (1.44,0.0130023) (1.448,0.0135399) (1.456,0.0141236) (1.464,0.0147539) (1.472,0.0154313) (1.48,0.0161564) (1.488,0.0169297) (1.496,0.0177516) (1.504,0.0186226) (1.512,0.0195431) (1.52,0.0205135) (1.528,0.021534) (1.536,0.0226051) (1.544,0.023727) (1.552,0.0248999) (1.56,0.0261238) (1.568,0.0273988) (1.576,0.0287249) (1.584,0.0301019) (1.592,0.0315293) (1.6,0.0330065) (1.608,0.0345326) (1.616,0.0361062) (1.624,0.037725) (1.632,0.0393862) (1.64,0.0410854) (1.648,0.0428163) (1.656,0.0445698) (1.664,0.0463333) (1.672,0.0480892) (1.68,0.0498142) (1.688,0.0514792) (1.696,0.053051) (1.704,0.0544976) (1.712,0.0557948) (1.72,0.0569336) (1.728,0.0579215) (1.736,0.0587787) (1.744,0.0595311) (1.752,0.0602036) (1.76,0.0608169) (1.768,0.0613869) (1.776,0.0619249) (1.784,0.0624388) (1.792,0.062934) (1.8,0.0634141) (1.808,0.0638817) (1.816,0.0643384) (1.824,0.0647854) (1.832,0.0652236) (1.84,0.0656534) (1.848,0.0660766) (1.856,0.0664947) (1.864,0.0669085) (1.872,0.0673183) (1.88,0.0677241) (1.888,0.0681255) (1.896,0.068522) (1.904,0.0689127) (1.912,0.0677973) (1.92,0.0667475) (1.928,0.0657427) (1.936,0.0647723) (1.944,0.0638304) (1.952,0.062913) (1.96,0.0620177) (1.968,0.0611425) (1.976,0.0602859) (1.984,0.0594467) (1.992,0.0586237) (2,0.0591479)};
\addlegendentry{Optimal Control (with $l_j = 1$)}
\addplot+[color={rgb,255:red,214;green,0;blue,252}, solid, line width=1.08pt, mark=none] coordinates {(0,0.0835513) (0.008,0.0830233) (0.016,0.0825902) (0.024,0.0822019) (0.032,0.0818413) (0.04,0.0814996) (0.048,0.0811721) (0.056,0.0808555) (0.064,0.0805477) (0.072,0.0802471) (0.08,0.0799526) (0.088,0.0796631) (0.096,0.079378) (0.104,0.0790966) (0.112,0.0788183) (0.12,0.0785428) (0.128,0.0782697) (0.136,0.0779985) (0.144,0.0777291) (0.152,0.077461) (0.16,0.0771942) (0.168,0.0769283) (0.176,0.0766632) (0.184,0.0763987) (0.192,0.0761345) (0.2,0.0758706) (0.208,0.0756067) (0.216,0.0753427) (0.224,0.0750785) (0.232,0.0748139) (0.24,0.0745488) (0.248,0.074283) (0.256,0.0740165) (0.264,0.0737491) (0.272,0.0734806) (0.28,0.073211) (0.288,0.0729401) (0.296,0.0726678) (0.304,0.072394) (0.312,0.0721186) (0.32,0.0718415) (0.328,0.0715626) (0.336,0.0712817) (0.344,0.0709987) (0.352,0.0707136) (0.36,0.0704262) (0.368,0.0701364) (0.376,0.0698441) (0.384,0.0695491) (0.392,0.0692515) (0.4,0.068951) (0.408,0.0686476) (0.416,0.0683412) (0.424,0.0680316) (0.432,0.0677187) (0.44,0.0674024) (0.448,0.0670827) (0.456,0.0667593) (0.464,0.0664323) (0.472,0.0661014) (0.48,0.0657666) (0.488,0.0654277) (0.496,0.0650848) (0.504,0.0647375) (0.512,0.0643859) (0.52,0.0640298) (0.528,0.0636692) (0.536,0.0633038) (0.544,0.0629336) (0.552,0.0625585) (0.56,0.0621784) (0.568,0.0617932) (0.576,0.0614027) (0.584,0.0610069) (0.592,0.0606057) (0.6,0.0601989) (0.608,0.0597864) (0.616,0.0593682) (0.624,0.0589442) (0.632,0.0585142) (0.64,0.0580781) (0.648,0.057636) (0.656,0.0571876) (0.664,0.0567329) (0.672,0.0562718) (0.68,0.0558042) (0.688,0.0553301) (0.696,0.0548494) (0.704,0.054362) (0.712,0.0538678) (0.72,0.0533669) (0.728,0.0528592) (0.736,0.0523449) (0.744,0.0518242) (0.752,0.0512965) (0.76,0.0507618) (0.768,0.0502201) (0.776,0.0496713) (0.784,0.0491154) (0.792,0.0485525) (0.8,0.0479826) (0.808,0.0474056) (0.816,0.0468217) (0.824,0.0462312) (0.832,0.0456343) (0.84,0.0450312) (0.848,0.0444225) (0.856,0.0438082) (0.864,0.0431888) (0.872,0.0425646) (0.88,0.0419361) (0.888,0.0413034) (0.896,0.0406672) (0.904,0.0400277) (0.912,0.0393853) (0.92,0.0387405) (0.928,0.0380937) (0.936,0.0374451) (0.944,0.0367951) (0.952,0.0361438) (0.96,0.0354915) (0.968,0.0348384) (0.976,0.0341848) (0.984,0.033531) (0.992,0.0328772) (1,0.0322238) (1.008,0.0315711) (1.016,0.0309193) (1.024,0.0302688) (1.032,0.02962) (1.04,0.0289731) (1.048,0.0283285) (1.056,0.0276867) (1.064,0.0270479) (1.072,0.0264126) (1.08,0.0257812) (1.088,0.025154) (1.096,0.0245316) (1.104,0.0239143) (1.112,0.0233025) (1.12,0.0226968) (1.128,0.0220976) (1.136,0.0215054) (1.144,0.0209206) (1.152,0.0203437) (1.16,0.0197753) (1.168,0.0192159) (1.176,0.0186659) (1.184,0.018126) (1.192,0.0175966) (1.2,0.0170784) (1.208,0.0165718) (1.216,0.0160775) (1.224,0.015596) (1.232,0.0151279) (1.24,0.0146738) (1.248,0.0142344) (1.256,0.0138101) (1.264,0.0134017) (1.272,0.0130098) (1.28,0.012635) (1.288,0.0122779) (1.296,0.0119392) (1.304,0.0116195) (1.312,0.0113195) (1.32,0.0110398) (1.328,0.0107811) (1.336,0.0105442) (1.344,0.0103295) (1.352,0.0101379) (1.36,0.00997001) (1.368,0.00982649) (1.376,0.00970803) (1.384,0.00961532) (1.392,0.00954905) (1.4,0.0095099) (1.408,0.00949857) (1.416,0.00951574) (1.424,0.00956209) (1.432,0.00963833) (1.44,0.00974512) (1.448,0.00988315) (1.456,0.0100531) (1.464,0.0102557) (1.472,0.0104915) (1.48,0.0107613) (1.488,0.0110656) (1.496,0.0114053) (1.504,0.0117809) (1.512,0.0121933) (1.52,0.0126433) (1.528,0.013132) (1.536,0.0136602) (1.544,0.014229) (1.552,0.0148394) (1.56,0.0154924) (1.568,0.0161892) (1.576,0.016931) (1.584,0.0177189) (1.592,0.0185543) (1.6,0.0194383) (1.608,0.0203724) (1.616,0.0213578) (1.624,0.0223961) (1.632,0.0234886) (1.64,0.024637) (1.648,0.0258426) (1.656,0.0271071) (1.664,0.0284319) (1.672,0.0298186) (1.68,0.0312687) (1.688,0.0327834) (1.696,0.0343639) (1.704,0.0360109) (1.712,0.0377239) (1.72,0.0395007) (1.728,0.0413367) (1.736,0.0432244) (1.744,0.0442458) (1.752,0.0427688) (1.76,0.0413934) (1.768,0.0400989) (1.776,0.0388698) (1.784,0.037695) (1.792,0.0365661) (1.8,0.0354772) (1.808,0.0344233) (1.816,0.0334009) (1.824,0.032407) (1.832,0.0314394) (1.84,0.030496) (1.848,0.0295754) (1.856,0.0308423) (1.864,0.0321537) (1.872,0.033516) (1.88,0.0349331) (1.888,0.0364077) (1.896,0.0379414) (1.904,0.0395356) (1.912,0.041191) (1.92,0.0429079) (1.928,0.044685) (1.936,0.0465194) (1.944,0.0484062) (1.952,0.0503374) (1.96,0.0523007) (1.968,0.054278) (1.976,0.0562438) (1.984,0.0581645) (1.992,0.0599988) (2,0.0617038)};
\addlegendentry{Optimal Control (with $l_j = 0$)}
\end{axis}
\end{tikzpicture}
         }
         \caption{$\int_\Omega (\sigma-\sigma_Q)^2dx$}
         \label{fig:int_sigma_exp2}
     \end{subfigure}
     \hfill
     \begin{subfigure}[b]{0.49\textwidth}
         \centering
         \resizebox{\linewidth}{!}{
         \begin{tikzpicture}
\begin{axis}[width=12cm, height=6.8cm, xmin=0, xmax=2, xlabel={Time}, ylabel={Volume}, grid=both, ymajorgrids, grid style={gray!30}, legend cell align=left, legend style={font=\normalsize, at={(0.03,0.97)}, anchor=north west, fill=white, fill opacity=0.92, text opacity=1, draw=gray!45, rounded corners=2pt, inner xsep=6pt, inner ysep=4pt}, tick label style={font=\normalsize}, label style={font=\normalsize}, title style={font=\normalsize}, ]
\addplot+[color={rgb,255:red,0;green,0;blue,255}, dashed, line width=1.3pt, mark=none] coordinates {(0.008,0.0396039) (0.016,0.04072) (0.024,0.0418715) (0.032,0.0430569) (0.04,0.0442763) (0.048,0.0455303) (0.056,0.0468197) (0.064,0.0481454) (0.072,0.0495084) (0.08,0.0509096) (0.088,0.0523501) (0.096,0.0538308) (0.104,0.0553529) (0.112,0.0569175) (0.12,0.0585258) (0.128,0.0601788) (0.136,0.0618778) (0.144,0.0636241) (0.152,0.0654189) (0.16,0.0672634) (0.168,0.0691591) (0.176,0.0711073) (0.184,0.0731093) (0.192,0.0751666) (0.2,0.0772807) (0.208,0.079453) (0.216,0.0816851) (0.224,0.0839786) (0.232,0.086335) (0.24,0.088756) (0.248,0.0912433) (0.256,0.0937986) (0.264,0.0964237) (0.272,0.0991203) (0.28,0.10189) (0.288,0.104736) (0.296,0.107658) (0.304,0.11066) (0.312,0.113742) (0.32,0.116908) (0.328,0.12016) (0.336,0.123498) (0.344,0.126927) (0.352,0.130447) (0.36,0.134061) (0.368,0.137772) (0.376,0.141581) (0.384,0.145491) (0.392,0.149506) (0.4,0.153626) (0.408,0.157855) (0.416,0.162195) (0.424,0.166649) (0.432,0.17122) (0.44,0.17591) (0.448,0.180722) (0.456,0.185659) (0.464,0.190723) (0.472,0.195918) (0.48,0.201247) (0.488,0.206713) (0.496,0.212318) (0.504,0.218066) (0.512,0.223959) (0.52,0.230002) (0.528,0.236197) (0.536,0.242547) (0.544,0.249056) (0.552,0.255727) (0.56,0.262564) (0.568,0.269569) (0.576,0.276746) (0.584,0.284099) (0.592,0.291631) (0.6,0.299346) (0.608,0.307246) (0.616,0.315336) (0.624,0.323619) (0.632,0.332098) (0.64,0.340778) (0.648,0.349661) (0.656,0.358752) (0.664,0.368053) (0.672,0.377568) (0.68,0.387302) (0.688,0.397256) (0.696,0.407435) (0.704,0.417843) (0.712,0.428482) (0.72,0.439356) (0.728,0.450468) (0.736,0.461822) (0.744,0.473421) (0.752,0.485268) (0.76,0.497365) (0.768,0.509717) (0.776,0.522326) (0.784,0.535194) (0.792,0.548325) (0.8,0.561721) (0.808,0.575384) (0.816,0.589317) (0.824,0.603523) (0.832,0.618002) (0.84,0.632758) (0.848,0.647791) (0.856,0.663104) (0.864,0.678697) (0.872,0.694573) (0.88,0.710731) (0.888,0.727173) (0.896,0.743899) (0.904,0.76091) (0.912,0.778206) (0.92,0.795785) (0.928,0.813649) (0.936,0.831796) (0.944,0.850225) (0.952,0.868935) (0.96,0.887924) (0.968,0.90719) (0.976,0.926731) (0.984,0.946544) (0.992,0.966626) (1,0.986974) (1.008,1.00758) (1.016,1.02845) (1.024,1.04957) (1.032,1.07094) (1.04,1.09256) (1.048,1.11441) (1.056,1.13649) (1.064,1.15879) (1.072,1.18132) (1.08,1.20405) (1.088,1.22698) (1.096,1.25011) (1.104,1.27342) (1.112,1.29691) (1.12,1.32056) (1.128,1.34437) (1.136,1.36832) (1.144,1.39241) (1.152,1.41661) (1.16,1.44092) (1.168,1.46533) (1.176,1.48981) (1.184,1.51437) (1.192,1.53897) (1.2,1.56362) (1.208,1.58828) (1.216,1.61295) (1.224,1.6376) (1.232,1.66223) (1.24,1.6868) (1.248,1.71131) (1.256,1.73573) (1.264,1.76005) (1.272,1.78423) (1.28,1.80826) (1.288,1.83212) (1.296,1.85578) (1.304,1.87922) (1.312,1.90241) (1.32,1.92532) (1.328,1.94793) (1.336,1.97021) (1.344,1.99213) (1.352,2.01365) (1.36,2.03475) (1.368,2.05538) (1.376,2.07552) (1.384,2.09512) (1.392,2.11415) (1.4,2.13255) (1.408,2.15029) (1.416,2.16731) (1.424,2.18356) (1.432,2.19899) (1.44,2.21353) (1.448,2.22712) (1.456,2.23969) (1.464,2.25116) (1.472,2.26143) (1.48,2.27042) (1.488,2.27802) (1.496,2.2841) (1.504,2.28852) (1.512,2.29112) (1.52,2.29171) (1.528,2.29006) (1.536,2.2859) (1.544,2.2789) (1.552,2.26866) (1.56,2.25466) (1.568,2.23626) (1.576,2.21263) (1.584,2.18275) (1.592,2.14531) (1.6,2.09876) (1.608,2.0414) (1.616,1.97168) (1.624,1.88897) (1.632,1.79476) (1.64,1.69388) (1.648,1.59431) (1.656,1.50443) (1.664,1.42917) (1.672,1.36874) (1.68,1.32046) (1.688,1.28101) (1.696,1.2477) (1.704,1.21865) (1.712,1.19261) (1.72,1.1688) (1.728,1.14671) (1.736,1.12601) (1.744,1.10647) (1.752,1.08793) (1.76,1.07028) (1.768,1.05343) (1.776,1.03731) (1.784,1.02187) (1.792,1.00705) (1.8,0.992817) (1.808,0.979132) (1.816,0.965963) (1.824,0.953279) (1.832,0.941054) (1.84,0.929264) (1.848,0.917886) (1.856,0.906898) (1.864,0.896284) (1.872,0.886023) (1.88,0.876102) (1.888,0.866503) (1.896,0.857213) (1.904,0.848219) (1.912,0.839509) (1.92,0.831071) (1.928,0.822894) (1.936,0.814969) (1.944,0.807284) (1.952,0.799832) (1.96,0.792605) (1.968,0.785593) (1.976,0.778789) (1.984,0.772187) (1.992,0.765779) (2,0.759558)};
\addlegendentry{Uncontrolled}
\addplot+[color={rgb,255:red,255;green,0;blue,0}, solid, line width=0.99pt, mark=none] coordinates {(0.008,0.0396039) (0.016,0.04072) (0.024,0.0418715) (0.032,0.0430569) (0.04,0.0442763) (0.048,0.0455303) (0.056,0.0468197) (0.064,0.0481454) (0.072,0.0495084) (0.08,0.0509096) (0.088,0.0523501) (0.096,0.0538308) (0.104,0.0553529) (0.112,0.0569175) (0.12,0.0585258) (0.128,0.0601788) (0.136,0.0618778) (0.144,0.0636241) (0.152,0.0654189) (0.16,0.0672634) (0.168,0.0691591) (0.176,0.0711073) (0.184,0.0731093) (0.192,0.0751666) (0.2,0.0772807) (0.208,0.079453) (0.216,0.0816851) (0.224,0.0839786) (0.232,0.086335) (0.24,0.088756) (0.248,0.0912433) (0.256,0.0937986) (0.264,0.0964237) (0.272,0.0991203) (0.28,0.10189) (0.288,0.104736) (0.296,0.107658) (0.304,0.11066) (0.312,0.113742) (0.32,0.116908) (0.328,0.12016) (0.336,0.123498) (0.344,0.126927) (0.352,0.130447) (0.36,0.134061) (0.368,0.137772) (0.376,0.141581) (0.384,0.145491) (0.392,0.149506) (0.4,0.153626) (0.408,0.157855) (0.416,0.162195) (0.424,0.166649) (0.432,0.17122) (0.44,0.17591) (0.448,0.180722) (0.456,0.185659) (0.464,0.190723) (0.472,0.195918) (0.48,0.201247) (0.488,0.206713) (0.496,0.212318) (0.504,0.218066) (0.512,0.223959) (0.52,0.230002) (0.528,0.236197) (0.536,0.242547) (0.544,0.249056) (0.552,0.255727) (0.56,0.262564) (0.568,0.269569) (0.576,0.276746) (0.584,0.284099) (0.592,0.291631) (0.6,0.299346) (0.608,0.307246) (0.616,0.315336) (0.624,0.323619) (0.632,0.332098) (0.64,0.340778) (0.648,0.349661) (0.656,0.358752) (0.664,0.368053) (0.672,0.377568) (0.68,0.387302) (0.688,0.397256) (0.696,0.407435) (0.704,0.417843) (0.712,0.428482) (0.72,0.439356) (0.728,0.450468) (0.736,0.461822) (0.744,0.473421) (0.752,0.485268) (0.76,0.497365) (0.768,0.509717) (0.776,0.522326) (0.784,0.535194) (0.792,0.548325) (0.8,0.561721) (0.808,0.575384) (0.816,0.589317) (0.824,0.603523) (0.832,0.618002) (0.84,0.632758) (0.848,0.647791) (0.856,0.663104) (0.864,0.678697) (0.872,0.694573) (0.88,0.710731) (0.888,0.727173) (0.896,0.743899) (0.904,0.76091) (0.912,0.778206) (0.92,0.795785) (0.928,0.813649) (0.936,0.831796) (0.944,0.850225) (0.952,0.868935) (0.96,0.887924) (0.968,0.90719) (0.976,0.926731) (0.984,0.946544) (0.992,0.966626) (1,0.986974) (1.008,1.00758) (1.016,1.02845) (1.024,1.04957) (1.032,1.07094) (1.04,1.09256) (1.048,1.11441) (1.056,1.13649) (1.064,1.15879) (1.072,1.18132) (1.08,1.20405) (1.088,1.22698) (1.096,1.25011) (1.104,1.27342) (1.112,1.29691) (1.12,1.32056) (1.128,1.34437) (1.136,1.36832) (1.144,1.39241) (1.152,1.41661) (1.16,1.44092) (1.168,1.46533) (1.176,1.48981) (1.184,1.51437) (1.192,1.53897) (1.2,1.56362) (1.208,1.58559) (1.216,1.60289) (1.224,1.61557) (1.232,1.62371) (1.24,1.62744) (1.248,1.62732) (1.256,1.62703) (1.264,1.62656) (1.272,1.6259) (1.28,1.62506) (1.288,1.62403) (1.296,1.62281) (1.304,1.62139) (1.312,1.61979) (1.32,1.61798) (1.328,1.61598) (1.336,1.61377) (1.344,1.61136) (1.352,1.60875) (1.36,1.60592) (1.368,1.60289) (1.376,1.59965) (1.384,1.59619) (1.392,1.59251) (1.4,1.58862) (1.408,1.5845) (1.416,1.58016) (1.424,1.57559) (1.432,1.57078) (1.44,1.56575) (1.448,1.56048) (1.456,1.55497) (1.464,1.54921) (1.472,1.5432) (1.48,1.53694) (1.488,1.53043) (1.496,1.52365) (1.504,1.5166) (1.512,1.50928) (1.52,1.50167) (1.528,1.49378) (1.536,1.48559) (1.544,1.4771) (1.552,1.46829) (1.56,1.45916) (1.568,1.44969) (1.576,1.43986) (1.584,1.42968) (1.592,1.41911) (1.6,1.40814) (1.608,1.39675) (1.616,1.38492) (1.624,1.37261) (1.632,1.35979) (1.64,1.34644) (1.648,1.33251) (1.656,1.31795) (1.664,1.30272) (1.672,1.28674) (1.68,1.26996) (1.688,1.2523) (1.696,1.23366) (1.704,1.21396) (1.712,1.19308) (1.72,1.1709) (1.728,1.14732) (1.736,1.12219) (1.744,1.09542) (1.752,1.06689) (1.76,1.03656) (1.768,1.00443) (1.776,0.970571) (1.784,0.935193) (1.792,0.898632) (1.8,0.861363) (1.808,0.823991) (1.816,0.787188) (1.824,0.751625) (1.832,0.717881) (1.84,0.686375) (1.848,0.658251) (1.856,0.633772) (1.864,0.612349) (1.872,0.5934) (1.88,0.576403) (1.888,0.560886) (1.896,0.546417) (1.904,0.53258) (1.912,0.518764) (1.92,0.506488) (1.928,0.496866) (1.936,0.489003) (1.944,0.48225) (1.952,0.476287) (1.96,0.47093) (1.968,0.466069) (1.976,0.461628) (1.984,0.457555) (1.992,0.45381) (2,0.450361)};
\addlegendentry{Optimal Control (with $l_j = 1$)}
\addplot+[color={rgb,255:red,214;green,0;blue,252}, solid, line width=1.08pt, mark=none] coordinates {(0.008,0.0396039) (0.016,0.04072) (0.024,0.0418715) (0.032,0.0430569) (0.04,0.0442763) (0.048,0.0455303) (0.056,0.0468197) (0.064,0.0481454) (0.072,0.0495084) (0.08,0.0509096) (0.088,0.0523501) (0.096,0.0538308) (0.104,0.0553529) (0.112,0.0569175) (0.12,0.0585258) (0.128,0.0601788) (0.136,0.0618778) (0.144,0.0636241) (0.152,0.0654189) (0.16,0.0672634) (0.168,0.0691591) (0.176,0.0711073) (0.184,0.0731093) (0.192,0.0751666) (0.2,0.0772807) (0.208,0.079453) (0.216,0.0816851) (0.224,0.0839786) (0.232,0.086335) (0.24,0.088756) (0.248,0.0912433) (0.256,0.0937986) (0.264,0.0964237) (0.272,0.0991203) (0.28,0.10189) (0.288,0.104736) (0.296,0.107658) (0.304,0.11066) (0.312,0.113742) (0.32,0.116908) (0.328,0.12016) (0.336,0.123498) (0.344,0.126927) (0.352,0.130447) (0.36,0.134061) (0.368,0.137772) (0.376,0.141581) (0.384,0.145491) (0.392,0.149506) (0.4,0.153626) (0.408,0.157855) (0.416,0.162195) (0.424,0.166649) (0.432,0.171219) (0.44,0.175908) (0.448,0.180717) (0.456,0.18565) (0.464,0.190709) (0.472,0.195897) (0.48,0.201218) (0.488,0.206673) (0.496,0.212266) (0.504,0.217999) (0.512,0.223876) (0.52,0.2299) (0.528,0.236074) (0.536,0.2424) (0.544,0.248882) (0.552,0.255523) (0.56,0.262326) (0.568,0.269294) (0.576,0.27643) (0.584,0.283738) (0.592,0.291221) (0.6,0.298882) (0.608,0.306724) (0.616,0.314751) (0.624,0.322965) (0.632,0.331371) (0.64,0.33997) (0.648,0.348767) (0.656,0.357764) (0.664,0.366965) (0.672,0.376373) (0.68,0.385991) (0.688,0.395822) (0.696,0.405869) (0.704,0.416135) (0.712,0.426624) (0.72,0.437338) (0.728,0.447922) (0.736,0.458334) (0.744,0.468549) (0.752,0.480149) (0.76,0.49199) (0.768,0.504072) (0.776,0.516397) (0.784,0.528968) (0.792,0.541788) (0.8,0.554857) (0.808,0.568179) (0.816,0.581519) (0.824,0.594596) (0.832,0.607377) (0.84,0.619832) (0.848,0.631931) (0.856,0.643647) (0.864,0.654953) (0.872,0.665827) (0.88,0.676247) (0.888,0.686194) (0.896,0.695651) (0.904,0.704604) (0.912,0.713039) (0.92,0.720946) (0.928,0.728315) (0.936,0.735509) (0.944,0.742709) (0.952,0.749913) (0.96,0.757119) (0.968,0.764325) (0.976,0.77153) (0.984,0.778732) (0.992,0.785929) (1,0.793119) (1.008,0.800301) (1.016,0.807473) (1.024,0.814633) (1.032,0.821779) (1.04,0.828909) (1.048,0.836022) (1.056,0.843115) (1.064,0.850187) (1.072,0.857236) (1.08,0.86426) (1.088,0.871257) (1.096,0.878225) (1.104,0.885163) (1.112,0.892067) (1.12,0.898937) (1.128,0.905771) (1.136,0.912566) (1.144,0.91932) (1.152,0.926032) (1.16,0.9327) (1.168,0.939321) (1.176,0.945894) (1.184,0.952417) (1.192,0.958888) (1.2,0.965305) (1.208,0.971665) (1.216,0.977967) (1.224,0.98421) (1.232,0.99039) (1.24,0.996507) (1.248,1.00256) (1.256,1.00854) (1.264,1.01445) (1.272,1.0203) (1.28,1.02607) (1.288,1.03176) (1.296,1.03737) (1.304,1.04291) (1.312,1.04837) (1.32,1.05374) (1.328,1.05903) (1.336,1.06423) (1.344,1.06934) (1.352,1.07436) (1.36,1.07929) (1.368,1.08413) (1.376,1.08887) (1.384,1.09351) (1.392,1.09806) (1.4,1.1025) (1.408,1.10684) (1.416,1.11108) (1.424,1.11521) (1.432,1.11923) (1.44,1.12314) (1.448,1.12694) (1.456,1.13062) (1.464,1.13419) (1.472,1.13765) (1.48,1.14098) (1.488,1.1442) (1.496,1.14729) (1.504,1.15037) (1.512,1.154) (1.52,1.15819) (1.528,1.16293) (1.536,1.16822) (1.544,1.17405) (1.552,1.18042) (1.56,1.18733) (1.568,1.19477) (1.576,1.20275) (1.584,1.21127) (1.592,1.22032) (1.6,1.2299) (1.608,1.24002) (1.616,1.25068) (1.624,1.26188) (1.632,1.27363) (1.64,1.28594) (1.648,1.29882) (1.656,1.31229) (1.664,1.32637) (1.672,1.34109) (1.68,1.35649) (1.688,1.37261) (1.696,1.38954) (1.704,1.40735) (1.712,1.42579) (1.72,1.44398) (1.728,1.46187) (1.736,1.47943) (1.744,1.49663) (1.752,1.51348) (1.76,1.53008) (1.768,1.54659) (1.776,1.56312) (1.784,1.5797) (1.792,1.59639) (1.8,1.61319) (1.808,1.63013) (1.816,1.64721) (1.824,1.66444) (1.832,1.68183) (1.84,1.69938) (1.848,1.71712) (1.856,1.73499) (1.864,1.75295) (1.872,1.77087) (1.88,1.78869) (1.888,1.80636) (1.896,1.82387) (1.904,1.84126) (1.912,1.85865) (1.92,1.87636) (1.928,1.89359) (1.936,1.91021) (1.944,1.92615) (1.952,1.94132) (1.96,1.95564) (1.968,1.96903) (1.976,1.98137) (1.984,1.99255) (1.992,2.00242) (2,2.01084)};
\addlegendentry{Optimal Control (with $l_j = 0$)}
\end{axis}
\end{tikzpicture}
         }
         \caption{Tumor volume}
         \label{fig:vol_u_exp2}
     \end{subfigure}
    \caption{Evolution over time.}
    \label{fig:robustes_u_2}
\end{figure}
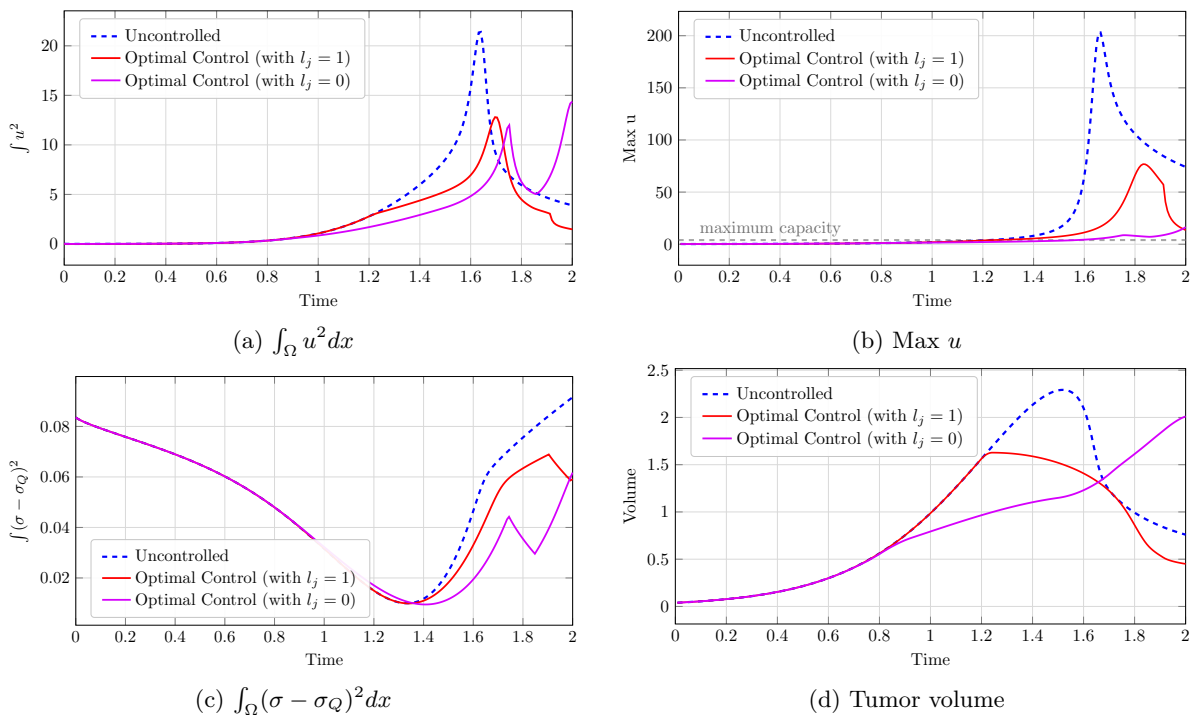

\appendix

\section{Appendix}
\begin{lemm}\label{lema_adjunto}
Let $u_0 \in H^1(\Omega)$ and consider $f_1 \in L^2(L^{2+})$, $\mathbf{v} \in L^4(Q) ^2$, and $f_2 \in L^2(Q)$. Then there exists a unique strong solution $u \in X_2$ of the following initial and boundary value problem:
\begin{equation}\label{eq_append:EqLemma}
\begin{cases}
    \partial_t u - \Delta u + f_1u + \mathbf{v} \cdot\nabla u = f_2, & \text{in }Q,\\
    \partial_{\mathbf{n}}u=0, & \text{on }\Sigma,\\
    u(0) = u_0, & \text{in }\Omega.
\end{cases}
\end{equation}

\end{lemm}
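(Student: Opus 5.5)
Since the problem is linear, uniqueness follows from existence once an a priori estimate in $X_2$ is available, and for existence I will use the Leray--Schauder theorem as elsewhere in the paper; a Galerkin scheme with the same estimates would also work. Define $\Gamma:\overline u\mapsto u$ on $L^2(H^1)$, or more conveniently on $L^{\infty-}(Q)$ with $\nabla\overline u\in L^{4-}(Q)$, where $u\in X_2$ solves the heat equation
\[
\partial_t u-\Delta u=f_2-f_1\overline u-\mathbf v\cdot\nabla\overline u,\qquad \partial_{\mathbf n}u=0,\qquad u(0)=u_0 .
\]

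Step 1 (well-posedness and compactness of $\Gamma$). If $\overline u\in X_2$, then $\overline u\in L^\infty(L^q)$ for every $q<\infty$ and $\nabla\overline u\in W_2\hookrightarrow L^4(Q)$. Hence $f_1\overline u\in L^2(Q)$ by H\"older, using $f_1\in L^2(L^{2+})$, and $\mathbf v\cdot\nabla\overline u\in L^2(Q)$ because $\mathbf v,\nabla\overline u\in L^4(Q)$. With $u_0\in H^1(\Omega)$, maximal parabolic regularity (\cite{FeireislNovotny}, Theorem 10.22) gives a unique $u\in X_2$ and a bound in terms of the right-hand side. Working on the space $Y:=\{\overline u\in L^{\infty-}(Q):\nabla\overline u\in L^{4-}(Q)\}$ with slightly lowered exponents, compactness follows from the compact embedding $X_2\hookrightarrow Y$. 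Continuity is shown by linearity together with the same estimates.

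Step 2 (uniform a priori bound). Let $u=\theta\Gamma u$. Test the equation by $u-\Delta u$ and use $\|u\|_{H^2}\le C\|u-\Delta u\|_{L^2}$:
\[
\tfrac12\tfrac{d}{dt}\|u\|_{H^1}^2+c\|u\|_{H^2}^2\le \|f_2\|_{L^2}\|u-\Delta u\|_{L^2}+|(f_1u,u-\Delta u)|+|(\mathbf v\cdot\nabla u,u-\Delta u)| .
\]
For the convection term, the 2D Gagliardo--Nirenberg inequality $\|\nabla u\|_{L^4}\le C\|\nabla u\|_{L^2}^{1/2}\|u\|_{H^2}^{1/2}$ gives
\[
\|\mathbf v\|_{L^4}\|\nabla u\|_{L^4}\|u\|_{H^2}\le\delta\|u\|_{H^2}^2+C_\delta\|\mathbf v\|_{L^4}^4\|u\|_{H^1}^2,
\]
and $\|\mathbf v\|_{L^4}^4\in L^1(0,T)$ since $\mathbf v\in L^4(Q)$.

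Step 3 (the reaction term, the main obstacle). For the potential term, take $p=2+\epsilon$ and $q$ with $1/p+1/q=1/2$, so $q<\infty$. Then $\|f_1u\|_{L^2}\le\|f_1\|_{L^{p}}\|u\|_{L^{q}}$, and the 2D interpolation $\|u\|_{L^q}\le C\|u\|_{H^1}^{1-\epsilon'}\|u\|_{H^2}^{\epsilon'}$ with $\epsilon'$ small, followed by Young's inequality, yields
\[
\delta\|u\|_{H^2}^2+C_\delta\|f_1\|_{L^{2+}}^{2/(1-\epsilon')}\|u\|_{H^1}^2 .
\]
This weight must be integrable in time, so the time exponent $2/(1-\epsilon')$ must not exceed $2$. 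That fails with the hypothesis $f_1\in L^2(L^{2+})$ taken literally, and this is the delicate point. My first option is an $L^\infty(L^q)$-type bound via Brezis--Gallouet/Br\'ezis--Wainger, $\|u\|_{L^\infty}\le C\|u\|_{H^1}\log^{1/2}(e+\|u\|_{H^2}/\|u\|_{H^1})$, combined with a log-Gronwall argument. If that does not close, I will use that in the application $f_1\in L^4(L^{4-})$, so a slightly higher time integrability is available and the Young argument above closes.

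Step 4 (conclusion). Choosing $\delta$ small and applying Gronwall's lemma gives a bound in $L^\infty(H^1)\cap L^2(H^2)$ independent of $\theta$. The equation then yields $\partial_t u\in L^2(Q)$, so $u$ is bounded in $X_2$ and therefore in $Y$. The Leray--Schauder theorem gives existence, and uniqueness follows by applying the same estimate to the difference of two solutions, which solves the problem with $u_0=0$ and $f_2=0$.
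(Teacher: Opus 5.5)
\textbf{Main gap: Step 3.} The obstacle you describe there does not exist. In two dimensions $H^1(\Omega)\hookrightarrow L^q(\Omega)$ for every finite $q$, so with $1/(2+\epsilon)+1/q=1/2$ you can take $\epsilon'=0$ in your interpolation:
\[
|(f_1u,u-\Delta u)|\le\|f_1\|_{L^{2+\epsilon}}\|u\|_{L^q}\|u-\Delta u\|_{L^2}\le\delta\|u\|_{H^2}^2+C_\delta\|f_1\|_{L^{2+\epsilon}}^2\|u\|_{H^1}^2 .
\]
The Gronwall weight $\|f_1(t)\|_{L^{2+\epsilon}}^2$ lies in $L^1(0,T)$ precisely because $f_1\in L^2(L^{2+})$. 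This is exactly the estimate in the paper's proof, and it is what that hypothesis is designed for. By borrowing a power of $\|u\|_{H^2}$ you gave up time integrability, and then concluded that the stated hypothesis might not be enough.

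As a result, your proposal does not prove the lemma as stated. The Brezis--Gallou\"et/log-Gronwall fallback is only sketched and never checked, and it is not needed. The second fallback, $f_1\in L^4(L^{4-})$, proves a weaker statement under a stronger hypothesis. Once you use the direct embedding, your Steps 2 and 4 close, and your convection estimate is the same as the paper's.

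\textbf{Secondary problem: Step 1.} You check that $f_2-f_1\overline u-\mathbf{v}\cdot\nabla\overline u\in L^2(Q)$ only for $\overline u\in X_2$. However, $\Gamma$ is defined on $Y$, and there this fails. With $\mathbf{v}\in L^4(Q)$ and $\nabla\overline u\in L^{4-}(Q)$ the product is only in $L^{2-}(Q)$. With $\overline u\in L^{\infty-}(Q)$, the term $f_1\overline u$ is only $L^{2-}$ in time. So maximal regularity does not return an element of $X_2$. Restoring the critical exponent $\nabla\overline u\in L^4(Q)$ fixes this but destroys the compactness of the embedding, which is scale-invariant at that exponent.

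For a linear problem you do not need Leray--Schauder at all. The paper takes the Galerkin route you mention in passing:
\begin{itemize}
\item a basis of Neumann eigenfunctions of $-\Delta+I$, so that $u_n-\Delta u_n$ is an admissible test function;
\item mollified $f_1,f_2,\mathbf{v}$;
\item the uniform $X_2$ bound obtained above;
\item passage to the limit.
\end{itemize}
A contraction argument in $X_2$ on short time intervals would also work.
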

\begin{proof}
    Let $\{e_j\}$ be the basis of $H^1(\Omega)$ of functions such that
    \[
    \begin{cases}
    -\Delta e_j + e_j = \lambda_je_j,  &\text{in }\Omega \\
    \partial_{\mathbf{n}}e_j = 0, &\text{on }\partial\Omega,
    \end{cases}
    \]
    for each $j \in \mathbb{N}$. We then define $V_n = \text{span}\{e_1, e_2, \dots, e_n\}$ and $\mathbb{P}_n$ the projection of $H^1(\Omega)$ on $V_n$. Let $f_{1n},$ $f_{2n}\in C^\infty_0(\mathbb{R};\mathbb{R}^2)$ and $\mathbf{v}_{n}\in (C^\infty_0(\mathbb{R};\mathbb{R}^2))^2$ be mollifier regularizations of $f_1,f_2$ and $\mathbf{v}$ such that
    $$f_{1n}\rightarrow f_1\ \mbox{in}\ L^2(L^{2+}),\ \ \ f_{2n}\rightarrow f_2\ \mbox{in}\ L^2(L^{2}),\ \ \ \mathbf{v}_{n}\rightarrow \mathbf{v}\ \mbox{in}\ L^4(Q)^2.$$
    For each $n\in \mathbb{N}$ we look for the Galerkin approximation
    \[
    u_n(t,x) = \sum_{j=1}^n a_j(t) e_j(x) 
    \]
    such that
    \begin{equation}\label{eq_append:lemmaEq1}
    \langle \partial_t u_n, \varphi\rangle + (\nabla u_n, \nabla \varphi) +(f_{1n}u_n, \varphi) + (\mathbf{v}_n \nabla u_n, \varphi) =  
    (f_{2n},\varphi), \quad \forall \varphi \in V_n,
    \end{equation}
    with $u_n(0) = \mathbb{P}_n u_0$ a.e. in $\Omega$. From the linear ODE theory we have that there is a unique classical global solution $u_n \in C^1([0,T]; V_n)$, satisfying \eqref{eq_append:lemmaEq1} for each $n$.\\

    Next, we obtain a priori estimates for the sequence $\{u_n\}$ that are need for passing the limit as $n$ goes to $\infty.$ Taking $\varphi = u_n-\Delta u_n$ and using that $\|z\|_{H^2} \leq C\, \|z-\Delta z\|_{L^2}$, we obtain
    \[
    \frac{1}{2}\frac{d}{dt} \|u_n\|^2_{H^1} 
    %+ \|\nabla u_n\|^2_{L^2} 
    + \|u_n\|^2_{H^2} 
    = (f_{1n}u_n, u_n-\Delta u_n) 
    + (\mathbf{v}_n\cdot\nabla u_n, u_n - \Delta u_n) 
    + (f_{2n}, u_n-\Delta u_n).
    \]
    Applying the H\"older, Young, and Gagliardo-Nirenberg inequalities, we obtain
    \[
    \begin{split}
        (f_{1n}u_n, u_n-\Delta u_n) &\leq \delta \|u_n\|^2_{H^2} + C_\delta \|f_{1n}\|^2_{L^{2+}} \|u_n\|^2_{H^1},
%         \\
%        (\mathbf{v}\cdot\nabla u_n, u_n)& \leq \delta \|\nabla u\|^2_{H^1} + C_\delta \|\mathbf{v}\|^2_{L^4}\|u_n\|^2_{H^1}, 
      \\
        (\mathbf{v}_n\cdot\nabla u_n, u_n-\Delta u_n)& 
        \leq  \delta \|u_n\|^2_{H^2} + C_\delta \|\mathbf{v}_n\|^2_{L^4} \|\nabla u_n\|^2_{L^4}
        \\ & \leq \delta \|u_n\|^2_{H^2} + C_\delta \|\mathbf{v}_n\|^2_{L^4}\|u_n\|_{H^2}\|\nabla u_n\|_{L^2}
         \\
        &\leq 2\delta\|u_n\|^2_{H^2} 
        + C_\delta \|\mathbf{v}_n\|^4_{L^4}\|\nabla u_n\|^2_{L^2}, 
        \\
        (f_{2n}, u_n-\Delta u_n) & \leq \delta \|u_n\|^2_{H^2} + C_\delta \|f_{2n}\|^2_{L^2}.
    \end{split}
    \]
    Thus, applying the properties of the mollified sequences, taking $\delta>0$ small enough and applying the Gronwall inequality, we can obtain
    \[
    \|u_n\|_{X_2} \leq C(T, |\Omega|, \|\mathbf{v}\|_{L^4(Q)}, \|f_1\|_{L^2(L^{2+})}, \|f_2\|_{L^2(Q)}, \|u_n(0)\|_{L^2(Q)}),
    \]
    where $C$ is independent of $n$. Previous estimate is sufficient to pass to the limit as $n$ approaches infinity, obtaining a solution $u \in X_2$ of \eqref{eq_append:EqLemma}. The uniqueness follows standard arguments.
\end{proof}

\begin{theo}\label{teo_append:linealizado}
    Let $T > 0$ and $\Omega \subset \mathbb{R}^2$ be a bounded domain with smooth enough boundary. Suppose that
    $F_1 \in L^{2}(L^{2}), F_2 \in L^{2}(L^{1+}), F_3 \in L^4(Q), \mathbf{v} \in L^4(Q)^2, F_4 \in L^2(H^{-1}), G_1 \in L^{2}(L^{\infty}), G_2 \in L^{2}(L^{2+}), G_3 \in L^2(Q).$
    Then, the following linear system
    \begin{equation}\label{eq_append:lin}
    \begin{cases}
        \partial_t w - \Delta w + F_1 w + F_2 z + \nabla \cdot (w \mathbf{v} + F_3 \nabla z) = F_4, & \text{in }Q,\\
        \partial_t z - \Delta z + G_1 w + G_2 z = G_3, & \text{in }Q, \\
        (-\nabla w + w \mathbf{v} + F_3 \nabla z)\cdot \mathbf{n} = \partial_{\mathbf{n}}z = 0, & \text{on }\Sigma,\\
        w(0) = z(0) = 0, & \text{in }\Omega,
    \end{cases}
    \end{equation}
    has a unique weak-strong solution $[w,z] \in W_2\times X_2$.
\end{theo}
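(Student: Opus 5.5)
I would prove Theorem \ref{teo_append:linealizado} by a Galerkin (or Leray--Schauder) argument mirroring the proof of Theorem \ref{teoSolFuerte} and of Lemma \ref{lema_adjunto}. For existence I would use a fixed-point decoupling. Given $\bar z\in L^2(H^2)\cap L^\infty(H^1)$ (or $\bar z\in L^{\infty-}(Q)$ with $\nabla\bar z\in L^{4-}(Q)$), solve the $w$-equation as a linear parabolic problem in weak form in $W_2$. Its right-hand side $F_4-F_2\bar z-\nabla\cdot(F_3\nabla\bar z)$ lies in $L^2(H^{-1})$, because $F_2\bar z\in L^2(L^{1+})\hookrightarrow L^2(H^{-1})$ in 2D and $F_3\nabla\bar z\in L^2(Q)$ once $\nabla\bar z\in L^4(Q)$. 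The drift term $\nabla\cdot(w\mathbf v)$ with $\mathbf v\in L^4(Q)$ is handled exactly as the term $(u\nabla\sigma_2,\nabla u)$ in Theorem \ref{teo:estabilidad}. Then solve the $z$-equation by maximal parabolic regularity, as in Step 1 of Theorem \ref{teoSolFuerte}, with source $G_3-G_1w$. Since $G_1\in L^2(L^\infty)$ and $w\in L^\infty(L^2)$, we have $G_1w\in L^2(Q)$. The term $G_2z$ is absorbed as in Lemma \ref{lema_adjunto}, which gives $z\in X_2$. Compactness of $W_2\times X_2$ into the auxiliary spaces gives compactness and continuity of the map, as in Step 3 of that proof.

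The core is the a priori estimate for $[w,z]=\theta\Gamma[w,z]$, uniform in $\theta$. I would test the $w$-equation with $w$ and the $z$-equation with $z-\Delta z$, using $\|z\|_{H^2}\le C\|z-\Delta z\|_{L^2}$. The $w$-terms are estimated as follows:
\begin{itemize}
\item $(F_1w,w)\le \|F_1\|_{L^2}\|w\|_{L^4}^2\le\delta\|w\|_{H^1}^2+C_\delta\|F_1\|_{L^2}^2\|w\|_{L^2}^2$, by Ladyzhenskaya.
\item $(F_2z,w)\le\|F_2\|_{L^{1+}}\|z\|_{L^\infty}\|w\|_{L^{\infty-}}$, and I would use $\|z\|_{L^\infty}\le C\|z\|_{H^1}^{1/2}\|z\|_{H^2}^{1/2}$ together with Young.
\item $(w\mathbf v,\nabla w)\le\delta\|w\|_{H^1}^2+C_\delta\|\mathbf v\|_{L^4}^4\|w\|_{L^2}^2$.
\item $(F_3\nabla z,\nabla w)\le \|F_3\|_{L^4}\|\nabla z\|_{L^4}\|\nabla w\|_{L^2}\le\delta\|\nabla w\|^2+\delta\|z\|_{H^2}^2+C_\delta\|F_3\|_{L^4}^4\|z\|_{H^1}^2$.
\end{itemize}
The $z$-terms are estimated as follows:
\begin{itemize}
\item $(G_1w,z-\Delta z)\le\delta\|z\|_{H^2}^2+C_\delta\|G_1\|_{L^\infty}^2\|w\|_{L^2}^2$.
\item $(G_2z,z-\Delta z)\le\delta\|z\|_{H^2}^2+C_\delta\|G_2\|_{L^{2+}}^2\|z\|_{H^1}^2$, since $H^1\hookrightarrow L^{\infty-}$.
\end{itemize}
All Gronwall coefficients are then in $L^1(0,T)$ by the hypotheses. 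Choosing $\delta$ small, Gronwall bounds $w$ in $L^\infty(L^2)\cap L^2(H^1)$ and $z$ in $L^\infty(H^1)\cap L^2(H^2)$. Comparison in the equations then gives $\partial_tw\in L^2(H^{-1})$ and $\partial_tz\in L^2(Q)$, so $[w,z]\in W_2\times X_2$. Leray--Schauder then yields existence.

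Uniqueness follows from linearity, by applying the same energy estimate to the difference of two solutions with zero data.

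The main obstacle is the $F_2z$ term, because $F_2$ has only $L^{1+}$ spatial integrability. The pairing needs $z\in L^\infty$ and $w\in L^{\infty-}$ in space, and only $w\in H^1$ is available. I would first try $\|F_2 z w\|_{L^1}\le \|F_2\|_{L^{1+\epsilon}}\|z\|_{L^\infty}\|w\|_{L^{q}}$ with $q<\infty$ large, and interpolate $\|w\|_{L^q}\le C\|w\|_{L^2}^{2/q}\|w\|_{H^1}^{1-2/q}$. This requires the time exponents to close: $\|F_2\|_{L^{1+}}\in L^2(0,T)$ and $\|z\|_{H^2}^{1/2}\in L^4$, which is tight and may force using the $H^2$ norm of $z$ with a small power. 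If it fails, I would fall back on the proof of Theorem \ref{teo:estabilidad}, where the analogous terms $u_2(\rho(\sigma_1)-\rho(\sigma_2))$ were absorbed into $\delta\|\sigma\|_{H^2}^2$.
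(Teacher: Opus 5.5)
Your architecture (decoupled fixed point or Galerkin, testing the $w$-equation with $w$ and the $z$-equation with $z-\Delta z$, Gronwall, uniqueness by linearity) is the paper's. Your bounds for the $F_1$, $\mathbf v$, $F_3$, $G_1$, $G_2$ terms are fine. The gap is exactly the term you flag. Your bullet list takes $z\in L^\infty$ with $\|z\|_{L^\infty}\le C\|z\|_{H^1}^{1/2}\|z\|_{H^2}^{1/2}$ and $w\in L^{\infty-}$ via $\|w\|_{H^1}$. After Young this gives
\[
|(F_2z,w)|\le \delta\|w\|_{H^1}^2+\delta\|z\|_{H^2}^2+C_\delta\|F_2\|_{L^{1+}}^4\|z\|_{H^1}^2 ,
\]
and $\|F_2\|_{L^{1+}}^4$ need not be in $L^1(0,T)$ when $F_2\in L^2(L^{1+})$. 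So your sentence ``all Gronwall coefficients are then in $L^1(0,T)$'' is false for this term, and your last paragraph leaves it unresolved. Your fallback does not work either. In Theorem~\ref{teo:estabilidad} the companion factor $u_2$ lies in $L^2$ in space, so the leftover is $\|u_2\|_{L^2}^2\|u\|_{L^2}^2$. Here $F_2\in L^{1+}$ forces $w$ into $L^{\infty-}$, and a leftover $C\|F_2\|_{L^{1+}}^2\|w\|_{H^1}^2$ can be neither absorbed nor Gronwalled.

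The missing observation is that $z$ does not need to be bounded. Take $p=1+\epsilon$ and $q=p'$, split H\"older as $\|F_2\|_{L^p}\|z\|_{L^{2q}}\|w\|_{L^{2q}}$, and use $H^1(\Omega)\hookrightarrow L^{2q}(\Omega)$ (in 2D) on both factors:
\[
|(F_2z,w)|\le C\|F_2\|_{L^{1+}}\|z\|_{H^1}\|w\|_{H^1}\le \delta\|w\|_{H^1}^2+C_\delta\|F_2\|_{L^{1+}}^2\|z\|_{H^1}^2 .
\]
The coefficient is integrable exactly under $F_2\in L^2(L^{1+})$. This is the bound used in \eqref{eq_append:lin3}, and with it your estimate closes. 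Your interpolation idea can also be closed, but only if the power in $\|z\|_{L^\infty}\le C_s\|z\|_{H^1}^{1-s}\|z\|_{H^2}^{s}$ satisfies $s\le 2/q$; the value $s=1/2$ works only when $q\le 4$.

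A smaller point: your two choices of auxiliary space for $\bar z$ do not both serve.
\begin{itemize}
\item $L^2(H^2)\cap L^\infty(H^1)$ gives $F_2\bar z-\nabla\cdot(F_3\nabla\bar z)\in L^2(H^{-1})$, but $X_2$ is not compactly embedded in it.
\item $L^{\infty-}(Q)$ with $\nabla\bar z\in L^{4-}(Q)$ is compact, but yields sources only in $L^{2-}$ in time.
\end{itemize}
Since the system is linear, running a Galerkin scheme on the coupled system with the energy estimate above (your other option) avoids this.
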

\begin{proof}
    For the proof we will use the Leray-Schauder Fixed Point Theorem. We consider the operator $\Gamma:L^{4-}(Q) \times L^{\infty-}(Q) \to W_2 \times X_2 \hookrightarrow L^{4-}(Q) \times L^{\infty-}(Q)$ where $\Gamma([\hat w,\hat z]) = [w,z]$ solves the decoupled linear system
    \begin{equation}\label{eq_append:lin1}
    \begin{cases}
        \displaystyle\int_0^T \langle \partial_t w, \varphi\rangle 
        + \int_Q \nabla w \cdot\nabla \varphi + \int_Q F_1 w\varphi
         + \int_Q F_2 z\varphi 
         = \int_Q  w \mathbf{v}\cdot\nabla \varphi 
         + \int_Q F_3 \nabla z\cdot\nabla \varphi 
         + \int_0^T \langle F_4, \varphi\rangle, \\
        \partial_t z - \Delta z + G_1 \hat w + G_2 z = G_3, & \text{in }Q, \\
        %\partial_{\mathbf{n}} w =
         \partial_{\mathbf{n}}z = 0, & \text{on }\Sigma, \\
        w(0) = z(0) = 0, & \text{in }\Omega,
    \end{cases}
    \end{equation}    
    for all $\varphi \in L^2(H^1)$. From the regularity of $G_1, G_2$ and $G_3$ and recalling that $\hat w \in L^{4-}(Q)$ y $\hat z \in L^{\infty-}(Q)$, it holds $G_1 \hat w + G_2 - G_3 \in L^2(Q)$. Then, Lemma \ref{lema_adjunto} with $\mathbf{v} = 0$ gives the existence of a unique $z \in X_2$ of \eqref{eq_append:lin1}$_2$.\\
    
Now, knowing the existence of $z \in X_2,$ then $\nabla z \in W_2 \hookrightarrow L^4(Q)$. In addition, from the regularity of $F_1, F_2, F_3, F_4$ and since $X_2 \hookrightarrow L^\infty(L^{\infty-})$, then $F_1 w + F_2 z - F_3\nabla z- F_4 \in L^2(Q)$.  Therefore, since $\mathbf{v}\in L^4(Q),$ by using a Galerkin procedure, it is not difficult to prove that there is a unique solution $w \in W_2$ of \eqref{eq_append:lin1}$_1$. Thus, the mapping $\Gamma$ is well defined. Furthermore, from the Aubin-Lions Lemma (see \cite{16} Theorem 5.1, p.58) and using Corollary 4 of \cite{23}, then $W_2 \times X_2 \hookrightarrow L^{4-}(Q) \times L^{\infty-}(Q)$ compactly, and therefore $\Gamma$ is compact.\\

Now, it remains to verify that the set of possible fixed points of $\theta \,\Gamma$, $\theta \in [0,1]$ is bounded in $W_2 \times X_2$. Let $\theta \in (0,1]$ (the case $\theta = 0$ is clear). Then, if $[w,z]$ is a fixed point of $\theta \,\Gamma$, it verifies the following system:
    \begin{equation}\label{eq_append:lin2}
    \begin{cases}
        \displaystyle\int_0^T \langle \partial_t w, \varphi\rangle + \int_Q \nabla w \cdot  \nabla \varphi +  \int_Q F_1  w\varphi + \int_Q F_2 z\varphi =  \int_Q   w \mathbf{v}\cdot\nabla \varphi  + \int_Q F_3 \nabla z\cdot\nabla \varphi + \theta \int_0^T \langle F_4, \varphi\rangle,\\
        \partial_t z - \Delta z + \theta G_1 w + G_2 z = \theta G_3, & \text{in }Q, \\
        \partial_{\mathbf{n}} w= \partial_{\mathbf{n}}z = 0, & \text{on }\Sigma, \\
        w(0) = z(0) = 0, & \text{in }\Omega,
    \end{cases}
    \end{equation}
    for all $\varphi \in L^2(H^1)$. Testing \eqref{eq_append:lin2}$_1$ by $w$ and integrating by parts, and using the H\"older, Young and Gagliardo-Niremberg inequalities we get
    \begin{equation}\label{eq_append:lin3}
    \begin{split}
        \frac{1}{2}\frac{d}{dt}\|w\|^2_{L^2} + \|w\|^2_{H^1} \leq & 2\delta \|w\|^2_{H^1} + \delta \|\nabla z\|^2_{H^1} + 3\delta\|\nabla w\|^2_{L^2} + C_\delta( \|F_1\|^2_{L^{2}}+\|\mathbf{v}\|^4_{L^4} + 1)\|w\|^2_{L^2} \\ &+C_\delta \|F_2\|^2_{L^{1+}}\|z\|^2_{H^1}  + C_\delta \|F_3\|^4_{L^4}\|\nabla z\|^2_{L^2} + C_\delta \|F_4\|^2_{H^{-1}}   ,
        \end{split}
    \end{equation}
    for any $\delta>0.$ Now, multiplying \eqref{eq_append:lin2}$_2$ by $z-\Delta z$, integrating by parts over $\Omega$ and applying H\"older and Young inequalities, and using that $\|z\|_{H^2} \leq C\, \|z-\Delta z\|_{L^2}$ we arrive at
    \begin{equation}\label{eq_append:lin4}
    \frac{1}{2}\frac{d}{dt}\|z\|_{H^1}^2 + C\, \|z\|^2_{H^2} \leq \delta \|z\|_{H^2}^2 + C_\delta \|G_1\|^2_{L^\infty} \|w\|_{L^2}^2 + C_\delta \|G_2\|^2_{L^{2+}}\|z\|^2_{H^1} + C_\delta \|G_3\|^2_{L^2}.
    \end{equation}
    Adding \eqref{eq_append:lin3} and \eqref{eq_append:lin4}, taking $\delta > 0$ sufficiently small, using the Gronwall inequality and by the regularity of $F_i$, $G_j$ for $i = 1,2,3, 4$ and $j = 1,2,3$, we obtain
    \[
    \begin{split}
    \|w\|^2_{W_2} + \|z\|^2_{X_2} \leq C(&T, |\Omega|, \|F_1\|_{L^2(L^{2+})}, \|F_2\|_{L^2(L^{1+})}, \|F_3\|_{L^4(Q)},\|F_4\|_{L^2(H^{-1})}, \|G_1\|_{L^2(L^\infty)}, \|G_2\|_{L^2(L^{2+})}, \\ &\|G_3\|_{L^2(Q)}, \|\mathbf{v}\|_{L^4(Q)}),
    \end{split}
    \]
    where $C$ is independent of $\theta$. In addition, it is a straightforward to check the continuity of $\Gamma$. Therefore, the Leray-Schauder Fixed Point Theorem guarantees the existence of the weak-strong solution of \eqref{eq_append:lin}. Moreover, using a classical comparison argument, 
    %we can prove that 
    the pair $[w,z] \in W_2 \times X_{2}$ is the unique solution of system \eqref{eq_append:lin}.
\end{proof}

\section{Appendix}

Let $\Omega \subset \mathbb{R}^2$ a bounded domain with smooth enough boundary, $T>0$, and 
$[\ControlC, \ControlS] \in \mathcal{U}_{ad}$ 
 a convex set of $ [L^\infty(Q)]^2$. Consider the following generic controlled system:
\begin{equation}\label{eq_apend:SistemaGeneral}
    \begin{cases}
        \partial_t u - \Delta u = f_1(u,v) + \ControlC f_2(u,v) - \nabla \cdot (u\nabla v), & \text{in }Q, \\
        \partial_t v - \Delta v = g_1 (u,v) + \ControlS g_2(u,v), & \text{in }Q,\\
        \partial_{\mathbf{n}}u = \partial_{\textbf{n}}v = 0, & \text{on }\Sigma, \\
        u(0) = u_0, \quad v(0) = v_0, & \text{in }\Omega.
    \end{cases}
\end{equation}
The main objective of this section is to analyze the linearized system corresponding to the generic system \eqref{eq_apend:SistemaGeneral}. For this, we will assume the following hypotheses:
\begin{itemize}
    \item[(\text{H1})] There exists a unique weak-strong solution $[u,v] \in W_2 \times X_2$ of the generic  system \eqref{eq_apend:SistemaGeneral}.
    \item[(\text{H2})] The weak-strong solutions of \eqref{eq_apend:SistemaGeneral} are stable with respect to the controls, that is, if $[\ControlC_1, \ControlS_1], [\ControlC_2, \ControlS_2] \in \mathcal{U}_{ad}$ and $[u_1, v_1]$, $[u_2, v_2]\in W_2 \times X_2$ are the corresponding states, then there exists $C>0,$ such that
    \[
    \|u_1-u_2\|_{W_2} + \|v_1- v_2\|_{X_2} \leq C(\|\ControlC_1 - \ControlC_2\|_{L^\infty(Q)} + \|\ControlS_1 - \ControlS_2\|_{L^\infty(Q)}).
    \]
\end{itemize}
From (H1) it follows that there  exists a unique weak-strong solution  $[u,v]\in W_2 \times X_2$. Using (H2) we compute the derivative of states with respect to the controls using a convex perturbation strategy. Let $[u^\varepsilon, v^\varepsilon]$ and $[u, v]$ solutions of the controlled general system \eqref{eq_apend:SistemaGeneral} associated with the controls $[\ControlC^\varepsilon, \ControlS^\varepsilon]$ and $[\ControlC, \ControlS]$ respectively, where $[\ControlC^\varepsilon, \ControlS^\varepsilon] = [\ControlC + \varepsilon(\bar \ControlC - \ControlC), \ControlS + \varepsilon(\bar \ControlS - \ControlS)] \in \mathcal{U}_{ad}$ for any $[\bar \ControlC, \bar \ControlS] \in \mathcal{U}_{ad}$ and $\varepsilon \in [0,1]$. Note that, letting $\varepsilon \to 0$, by construction 
$$\ControlC^\varepsilon \to \ControlC\quad \text{in } L^\infty(Q)\quad and \quad \ControlS^\varepsilon \to \ControlS\quad  \text{in } L^\infty(Q),
$$ and as a consequence of (H2), we get
\[
[u^\varepsilon, v^\varepsilon] \to [u,v] \quad \text{ strongly in } W_2\times X_2.
\]
Let us define  
$$w^\varepsilon = \frac{u^\varepsilon - u}{\varepsilon}, 
\quad z^\varepsilon = \frac{v^\varepsilon - v}{\varepsilon}.
$$
 Then, using that 
$$ \frac{\ControlC ^\varepsilon - \ControlC}{\varepsilon}=\bar \ControlC - \ControlC
\quad 
\text{and}
\quad 
\frac{\ControlS ^\varepsilon - \ControlS}{\varepsilon}=\bar \ControlS - \ControlS,
$$  
one has that $[w^\varepsilon,z^\varepsilon]$ satisfies the following system
\begin{equation}\label{eq_append:linealizadoEpsilon}
    \begin{cases}
        \partial_t w^\varepsilon - \Delta w^\varepsilon 
        = \frac{1}{\varepsilon}\Big(f_1(u^\varepsilon, v^\varepsilon)
         - f_1(u,v)\Big) + f_2(u^\varepsilon, v^\varepsilon)(\bar \ControlC - \ControlC) + \frac{1}{\varepsilon}\Big (f_2(u^\varepsilon, v^\varepsilon) - f_2(u,v)\Big )\ControlC \\ \hspace{2.4cm}- \nabla \cdot (w^\varepsilon\nabla v + u^\varepsilon\nabla z^\varepsilon), & \text{in }Q, \\
        \partial_t z^\varepsilon - \Delta z^\varepsilon = \frac{1}{\varepsilon}\Big(g_1(u^\varepsilon, v^\varepsilon) - g_1(u,v)\Big) + g_2(u^\varepsilon, v^\varepsilon)(\bar\ControlS - \ControlS) + \frac{1}{\varepsilon}\Big(g_2(u^\varepsilon, v^\varepsilon)-g_2(u,v)\Big)\ControlS, &\text{in }Q, \\
        \partial_{\mathbf{n}} w^\varepsilon = \partial_{\mathbf{n}}z^\varepsilon = 0, & \text{on }\Sigma,\\
        w^\varepsilon(0) = 0, \quad z^\varepsilon(0) = 0, & \text{in }\Omega.
    \end{cases}
\end{equation}
\begin{theo}\label{teo:Append_derivada}
Assume that $f_i,g_i \in C^2([0,\infty)^2)$ for $i=1,2$, such that for any $[u,v] \in W_2\times X_2$ with $\partial_{\mathbf{n}} v = 0$ on $\Sigma$, one has $f_2(u,v) \in L^2(L^{\infty-})$, $g_2(u,v) \in L^2(Q)$, and their first partial derivatives satisfy
\begin{align*}
    &\partial_uf_1(u,v) \in L^{2}(L^{2+}), && \partial_u f_2(u,v) \in L^{2}(L^{2+}), && \partial_u g_1(u,v) \in L^{2}(L^{\infty}), && \partial_u g_2(u,v) \in L^{2}(L^\infty),\\
    &\partial_v f_1(u,v) \in L^{2+}(Q), && \partial_v f_2(u,v) \in L^{2+}(Q), && \partial_v g_1(u,v) \in L^{2+}(Q), && \partial_v g_2(u,v) \in L^{2+}(Q),
\end{align*}
and their seconds partial derivatives satisfy
\begin{align*}
    &\partial_{uu}f_1(u,v), \partial_{uu}f_2(u,v) \in L^4(L^{4+}), && \partial_{vv}f_1(u,v), \partial_{vv}f_2(u,v) \in L^2(L^{1+}),\\ & \partial_{uv}f_1(u,v),\partial_{uv}f_2(u,v) \in L^2(L^{2+})\cap L^4(L^{4/3+}), &&
    \partial_{uu}g_1(u,v), \partial_{uu}g_2(u,v) \in L^\infty(Q),\\ & \partial_{vv}g_1(u,v), \partial_{vv}g_2(u,v) \in L^2(L^{2+}), && \partial_{uv}g_1(u,v),\partial_{uv}g_2(u,v) \in L^4(L^{4+}).
\end{align*}
    Then, under the hypotheses (H1) and (H2), there is the limit $\lim\limits_{\varepsilon\to 0}[w^\varepsilon, z^\varepsilon] = [w,z]$ in $W_2 \times X_2$, where $[w,z]$ is the solution of the following linearized (or sensitivity) system
    \begin{equation}\label{eq_append:linealizado}
        \begin{cases}
            \partial_t w - \Delta w  - (\partial_u f_1 (u,v) + \partial_uf_2(u,v)\ControlC)w - (\partial_v f_1(u,v) + \partial_v f_2(u,v)\ControlC)z \\ \qquad+ \nabla \cdot (w\nabla v + u\nabla z)  = f_2(u,v) (\bar \ControlC - \ControlC), & \text{in }Q,\\
            \partial_t z - \Delta z  - (\partial_u g_1(u,v) + \partial_ug_2(u,v) \ControlS)w - (\partial_v g_1(u,v) + \partial_vg_2(u,v)\ControlS)z = g_2(u,v) (\bar \ControlS - \ControlS), &\text{in }Q, \\
            \partial_{\mathbf{n}}w = \partial_{\mathbf{n}} z = 0, &\text{on }\Sigma, \\
            w(0) = 0, \quad z(0) = 0, &\text{in }\Omega.
        \end{cases}
    \end{equation}
\end{theo}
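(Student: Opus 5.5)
The plan is to work with the remainders $r^\varepsilon := w^\varepsilon - w$ and $q^\varepsilon := z^\varepsilon - z$, where $[w,z]\in W_2\times X_2$ is the solution of the linearized system \eqref{eq_append:linealizado}. Existence and uniqueness of $[w,z]$ come from Theorem \ref{teo_append:linealizado}, applied with
\[
F_1=-(\partial_u f_1+\partial_u f_2\,\ControlC),\qquad F_2=-(\partial_v f_1+\partial_v f_2\,\ControlC),\qquad \mathbf{v}=\nabla v,\qquad F_3=u,
\]
\[
G_1=-(\partial_u g_1+\partial_u g_2\,\ControlS),\qquad G_2=-(\partial_v g_1+\partial_v g_2\,\ControlS),
\]
and right-hand sides $F_4=f_2(u,v)(\bar\ControlC-\ControlC)$ and $G_3=g_2(u,v)(\bar\ControlS-\ControlS)$. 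The integrability required there follows from the assumed regularity of the first derivatives, from $\ControlC,\ControlS\in L^\infty(Q)$, from $\nabla v\in W_2\hookrightarrow L^4(Q)$ and from $u\in L^4(Q)$.

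Subtracting \eqref{eq_append:linealizado} from \eqref{eq_append:linealizadoEpsilon} gives a system for $[r^\varepsilon,q^\varepsilon]$ with zero initial data. It has the same linear structure, with coefficients evaluated at $[u,v]$, plus residual terms. Expanding with Taylor's formula with integral remainder,
\[
\frac{1}{\varepsilon}\big(f_1(u^\varepsilon,v^\varepsilon)-f_1(u,v)\big)=\partial_u f_1\,w^\varepsilon+\partial_v f_1\,z^\varepsilon+\varepsilon R_{f_1}^\varepsilon,
\]
where $R^\varepsilon_{f_1}$ is quadratic in $(w^\varepsilon,z^\varepsilon)$ and involves second derivatives evaluated at intermediate points. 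The same expansion is used for $f_2$, $g_1$ and $g_2$. The remaining residuals are:
\begin{itemize}
\item the chemotaxis residual $\nabla\cdot\big((u^\varepsilon-u)\nabla z^\varepsilon\big)=\varepsilon\,\nabla\cdot(w^\varepsilon\nabla z^\varepsilon)$;
\item the control residuals $(f_2(u^\varepsilon,v^\varepsilon)-f_2(u,v))(\bar\ControlC-\ControlC)=\varepsilon(\partial_u f_2\, w^\varepsilon+\dots)$, and similarly for $g_2$.
\end{itemize}
Every residual therefore carries a factor $\varepsilon$, or a factor $u^\varepsilon-u$ or $v^\varepsilon-v$.

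Next, I would derive a uniform bound $\|w^\varepsilon\|_{W_2}+\|z^\varepsilon\|_{X_2}\le C$ independent of $\varepsilon$. This follows from (H2), since $\|\ControlC^\varepsilon-\ControlC\|_{L^\infty}=\varepsilon\|\bar\ControlC-\ControlC\|_{L^\infty}$, so $\|u^\varepsilon-u\|_{W_2}+\|v^\varepsilon-v\|_{X_2}\le C\varepsilon$. Then I would repeat exactly the energy estimate of Theorem \ref{teo_append:linealizado}: test the $r^\varepsilon$-equation by $r^\varepsilon$ and the $q^\varepsilon$-equation by $q^\varepsilon-\Delta q^\varepsilon$, and use the same H\"older, Gagliardo--Nirenberg and Young bounds together with Gronwall. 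This should give
\[
\|r^\varepsilon\|_{W_2}+\|q^\varepsilon\|_{X_2}\le C\,\|\text{residuals}\|,
\]
with each residual measured in the dual norm used there ($L^2(H^{-1})$ for the $w$-equation, $L^2(Q)$ for the $z$-equation).

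The main obstacle is showing that the residual norms are $O(\varepsilon)$, or at least tend to $0$. In particular, the second-derivative terms must be controlled uniformly at the intermediate points $[u+\theta(u^\varepsilon-u),\,v+\theta(v^\varepsilon-v)]$. I expect to handle this as follows. Since these intermediate pairs are bounded in $W_2\times X_2$ uniformly in $\varepsilon$ and $\theta$, the hypotheses on the second derivatives should hold with uniform bounds; I would read them as bounds on bounded sets of $W_2\times X_2$. The products then fit the assumed exponents:
\begin{itemize}
\item $\partial_{uu}f_i\,(w^\varepsilon)^2\in L^2(H^{-1})$, using $\partial_{uu}f_i\in L^4(L^{4+})$ and $w^\varepsilon\in L^4(Q)$;
\item $\partial_{vv}f_i\,(z^\varepsilon)^2$, using $z^\varepsilon\in L^\infty(L^{\infty-})$;
\item $\partial_{uv}f_i\,w^\varepsilon z^\varepsilon$, using $\partial_{uv}f_i\in L^4(L^{4/3+})$;
\item $w^\varepsilon\nabla z^\varepsilon\in L^2(Q)$, since $\nabla z^\varepsilon\in L^4(Q)$.
\end{itemize}
Each of these carries a factor $\varepsilon$, so the residuals are $O(\varepsilon)$. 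Consequently $[r^\varepsilon,q^\varepsilon]\to 0$ in $W_2\times X_2$, which proves that $[w^\varepsilon,z^\varepsilon]\to[w,z]$ in $W_2\times X_2$.
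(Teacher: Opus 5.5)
Your argument is correct and follows the same overall strategy as the paper: an error system for $w^\varepsilon-w$ and $z^\varepsilon-z$, the same test functions ($e_w$, and $e_z-\Delta e_z$), Gronwall, and residuals that vanish. What differs is the decomposition.

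The paper applies a pointwise mean value theorem. Its error system therefore carries $\varepsilon$-dependent coefficients $A_1,A_2,B_1,B_2$, evaluated at intermediate functions $(\tilde u^\varepsilon,\tilde v^\varepsilon)$, and $u^\varepsilon$ in the cross-diffusion term. Its residuals are $A_3w+A_4z+A_5$ and $B_3w+B_4z+B_5$, which it shows vanish via a second mean value expansion and the mere convergence $[u^\varepsilon,v^\varepsilon]\to[u,v]$.

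You instead freeze every coefficient at $(u,v)$ and push all $\varepsilon$-dependence into Taylor integral remainders quadratic in $(w^\varepsilon,z^\varepsilon)$. You control these with the Lipschitz form of (H2). This gives you three things:
\begin{itemize}
\item a fixed linear operator, so the stability constant from Theorem \ref{teo_append:linealizado} is manifestly independent of $\varepsilon$;
\item an explicit $O(\varepsilon)$ rate;
\item intermediate points $u+\theta(u^\varepsilon-u)$, $v+\theta(v^\varepsilon-v)$ that genuinely lie in $W_2\times X_2$ and are nonnegative, so the hypotheses legitimately apply to them.
\end{itemize}
The paper's pointwise intermediate functions need not belong to $W_2\times X_2$; only $|\tilde u^\varepsilon-u|\le|u^\varepsilon-u|$ is available. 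Its claim that they converge in $W_2\times X_2$ is not justified. Both routes still need the derivative hypotheses in the uniform-on-bounded-sets reading you state.

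One small fix. For $\partial_{uu}f_i\,(w^\varepsilon)^2$, using $w^\varepsilon\in L^4(Q)$ for both factors only yields $L^{4/3}$ integrability in time, not $L^2(H^{-1})$. Instead pair $\partial_{uu}f_i\,w^\varepsilon\in L^2(L^{2+})$ with $w^\varepsilon\in L^\infty(L^2)$; this lands in $L^2(L^{1+})\hookrightarrow L^2(H^{-1})$, the same pairing the paper uses for $A_3 w$.
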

\begin{remark}
Note that the hypotheses on $f_i$ and $g_i$ for $i=1,2$ and their first derivatives imposed in Theorem 
\ref{teo:Append_derivada} 
are sufficient to ensure that for each $\varepsilon \in (0,1]$, using the Theorem \ref{teo_append:linealizado}, there exists a unique solution $[w^\varepsilon, z^\varepsilon] \in W_2 \times X_2$ of the system \eqref{eq_append:linealizadoEpsilon} associated with $[u^\varepsilon, v^\varepsilon, \ControlC^\varepsilon, \ControlS^\varepsilon],$  and a unique solution $[w,z]$ of the linearized system \eqref{eq_append:linealizado} associated with $[u,v,\ControlC, \ControlS]$.
\end{remark}
\begin{proof}
    Using an extension of the mean value Lagrange Theorem (see \cite[Appendix]{extTeoLagrange}), it holds
    \[
    f_i(u^\varepsilon, v^\varepsilon)-f_i(u,v) = \partial_u f_i (\tilde u^\varepsilon, \tilde v^\varepsilon)(u^\varepsilon-u) + \partial_v f_i(\tilde u^\varepsilon, \tilde v^\varepsilon)(v^\varepsilon - v),
    \]
    \[
    g_i(u^\varepsilon, v^\varepsilon)-g_i(u,v) = \partial_u g_i (\tilde u^\varepsilon, \tilde v^\varepsilon)(u^\varepsilon-u) + \partial_v g_i(\tilde u^\varepsilon, \tilde v^\varepsilon)(v^\varepsilon - v),
    \]
for $i = 1,2$, where $\tilde u^\varepsilon, \tilde v^\varepsilon$ are intermediate functions between those of $u^\varepsilon$ and $u$, and 
$v^\varepsilon$ and $v$, respectively. Then
    \[
    [\tilde u^\varepsilon, \tilde v^\varepsilon] \to [u, v] \quad \text{ strongly in } \quad  W_2 \times X_2.
    \]
    Therefore, the system \eqref{eq_append:linealizadoEpsilon} is rewriting as:
    \begin{equation}\label{eq_append:linealizadoEpsilon2}
        \begin{cases}
            \partial_t w^\varepsilon - \Delta w^\varepsilon - \Big(\partial_u f_1(\tilde u^\varepsilon, \tilde v^\varepsilon) + \partial_u f_2(\tilde u^\varepsilon, \tilde v^\varepsilon)\ControlC\Big)w - \Big(\partial_v f_1(\tilde u^\varepsilon, \tilde v^\varepsilon) + \partial_v f_2(\tilde u^\varepsilon, \tilde v^\varepsilon)\ControlC\Big)z \\ \hspace{2.5cm }= f_2(u^\varepsilon, v^\varepsilon)(\bar\ControlC - \ControlC) - \nabla \cdot (w^\varepsilon \nabla v + u^\varepsilon \nabla z^\varepsilon) , & \text{in }Q, \\
            \partial_t z^\varepsilon - \Delta z^\varepsilon - \Big(\partial_u g_1(\tilde u^\varepsilon, \tilde v^\varepsilon) + \partial_u g_2(\tilde u^\varepsilon, \tilde v^\varepsilon)\ControlS \Big)w - \Big(\partial_v g_1(\tilde u^\varepsilon, \tilde v^\varepsilon) + \partial_v g_2(\tilde u^\varepsilon, \tilde v^\varepsilon)\ControlS\Big)z \\ \hspace{2.5cm } = g_2(u^\varepsilon, v^\varepsilon)(\bar\ControlS - \ControlS), & \text{in }Q,\\
            \partial_{\mathbf{n}} w^\varepsilon = \partial_{\mathbf{n}}z^\varepsilon = 0, & \text{on }\Sigma, \\
            w^\varepsilon(0) = 0, \quad z^\varepsilon (0)=0, & \text{in }\Omega.
        \end{cases}
    \end{equation}
    Now, let us define the error functions $e_w = w^\varepsilon - w$ and $e_z =  z^\varepsilon - z$. Therefore, $[e_w, e_z]$ satisfies the following system (in the weak-strong sense of $W_2\times X_2$):    
    \begin{equation}\label{eq_append:error1}
        \begin{cases}
            \partial_t e_w - \Delta e_w + e_w= (A_1 +1)e_w + A_2e_z + A_3 w + A_4 z + A_5 - \nabla \cdot (e_w \nabla v + u^\varepsilon \nabla e_z + (u^\varepsilon-u)\nabla z), & \text{in }Q, \\
            \partial_t e_z - \Delta e_z + e_z = B_1 e_w + (B_2+1)e_z + B_3 w + B_4 z + B_5, & \text{in }Q, \\
            \partial_{\mathbf{n}} e_w = \partial_{\mathbf{n}}e_z = 0, & \text{on }\Sigma, \\
            e_w(0) = 0, \quad e_z(0) = 0, & \text{in }\Omega,
        \end{cases}
    \end{equation}
    where
    \[
    \begin{split}
        &A_1 = \partial_u f_1 (\tilde u^\varepsilon, \tilde v^\varepsilon) + \partial_u f_2(\tilde u^\varepsilon, \tilde v^\varepsilon)\ControlC,\\ &A_2 = \partial_v f_1 (\tilde u^\varepsilon, \tilde v^\varepsilon) + \partial_v f_2(\tilde u^\varepsilon, \tilde v^\varepsilon)\ControlC, \\
        & A_3 = \Big(\partial_u f_1(\tilde u^\varepsilon, \tilde v^\varepsilon) - \partial_u f_1(u,v)\Big) + \Big(\partial_u f_2(\tilde u^\varepsilon, \tilde v^\varepsilon) - \partial_uf_2( u,  v)\Big)\ControlC, \\
        & A_4 = \Big(\partial_v f_1(\tilde u^\varepsilon, \tilde v^\varepsilon) - \partial_v f_1(u,v)\Big) + \Big(\partial_v f_2(\tilde u^\varepsilon, \tilde v^\varepsilon) - \partial_vf_2( u,  v)\Big)\ControlC, \\
        & A_5 = \Big(f_2( u^\varepsilon,  v^\varepsilon)-f_2(u,v)\Big)(\bar \ControlC - \ControlC),
    \end{split}
    \]
    and
    \[
    \begin{split}
        &B_1 = \partial_u g_1 (\tilde u^\varepsilon, \tilde v^\varepsilon) + \partial_u g_2(\tilde u^\varepsilon, \tilde v^\varepsilon)\ControlS,\\ &B_2 = \partial_v g_1 (\tilde u^\varepsilon, \tilde v^\varepsilon) + \partial_v g_2(\tilde u^\varepsilon, \tilde v^\varepsilon)\ControlS, \\
        &B_3 = \Big(\partial_u g_1(\tilde u^\varepsilon, \tilde v^\varepsilon) - \partial_u g_1(u,v)\Big) + \Big(\partial_u g_2(\tilde u^\varepsilon, \tilde v^\varepsilon) - \partial_ug_2( u,  v)\Big)\ControlS, \\
        &B_4 = \Big(\partial_v g_1(\tilde u^\varepsilon, \tilde v^\varepsilon) - \partial_v g_1(u,v)\Big) + \Big(\partial_v g_2(\tilde u^\varepsilon, \tilde v^\varepsilon) - \partial_v g_2( u,  v)\Big)\ControlS, \\
        &B_5 = \Big(g_2(u^\varepsilon, v^\varepsilon)-g_2(u,v)\Big)(\bar \ControlS - \ControlS).
    \end{split}
    \]
    Multiplying \eqref{eq_append:error1}$_1$ by $e_w$ and \eqref{eq_append:error1}$_2$ by $e_z - \Delta e_z$, integrating by parts on $\Omega$ and adding, we get
    \[
    \begin{split}
    \frac{1}{2}\frac{d}{dt}\Big(\|e_w\|^2_{L^2} & + \|e_z\|^2_{H^1}\Big) + \|\nabla e_w\|^2_{L^2} + \|\nabla e_z\|^2_{L^2}+ \|\Delta e_z\|^2_{L^2} = \int_\Omega (A_1 + 1)e_w^2 + \int_\Omega A_2 e_z e_w + \int_\Omega A_3 w e_w \\ &+ \int_\Omega A_4 z e_w + \int_\Omega A_5 e_w + \int_\Omega e_w \nabla v \cdot \nabla e_w + \int_\Omega u^\varepsilon \nabla e_z \cdot \nabla e_w + \int_\Omega (u^\varepsilon - u)\nabla z\cdot  \nabla e_z\\ & + \int_\Omega B_1 e_w (e_z - \nabla e_z)  + \int_\Omega (B_2 + 1)e_z(e_z - \nabla e_z) + \int_\Omega B_3 w (e_z - \nabla e_z) \\ & + \int_\Omega B_4 z (e_z - \nabla e_z) + \int_\Omega B_5 (e_z - \nabla e_z).
    \end{split}
    \]
    Applying the H\"older and Young inequalities and using that $H^1(\Omega) \hookrightarrow L^{\infty-}(\Omega)$, we obtain that
    \begin{align*}
        & \int_\Omega A_1 e_w^2 \leq \delta \|e_w\|^2_{H^1}+ C_\delta \|A_1\|^2_{L^2}\|e_w\|^2_{L^2}, && \int_\Omega B_1e_w(e_z-\Delta e_z)\leq \delta \|e_z\|^2_{H^2} + C_\delta \|B_1\|^2_{L^4}\|e_w\|^2_{L^4},\\
        & \int_\Omega A_2 e_w e_z \leq \delta \|e_w\|^2_{H^1} + C_\delta \|A_2\|^2_{L^{1+}}\|e_z\|^2_{H^1}, &&\int_\Omega B_2 e_z(e_z - \Delta e_z) \leq \delta \|e_z\|^2_{H^2} + C_\delta \|B_2\|^2_{L^{2+}}\|e_z\|^2_{H^1}, \\
        & \int_\Omega A_3 w e_w \leq \delta \|e_w\|^2_{H^1} + C_\delta \|A_3\|_{L^{2+}}^2\|w\|^2_{L^2}, && \int_\Omega B_3 w(e_z-\Delta e_z) \leq \delta \|e_z\|^2_{H^2} + C_\delta \|B_3\|^2_{L^4}\|w\|^2_{L^4}, \\
        & \int_\Omega A_4 z e_w \leq \delta \|e_w\|^2_{H^1} + C_\delta C^2 \|A_4\|^2_{L^{1+}}\|z\|_{H^1}^2, && \int_\Omega B_4 z(e_z-\Delta e_z) \leq \delta \|e_z\|_{H^2}^2 +C_\delta \|B_4\|^2_{L^{2+}}\|z\|^2_{H^1} \\
        & \int_\Omega A_5 e_w \leq \delta \|e_w\|^2_{H^1} + C_\delta \|A_5\|^2_{L^{1+}}, &&  \int_\Omega B_5(e_z-\Delta e_z) \leq \delta \|e_z\|^2_{H^2} + C_\delta \|B_5\|^2_{L^2},
    \end{align*}
    Similarly, applying the Gagliardo-Nirenberg inequality, we get
    \[
    \begin{split}
        & \int_\Omega e_w \nabla v \cdot\nabla e_w \leq C\|e_w\|^{1/2}_{L^2}\|e_w\|^{1/2}_{H^1}\|\nabla v\|_{L^4}\|\nabla e_w\|_{L^2} \leq \delta \|e_w\|^2_{H^1} + C_\delta \|\nabla v\|^4_{L^4}\|e_w\|^2_{L^2},
        \\
        & \int_\Omega u^\varepsilon \nabla e_z \cdot \nabla e_w \leq C\|u^\varepsilon\|_{L^4}\|\nabla e_z\|^{1/2}_{L^2}\|\nabla e_z\|^{1/2}_{H^1}\|\nabla e_w\|_{L^2} \leq \delta (\|\nabla e_z\|^2_{H^1} + \|\nabla e_w\|^2_{L^2}) + C_\delta \|u^\varepsilon\|_{L^4}^4 \|\nabla e_z\|^2_{L^2}, \\
        & \int_\Omega (u^\varepsilon-u)\nabla z \cdot\nabla e_w \leq \delta\|e_w\|^2_{H^1} + C_\delta \|u^\varepsilon-u\|^2_{L^4}\|\nabla z\|_{L^4}^2.
    \end{split}
    \]
    Thus, taking $\delta$ sufficiently small and using the Gronwall inequality, we have
    \[
    \begin{split}
    \|e_w\|^2_{W_2} + &\|e_z\|_{X_2}^2 \leq C\Big(\|A_3\|^2_{L^{2}(L^{2+})}\|w\|^2_{L^\infty(L^2)}+\|A_4\|^2_{L^2(L^{1+})}\|z\|^2_{L^\infty(H^1)} + \|A_5\|^2_{L^2(L^{1+})} \\ & + \|u^\varepsilon-u\|^2_{L^4(Q)}\|\nabla z\|^2_{L^4(Q)} + \|B_3\|_{L^4(Q)}\|w\|^2_{L^4(Q)}+ \|B_4\|^2_{L^2(L^{2+})}\|z\|^2_{L^\infty(H^1)} + \|B_5\|^2_{L^2(Q)}\Big).
    \end{split}
    \]
    Now, using the mean value Lagrange Theorem again, and using hypothesis (H2), we obtain 
    \[
        \begin{split}
            \|A_3\|_{L^2(L^{2+})} & \leq  \|\partial_u f_1(\tilde u^\varepsilon, \tilde v^\varepsilon)-\partial_uf_1(u,v)\|_{L^2(L^{2+})} + \|\partial_u f_2(\tilde u^\varepsilon, \tilde v^\varepsilon) - \partial_u f_2(u,v)\|_{L^2(L^{2+})}\|\ControlC\|_{L^\infty(Q)} \\
        & = \|\partial_{uu}f_1(\tilde {\tilde u}^\varepsilon, \tilde{\tilde v}^\varepsilon)(\tilde u^\varepsilon - u) + \partial_{uv}f_1(\tilde {\tilde u}^\varepsilon, \tilde{\tilde v}^\varepsilon)(\tilde v^\varepsilon - v)\|_{L^2(L^{2+})} \\ & \qquad + \|\partial_{uu}f_2(\tilde {\tilde u}^\varepsilon, \tilde{\tilde v}^\varepsilon)(\tilde u^\varepsilon - u) + \partial_{uv}f_2(\tilde {\tilde u}^\varepsilon, \tilde{\tilde v}^\varepsilon)(\tilde v^\varepsilon - v)\|_{L^2(L^{2+})}\|\ControlC\|_{L^\infty(Q)} \\
        & \leq \|\partial_{uu}f_1(\tilde {\tilde u}^\varepsilon, \tilde{\tilde v}^\varepsilon)\|_{L^4(L^{4+})}\|\tilde u^\varepsilon - u\|_{L^4(Q)} + \|\partial_{uv}f_1(\tilde {\tilde u}^\varepsilon, \tilde{\tilde v}^\varepsilon)\|_{L^2(L^{2+})}\|\tilde v^\varepsilon - v\|_{L^\infty(L^{\infty-})} \\
        & \qquad + \|\partial_{uu}f_2(\tilde {\tilde u}^\varepsilon, \tilde{\tilde v}^\varepsilon)\|_{L^4(L^{4+})}\|\tilde u^\varepsilon - u\|_{L^4(Q)}\|\ControlC\|_{L^\infty(Q)} \\ & \qquad + \|\partial_{uv}f_2(\tilde {\tilde u}^\varepsilon, \tilde{\tilde v}^\varepsilon)\|_{L^2(L^{2+})}\|\tilde u^\varepsilon - u\|_{L^\infty(L^{\infty-})}\|\ControlC\|_{L^\infty(Q)}\\
        & \overset{\varepsilon \to 0}{\longrightarrow} 0.
        \end{split}
    \]
    Similarly, it is easy to prove that
    \[
    \|A_4\|_{L^2(L^{1+})}, \|A_5\|_{L^2(L^{1+})}, \|B_3\|_{L^4(Q)}, \|B_4\|_{L^2(L^{2+})}, \|B_5\|^2_{L^2(Q)} \overset{\varepsilon\to 0}{\longrightarrow}0.
    \]
    Thus we conclude that
    \[
    (e_w, e_z) \to (0,0)\quad \text{strongly in }\quad W_2\times  X_2,
    \]
        that is,
    \[
    \lim_{\varepsilon \to 0} \frac{u^\varepsilon - u}{\varepsilon} = w \quad \text{in}\quad W_2 \qquad \text{and} \qquad \lim_{\varepsilon \to 0} \frac{v^\varepsilon - v}{\varepsilon} = z \quad \text{in}\quad X_2.
    \]
\end{proof}

\noindent
{\footnotesize
\textbf{Acknowledgements. }First author thanks to the Asociación Universitaria Iberoamericana de Posgrados (AUIP) for the support to the research stay at Universidad de Sevilla during february of 2026. Second author thanks Grant I+D+I PID2023-149182NB-I00 funded by MICIU/AEI/10.13039/501100011033 and ERDF/EU and IMUS-Maria de Maeztu grant CEX2024-001517-M - Apoyo a Unidades de Excelencia María de Maeztu, funded by MICIU/AEI/ 10.13039/501100011033, and The Vicerrector\'ia Acad\'emica of the Universidad Industrial de Santander.  The third author thanks the Vicerrector\'ia de Investigaci\'on y Extensi\'on of the Universidad Industrial de Santander by the support.
}
%%%%%%%%%%%%%%%%%%%%%%%%%%%%%%%%%%%%%%%%%%%%%%%%%%%%%%%%%%%%%
%%%%%%%%%%%%%%%%%%%%%%%%%%%%%%%%%%%%%%%%%%%%%%%%%%%%%%%%%%%%%
% BIBLIOGRAFIA
%%%%%%%%%%%%%%%%%%%%%%%%%%%%%%%%%%%%%%%%%%%%%%%%%%%%%%%%%%%%%
%%%%%%%%%%%%%%%%%%%%%%%%%%%%%%%%%%%%%%%%%%%%%%%%%%%%%%%%%%%%%

\end{document}